\newcommand{\labeltext}[2]{%
	\@bsphack
	\def\@currentlabel{#1}{\label{#2}}%
	\@esphack
}
\def\step#1#2#3{\par \noindent{{\\ \bf Step~\labeltext{#1}{#3}#1. }{\bf #2. }}}
\long\def\unmarkedfootnote#1{{\long\def\@makefntext##1{##1}\footnotetext{#1}}}
\theoremstyle{plain}
\newtheorem{thm}{Theorem}[section]
\newtheorem{thmx}{Theorem}
\newtheorem{lemma}[thm]{Lemma}
\newtheorem{corollary}[thm]{Corollary}
\newtheorem{prop}[thm]{Proposition}
\newtoks\prt
\theoremstyle{definition}
\newtheorem{remark}[thm]{Remark}
\newtheorem{remarks}[thm]{Remarks}
\newtheorem{definition}[thm]{Definition}
\newtheorem{example}[thm]{Example}
\def\eqn#1$$#2$${\begin{equation}\label#1#2\end{equation}}
\numberwithin{equation}{section}
\def\diam{\operatorname{diam}}
\def\INV{\operatorname{INV}^+}
\def\dist{\operatorname{dist}}
\def\epsilon{\varepsilon}
\def\en{\mathbb N}
\def\er{\mathbb R}
\def\G{\mathcal{G}}
\def\id{\operatorname{id}}
\def\inter{\operatorname{int}}
\def\co{\operatorname{co}}
\def\loc{\operatorname{loc}}
\def\mir1{\mathcal L_1}
\def\oint{-\hskip -13pt \int}
\def\phi{\varphi}
\def\sto{\rightrightarrows}
\def\t{{\bf t}}
\def\rn{\mathbb R^n}
\def\er{\mathbb R}
\def\x{\widetilde{x}}
\def\zet{\mathbb Z}
\definecolor{ffqqqq}{rgb}{1.,0.,0.}
\definecolor{qqqqqq}{rgb}{0,0.,0.}
\definecolor{qqqqff}{rgb}{0.,0.,1.}
\definecolor{uuuuuu}{rgb}{0.26666666666666666,0.26666666666666666,0.26666666666666666}
\definecolor{ffwwqq}{rgb}{1.,0.4,0.}
\definecolor{ffqqqq}{rgb}{1.,0.,0.}
\definecolor{qqqqff}{rgb}{0.,0.,1.}
\definecolor{qqffqq}{rgb}{0.,1.,0.}
\definecolor{wwqqzz}{rgb}{0.4,0.,0.6}
\definecolor{zzttqq}{rgb}{0.6,0.2,0.}
\definecolor{qqqqff}{rgb}{0.,0.,1.}
\definecolor{yqqqyq}{rgb}{0.5019607843137255,0.,0.5019607843137255}
\definecolor{qqzzqq}{rgb}{0.,0.6,0.}
\definecolor{sqsqsq}{rgb}{0.12549019607843137,0.12549019607843137,0.12549019607843137}
\newtoks\by
\newtoks\paper
\newtoks\book
\newtoks\jour
\newtoks\yr
\newtoks\pages
\newtoks\vol
\newtoks\publ
\def\ota{{\hbox\vol{???}}}
\def\cLear{\by=\ota\paper=\ota\book=\ota\jour=\ota\yr=\ota
\pages=\ota\vol=\ota\publ=\ota}
\def\endpaper{\the\by, {\the\paper},
\textit{\the\jour} \textbf{\the\vol} (\the\yr), \the\pages.\cLear}
\def\endbook{\the\by, \textit{\the\book}, \the\publ.\cLear}
\def\endprep{\the\by, \textit{\the\paper}, \the\jour.\cLear}
\def\endyearprep{\the\by, \textit{\the\paper}, \the\jour, (\the\yr).\cLear}
\def\name#1#2{#2 #1}
\def\nom{ \rm no. }
\def\H{\mathcal{H}}
\def\de0#1{\rule[3pt]{#1}{0.4pt} \hspace{-0.1pt} \rule[3.05pt]{0.05pt}{0.4pt} \hspace{-0.1pt} \rule[3.1pt]{0.05pt}{0.4pt} \hspace{-0.1pt} \rule[3.15pt]{0.05pt}{0.4pt} \hspace{-0.1pt} \rule[3.2pt]{0.05pt}{0.4pt} \hspace{-0.1pt} \rule[3.25pt]{0.05pt}{0.4pt} \hspace{-0.1pt} \rule[3.3pt]{0.05pt}{0.4pt} \hspace{-0.1pt} \rule[3.35pt]{0.05pt}{0.4pt} \hspace{-0.1pt} \rule[3.4pt]{0.05pt}{0.4pt} \hspace{-0.1pt} \rule[3.45pt]{0.05pt}{0.4pt} \hspace{-0.1pt} \rule[3.5pt]{0.05pt}{0.4pt} \hspace{-0.1pt} \rule[3.55pt]{0.05pt}{0.4pt} \hspace{-0.1pt} \rule[3.6pt]{0.05pt}{0.4pt} \hspace{-0.1pt} \rule[3.65pt]{0.05pt}{0.4pt} \hspace{-0.1pt} \rule[3.7pt]{0.05pt}{0.4pt} \hspace{-0.1pt} \rule[3.75pt]{0.05pt}{0.4pt} \hspace{-0.1pt} \rule[3.8pt]{0.05pt}{0.4pt} \hspace{-0.1pt} \rule[3.85pt]{0.05pt}{0.4pt} \hspace{-0.1pt} \rule[3.9pt]{0.05pt}{0.4pt} \hspace{-0.1pt} \rule[3.95pt]{0.05pt}{0.4pt} \hspace{-0.1pt} \rule[4.0pt]{0.05pt}{0.4pt} \hspace{-0.1pt} \rule[4.05pt]{0.05pt}{0.4pt} \hspace{-0.1pt} \rule[4.1pt]{0.05pt}{0.4pt} \hspace{-0.1pt} \rule[4.15pt]{0.05pt}{0.4pt} \hspace{-0.1pt} \rule[4.2pt]{0.05pt}{0.4pt} \hspace{-0.1pt} \rule[4.25pt]{0.05pt}{0.4pt} \hspace{-0.1pt} \rule[4.3pt]{0.05pt}{0.4pt} \hspace{-0.1pt} \rule[4.35pt]{0.05pt}{0.4pt} \hspace{-0.1pt} \rule[4.4pt]{0.05pt}{0.4pt} \hspace{-0.1pt} \rule[4.45pt]{0.05pt}{0.4pt} \hspace{-0.1pt} \rule[4.5pt]{0.05pt}{0.4pt} \hspace{-0.1pt} \rule[4.55pt]{0.05pt}{0.4pt} \hspace{-0.1pt} \rule[4.6pt]{0.05pt}{0.4pt} \hspace{-0.1pt} \rule[4.65pt]{0.05pt}{0.4pt} \hspace{-0.1pt} \rule[4.7pt]{0.05pt}{0.4pt} \hspace{-0.1pt} \rule[4.75pt]{0.05pt}{0.4pt} \hspace{-0.1pt} \rule[4.8pt]{0.05pt}{0.4pt} \hspace{-0.1pt} \rule[4.85pt]{0.05pt}{0.4pt} \hspace{-0.1pt} \rule[4.9pt]{0.05pt}{0.4pt} \hspace{-0.1pt} \rule[4.95pt]{0.05pt}{0.4pt} \hspace{-0.1pt} \rule[5.0pt]{0.05pt}{0.4pt} \hspace{-0.1pt} \rule[5.05pt]{0.05pt}{0.4pt} \hspace{-0.1pt} \rule[5.1pt]{0.05pt}{0.4pt} \hspace{-0.1pt} \rule[5.15pt]{0.05pt}{0.4pt} \hspace{-0.1pt} \rule[5.2pt]{0.05pt}{0.4pt} \hspace{-0.1pt} \rule[5.25pt]{0.05pt}{0.4pt} \hspace{-0.1pt} \rule[5.3pt]{0.05pt}{0.4pt} \hspace{-0.1pt} \rule[5.35pt]{0.05pt}{0.4pt} \hspace{-0.1pt} \rule[5.4pt]{0.05pt}{0.4pt} \hspace{-0.1pt} \rule[5.45pt]{0.05pt}{0.4pt} \hspace{-0.1pt} \rule[5.5pt]{0.05pt}{0.4pt} \hspace{-0.1pt} \rule[5.55pt]{0.05pt}{0.4pt} \hspace{-0.1pt} \rule[5.6pt]{0.05pt}{0.4pt} \hspace{-0.1pt} \rule[5.65pt]{0.05pt}{0.4pt} \hspace{-0.1pt} \rule[5.7pt]{0.05pt}{0.4pt} \hspace{-0.1pt} \rule[5.75pt]{0.05pt}{0.4pt} \hspace{-0.1pt} \rule[5.8pt]{0.05pt}{0.4pt} \hspace{-0.1pt} \rule[5.85pt]{0.05pt}{0.4pt} \hspace{-0.1pt} \rule[5.9pt]{0.05pt}{0.4pt} \hspace{-0.1pt} \rule[5.95pt]{0.05pt}{0.4pt} \hspace{-0.1pt} \rule[6.0pt]{0.05pt}{0.4pt}}	
\def\deb{\mathop{\de0{16pt} ~}\limits}		
\def\xdeb#1{\mathop{\hspace{1pt} ^{#1} \hspace{-21pt} \de0{25pt} ~}\limits}	
\date{\today}
\begin{document}

\title[Characterizations of limits of planar homeomorphisms]{Point-wise characterizations of limits of planar Sobolev homeomorphisms and their quasi-monotonicity}

\author{Daniel Campbell}
\address{Department of Mathematical Analysis, Charles University,
	So\-ko\-lovsk\'a 83, 186~00 Prague 8, Czech Republic}
\email{\tt campbell@karlin.mff.cuni.cz}

\subjclass[2010]{Primary 46E35; Secondary 30E10, 58E20}
\keywords{Monotone mappings, INV mappings, Non-crossing mappings, Non-crossing on lines mappings}

\begin{abstract}
	We present three novel classifications of the weak sequential (and strong) limits in $W^{1,p}$ of planar diffeomorphisms. We introduce a concept called QM condition which is a kind of separation property for pre-images of closed connected sets and show that $u$ satisfies this property exactly when it is the limit of Sobolev homeomorphisms. Further, we prove that $u\in W^{1,p}_{\id}((-1,1)^2,\er^2)$ is the limit of a sequence of  homeomorphisms exactly when there are classically monotone mappings $g_{\delta}:[-1,1]^2\to \er^2$ and very small open sets $U_{\delta}$ such that $g_{\delta} = u$ on $[-1,1]^2 \setminus U_{\delta}$. Also, we introduce the so-called the three curve condition, which is in some sense reminiscent of the NCL condition of \cite{CPR} but for $u^{-1}$ instead of for $u$, and prove that a map is the $W^{1,p}$ limit of planar Sobolev homeomorphisms exactly when it satisfies this property. This improves on results in \cite{DPP} answering the question from \cite{IO2}.
\end{abstract} 

\maketitle

\section{Introduction}
	
	We are naturally led to the study of weak (and strong) limits of Sobolev homeomorphisms (or diffeomorphisms) from $\rn$ to $\rn$ in the context of variational models of bulk energy in non-linear elasticity, because we may consider all such mappings to be meaningful deformations. In the case $n=2$ and $p\geq2$, a succinct categorization was provided by Iwaniec and Onninen in \cite{IO2}. Their theorem says that  the weak closure of Sobolev homeomorphisms coincides with the strong closure of diffeomorphisms and is characterized by Sobolev monotone maps.
	\begin{definition}\label{defMonotone}
		Let $X,Y\subset \er^2$ be compact connected sets. We say that a continuous $u:X\to Y$ is a monotone map from $X$ onto $Y$ if $u^{-1}(y) = \{x\in X; \ u(x) = y\}$ is a closed connected set.
	\end{definition}
	
	A characterization of monotone maps is thanks to Youngs \cite{Y}
	\begin{thm}\label{Young}
		Let $X,Y \subset \er^2$ be Jordan domains of the same topological type. A map $h : X \xrightarrow{onto} Y$ is monotone if and only if it is the
		uniform limit of a sequence of homeomorphisms $u_k : X\xrightarrow{onto} Y$. 
	\end{thm}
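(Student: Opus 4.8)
\textbf{The easy implication} ($\Leftarrow$). Here I would argue by contradiction. Suppose $h=\lim_k u_k$ uniformly with $u_k\colon X\xrightarrow{onto}Y$ homeomorphisms; each fibre $h^{-1}(y_0)$ is compact by continuity of $h$, so only connectedness is at issue. Assuming $h^{-1}(y_0)=E_1\sqcup E_2$ with $E_1,E_2$ nonempty and relatively closed (hence compact), choose disjoint open $V_i\supset E_i$; on the compact set $X\setminus(V_1\cup V_2)$ the quantity $|h(\cdot)-y_0|$ has a positive lower bound $\delta$. Pick $x_i\in E_i$, so $u_k(x_i)\to y_0$, and (identifying $\overline Y$ with $\overline{\mathbb D}$ and using convexity of balls) for $k$ large join $u_k(x_1)$ to $u_k(x_2)$ by an arc $\gamma_k\subset Y$ with $\gamma_k\subset B(y_0,\delta/2)$. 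Then $\sigma_k:=u_k^{-1}(\gamma_k)$ is a connected subset of $X$ containing $x_1$ and $x_2$, and for $k$ large $\sigma_k\subset V_1\cup V_2$, since any $x\in\sigma_k\setminus(V_1\cup V_2)$ would give $|u_k(x)-h(x)|\ge|h(x)-y_0|-|u_k(x)-y_0|\ge\delta/2>\|u_k-h\|_\infty$. But a connected set containing $x_1\in V_1$ and $x_2\in V_2$ cannot lie in the disjoint open union $V_1\cup V_2$; contradiction. So every fibre is connected.

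\textbf{The hard implication} ($\Rightarrow$). I would first pre- and postcompose $h$ with homeomorphisms $\overline{\mathbb D}\to\overline X$ and $\overline Y\to\overline{\mathbb D}$, which preserves both monotonicity and the property of being a uniform limit of homeomorphisms, and so assume $\overline X=\overline Y=\overline{\mathbb D}$. Next I would record the standard fact that a monotone surjection pulls connected sets back to connected sets: a disconnection $h^{-1}(C)=P\sqcup Q$ into relatively open sets would produce $C=C_P\sqcup C_Q$, where $C_P:=\{y\in C:h^{-1}(y)\subset P\}$, and $C_P,C_Q$ are open in $C$ by connectedness of the fibres together with upper semicontinuity of $y\mapsto h^{-1}(y)$ — contradicting connectedness of $C$. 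Applying this with $C=\overline{\mathbb D}\setminus\{y\}$ shows $\overline{\mathbb D}\setminus h^{-1}(y)=h^{-1}(\overline{\mathbb D}\setminus\{y\})$ is connected; so every fibre, and more generally $h^{-1}(\sigma)$ for every closed simplex $\sigma$ of a triangulation of the target, is a \emph{non-separating} continuum in $\overline{\mathbb D}$.

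The plan is then, for fixed $\epsilon>0$, to fix a genuine triangulation $\mathcal T$ of the target disk of mesh $<\epsilon$ (each closed triangle a Jordan region; the union of the triangles containing a given vertex is a disk, a half-disk at boundary vertices). Since $h^{-1}(T)\cap h^{-1}(T')=h^{-1}(T\cap T')$, the closed cover $\{h^{-1}(T)\}_{T\in\mathcal T}$ of $\overline{\mathbb D}$ has exactly the nerve of $\mathcal T$, and I would want a \emph{faithful disk replacement}: pairwise non-overlapping closed Jordan regions $\{P_T\}_{T\in\mathcal T}$ with $\bigcup_T P_T=\overline{\mathbb D}$, carrying the incidence pattern of the triangles (adjacent triangles $\mapsto$ regions sharing an arc; triangles meeting at a vertex $\mapsto$ regions sharing a point; boundary triangles $\mapsto$ regions meeting $\partial\mathbb D$) and with each $P_T$ contained in $h^{-1}(N_\rho(T))$, where $N_\rho(T)$ is the $\rho$-neighbourhood of $T$ and $\rho<\epsilon$ is small enough (relative to $\mathcal T$) that the $\rho$-neighbourhoods of triangles keep the nerve of $\mathcal T$. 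From such a family I would build a homeomorphism $u\colon\overline{\mathbb D}\to\overline{\mathbb D}$ combinatorially — send the common points of the $P_T$'s to the vertices of $\mathcal T$, extend over the shared arcs onto the edges, then extend each $P_T$ onto its triangle $T$ by Schoenflies; the pieces agree on overlaps, so $u$ is a homeomorphism, and for $x\in P_T$ one has $u(x)\in T$ and $h(x)\in N_\rho(T)$, whence $|u(x)-h(x)|\le\diam T+\rho<2\epsilon$. Letting $\epsilon=1/k$ yields $u_k\to h$ uniformly.

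\textbf{The main obstacle} is precisely the faithful disk replacement $\{P_T\}$ shadowing the preimage continua; everything else above is routine once it is in hand. This is a purely planar-topological construction: thicken $h^{-1}(v)$ over vertices, then $h^{-1}(e)$ over edges, then $h^{-1}(T)$ over triangles into honest Jordan regions abutting in the prescribed way, at each stage absorbing the ``overspill'' with Schoenflies/Zoretti-type arguments and the upper semicontinuity of the decomposition $G=\{h^{-1}(y)\}$, and using crucially that the $h^{-1}(\sigma)$ do not separate $\overline{\mathbb D}$ (so each can be engulfed in an arbitrarily small Jordan region meeting its neighbours correctly). Equivalently, this is the assertion that a monotone upper semicontinuous decomposition into non-separating continua is shrinkable — a statement in the circle of Moore's decomposition theorem — but for the $2$-cell formulation at hand the hands-on construction is the natural route.
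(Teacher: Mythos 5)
First, a remark on the comparison you were asked for: the paper does not prove this statement at all — it is quoted as Youngs' classical theorem \cite{Y} and used as a black box — so the only thing to measure your attempt against is the classical Moore/Youngs decomposition-theoretic argument, which is essentially the route you sketch. Your ``easy'' implication is correct; the one point to treat carefully is joining $u_k(x_1)$ to $u_k(x_2)$ by an arc of small diameter when $y_0\in\partial Y$, and there your parenthetical is the right fix provided you phrase it via uniform continuity of a Schoenflies homeomorphism $\overline{\mathbb D}\to\overline Y$ (metric balls are not preserved, so ``convexity of balls'' must be read in the disk coordinates and translated back by a modulus of continuity).

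The hard implication, however, is not a proof but a reduction. The ``faithful disk replacement'' $\{P_T\}$ is not a routine planar lemma that can be deferred: it is exactly the substance of Youngs'/Moore's theorem, and you explicitly leave it as ``the main obstacle'' with an appeal to ``Schoenflies/Zoretti-type arguments''. Concretely: (i) nothing in your outline explains why the continua $h^{-1}(\sigma)$ — which may be wild, may have empty interior or positive measure, and may abut one another along complicated sets — can be engulfed by non-overlapping Jordan regions realizing precisely the nerve of $\mathcal T$ (shared arcs for adjacent triangles, single common points at vertices) and contained in $h^{-1}(N_\rho(T))$; ``thicken vertices, then edges, then triangles, absorbing the overspill'' describes the goal, not an argument, and this is where all the difficulty of the theorem lives (in modern terms, the shrinkability of the decomposition, including the relative version for a $2$-cell). (ii) Your incidence scheme near the boundary (boundary triangles correspond to regions meeting $\partial\mathbb D$, half-disk stars at boundary vertices) silently uses that fibres of interior points of $Y$ do not meet $\partial X$, i.e. $h(\partial X)=\partial Y$ and $h^{-1}(\partial Y)\supset\partial X$; this is true but is not immediate from monotonicity and must be proved — note for instance that collapsing a single arc meeting $\partial\mathbb D$ in one endpoint does not even yield a manifold quotient, so the boundary behaviour genuinely constrains the fibres and enters the construction. (A smaller point: your claim that preimages of arbitrary, not necessarily closed, connected sets are connected is correct but needs the compactness argument written out.) As it stands, then, the proposal amounts to a correct reduction of the statement to Moore-type decomposition theory — roughly, to the very result being cited — rather than an independent proof; to make it complete you would either carry out the $\{P_T\}$ construction in detail or invoke the shrinkability theorem (in its relative, $2$-cell form) explicitly as the paper in effect does by citing \cite{Y}.
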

	For $p>2=n$ the usual embedding of $W^{1,p}$ into continuous functions suffices to realize that limits in $W^{1,p}$ must also be uniform limits. The case $p=2$ is substantially more challenging. It was known (see \cite[Corollary 7.5.1]{IM}) that planar homeomorphisms in $W^{1,2}$ have a modulus of continuity. For analytical reasons proven in \cite[Lemma 4.3, Lemma 4.5]{IO2} the modulus of continuity is conveyed to their limits. This was instrumental in proving their theorem \cite[Theorem 1.2]{IO2} cited below. 
	\begin{thm}
		Let $X$ and $Y$ be bounded multiply connected Lipschitz domains in $\er^2$ and $h_j : X \xrightarrow{onto} Y$ homeomorphisms converging weakly to $h$ in the space $W^{1,p}(X, \er^2), p \geq 2$. Then there exists a sequence of homeomorphisms (actually $\mathcal{C}^{\infty}$-diffeomorphisms)
		$$
		h^*_j : X \xrightarrow{onto} Y, \ \ h^*_j \in h + W^{1,p}_0 (X, \er^2),
		$$
		converging to $h$ strongly in $W^{1,p}(X, \er^2)$. The same holds for simply connected domains, except for $p = 2$, in which case the mappings $h_j$ must be fixed either at one point in $X$ or at three points on $\partial X$.
	\end{thm}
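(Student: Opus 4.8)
The plan is to show that the strong closure of the $C^\infty$-diffeomorphisms and the weak sequential closure of the homeomorphisms both coincide with the class of monotone maps in $W^{1,p}(X,\er^2)$ carrying the prescribed boundary behaviour; since a strong limit of diffeomorphisms is automatically a weak limit of homeomorphisms, only two inclusions need work. For the easy one I would argue that the weak limit $h$ is a monotone Sobolev map. Banach--Steinhaus bounds $M:=\sup_j\|h_j\|_{W^{1,p}}$; when $p>2$ the compact Morrey embedding $W^{1,p}\hookrightarrow C^0(\overline X)$ upgrades weak to uniform convergence at once, and when $p=2$ I would instead invoke the planar modulus-of-continuity estimate for Sobolev homeomorphisms \cite[Corollary 7.5.1]{IM}, which furnishes a single modulus $\omega=\omega(M,X,Y)$ for all $h_j$, so that $\{h_j\}$ is equicontinuous on $\overline X$; with $Y$ bounded (and, in the simply connected case, the one-interior-point or three-boundary-point normalisation, whose only role is to forbid $\osc h_j\to 0$, i.e.\ collapse to a point) Arzel\`a--Ascoli and uniqueness of limits give $h_j\to h$ uniformly. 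Then $h(\partial X)=\partial Y$ and Youngs' theorem (Theorem~\ref{Young}) makes $h$ monotone onto $Y$; a short additional argument with $\omega$ restricted to boundary arcs shows $h|_{\partial X}$ is in fact a homeomorphism onto $\partial Y$, which is what will later permit us to impose $h^*_j=h$ on $\partial X$ while keeping $h^*_j$ a diffeomorphism of $X$ onto $Y$.

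The substantive direction is the converse: to approximate an arbitrary monotone $h\in W^{1,p}(X,\er^2)$, strongly in $W^{1,p}$ and without altering its boundary trace, by $C^\infty$-diffeomorphisms. I would first pass to a piecewise affine model. Choosing a fine triangulation $\mathcal T_\delta$ of $X$ adapted to $\partial X$ and letting $h_\delta$ be the continuous nodal interpolant of $h$, density of piecewise affine maps in $W^{1,p}$ gives $h_\delta\to h$ strongly with $h_\delta=h$ on $\partial X$, and for $\delta$ small $h_\delta$ stays monotone because the uniform modulus of continuity of $h$ prevents the interpolant from creating non-degenerate fibres on a scale exceeding $\delta$. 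Hence it suffices to approximate a monotone \emph{simplicial} map, strongly in $W^{1,p}$ and with fixed boundary values, by $C^\infty$-diffeomorphisms.

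A monotone simplicial map fails injectivity only on a finite union of sub-polygons that it collapses, so the final step is to perturb $h_\delta$ inside an open neighbourhood of this collapse set --- of Lebesgue measure as small as we please --- unfolding each collapsed piece into a thin homeomorphic tube, thereby producing piecewise affine homeomorphisms $g_\delta$ with $g_\delta=h$ on $\partial X$ and $\|g_\delta-h\|_{W^{1,p}}\to 0$; a standard smoothing of planar piecewise affine homeomorphisms into diffeomorphisms with arbitrarily small $W^{1,p}$ error (regularise away from the $1$-skeleton, then repair near the vertices, where finiteness of $p$ in dimension two is used) then delivers the desired $h^*_j$. The principal obstacle is exactly this energy-controlled unfolding: a naive homotopy opening a collapsed continuum into a tube can make $\|Dg_\delta\|_{L^p}$ blow up, so one must genuinely use $Dh\in L^p$ --- that the collapse set carries vanishing $p$-energy --- and not merely continuity and monotonicity, to calibrate the widths and shapes of the tubes; and in the simply connected $p=2$ subcase the point normalisation must be carried through every step, for without it the conformal degeneration where $h_j$ contracts $X$ to a single point would produce a constant (still formally monotone) limit and the conclusion would be false.
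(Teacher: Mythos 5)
This statement is not proved in the paper at all: it is quoted verbatim from Iwaniec--Onninen \cite[Theorem~1.2]{IO2}, where the proof occupies a substantial part of that article, so there is no in-paper argument to compare against; the question is whether your sketch would stand on its own, and it does not. The first direction (weak limit $\Rightarrow$ uniform limit $\Rightarrow$ monotone, via the modulus of continuity of \cite[Corollary~7.5.1]{IM} for $p=2$, Morrey for $p>2$, Arzel\`a--Ascoli and Theorem~\ref{Young}, with the normalisation preventing collapse in the simply connected $p=2$ case) is essentially fine and matches the role these facts play in \cite{IO2}. The problem is the converse, which is the entire content of the theorem, and there your argument has three unproved steps, each of which is a genuine gap. (i) The claim that the nodal piecewise affine interpolant $h_\delta$ of a monotone map is again monotone for small $\delta$ is unjustified and in general false: monotonicity is a global topological property, and affine interpolation of a non-injective continuous map can create folds and overlaps whose point-preimages are disconnected; a modulus of continuity controls oscillation, not the topology of fibres. (ii) The structural claim that a monotone simplicial map ``fails injectivity only on a finite union of sub-polygons that it collapses'' is not correct as stated: piecewise affine monotone maps can fail injectivity through degenerate triangles and through coincidences of images of distant triangles, not only through collapsed polygons, so the object you propose to ``unfold'' is not what you say it is. (iii) Most importantly, the ``energy-controlled unfolding'' of the collapse set into thin tubes with $\|Dg_\delta-Dh\|_{L^p}\to 0$ and unchanged boundary trace is exactly the hard analytic core of the theorem; you name it as the principal obstacle but give no construction and no estimate, so nothing is actually proved. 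Iwaniec and Onninen do not proceed this way: their proof runs through $p$-harmonic replacement and Rad\'o--Kneser--Choquet-type results for ($p$-)harmonic extensions on a fine decomposition of $X$, together with the transfer of the modulus of continuity to the limit (\cite[Lemmas 4.3, 4.5]{IO2}), which is what produces injective approximants with controlled energy; an elementary ``open the collapsed set into a tube'' perturbation is precisely the kind of step that is not known to work with $W^{1,p}$ control.

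A smaller but real issue: even granting your reduction, you would still need the approximants to lie in $h+W^{1,p}_0(X,\er^2)$ while being diffeomorphisms of $X$ \emph{onto} $Y$; your remark that $h|_{\partial X}$ is a homeomorphism onto $\partial Y$ needs an argument (a uniform limit of boundary homeomorphisms is a priori only monotone on $\partial X$), and for multiply connected $X$ one must also preserve the correspondence of boundary components and the degree/orientation, none of which is addressed. As it stands the proposal is a plausible strategy outline, but the decisive steps coincide with the open work, so it cannot be accepted as a proof of the cited theorem.
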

	Moreover, the same authors show in \cite{IO} that even for $p\in [1,2)$ any $W^{1,p}$ limit of planar homeomorphisms is a monotone map, if we assume a priori that the limit is a continuous map.
	
	A key result of \cite{DPP} (found in Theorem~B) is that the weak sequential closure and strong closure of homeomorphisms and/or diffeomorphisms in $W^{1,p}_{\id}((-1,1)^2,\er^2)$ (for the definition see Section~\ref{SSS}) coincide. Therefore, in the following we will refer only to the closure of homeomorphisms in $W^{1,p}_{\id}((-1,1)^2,\er^2)$ and for short we will write this as $\overline{H\cap W^{1,p}_{\id}((-1,1)^2,\er^2)}$.
	
	In \cite[Question 2.4]{IO2} the authors pose the question, whether the class of $W^{1,p}$ ``monotone mappings'' coincides with the class of limits of Sobolev homeomorphisms.  It is clear that being the limit of homeomorphisms must convey some sense of monotonicity. What is less clear is what is the `correct' monotonicity condition to guarantee that a map is the limit of homeomorphisms. The reason for this is the lack of continuity of the mappings in the class of limits. For a continuous map the following conditions are all equivalent (see \cite[Proposition 3.4]{IO2} or \cite[Lemma 4.24]{DPP}):
	\begin{itemize}
		\item the preimage of every point is a closed connected set, equivalently the preimage of a closed connected set is a closed connected set, equivalently the map satisfies the $\INV$ condition (see Definition~\ref{defINV}, Lemma~\ref{IDidThisOne} and Lemma~\ref{StolenFromAldo})
		\item the preimage of every point is a closed connected set which does not disconnect the plane (we call this the very weak monotonicity (VWM) condition)
		\item the preimage of every closed connected set which does not disconnect the plane is a closed connected set which does not disconnect the plane (we call this the weak monotonicity (WM) condition)
		\item the map is the uniform limit of homeomorphisms (we call this Youngs monotone).
	\end{itemize}
	
	It is immediate that WM implies VWM implies $\INV$. It is easy to check that homeomorphisms satisfy the $\INV$ condition and it was already known that this class is closed under Sobolev limits. Thus the $\INV$ condition was known to be necessary for a map to a limit of homeomorphisms. For some time, many experts believed that the $\INV$ condition was precisely the correct monotonicity condition to guarantee that a Sobolev map can be approximated by homeomorphisms. One of the results in the ground-breaking paper for the sub-critical case \cite{DPP}, however, is that there are $W^{1,p}_{\id}((-1,1)^2,\er^2)$ mappings satisfying the $\INV$ condition which are not in $\overline{H\cap W^{1,p}_{\id}((-1,1)^2,\er^2)}$.
	
	With great attention to technical details it was shown in \cite[Theorem~C]{DPP} that if $u$ is in $W^{1,p}_{\id}((-1,1)^2,\er^2)$ and is weakly monotone, then $u\in\overline{H\cap W^{1,p}_{\id}((-1,1)^2,\er^2)}$. Although the proof is very intricate and insightful, there are reasons not to be satisfied with the sufficiency result since already the radial stretching map $x\to \frac{x}{|x|}$ fails the WM condition (see Lemma~\ref{HaHaJoke}). We are able to significantly simplify the proof using our novel QM condition and prove also Theorem~\ref{SpanishInquisition}. The authors of \cite{DPP} propose the question: ``is the VWM condition sufficient to guarantee that if $u$ is in $W^{1,p}_{\id}((-1,1)^2,\er^2)$ then $u$ is in $\overline{H\cap W^{1,p}_{\id}((-1,1)^2,\er^2)}$?'' This question is extremely challenging since none of the known counter examples can refute the claim but it is not obvious how to exploit the condition to find a limiting sequence of injective maps. The authors of this paper agree that the question is very interesting and would find a negative answer very surprising. On the other hand, the authors of \cite{DPP} already knew that not even the VWM condition is necessary for a map to be the limit of homeomorphisms (see the example in \cite[Section~5.4]{DPP}). 
	
	Another breakthrough result of \cite{DPP} was their characterization for the Sobolev limits of planar homeomorphisms. This result has been very useful, for example in showing the way how to approach strict limits in BV for example (see \cite{CKR}). To this point it is the only characterization known (to the author) for limits of planar homeomorphisms in $W^{1,p}$ for $p\in [1,2)$. In order to be able to approximate a planar map in $W^{1,p}$ by diffeomorphisms the authors of \cite{DPP} devised the so-called no-crossing condition. Roughly speaking, $u\in W^{1,p}_{\id}((-1,1)^2, \er^2)$ satisfies the no crossing condition if for (almost) every grid of lines $\Gamma\subset [-1,1]^2$ so that $u_{\rceil\Gamma}$ is continuous, there exist continuous injective $\phi_m:\Gamma\to\er^2$ with $\phi_m\sto u$ on $\Gamma$. For the precise definition see Definition~\ref{defNC}. Although this condition allows the authors to readily approximate their map, it is a rather complicated condition to check and there has been repeated interest in finding a simpler or more explicit classification, which comes closer to answering the question from \cite{IO2}, i.e. we would like to find the `correct' monotonicity condition. Note that \cite{CPR} was one such attempt to find such a characterization using a condition called NCL (see Definition~\ref{defNCL}). 
	
	We give three such equivalent conditions. The first of these we call \textit{quasi-monotonicity}, for the precise definition see Definition~\ref{DefQuasiMonotne}. Roughly speaking, $u$ is quasi-monotone when for every $\delta$ we can find a monotone $g_{\delta}$ such that $u = g_{\delta}$ outside a very small set, i.e. outside $U_{\delta}$, where $\H^1_{\delta}(U_{\delta})<\delta$. This is even stronger than saying that $u$ is Youngs monotone on $[-1,1]^2\setminus U_{\delta}$.
	
	The second condition is called the three-curve condition. Loosely speaking, we can say that, if for some reason we knew that the preimage of every injective Lipschitz curve $\psi$ is a Lipschitz curve $\gamma$, then the three curve condition is akin to saying that $\gamma$ is not self-crossing i.e. there are injective Lipschitz curves converging to $\gamma$ in the uniform metric (by curve we mean parameterized curve). Of course there is no reason to expect that the preimage of a curve should look like a curve, even for continuous monotone maps (unless $J_u>0$ a.e.) so the condition is somewhat more complicated and can be found at Definition~\ref{ThreeCurvePropertyDef} (refer also to Definition~\ref{AppropriateCouple}). As stated earlier, a related condition called the NCL condition was studied in \cite{CPR}. The NCL condition, loosely speaking, means that the image of an injective Lipschitz curve in $u$  is not self-crossing (i.e. can be approximated uniformly by continuous injective curves). It was proved that the NCL condition did not imply even the $\INV$ condition (unless $J_u>0$ a.e.). For this reason, we consider the fact that a similar condition on the inverse mapping is equivalent with the map being the Sobolev limit of homeomorphisms quite noteworthy.
	
	The third condition is called the QM condition and can be found at Definition~\ref{defQM}. It is a kind of separation property for the preimages of closed connected sets whose union does not disconnect the plane. It suffices to verify for the condition for injective Lipschitz curves whose union do not disconnect the plane. If for some reason we knew that the preimage of the pair of injective Lipschitz curves was a pair of injective Lipschitz curves, then the QM condition is akin to saying that the two preimage curves (although they may intersect each other) cannot cross over each other. We express this by saying that for the pair of closed connected sets $C_1,C_2$ and every $r,\delta>0$ there are disjoint Jordan domains $V_i\subset u^{-1}(C_i+B(0,r))\cup U_{\delta}$ containing $u^{-1}(C_i)\setminus U_{\delta}$, where $U_{\delta}$ are some small open sets as above. For another description see Remarks~\ref{QMRemarks}, point $(1)$.
	
	The characterization theorem is as follows.
	\begin{thmx}\label{MainTheorem}
		Let $p\in [1,2]$ and let $u\in W^{1,p}_{\id}((-1,1)^2, \er^2)$. Then the following are equivalent
		\begin{enumerate}
			\item[a)] $u \in \overline{H\cap W_{\id}^{1,p}((-1,1)^2, \er^2)}$,
			\item[b)] $u$ is quasi-monotone on $[-1,1]^2$ (see Definition~\ref{DefQuasiMonotne}),
			\item[c)] $u$ has the three-curve property (see Definition~\ref{ThreeCurvePropertyDef}),
			\item[d)] $u$ satisfies the QM property (see Definition~\ref{defQM}).
		\end{enumerate}
	\end{thmx}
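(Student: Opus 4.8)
The plan is to prove the cycle of implications $a)\Rightarrow d)\Rightarrow c)\Rightarrow b)\Rightarrow a)$, which seems to be the most economical route; the genuinely hard analytic content should live in $b)\Rightarrow a)$ and $a)\Rightarrow d)$, while the middle two implications should be comparatively soft consequences of the definitions once the right auxiliary lemmas (the equivalences mentioned in the introduction, e.g.\ Lemma~\ref{StolenFromAldo} and the characterization of monotone maps via Youngs' Theorem~\ref{Young}) are in place.

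First I would establish $a)\Rightarrow d)$. Take $u_k\to u$ weakly in $W^{1,p}_{\id}$ with $u_k$ homeomorphisms. Since homeomorphisms trivially have disjoint preimages of disjoint closed sets and map Jordan domains to Jordan domains, the content is that the QM separation \emph{survives} the limit. The key tool should be the same capacitary/modulus-of-continuity machinery used in \cite{DPP, IO2}: for a.e.\ choice of a finite family of separating curves (relative to which $u$ and the $u_k$ are continuous along the relevant one-dimensional slices, by Fubini together with $W^{1,p}$ trace theory), the degree $\deg(u,\cdot,\cdot)$ passes to the limit, so $u^{-1}(C_i)$ is captured up to a small set $U_\delta$ (the exceptional set where the slices behave badly, whose $\H^1_\delta$ measure can be made $<\delta$) inside the open set $u^{-1}(\inter(C_i+B(0,r)))$. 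The disjoint Jordan domains $V_i$ are then produced by taking suitable connected components of these open sets and filling in holes — the non-disconnection hypothesis on $C_1\cup C_2$ is exactly what guarantees the filled-in sets stay disjoint. I expect this argument to be delicate but essentially a careful bookkeeping exercise on top of the known degree-theory lemmas.

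Next, $d)\Rightarrow c)$ and $c)\Rightarrow b)$. For $d)\Rightarrow c)$ one specializes the QM property to the configurations of curves appearing in the three-curve property: given the "appropriate couple" (Definition~\ref{AppropriateCouple}) one feeds the relevant pairs of Lipschitz arcs into the QM condition and reads off, from the disjoint Jordan domains $V_i$, the required approximating injective curves — this is where I would need to be careful that the small sets $U_\delta$ are uniform across the finitely many pairs one tests, but that is arranged by taking a common refinement. For $c)\Rightarrow b)$, the three-curve property controls preimages of curves well enough to run, on the complement of $U_\delta$, the Youngs-type construction: one cuts $[-1,1]^2$ along a fine grid whose image-under-$u$-preimages are non-self-crossing by the three-curve property, and glues together monotone pieces into a global classically monotone $g_\delta$ agreeing with $u$ off $U_\delta$. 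Here the main subtlety is the global gluing and verifying that $g_\delta$ is genuinely monotone in the sense of Definition~\ref{defMonotone} (connected point-preimages), which should follow from a Mayer–Vietoris / connectedness argument on the pieces.

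Finally, $b)\Rightarrow a)$, which I expect to be the principal obstacle and the technical heart of the paper. Given the monotone maps $g_\delta$ with $g_\delta=u$ off $U_\delta$ and $\H^1_\delta(U_\delta)<\delta$, the strategy is: (i) approximate each $g_\delta$ uniformly by homeomorphisms $h_{\delta,m}$ via Youngs' Theorem~\ref{Young} (after a routine reduction to Jordan-domain/boundary-compatible situations and fixing boundary values, using that $u\in W^{1,p}_{\id}$); (ii) upgrade uniform convergence to $W^{1,p}$ control — this is the crux, since Youngs' theorem gives no derivative bounds — by instead running a direct diffeomorphic-approximation scheme in the spirit of \cite[Theorem~C]{DPP}, but now starting from the \emph{already monotone} map $g_\delta$, so that the difficult "no-crossing" verification is replaced by the hypothesis $b)$ itself; (iii) control the error on $U_\delta$: because $U_\delta$ is small in the $\H^1_\delta$ sense, one can surround it by a controlled family of squares, redefine the approximants there by a Lipschitz/piecewise-affine filling whose $W^{1,p}$ energy is estimated by $p<2$ together with $\H^1_\delta(U_\delta)<\delta$, and conclude $h_{\delta,m}\to u$ strongly in $W^{1,p}$ after a diagonal choice $m=m(\delta)$ and $\delta\to0$. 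The main danger points are the energy bound on the filling (which is why $p\le2$ is essential) and ensuring injectivity is preserved across the interface $\partial U_\delta$; I would handle the latter by arranging, via the QM/three-curve structure near $\partial U_\delta$, that the boundary data on each surrounding square is itself non-self-crossing, so the filling can be taken injective.
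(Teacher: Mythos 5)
There is a genuine gap, and it sits at the start of your cycle, in $a)\Rightarrow d)$. The mechanism you describe --- degree convergence along a.e.\ one-dimensional slices, so that $u^{-1}(C_i)$ is trapped (up to a small $U_\delta$) inside $u^{-1}(C_i+B(0,r))$, then take connected components and ``fill in holes'', with the non-disconnection of $C_1\cup C_2$ guaranteeing disjointness --- uses only information that is available for an arbitrary map satisfying the $\INV$ condition. But the paper's Example~\ref{NoWindingConditionFail} (the map of \cite[Section 5.2]{DPP}) satisfies $\INV$ and even NCL and still fails the QM condition: there the preimages of two disjoint continua genuinely cross at the singular point, the thickened preimages $u^{-1}(C_i+B(0,r))$ meet inside every $U_\delta$, and no choice of components-plus-hole-filling produces disjoint Jordan domains. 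So your argument, as sketched, would ``prove'' QM for a map that does not have it; the non-disconnection hypothesis is not the ingredient that prevents crossing, and degree passage to the limit is too weak to do so. What is actually needed is to convert injectivity of the approximating homeomorphisms on one-dimensional sets into a separation statement, and this is exactly why the paper routes $a)\Rightarrow d)$ through quasi-monotonicity: one first proves $a)\Rightarrow b)$ (Theorem~\ref{QuasiMonotne}, via suitable grids avoiding $U_\delta$, piecewise-linear injective approximations on the grids, the Hencl--Pratelli extension with the estimates \eqref{CkGrid1}--\eqref{CkGrid2}, and an Arzel\`a--Ascoli/Cauchy argument), and then gets the $V_i$ essentially for free as $V_i=h^{-1}(C_i+B(0,2r))$, where $h$ is a homeomorphism uniformly close to the monotone map $g_\delta$ supplied by Youngs' theorem. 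Your proposal has no substitute for this step.

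A secondary but related misjudgment is $c)\Rightarrow b)$: producing a globally monotone $g_\delta$ agreeing with $u$ off $U_\delta$ is not a soft Mayer--Vietoris gluing; in the paper it is obtained only after $c)\Rightarrow a)$ (the long Steps 1--4 of Theorem~\ref{ThreeCurveChar}, which upgrade the three-curve property to the no-crossing condition on grids) followed by the $a)\Rightarrow b)$ construction just described. By contrast, your outline of $b)\Rightarrow a)$ is essentially the paper's argument (grids avoiding $U_\delta$, Youngs approximation of $g_\delta$ giving the no-crossing condition on the grids, then the extension machinery of \cite{DPP}), so that part of the plan is sound; the proposal fails where it tries to reach $d)$ directly from the weak limit rather than through $b)$.
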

	\begin{remark}
		Let $L$ be a segment in $(-1,1)^2$. It is easy to construct a map $u\in \overline{H\cap W_{\id}^{1,p}((-1,1)^2, \er^2)}$ for any $p\in[1,\infty]$ such that $u$ maps $L$ onto itself 3 times with $u$ parametrizing $L$  in some middle third in the opposite direction to how it parametrizes $L$ on its end thirds. Then the image of $L$ in any injective continuous approximation of $u$ will either be `S' shaped or `Z' shaped (see Figure~\ref{FiguroOne}). In the case that $p\geq 2$ and $u$ is necessarily continuous one can use Theorem~\ref{Young} to show that the geometry of the image of the segment  in every sufficiently accurate injective approximation $u$ is always of the type (i.e. always `S' or always `Z'). A key step and a very technical one in proving \cite[Theorem~C]{DPP} was showing that their maps have a unique order for separating curves that have the same image in $u$ (see \cite[Proposition~4.19]{DPP} and preparatory lemmas). Since the maps they consider are less general than maps in $\overline{H\cap W_{\id}^{1,p}((-1,1)^2, \er^2)}$ it was not clear that such a unique order must necessarily hold for the class of limits. A corollary of the fact that $a)$ is equivalent with $b)$ in Theorem~\ref{MainTheorem}, is that given any curve $\phi \subset \{u = g_{\delta}\}$ then the uniqueness of the geometry of approximations of $g_{\delta}$ implies the same uniqueness for $u$ and this is clearly independent of the choice of sufficiently small positive $\delta$. This observation is the key to proving the equivalence of $c)$ with $a)$. This observation is only made deeper with respect to Theorem~\ref{SpanishInquisition}.
	\end{remark}
	\begin{figure}[h]
		\begin{tikzpicture}[line cap=round,line join=round,>=triangle 45,x=1.0cm,y=0.5cm]
			\clip(0.5,1.5) rectangle (9,6);
			\draw [line width=1.pt, color=red] (1.,3.)-- (8.,3.);
			\draw [line width=1.pt, color=red] (8.,3.)-- (1.,2.);
			\draw [line width=1.pt, color=red] (1.,2.)-- (8.,2.);
			\draw [line width=1.pt,color=blue] (1.,4.)-- (8.,4.);
			\draw [line width=1.pt,color=blue] (8.,4.)-- (1.,5.);
			\draw [line width=1.pt, color=blue] (1.,5.)-- (8.,5.);
		\end{tikzpicture}
		\caption{If a mapping is not injective then there may be many ways to approximate its restriction to a curve by injective maps which may have very different geometry. Here we see an `S' shape in blue and a `Z' shape in red, both of which could be the image of an injective approximation of a map that maps a segment onto another 3 times.}\label{FiguroOne}
	\end{figure}
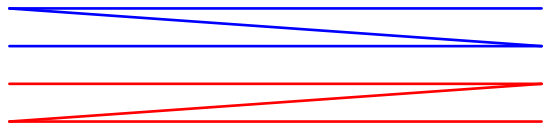
	
	We consider it appropriate to demonstrate the utility of Theorem~\ref{MainTheorem} and especially the QM condition. In order to do this let us demonstrate that it leads to a much simpler proof of \cite[Theorem C]{DPP}, i.e. let us prove that a $W^{1,p}_{\id}((-1,1)^2,\er^2)$ weakly monotone map satisfies the QM condition.
	
		\begin{figure}[h]
			\begin{tikzpicture}[line cap=round,line join=round,>=triangle 45,x=2.5cm,y=2.5cm]
			\clip(0.8,0.8) rectangle (6.2,3.2);
			\fill[line width=0.8pt,fill=black!10] (4.,3.) -- (4.,1.) -- (6.,1.) -- (5.8,1.2) -- (4.2,1.2) -- (4.2,2.8) -- cycle;
			\fill[line width=0.8pt,fill=black!10] (4.,3.) -- (6.,3.) -- (6.,1.) -- (5.8,1.2) -- (5.8,2.8) -- (4.2,2.8) -- cycle;
			\fill[line width=0.8pt,fill=black!10] (3.,3.) -- (3.,1.) -- (1.,1.) -- (1.2,1.2) -- (2.8,1.2) -- (2.8,2.8) -- cycle;
			\fill[line width=0.8pt,fill=black!10] (3.,3.) -- (1.,3.) -- (1.,1.) -- (1.2,1.2) -- (1.2,2.8) -- (2.8,2.8) -- cycle;
			\fill[line width=0.8pt,color=ffqqqq,fill=ffqqqq!20] (1.0795719622258892,2.3417235271215615) -- (1.0836934592638245,2.92285460947043) -- (2.855937185575979,2.9393405976221714) -- (2.888909161879461,2.284022568590468) -- (2.060488257254477,1.946059811479778) -- cycle;
			\draw [line width=0.8pt] (4.,3.)-- (4.,1.);
			\draw [line width=0.8pt] (4.,1.)-- (6.,1.);
			\draw [line width=0.8pt] (4.,3.)-- (6.,3.);
			\draw [line width=0.8pt] (6.,3.)-- (6.,1.);
			\draw [line width=0.8pt] (3.,3.)-- (3.,1.);
			\draw [line width=0.8pt] (3.,1.)-- (1.,1.);
			\draw [line width=0.8pt] (3.,3.)-- (1.,3.);
			\draw [line width=0.8pt] (1.,3.)-- (1.,1.);
			\draw [line width=1.5pt,color=qqqqff] (4.112993782146228,2.527190893828647)-- (4.455078036294854,2.098555201883382);
			\draw [line width=1.5pt,color=qqffqq] (5.4813307987407285,2.082069213731641)-- (5.9388169699515405,2.473611432335489);
			\draw [line width=0.8pt] (4.112993782146228,2.527190893828647)-- (4.1053733289327425,2.8946266710672575);
			\draw [line width=0.8pt] (4.1053733289327425,2.8946266710672575)-- (5.881116011420447,2.9187331124324953);
			\draw [line width=0.8pt] (5.881116011420447,2.9187331124324953)-- (5.9388169699515405,2.473611432335489);
			\draw [line width=1.5pt,color=qqqqff] (1.0795719622258892,2.3417235271215615)-- (2.060488257254477,1.946059811479778);
			\draw [line width=1.5pt,color=qqffqq] (2.060488257254477,1.946059811479778)-- (2.888909161879461,2.284022568590468);
			\draw [line width=0.8pt] (1.0795719622258892,2.3417235271215615)-- (1.0836934592638245,2.92285460947043);
			\draw [line width=0.8pt] (1.0836934592638245,2.92285460947043)-- (2.855937185575979,2.9393405976221714);
			\draw [line width=0.8pt] (2.855937185575979,2.9393405976221714)-- (2.888909161879461,2.284022568590468);
			\draw (3.4,2.2) node[anchor=north west] {$u$};
			\draw [->,line width=0.8pt] (3.2,2.) -- (3.8,2.);
			\begin{scriptsize}
				\draw [fill=black] (2.060488257254477,1.946059811479778) circle (2.5pt);
			\end{scriptsize}
		\end{tikzpicture}
		\caption{Let, for example $C_1$ be the blue set on the right and $C_2$ be the green set on the right. Let $\gamma$ be the black curve in the shaded area by the boundary where $u$ is identity. If there is some point $(x,y)\in u^{-1}(C_1) \cap u^{-1}(C_2)$ then necessarily $u^{-1}(C_1\cup C_2 \cup \gamma)$ must disconnect the plane (shaded red component).}\label{fig:Spain}
		\end{figure}
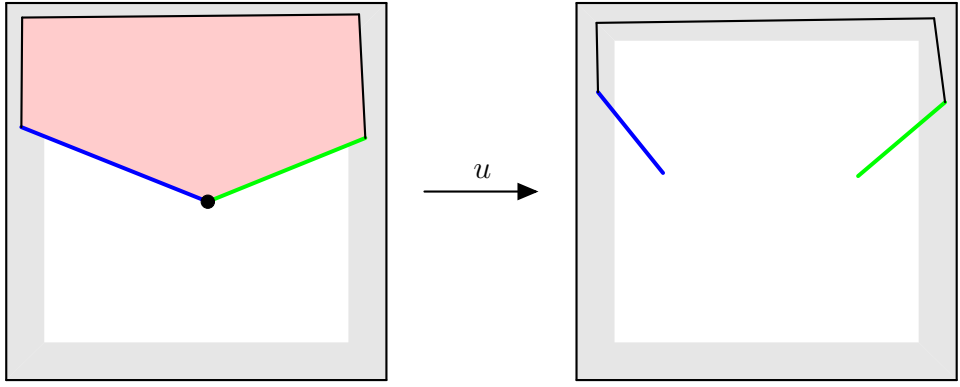
	
	\begin{proof}[Proof that $u$ satisfies WM $\Rightarrow$ $u\in \overline{H\cap W^{1,p}_{\id}((-1,1)^2, \er^2)}$]
		The reader may consult Figure~\ref{fig:Spain}. Let $C_1, C_2$ be a pair of disjoint closed sets such that $(-1,1)^2\setminus (C_1\cup C_2)$ is connected. Because $u\in W^{1,p}_{\id}((-1,1)^2,\er^2)$ we may assume that there is a $\xi>0$ such that $u(x,y) = (x,y)$ for all $\dist((x,y),\partial(-1,1)^2)<\xi$. Without loss of generality we may assume that $C_i \cap \{(x,y) : \dist((x,y),\partial(-1,1)^2)<\xi\} \neq \emptyset$. Then we can find a curve $\gamma$ inside $\{(x,y) : \dist((x,y),\partial(-1,1)^2)<\xi\}$ intersecting $C_1$ and $C_2$ such that $C_3:=C_1\cup C_2 \cup \gamma$ is closed connected and does not disconnect $(-1,1)^2$. Then $u^{-1}(C_3)$ is closed connected and does not disconnect $(-1,1)^2$. If $u^{-1}(C_1)\cap u^{-1}(C_2) \neq \emptyset$, then there is a bounded component of $\er^2\setminus u^{-1}(C_3)$ which is a contradiction. Then $\dist(u^{-1}(C_1),u^{-1}(C_2))>0$. Each component $G$ of $U_{\delta}$ has $\diam G\leq \delta$ and therefore, for $\delta\in (0, \tfrac{1}{2}\dist(u^{-1}(C_1),u^{-1}(C_2)))$, $G$ intersects at most one of $u^{-1}(C_i)$. Then define $V_i = u^{-1}(C_i) + B(0,\rho) \cup \bigcup_{G\cap u^{-1}(C_i)\neq \emptyset} G$ for sufficiently small $\rho$ since $u$ is uniformly continuous on $[-1,1]^2\setminus U_{\delta}$.
	\end{proof}
	The astute reader may have noticed the surprising implication of the above proof.
	\begin{thmx}\label{SpanishInquisition}
		Let $u \in W^{1,p}_{\id}((-1,1)^2,\er^2)$. Then $u$ is weakly monotone if and only if it is monotone.
	\end{thmx}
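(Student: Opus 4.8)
The plan is to prove the two implications separately. The implication ``monotone $\Rightarrow$ weakly monotone'' is the easy one: if $u\in W^{1,p}_{\id}((-1,1)^2,\er^2)$ is monotone in the sense of Definition~\ref{defMonotone}, then it is a continuous map, it equals $\id$ on $\partial(-1,1)^2$, and (by a standard degree/winding-number argument, or directly from Theorem~\ref{Young}) $u([-1,1]^2)=[-1,1]^2$; thus $u$ is a continuous monotone self-map of the square, and the list of equivalences for continuous maps recalled in the introduction (\cite[Proposition~3.4]{IO2} or \cite[Lemma~4.24]{DPP}) shows that $u$ satisfies the WM condition.

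For the converse, assume $u$ is weakly monotone. The first point is to extract from the proof just given of ``WM $\Rightarrow$ QM'' a quantitative separation statement: if $C_1,C_2\subset\er^2$ are disjoint closed sets each meeting the boundary collar $\{\dist(\cdot,\partial(-1,1)^2)<\xi\}$ and with $(-1,1)^2\setminus(C_1\cup C_2)$ connected, then joining them by a suitable arc $\gamma$ in the collar and applying WM to $C_1\cup C_2\cup\gamma$ forces $u^{-1}(C_1)\cap u^{-1}(C_2)=\emptyset$; since $u^{-1}(C_1)$ and $u^{-1}(C_2)$ are closed (again by WM) and lie in the compact square, in fact $\dist(u^{-1}(C_1),u^{-1}(C_2))>0$. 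I would then observe that this already applies to an arbitrary pair of small disjoint closed disks: given disjoint closed disks $D_1,D_2$ of small enough radius (so that $(-1,1)^2\setminus(D_1\cup D_2)$ is connected), choose disjoint arcs $\alpha_i$ running from $D_i$ into the collar so that the ``lollipops'' $C_i=D_i\cup\alpha_i$ are disjoint, closed, connected, and have connected complement in the square; then $\dist(u^{-1}(D_1),u^{-1}(D_2))\ge\dist(u^{-1}(C_1),u^{-1}(C_2))>0$.

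Next I would prove that $u$ has a continuous representative by a cluster-set argument. By the preceding proof together with Theorem~\ref{MainTheorem}, $u\in\overline{H\cap W^{1,p}_{\id}((-1,1)^2,\er^2)}$, so $u$ is an $L^p$-limit of homeomorphisms equal to $\id$ on $\partial(-1,1)^2$, each of which (by invariance of domain) maps the square onto itself; hence $u$ is essentially bounded. Fix $x_0\in[-1,1]^2$ and let $L(x_0)=\bigcap_{r>0}\overline{u\big((B(x_0,r)\setminus N)\cap[-1,1]^2\big)}$, where $N$ is the null set off which $u$ is defined; $L(x_0)$ is a nonempty compact set. If it contained two distinct points $p,q$, pick disjoint closed disks $D_p\ni p$ and $D_q\ni q$ small enough for the previous step; then $x_0\in\overline{u^{-1}(D_p)}\cap\overline{u^{-1}(D_q)}$, contradicting $\dist(u^{-1}(D_p),u^{-1}(D_q))>0$. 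Hence $L(x_0)$ is a single point for every $x_0$, which is precisely the statement that $u$ agrees almost everywhere with a continuous map $\tilde u$. Relabelling $\tilde u$ as $u$, we now have a continuous self-map of $[-1,1]^2$ satisfying WM; applying WM to the singletons $\{y\}$ (which are closed, connected and do not disconnect the plane) shows that each $u^{-1}(y)$ is closed and connected, i.e.\ $u$ is monotone.

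The step I expect to need the most care is the ``lollipop'' reduction and its interaction with whichever precise representative of $u$ is used to formulate the WM condition: one must check that the arcs $\alpha_i$ can always be chosen so that $C_1$ and $C_2$ remain disjoint closed continua with connected complement in the square, and that the separation statement extracted from the earlier proof is insensitive to the choice of representative, so that it may legitimately feed the cluster-set argument. The essential boundedness of $u$ and the passage from ``$L(x_0)$ is a singleton for all $x_0$'' to continuity are routine, and the converse implication is just the classical equivalence for continuous monotone maps.
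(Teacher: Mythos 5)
Your proof is correct and follows essentially the same route as the paper: the paper's one-line deduction that $u_{\operatorname{multi}}(x,y)$ is a singleton for every $(x,y)$ (hence $u$ is continuous, hence monotone) is exactly your disk/cluster-set argument, with your explicit ``lollipop'' arcs playing the role of the paper's ``without loss of generality $C_i$ meets the boundary collar''. Your detour through $u\in\overline{H\cap W^{1,p}_{\id}((-1,1)^2,\er^2)}$ to get essential boundedness is harmless (and not circular), though the paper gets by without it by working directly with the multifunction $u_{\operatorname{multi}}$ of Definition~\ref{defIMG}, which also settles the representative issue you flag.
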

	\begin{proof}
		It is easy to prove that a monotone map is weakly monotone using Theorem~\ref{Young}.
		
		Let $C_1,C_2$ be as in the above proof then $u^{-1}(C_1)\cap u^{-1}(C_2) = \emptyset$. Thus we have that $u_{\operatorname{multi}}(x,y)$ (see Definiton~\ref{defIMG}) is a singleton for each $(x,y)$. This implies that $u$ is continuous at $(x,y)$. Since this holds for all $(x,y)$ we have that $u$ is continuous on $[-1,1]^2$ and therefore monotone.
	\end{proof}
	
\subsection{Plan for the paper and outline of the proof of Theorem~\ref{MainTheorem}}

In Section~\ref{Preliminaries} we give definitions we need, recall some known results and prove some preparatory lemmas.

We split the proof of Theorem~\ref{MainTheorem} into three parts, $a)$ is equivalent with $b)$ is proved in Theorem~\ref{QuasiMonotne},  $a)$ is equivalent with $c)$ is proved in Theorem~\ref{ThreeCurveChar} and  $a)$ is equivalent with $d)$ is proved in Theorem~\ref{PointwiseChar}.

$b)\Rightarrow a)$
If $U_{\delta}$ is an open set with small projection onto coordinate axes, then we can find a grid of horizontal and vertical lines avoiding it. If the restriction of $u$ on a grid is equal to the restriction of a monotone map on the grid, then it satisfies the no-crossing condition on this grid. This allows us to construct a diffeomorphism $v_{\delta}$ with $\int|Dv_{\delta}|^p$ bounded by $\int|Du|^p$. By sending $\delta \to 0$ and  so refining the grid we converge weakly to $u$ in $W^{1,p}$. 

$a)\Rightarrow b)$
The key to is to carefully choose a sequence of grids $\Gamma_k$, each of which are subsets of $[-1,1]^2\setminus U_{\delta}$, a set on which $u$ is uniformly continuous. Then by careful extension of an injective uniform approximation of $u$ on $\Gamma_k$ and an application of the Arzela-Ascoli theorem we find a monotone map $h_k$ on $[-1,1]^2$ equal to $u$ on $\Gamma_k$. On $[-1,1]^2\setminus U_{\delta}$ the grids $\Gamma_k$ become very fine meaning  $h_k\sto u$ on $[-1,1]^2\setminus U_{\delta}$. On the rest of the set, we are careful to make sure that the sequence is almost constant. This is enough to make sure that $h_k\sto$ on $[-1,1]^2$. The limit $g_{\delta}$ is monotone and is equal to $u$ on $[-1,1]^2\setminus U_{\delta}$.

$b)\Rightarrow c)$
We use $b)$ to find a homeomorphism $h:(-1,1)^2\to (-1,1)^2$ uniformly very close to $u$ except in $U_{\delta}$, a set not seen by $\phi([0,1])$. Then we modify $h^{-1}\circ\psi([0,1])$ slightly to get exactly the $\gamma$ we want.

$c) \Rightarrow a)$
Firstly we show that if we have a stronger property for grids $\Gamma$ and $\G$, i.e. there is a Lipschitz injective $g:\G \to \er^2$  such that $g(\G) \cap \Gamma = u^{-1}(\G)\cap \Gamma$, then the equivalence follows easily. Then it suffices to prove the stronger property follows from the weaker. We make $\phi([0,1])$ that looks like a grid $\Gamma$ in the preimage but has some `holes' by the vertexes (see Figure~\ref{fig:wavy}). We make $\psi([0,1/2])$ that looks like an arrival grid and $\psi([1/2,1])$ goes around the images of the vertexes of the grid in the preimage (see Figure~\ref{fig:arcs}). The second part of $\psi$ allows us to assume that $\gamma([0,1])$ does not `go through' the holes by the vertexes of $\Gamma$ and we can replace the image of $\phi$ with the grid $\Gamma$. By careful analysis of the curve $\gamma$ we show it is possible `plug the holes' near preimages of vertexes of $\G$ without intersecting the rest of $\gamma([0,1])$ or $\Gamma$. This allows us to construct a new injective Lipschitz $g$ from $\G$ intersecting $\Gamma$ exactly in $u^{-1}(\G)\cap \Gamma$, which is precisely the stronger property.

$b)\Rightarrow d)$
This is an easy exercise of Theorem~\ref{Young}.

$d) \Rightarrow c)$
We show that the QM condition implies the NCL condition. We may assume that $\phi([0,1])$ disconnects $[-1,1]^2$. Using the NCL condition, for every $(w,z)\in [-1,1]^2\setminus u(\phi([0,1]))$ we uniquely determine a `side' of $u(\phi([0,1]))$ which we say $(w,z)$ lies on. The QM condition allows us to construct parts of the curve $\gamma$ as curves in domains $V_i$ which contain $\psi(I_i)$, where $I_i\subset[0,1]$ are intervals. By a careful adjustment we make sure they intersect $\phi([0,1])$ exactly once exactly where we want them. Since $V_i$ are pairwise disjoint so are these curves. It remains to carefully connect the curves so that the final curve is injective.  

\section{Preliminaries}\label{Preliminaries}
Throughout the paper we call the projections $\pi_1(x,y)  = x, \pi_2(x,y)  = y$. By $\mathcal{L}^n$ we denote the standard $n$-dimensional Lebesgue measure and by $\H^s$ the $s$-dimensional Hausdorf measure.

\subsection{Special Sobolev spaces}\label{SSS}
Let $u\in W^{1,p}((-1,1)^2, \er^2)$. We say that $u \in W^{1,p}_{\id}((-1,1)^2, \er^2)$ if $[u(x) - x] \in W^{1,p}_0((-1,1)^2, \er^2)$.

Let $\Gamma = \bigcup_{i=1}^{N} \{x_i\}\times [-1,1]\ \cup\ \bigcup_{j=1}^{N} [-1,1] \times \{y_j\}$ be the finite union of horizontal and vertical lines. Throughout the course of this paper, we work with function spaces of the type $W^{1,p}(\Gamma)$ but since this is not standard we explain explicitly what we mean by this notation. We say that $u\in W^{1,p}(\Gamma)$ if $u$ is absolutely continuous on every segment $\{x_i\}\times[-1,1]$, $[-1,1]\times\{y_j\}$ and
\begin{equation}\label{Modulus}
	\sum_{j=1}^N \int_{-1}^{1} |\partial_1 u(x, y_i) |^p\ dx + \sum_{i=1}^N  \int_{-1}^{1} |\partial_2 u(x_i,y) |^p\ dy < \infty.
\end{equation}
We can define a norm on $W^{1,p}(\Gamma)$ as the $\tfrac{1}{p}$ power of $\int|u|^p$ plus the expression in \eqref{Modulus}. The space shares many well-known properties of $W^{1,p}((a,b))$ especially concepts such as weak convergence, which implies uniform convergence. For shorthand we write $D_{\tau} u$ to denote the function $\partial_2u$ on $\bigcup_{i=1}^{N}\{x_i\}\times[-1,1]$ and $\partial_1u$ on $\bigcup_{j=1}^{N}[-1,1]\times\{y_j\}$.

It is well known for any $u\in W^{1,p}((-1,1)^2)$ (for the correct representative of $u$) that $u_{\rceil \Gamma} \in W^{1,p}(\Gamma)$ for almost every choice of $x_i$ and almost every choice of $y_j$. Specifically, for $u\in W^{1,p}((-1,1)^2)$ we denote the sets $G_x = G_x(u)$ and $G_y = G_y(u)$ as
\begin{equation}\label{defGx}
	G_x = \{x\in (-1,1); \  u(x,\cdot) \in  W^{1,p}(-1,1) \}
\end{equation}
and
\begin{equation}\label{defGy}
	G_y = \{y\in (-1,1);  \ u(\cdot,y) \in  W^{1,p}(-1,1)  \} .
\end{equation}
For mappings into $\er^2$ the definitions are extended in the natural way by components.

We often work with rectangles like $K = [x,\tilde{x}]\times[y,\tilde{y}]$. When we write $\lambda K$ for $\lambda >0$ we mean the set
$$
\lambda K = \Big\{(a,b)\in \er^2: \ \Big(\tfrac{x+ \tilde{x}}{2}  + \tfrac{2a - x- \tilde{x}}{2\lambda}, \tfrac{y+ \tilde{y}}{2}  +\tfrac{2b - y- \tilde{y}}{2\lambda}\Big) \in K\Big\}.
$$

During the paper we often use sets that we call suitable $k$-grids for $u$. Let us now define what such a set is.
\begin{definition}\label{GAP}
	Let $p\in[1,\infty)$ and let $u\in W^{1,p}((-1,1)^2)$. Let $k\in \en$, let the sets $X_k = \{x_{k,i}\in [-1,1]; i=0,1,\dots, M_k^x\}$, $Y_k = \{y_{k,j}\in [-1,1]; \ j=0,1,\dots, M_k^y\}$ satisfy
	\begin{enumerate}
		\item  $x_{k,0}  = y_{k,0}=-1$, and $x_{k,M_k^x} =y_{k,M_k^y} = 1$
		\item $x_{k,i} - x_{k,i-1}, y_{k,j} - y_{k,j-1}  \in (2^{-k}, 2^{2-k})$ for $i\in \{1,2,\dots, M_{k}^x\}$ and $j\in \{1,2,\dots, M_{k}^y\}$.
	\end{enumerate}
	Then we call
	$$
	\Gamma_k = \bigcup_{i=0}^{M_{k}^x} \{x_{k,i}\}\times [-1,1]\ \cup\ \bigcup_{j=0}^{M_k^y} [-1,1] \times \{y_{k,j}\},
	$$
	a $k$-grid. We say $\Gamma_k$ is a suitable $k$-grid for $u$ if
	\begin{enumerate}
		\item $u_{\rceil \Gamma_k}\in W^{1,p}(\Gamma_k)$,
		\item there exists a $\hat{C}>0$ such that for all $0\leq j\leq M_k^y$ and all $q\in [1,p]$
		\begin{equation}\label{CkGrid1}
			\int_{-1}^{1} |\partial_1u(x,y_{j,k})|^q \ dx \leq \hat{C}2^{k}  \int_{y_{k,j}-2^{-k-4}}^{y_{k,j}+2^{-k-4}}\int_{-1}^1 |Du(x,y)|^q \ dx dy
		\end{equation}
		and for all $0\leq i\leq M_k^x$
		\begin{equation}\label{CkGrid2}
			\int_{-1}^{1} |\partial_2u(x_{k,i},y)|^q \ dy \leq \hat{C}2^{k} \int_{x_{k,i}-2^{-k-4}}^{x_{k,i}+2^{-k-4}}\int_{-1}^1 |Du(x,y)|^q \ dy dx.
		\end{equation} 
	\end{enumerate}
	If $\Gamma_1 \subset \Gamma_2, \subset \dots $ is an increasing collection of suitable $k$-grids for $u$ for each $k\in \en$ with $\hat{C}$ independent of $k$ then we say that that $\mathfrak{G} = \{\Gamma_k\}_{k\in \en}$ is a suitable grid system for $u$.
\end{definition}

\begin{prop}[The existence of many suitable grid systems]\label{GridExist}
	Let $p\in [1,\infty)$, let $u\in W^{1,p}((-1,1)^2,\er^2)$ and let $S_u\subset (-1,1)^2$ with $\H^1(S_u) = 0$ be such that $u_{\rceil [-1,1]^2\setminus S_u}$ is continuous on $[-1,1]^2\setminus S_u$. There exists $\mathfrak{G} =\{\Gamma_k\}_{k\in \en}$ a suitable grid system for $u$ satisfying \eqref{CkGrid1} and \eqref{CkGrid2} with the constant $\hat{C} = 32$.
	
	Moreover for each $k\in \en$ each $i\in\{0,1,\dots, M_k^x\}$ and each $j\in\{0,1,\dots, M_k^y\}$ such that $x_{k,i}\notin X_{k-1}$ (respectively $y_{k,j}\notin Y_{k-1}$) there is a set $E_{k,i}^1\subset \er$ (respectively $E_{k,j}^2\subset \er$) with $\mathcal{L}^1(E_{k,i}^1), \mathcal{L}^1(E_{k,j}^2) \geq 2^{-k-3}$ and every choice of $x_{k,i} \in E_{k,i}^1$, $y_{k,j} \in E_{k,j}^2$ constructs a suitable grid system for $u$ satisfying \eqref{CkGrid1} and \eqref{CkGrid2} with the constant $\hat{C} = 32$.
\end{prop}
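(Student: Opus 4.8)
The plan is to realize both claims simultaneously by a Fubini/averaging argument combined with a Borel--Cantelli-type counting at each scale $k$. Fix $p$ and $u$. For a horizontal line at height $y$ define the "bad" contributions: we want $y\in G_y$ (so that $u(\cdot,y)\in W^{1,p}(-1,1)$, which happens for a.e.\ $y$ by the standard Fubini property of Sobolev functions), we want the line to avoid the exceptional set $S_u$ (which, since $\H^1(S_u)=0$, forces the horizontal slice of $S_u$ at height $y$ to be empty for a.e.\ $y$ — note $\pi_2(S_u)$ is Lebesgue-null), and we want the Hardy--Littlewood-type bound \eqref{CkGrid1}. The last is the crux of the averaging: set $f(y)=\int_{-1}^1|Du(x,y)|^q\,dx\in L^1$ (run the argument for $q=p$ and note that by Hölder $L^p$-control on a slice gives $L^q$-control for all $q\le p$ with the same constant up to the length of the interval, which is bounded, so it suffices to treat a single $q$; alternatively run it for the countably many rational $q$ and pass to a limit). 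For any interval $J=(c-2^{-k-4},c+2^{-k-4})$ of length $2^{-k-3}$, one has
$$
\int_{J} \Big(\int_{-1}^1 |\partial_1 u(x,y)|^q\,dx\Big)\,dy \le \int_J f(y)\,dy,
$$
so the set of $y\in J$ with $\int_{-1}^1|\partial_1u(x,y)|^q\,dx > \hat C\,2^{k}\int_J f$ has measure at most $\hat C^{-1}2^{-k}\cdot 2^{k}\cdot|J|^{-1}\cdot$ wait — more carefully, by Chebyshev this set has measure $\le \frac{1}{\hat C 2^{k}\int_J f}\int_J f = \frac{1}{\hat C 2^{k}}$, which for $\hat C=32$ is $2^{-k}/32 < \tfrac14|J|$. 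Hence a definite fraction — at least $|J| - |J|/4 - (\text{null sets for }G_y\text{ and }S_u) \ge \tfrac{1}{2}|J| = 2^{-k-4}$, and I would arrange constants so this is $\ge 2^{-k-3}$ by taking $|J|$ itself to be the relevant window — of heights in any admissible window are "good" for that window.

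Next I would build the grids inductively on $k$, maintaining nestedness $\Gamma_{k-1}\subset\Gamma_k$. Given $\Gamma_{k-1}$ with node sets $X_{k-1},Y_{k-1}$ whose consecutive gaps lie in $(2^{-(k-1)},2^{2-(k-1)})$, I need to insert new nodes so that all consecutive gaps lie in $(2^{-k},2^{2-k})$: a gap of length $\ell\in(2^{1-k},2^{3-k})$ is split into $2$, $3$, or $4$ subintervals of roughly equal length, each landing in $(2^{-k},2^{2-k})$ — the arithmetic here is the routine part. For each new node with nominal position $c$, the analysis of the previous paragraph gives a set $E\subset(c-2^{-k-4},c+2^{-k-4})$ of positions of measure $\ge 2^{-k-3}$, each of which simultaneously satisfies \eqref{CkGrid1}/\eqref{CkGrid2} for that node with $\hat C=32$, lies in $G_y$ (resp.\ $G_x$), and makes the corresponding slice avoid $S_u$. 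Since these windows (of radius $2^{-k-4}$) around distinct nominal nodes are pairwise disjoint when the nodes are at distance $\ge 2^{-k}$ apart, the choices are independent; picking one point from each $E$ yields $X_k\supset X_{k-1}$, $Y_k\supset Y_{k-1}$, and $\Gamma_k$ is a suitable $k$-grid. Because $\hat C=32$ is used at every scale, $\mathfrak{G}=\{\Gamma_k\}$ is a suitable grid system. The "moreover" clause is then immediate: the sets $E_{k,i}^1:=E$ (resp.\ $E_{k,j}^2$) for the newly inserted nodes are exactly the freedom recorded along the way, of measure $\ge 2^{-k-3}$, and any selection from them reconstructs a suitable grid system with constant $32$; for old nodes $x_{k,i}\in X_{k-1}$ there is no freedom and nothing to prove.

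I expect the main obstacle to be bookkeeping rather than conceptual: making the window radii, the gap-splitting ratios, and the constant $\hat C=32$ all mutually compatible so that (a) the "good" subset of each window genuinely has measure $\ge 2^{-k-3}$ after removing the Chebyshev-bad set and the two null sets, and (b) the inserted gaps provably fall in $(2^{-k},2^{2-k})$ even after the node is perturbed within its window of radius $2^{-k-4}$ — one must check that perturbing both endpoints of a split subinterval by up to $2^{-k-4}$ keeps its length safely inside the open interval, which is why the window radius is taken an order of magnitude below $2^{-k}$. A secondary point worth stating cleanly is the reduction from "all $q\in[1,p]$" to a single exponent: on an interval of length $\le 2$, Hölder's inequality gives $\int|\partial_1u|^q\,dx \le 2^{1-q/p}\big(\int|\partial_1u|^p\,dx\big)^{q/p}$, and the right side of \eqref{CkGrid1} transforms compatibly, so the single choice made for $q=p$ propagates to all smaller $q$ with the same constant $32$ (the numerical slack in choosing $32$ rather than, say, $8$ absorbs these factors). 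Once these estimates are pinned down, the induction runs without further difficulty.
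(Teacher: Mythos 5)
Your overall strategy is the same as the paper's: Fubini and absolute continuity on almost every line, avoidance of the null projections of $S_u$, a Chebyshev/averaging selection inside a small window around each nominal node, and an induction on $k$ that refines the gaps while keeping the spacing in $(2^{-k},2^{2-k})$ and the grids nested. Two steps, however, would fail as written. The first is the reduction of the whole family $q\in[1,p]$ to the single exponent $q=p$ via H\"older: H\"older controls $\int_{-1}^1|\partial_1u(x,y)|^q\,dx$ by a power of $\int_{-1}^1|\partial_1u(x,y)|^p\,dx$, but \eqref{CkGrid1} requires control by $\hat C 2^{k}\int\!\!\int_{W}|Du|^q$, and passing from $\bigl(\int\!\!\int_{W}|Du|^p\bigr)^{q/p}$ back down to $\int\!\!\int_{W}|Du|^q$ is a reverse H\"older inequality that is false in general (take $|Du|$ very large on a tiny subset of the window); no amount of slack in the constant $32$ absorbs this. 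In particular the case $q=1$, which is exactly what is used later (e.g.\ \eqref{Sobostimate1} and the $L^1$-convergence argument in Proposition~\ref{NoCrossEquivalence}), is not delivered by a selection made only for $q=p$. The paper instead runs the Chebyshev selection separately for $q=1$ and $q=p$ (and for both coordinate functions), each condition discarding at most one eighth of the window, and only the intermediate exponents are dispatched by interpolation.

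The second problem is the quantitative bookkeeping, which as set up cannot produce the ``moreover'' clause. With a selection window of length $2^{-k-3}$ you can never be left with a good set of measure $\geq 2^{-k-3}$ after discarding anything; you noticed this, and the resolution is the paper's: the window has radius $2^{-k-3}$ (measure $2^{-k-2}$), the averaged threshold gives the factor $8\cdot 2^{k+2}=32\cdot 2^{k}$, and the four Chebyshev losses of $\tfrac18\cdot 2^{-k-2}$ each leave exactly $2^{-k-3}$. More seriously, your splitting rule ``$2$, $3$ or $4$ subintervals'' omits the indispensable case of not splitting at all: a gap of length barely above $2^{1-k}$ cannot be divided into two parts both exceeding $2^{-k}$ while leaving the division point a freedom of measure $2^{-k-3}$ (the admissible positions form an interval of length $\ell-2^{1-k}$, which can be arbitrarily small), and three or more parts are already too short; such gaps must simply be kept. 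The paper's trichotomy inserts $0$, $1$ or $2$ new lines according to whether the gap length lies in $(2^{-k},2^{1-k}]$, $(2^{1-k},\tfrac32\,2^{1-k}]$ or $(\tfrac32\,2^{1-k},2^{2-k})$, with new nodes placed within $2^{-k-3}$ of the midpoint, respectively of the two third-points, and this is what makes the perturbed gaps land in $(2^{-k-1},2^{1-k})$. These are repairs rather than a change of method, but as stated your proposal neither yields \eqref{CkGrid1}--\eqref{CkGrid2} for $q=1$ nor the freedom sets of measure $\geq 2^{-k-3}$ claimed in the statement.
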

\begin{proof}
	Firstly, assume that $u$ is a scalar function. By an interpolation argument it suffices to prove \eqref{CkGrid1} and \eqref{CkGrid2} for $q=1$ and $q=p$. Let $\Gamma_0 = \{-1\}\times[-1,1] \cup \{1\}\times[-1,1] \cup [-1,1]\times\{-1\} \cup [-1,1]\times\{1\}$, then $\Gamma_0$ is a suitable $0$-grid for $u$. Now we continue by induction and assume that we have $\Gamma_k$ a suitable $k$-grid for $u$ and construct $\Gamma_{k+1}$. We show how to construct the sets $E_{k+1,i}^1$, the construction of $E_{k+1,j}^2$ is similar. Any choice of $x_{k+1,i} \in E_{k+1,i}^1$, respectively $y_{k+1,j} \in E_{k+1,j}^2$ yields our $\Gamma_{k+1}$.

	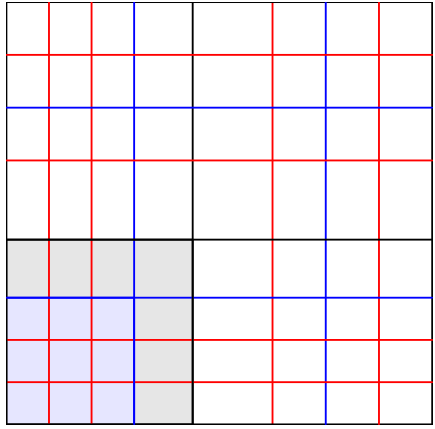
\begin{figure}
		\begin{tikzpicture}[line cap=round,line join=round,>=triangle 45,x=0.7cm,y=0.7cm]
			\clip(0,0) rectangle (8,8);
			\fill[line width=0.7pt,fill=black!10] (0.,3.5) -- (3.5,3.5) -- (3.5,0.) -- (0.,0.) -- cycle;
			\fill[line width=0.7pt,color=qqqqff,fill=qqqqff!10] (0.,2.4) -- (2.4,2.4) -- (2.4,0.) -- (0.,0.) -- cycle;
			\draw [line width=0.7pt,color=ffqqqq] (5.,8.)-- (5.,0.);
			\draw [line width=0.7pt] (8.,8.)-- (8.,0.);
			\draw [line width=0.7pt] (8.,0.)-- (0.,0.);
			\draw [line width=0.7pt] (0.,0.)-- (0.,8.);
			\draw [line width=0.7pt] (0.,8.)-- (8.,8.);
			\draw [line width=0.7pt] (3.5,8.)-- (3.5,0.);
			\draw [line width=0.7pt,color=qqqqff] (6.,8.)-- (6.,0.);
			\draw [line width=0.7pt,color=ffqqqq] (7.,8.)-- (7.,0.);
			\draw [line width=0.7pt,color=qqqqff] (2.4,8.)-- (2.4,0.);
			\draw [line width=0.7pt,color=ffqqqq] (0.8,8.)-- (0.8,0.);
			\draw [line width=0.7pt,color=ffqqqq] (1.6,8.)-- (1.6,0.);
			\draw [line width=0.7pt] (0.,3.5)-- (8.,3.5);
			\draw [line width=0.7pt,color=qqqqff] (0.,6.)-- (8.,6.);
			\draw [line width=0.7pt,color=qqqqff] (0.,2.4)-- (8.,2.4);
			\draw [line width=0.7pt,color=ffqqqq] (0.,0.8)-- (8.,0.8);
			\draw [line width=0.7pt,color=ffqqqq] (0.,1.6)-- (8.,1.6);
			\draw [line width=0.7pt,color=ffqqqq] (0.,5.)-- (8.,5.);
			\draw [line width=0.7pt,color=ffqqqq] (0.,7.)-- (8.,7.);
			\draw [line width=0.7pt] (0.,3.5)-- (3.5,3.5);
			\draw [line width=0.7pt] (3.5,3.5)-- (3.5,0.);
			\draw [line width=0.7pt] (3.5,0.)-- (0.,0.);
			\draw [line width=0.7pt] (0.,0.)-- (0.,3.5);
			\draw [line width=0.7pt,color=qqqqff] (0.,2.4)-- (2.4,2.4);
			\draw [line width=0.7pt,color=qqqqff] (2.4,2.4)-- (2.4,0.);
		\end{tikzpicture}
		\caption{We construct our grid $\Gamma_1$ by splitting the square by the black lines. The grid $\Gamma_2$ is made by including the blue lines. The grid $\Gamma_3$ is made by adding red lines. We make sure that neighboring lines in $\Gamma_k$ have distance close to $2^{-k}$ by adding the correct number of lines between neighbors, either 0, 1 or 2 lines are added.}\label{fig:MakeAGrid}
	\end{figure}

	How we construct the sets $E_{k+1,\tilde{i}}^1$ depends on the value of $x_{k,i+1}-x_{k,i}$. The reader should consult Figure~\ref{fig:MakeAGrid}. If $x_{k,i+1}-x_{k,i}\in (2^{-k}, 2^{1-k}]$ then we do not choose any new $x_{k+1,\tilde{i}} \in ( x_{k,i+1},x_{k,i})$. If $x_{k,i+1}-x_{k,i}\in (2^{1-k}, \tfrac{3}{2} 2^{1-k}]$ then we choose one new $x_{k+1,\tilde{i}} \in ( x_{k,i+1},x_{k,i})$. In this case call 
	$$
	F_{k+1,\tilde{i}}^1 =  \Big(\frac{x_{k,i+1}+x_{k,i}}{2} - 2^{-k-3},  \frac{x_{k,i+1}+x_{k,i}}{2} + 2^{-k-3}\Big) \cap G_x \setminus \pi_1(S_u)
	$$
	and then $\mathcal{L}^1(F_{\tilde{i}, k+1}^1 )  = 2^{-k-2}$. For all $x\in F_{\tilde{i}, k+1}^1$ it holds that
	$$
	[x_{k,i+1} - x], [x- x_{k,i}] \in (2^{-k}-2^{-k-3}, \tfrac{3}{2} 2^{-k} +2^{-k-3} ) \subset (2^{-k-1}, 2^{1-k}).
	$$
	If $x_{k,i+1}-x_{k,i}\in (\tfrac{3}{2} 2^{1-k}, 2^{2-k})$ then we choose two new points $x_{k+1,\tilde{i}} x_{k+1,\tilde{i}+1} \in ( x_{k,i+1},x_{k,i})$. In this case call 
	$$
	F_{k+1,\tilde{i}}^1 =  \Big(\frac{x_{k,i+1}+2x_{k,i}}{3} - 2^{-k-3},  \frac{x_{k,i+1}+2x_{k,i}}{3} + 2^{-k-3}\Big) \cap G_x \setminus \pi_1(S_u)
	$$
	and
	$$
	F_{k+1,\tilde{i}+1}^1 =  \Big(\frac{2x_{k,i+1}+x_{k,i}}{3} - 2^{-k-3},  \frac{2x_{k,i+1}+x_{k,i}}{3} + 2^{-k-3}\Big) \cap G_x \setminus \pi_1(S_u)
	$$
	and then $\mathcal{L}^1(F_{k+1,\tilde{i}}^1 ) = \mathcal{L}^1(F_{k+1,\tilde{i+1}}^1 )= 2^{-k-2}$. For all $s\in F_{k+1,\tilde{i}}^1$ and all $t\in F_{k+1,\tilde{i}}^1$ it holds that 
	$$
	[x_{k,i+1} - t], [t-s], [s- x_{k,i}] \in (2^{-k} -2^{-k-2},  \tfrac{2}{3}2^{1-k} + 2^{-k-2}) \subset (2^{-k-1}, 2^{1-k}).
	$$

	In both of the cases when we add some new points $x_{k+1,\tilde{i}}$ we have
	$$
	\oint_{F_{k,\tilde{i}}^1} \int_{-1}^1|\partial_2 u(x,y)| \ dy dx \leq  2^{k+2}\int_{x_{k,i}}^{x_{k,i+1}} \int_{-1}^1|\partial_2 u(x,y)| \ dy dx
	$$
	and
	$$
	\oint_{F_{k,\tilde{i}}^1} \int_{-1}^1|\partial_2 u(x,y)|^p \ dy dx \leq  2^{k+2}\int_{x_{k,i}}^{x_{k,i+1}} \int_{-1}^1|\partial_2 u(x,y)|^p \ dy dx
	$$
	Arguing that $\mathcal{L}^1(\{y\in I: f(y) > \lambda \oint_I f\}) \leq \lambda^{-1}\mathcal{L}^1(I)$ we find that the set of $x_{k+1,\tilde{i}} \in F_{k,\tilde{i}}^1$ satisfying
	$$
	\int_{-1}^1|\partial_1 u(x_{k,\tilde{i}},y)| \ dx \leq 32\cdot 2^{k}\int_{x_{k,i}}^{x_{k,i+1}} \int_{-1}^1|\partial_2 u(x,y)| \ dy dx
	$$
	and
	$$
	\int_{-1}^1|\partial_1 u(x,y_j)|^p \ dx \leq 32\cdot 2^{k}\int_{x_{k,i}}^{x_{k,i+1}} \int_{-1}^1|\partial_2 u(x,y)|^p \ dy dx
	$$
	each has measure at least $\tfrac{7}{8}2^{-k-2}$. Therefore, the measure of their intersection is at least $\tfrac{3}{4}\cdot2^{-k-2}$. We repeat the above estimates on both coordinate functions and the intersection of the sets on which the estimates hold for both coordinate functions simultaneously is at least $2^{k-3}>0$, that is the set $E_{k+1,\tilde{i}}^1$. The proof for $E_{k+1,\tilde{j}}^2$ and $\partial_2 u$ is similar. The choice of $E_{k+1,\tilde{i}}^1\subset G_x \setminus \pi_1(S_u)$ and $E_{k+1,\tilde{j}} \subset  G_y \setminus \pi_2(S_u)$ together with the estimates above guarantee that $u_{\rceil \Gamma_k} \in W^{1,p}(\Gamma_k)$ and that $\Gamma\cap S_u = \emptyset$.
\end{proof}

\subsection{Linearization of a map on a grid}
The main result we want to formulate here is Proposition~\ref{PiecewiseLinearization}, which is a summary of an approach used in \cite{DPP} and has been useful since. We include the details for the convenience of the reader even though they are not new. In the proof we use a version of the following area formula. Let $\phi:(0,1)\to\er$ be absolutely continuous and $A\subset (0,1)$ be Borel set, then (see e.g. \cite[Theorem 3.65]{Le} where $\psi=\chi_A$)
$$
\int_A|\phi'(t)|dt = \int_{\er} \H^0(\{t\in A:\ \phi(t) = z\}) \, dz. 
$$
The following corollary for $\phi:\Gamma_k\to\er$ absolutely continuous on a $k$-grid $\Gamma_k$ and $A\subset \Gamma_k$ follows simply
\begin{equation}\label{area}
	\int_{A}|D_{\tau}\phi|d\H^1 = \int_{\er} \H^0(\{t\in A:\ \phi(t) = z\}) \, dz. 
\end{equation}

We define the concept of the generalized segment, already introduced in \cite{DPP}, which is useful for proving Proposition~\ref{PiecewiseLinearization}.

\begin{definition}[generalized segments]\label{generalized segments}
	Let $R >0$, let  $-R= w_0 < w_1 < \dots < w_M = R,$  $-R= z_0 < z_1 < \dots < z_M = R,$ and let $\G = \bigcup_{n=0}^M \{w_n\}\times [-R,R] \cup [-R,R]\times\{z_n\}$. Let $R_{n,m} = [w_n, w_{n+1}]\times[z_m, z_{m+1}]$ be a rectangle of the grid $\G$. Let $X\neq Y$ and $X,Y\in\partial R \subset \G$. Given $\xi >0$ a small parameter, the \emph{generalized segment} $[XY]_{\xi}$ between $X$ and $Y$ in $R_{n,m}$ with parameter $\xi$ is defined as the standard segment $XY$ if the two points are not on the same side of $\partial R_{n,m}$; otherwise, $[XY]$ is the union of the two segments $XM$ and $MB$, where $M$ is the point inside $R_{n,m}$ whose distance from the side containing $X$ and $Y$ is $\xi|X-Y|/2$ and the projection of $M$ on the segment $XY$ is the mid-point of $XY$.
\end{definition}

The following claim from \cite{CKR} is straightforward.

\begin{prop}\label{EverythingIDo}
	Let $R\subset \er^2$ be a rectangle and let $a,b \in \partial R$. Let $[ab]_{\xi}$ be the generalized segment from $a$ to $b$ in $R$ with parameter $\xi >0$. Then
	$$
	\H^1([ab]) \leq (1+\xi)|b-a|.
	$$
\end{prop}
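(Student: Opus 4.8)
The plan is to unwind Definition~\ref{generalized segments} and reduce the whole statement to a single application of the Pythagorean theorem, treating separately the two cases appearing in that definition.

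If $a$ and $b$ do not lie on a common side of $\partial R$, then by definition $[ab]_{\xi}$ is simply the straight segment joining $a$ to $b$, so $\H^1([ab]) = |b-a| \leq (1+\xi)|b-a|$ and there is nothing further to prove.

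In the remaining case, $a$ and $b$ lie on one common side $\ell$ of $\partial R$, so the chord $ab$ is itself a subsegment of $\ell$. By construction $[ab]_{\xi} = aM \cup Mb$, where $M$ is the point interior to $R$ whose orthogonal projection onto $\ell$ is the midpoint of $ab$ and which satisfies $\dist(M,\ell) = \xi|b-a|/2$. Thus the triangle $aMb$ is isosceles with base $ab$ of length $|b-a|$ and with height $\xi|b-a|/2$ over that base, and the Pythagorean theorem gives
$$
|a-M| = |M-b| = \sqrt{\Big(\tfrac{|b-a|}{2}\Big)^{2} + \Big(\tfrac{\xi|b-a|}{2}\Big)^{2}} = \tfrac{|b-a|}{2}\sqrt{1+\xi^{2}}.
$$
Adding the lengths of the two segments,
$$
\H^1([ab]) = |a-M| + |M-b| = |b-a|\sqrt{1+\xi^{2}},
$$
and the proof concludes with the elementary bound $\sqrt{1+\xi^{2}} \leq 1+\xi$, which holds for every $\xi \geq 0$ because $(1+\xi)^{2} = 1 + 2\xi + \xi^{2} \geq 1 + \xi^{2}$.

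I do not expect any real obstacle: the statement is essentially a repackaging of the Pythagorean theorem. The only detail worth stating explicitly is that, when $a$ and $b$ share a side of $\partial R$, the chord $ab$ lies along that side, so that $\dist(M,\ell)$ is genuinely the height of the triangle $aMb$ over the base $ab$; this is immediate from $a,b\in\ell$, and once it is noted the computation above is forced.
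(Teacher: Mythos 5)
Your proof is correct. The paper itself does not prove this proposition (it is quoted as a straightforward claim from \cite{CKR}), and your two-case reading of Definition~\ref{generalized segments} followed by the Pythagorean computation is exactly the elementary argument intended; in fact you obtain the sharper bound $\H^1([ab]_{\xi}) = \sqrt{1+\xi^{2}}\,|b-a|$ in the common-side case, from which $\sqrt{1+\xi^{2}}\leq 1+\xi$ gives the stated estimate.
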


\begin{definition}
	Let $K\in \en$ and $\gamma_i:[0,1] \to \er^2, i=1,\dots, K$ be such that
	\begin{enumerate}
		\item $\gamma_i$ is Lipschitz continuous and injective,
		\item $\gamma_i([0,1]) \cap \gamma_j([0,1])$ contains at most one point for every $i\neq j$,
		\item if $\gamma_i(t) = \gamma_j(s)$ then both of the derivatives exist and are linearly independent,
		\item $\gamma_i([0,1]) \cap \gamma_j([0,1]) \cap \gamma_n([0,1]) = \emptyset$ if $i\neq j \neq n \neq i$.
	\end{enumerate}
	Then we call $\{\gamma_i\}_{i=1}^K$ a general grid.
\end{definition}

\begin{definition}
	Let $\{\sigma_i\}$ be a general grid, call $\Gamma = \bigcup_{i=1}^M\sigma([0,1])$ and let $\phi :\Gamma \to  \er^2$. We say that an injective Lipschitz continuous curve $g:A \to \er^2$, where $A = [0,1]$ or $A$ is the unit circle is a good arrival curve for $\phi$ if
	\begin{itemize}
		\item $F = \phi^{-1}(g(A))$ is finite and $F\cap \sigma_i([0,1])\cap \sigma_j([0,1]) =\emptyset$ for each $1\leq i<j\leq K$,
		\item for all $s\in A$ such that $g(s)\in \phi(F)$ there exists $g'(s)$ (or $D_{\tau}g(s)$ in case $A$ is the circle) 
		\item for every $(x,y)\in F$ there exists $D_{\tau}\phi(x,y)$ (here we implicitly understand that $\sigma_i'(t)\neq 0$ for the unique $t\in [0,1]$ and the unique $1\leq i\leq K$ such that $\sigma_i(t) = (x,y)$)
		\item the vectors $D_{\tau}\phi(x,y)$ and $g'(s)$ are linearly independent whenever $\phi(x,y) = g(s)$.
	\end{itemize}
	Further, for $M\in \en$ let $\{\gamma_j\}_{j=1}^M$ be a general grid such that $\gamma_j:[0,1] \to \er^2$ is a good arrival curve for $\phi$ for each $j=1,\dots M$ and also $\gamma_{i}([0,1])\cap \gamma_j([0,1])\cap \varphi(\Gamma) = \emptyset$, then we call $\{\gamma_j\}$ a good arrival grid for $\phi$.
\end{definition}

\begin{remark}\label{Convention}
	All the good arrival grids we actually use in this paper are collections of curves whose image is either a vertical or a horizontal segment. In this case we will assume that the curves have constant-speed parametrization, and it is enough to specify the image of the curves as in \eqref{SegmentsGrid}.
\end{remark}

The authors of \cite{DPP} proved that good arrival grids exist for suitable restrictions of $W^{1,p}$ maps. The following is a version of the proof given there for the convenience of the reader.

\begin{prop}\label{ArrivalGrids}
	Let $p\in [1,\infty)$, let $\Gamma_k$ be a $k$-grid, let $\phi \in W^{1,p}(\Gamma_k, \er^2)$. For every $M\in \en$ and for $\mathcal{L}^M$-almost every $(w_1, \dots, w_M), (z_1, \dots, z_M)  \in \er^M$ the set
	\begin{equation}\label{SegmentsGrid}
		\G= \bigcup_{n=0}^M \{w_n\}\times \er \cup \er\times\{z_n\}
	\end{equation}
	is a good arrival grid for $\phi$ (under the convention of Remark~\ref{Convention}).
\end{prop}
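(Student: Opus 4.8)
The plan is to choose the vertical lines $\{w_n\}\times\er$ and the horizontal lines $\er\times\{z_n\}$ of $\G$ independently and one Cartesian coordinate at a time, using the area formula \eqref{area} to control the level sets of the two coordinate functions $\phi^1,\phi^2$ of $\phi$ along the finitely many segments that make up $\Gamma_k$. I would first fix one segment $\sigma$ of $\Gamma_k$; on it $\phi$ is absolutely continuous and $D_\tau\phi\in L^p$, and $\phi^{-1}(\{w\}\times\er)\cap\sigma=(\phi^1)^{-1}(w)\cap\sigma$. Three facts about this set, valid for $\mathcal{L}^1$-a.e.\ $w$, are all immediate from \eqref{area}: (i) taking $A=\sigma$ gives $\int_\er\H^0\big((\phi^1)^{-1}(z)\cap\sigma\big)\,dz=\int_\sigma|D_\tau\phi^1|\,d\H^1<\infty$, so $(\phi^1)^{-1}(w)\cap\sigma$ is finite for a.e.\ $w$; (ii) taking $A$ to be the set of points of $\sigma$ at which $D_\tau\phi^1$ exists and equals $0$ gives $\int_\er\H^0\big(A\cap(\phi^1)^{-1}(z)\big)\,dz=0$, so for a.e.\ $w$ the level set $(\phi^1)^{-1}(w)\cap\sigma$ is disjoint from $A$, i.e.\ $D_\tau\phi^1\neq0$ on it; (iii) the ($\H^1$-null) set $Z_\sigma$ of points of $\sigma$ at which $D_\tau\phi$ fails to exist has $\mathcal{L}^1$-null image under $\phi^1$ (absolute continuity gives the Luzin $(N)$ property), so for a.e.\ $w$ the level set $(\phi^1)^{-1}(w)\cap\sigma$ avoids $Z_\sigma$.

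Next I would run these over the finitely many segments of $\Gamma_k$ and also discard the finite set $\phi^1(V)$, where $V$ is the (finite) vertex set of $\Gamma_k$, obtaining an $\mathcal{L}^1$-null set $B^1\subset\er$ such that for every $w\notin B^1$ the line $\ell_w:=\{w\}\times\er$ (with its constant-speed affine parametrization) is a good arrival curve for $\phi$. Indeed $F:=\phi^{-1}(\ell_w)$ is finite and disjoint from every pairwise intersection $\sigma_i([0,1])\cap\sigma_j([0,1])$ (these all lie in $V$), $D_\tau\phi$ exists at each point of $F$, the parametrization of $\ell_w$ has a nowhere-vanishing (vertical) velocity, and $D_\tau\phi^1\neq0$ on $F$ forces $D_\tau\phi$ and this velocity to be linearly independent. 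The symmetric argument with $\phi^2$ in place of $\phi^1$ produces an $\mathcal{L}^1$-null set $B^2\subset\er$ so that $\er\times\{z\}$ is a good arrival curve for $\phi$ whenever $z\notin B^2$. (Following Remark~\ref{Convention}, only the bounded portions of these lines meeting the compact set $\phi(\Gamma_k)$ matter, which causes no trouble.)

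It then remains to verify the mutual-position requirements in the definition of a good arrival grid. That $\{\ell_{w_n}\}_n\cup\{\er\times\{z_n\}\}_n$ is a general grid holds for every choice with the $w_n$ pairwise distinct and the $z_n$ pairwise distinct: then parallel lines of the family are disjoint, a vertical and a horizontal line meet in a single point with orthogonal (hence linearly independent) directions, and no three of the lines have a common point. The last condition, $\gamma_i([0,1])\cap\gamma_j([0,1])\cap\phi(\Gamma_k)=\emptyset$, says that each intersection point $(w_i,z_j)$ should lie outside $\phi(\Gamma_k)$; but $\phi(\Gamma_k)$ is $\mathcal{L}^2$-null, being a finite union of images of segments under absolutely continuous maps (hence of finite $\H^1$-measure), so by Fubini, for $\mathcal{L}^2$-a.e.\ pair $(w_i,z_j)$ this holds, and intersecting over the finitely many pairs keeps full measure. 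The union of the exceptional sets --- $\{(w_n)_n:\text{some }w_n\in B^1\}$, $\{(z_n)_n:\text{some }z_n\in B^2\}$, the two non-distinctness loci, and $\{\exists\,i,j:\ (w_i,z_j)\in\phi(\Gamma_k)\}$ --- is Lebesgue-null, and off it $\G$ is a good arrival grid for $\phi$.

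The only step that does real work is (i)--(ii): the a.e.\ finiteness of $(\phi^1)^{-1}(w)$ together with the a.e.\ non-vanishing of $D_\tau\phi^1$ there, both read straight off the area formula \eqref{area} (which is precisely why that formula was recorded just before the statement). Everything else --- the Luzin $(N)$ property, the $\mathcal{L}^2$-nullity of the image of a curve, Fubini, and the bookkeeping over finitely many segments and finitely many pairs of lines --- is routine, and I do not anticipate a genuine obstacle beyond keeping track of the exceptional sets.
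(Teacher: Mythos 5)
Your proposal is correct and follows essentially the same route as the paper: the core is the area formula \eqref{area} applied to the coordinate projections of $\phi$, yielding for a.e.\ $w$ (resp.\ $z$) finiteness of the level sets and avoidance of the points where $D_{\tau}\phi$ fails to exist or the relevant tangential coordinate derivative vanishes, followed by the same bookkeeping to exclude the images of grid vertices and to keep the crossing points $(w_n,z_m)$ off $\phi(\Gamma_k)$. Your use of Luzin~$(N)$ for the non-differentiability set and Fubini for the crossing points are only cosmetic variants of the paper's Borel-hull and ``choose $w_n$ first, then $z_m$'' arguments.
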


\begin{proof}
	Recall that we call the projections $\pi_1(x,y)  = x, \pi_2(x,y)  = y$. Since $\phi\in W^{1,1}(\Gamma_k, \er^2)$, it is absolutely continuous and so too is $\pi_i\circ \phi$ for $i=1,2$. Therefore, we can apply the area formula from \eqref{area} for $\pi_i\circ \phi$ for $i=1,2$. Denote
	$$
	\tilde{N}_i = \{(x,y)\in \Gamma_k; \text{ the derivative } D_{\tau}\phi(x,y)\text{ does not exist}\} \cup \{t\in \Gamma_k; D_{\tau}\pi_i\circ\phi(x,y) = 0\}.
	$$
	We can clearly fix a Borel set ${N_i}\supset \tilde{N}_i$ and $\mathcal{H}^1(N_i\setminus \tilde{N}_i) = 0$. We have $|D_{\tau}\pi_i\circ\phi| = 0$ $\H^1$-almost everywhere in $N_i$. From \eqref{area}
	$$
	\begin{aligned}
		\mathcal{L}^1\big(\pi_i\circ\phi(\tilde{N}_i)\big) &\leq\mathcal{L}^1\big(\pi_i\circ\phi({N}_i)\big) \leq \int_{\er} \H^0\big(\{t\in {N}_i:\ \pi_i(\phi(t)) = z\}\big) \, dz \\
		&=\int_{{N}_i}|D_{\tau}\pi_i\circ \phi|\; d\mathcal{H}^1=0.
	\end{aligned}
	$$ 
	Further, since $\phi\in W^{1,1}(\Gamma_k,\er^2)$, the area formula applied to $\pi_i\circ\phi$ again gives
	$$
	\infty > \int_{\Gamma_k}|D_{\tau}\phi|d\mathcal{H}^1 \geq \int_{\Gamma_k}|D_{\tau}\pi_i\circ\phi|d\mathcal{H}^1 = \int_{-\infty}^{\infty} \H^0(\{(x,y):\ \pi_i(\phi(x,y)) = z\}) \, d\mathcal{L}^1(z) 
	$$
	and so
	\begin{equation}\label{Finiteness}
		\H^0\Big(\phi\big(\Gamma_k\big)\cap (\{w\}\times \er)\Big) < \infty \ \text{and} \ \H^0\Big(\phi\big(\Gamma_k\big)\cap \big(\er\times\{z\}\big)\Big) < \infty
	\end{equation}
	for almost every $w,z\in \er$. We choose $-R= w_0 < w_1 < \dots < w_M = R,$ with $w_n \in \er\setminus N_1$ and  $-R= z_0 < z_1 < \dots < z_M = R,$ with $z_m \in \er\setminus N_2$ so that \eqref{Finiteness} holds. Since $\H^1\big(\{\pi_1\circ\phi(x_i,y_j); \ 0\leq i,j\leq 2^k  \}\big) = \H^1\big(\{\pi_2\circ\phi(x_i,y_j); \ 0\leq i,j\leq 2^k  \}\big) = 0$ we may assume that $(w_n,z_m) \in \er^2\setminus \phi(x_i, y_j) $, where $(x_i,y_j)$ are the crossing points of the $k$-grid $\Gamma_k$. By choosing the points $w_n$ first we get a finite set $\phi(\Gamma_k) \cap \{w_n\}\times\er$ and then choosing $z_m$ we may assume that $\phi(\Gamma_k) \cap \{(w_n, z_m)\} = \emptyset$.
\end{proof}

\begin{remark}\label{Alibistic}
	The arguments used above easily extend to the case, where we replace $\G$ with a single circle centered at $(a,b)$ of radius $r>0$. That is, let $\phi\in W^{1,p}(\Gamma_k, \er^2)$ and let $(a,b)\in \er^2$ then for $\mathcal{L}^1$-almost every $r>0$ the curve $g:\partial B(0,1) \to \er^2$, $g(\theta) = (a,b) + r\theta$ is a good arrival curve for $\phi$.
\end{remark}

The following proposition is the application of arrival grids. We hope that by formulating the following proposition we make the utility of the tool more readily accessible.

\begin{prop}\label{PiecewiseLinearization}
	Let $p\in [1,\infty)$, let $\Gamma_k$ be a $k$-grid, let $\phi \in W^{1,p}(\Gamma_k, \er^2)$ and let for every $\delta>0$ there exists an injective and continuous $\phi_{\delta}:\Gamma_k \to \er^2$ such that $|\phi_{\delta}(s) - \phi(s)| < \delta$  for all $s\in \Gamma_k$. Then there exist continuous piecewise linear injective $\gamma_l \sto \phi$ on $\Gamma_k$ such that
	\begin{equation}\label{WeakEstimate}
		\limsup_{l\to \infty}\int_{\Gamma_k}|D_{\tau}\gamma_l|^p \leq \int_{\Gamma_k}|D_{\tau}\phi|^p
	\end{equation}
	and $\{|D_{\tau}\gamma_l|\}_{l\in \en}$ is uniformly integrable on $\Gamma_k$.
\end{prop}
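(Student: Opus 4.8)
The plan is to prove the proposition in two stages. First I would replace $\phi$ by a piecewise-linear map that is uniformly close to $\phi$ and whose $p$-energy on $\Gamma_k$ does not exceed that of $\phi$, and whose derivative is uniformly integrable; then I would perturb each such map into an \emph{injective} piecewise-linear map, paying only an arbitrarily small increase of the energy, by exploiting the hypothesis that $\phi$ is a uniform limit of injective continuous maps together with the arrival-grid and generalized-segment machinery recalled above. The output $\gamma_l$ is then obtained by a diagonal choice.

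\textbf{Step 1 (piecewise-linear interpolation).} For $N\in\en$ let $\mathcal{P}_N$ be a partition of $\Gamma_k$ into sub-arcs of length at most $2^{-N}$, chosen nested in $N$ and always containing the vertices of the $k$-grid, and let $\psi_N\colon\Gamma_k\to\er^2$ be the map that agrees with $\phi$ at every point of $\mathcal{P}_N$ and is affine with constant speed on each arc of $\mathcal{P}_N$. Since $\phi$ is absolutely continuous on each segment of $\Gamma_k$, $\psi_N\sto\phi$ uniformly on $\Gamma_k$. On an arc $J\in\mathcal{P}_N$ one has $|D_\tau\psi_N|\equiv \H^1(J)^{-1}\bigl|\int_J D_\tau\phi\bigr|\le \fint_J|D_\tau\phi|$, so Jensen's inequality gives $\int_{\Gamma_k}|D_\tau\psi_N|^p\le\int_{\Gamma_k}|D_\tau\phi|^p$; the pointwise majorant $\fint_J|D_\tau\phi|$ is the conditional expectation of $|D_\tau\phi|$ with respect to $\mathcal{P}_N$, which converges to $|D_\tau\phi|$ in $L^1(\Gamma_k)$ and is therefore uniformly integrable, so $\{|D_\tau\psi_N|\}_N$ is uniformly integrable as well. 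Finally, since $\psi_N\to\phi$ uniformly and $\phi$ is by hypothesis a uniform limit of injective continuous maps, so is each $\psi_N$.

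\textbf{Step 2 (straightening to an injective PL map).} Fix $N$ and $\eps>0$; I construct an injective continuous piecewise-linear $\gamma=\gamma_{N,\eps}$ with $\|\gamma-\psi_N\|_\infty\le\eps$, $|D_\tau\gamma|\le(1+\eps)|D_\tau\psi_N|$ a.e. on $\Gamma_k$, and hence $\int_{\Gamma_k}|D_\tau\gamma|^p\le(1+\eps)^p\int_{\Gamma_k}|D_\tau\psi_N|^p$. Choose a (generic) injective continuous $g$ with $\|g-\psi_N\|_\infty$ as small as needed, and, by Proposition~\ref{ArrivalGrids} together with the freedom to avoid the finitely many $\psi_N$-images of the breakpoints of $\psi_N$ and of the vertices of $\Gamma_k$, a grid $\G$ of axis-parallel lines of mesh $\eta\ll\eps$ which is a good arrival grid for $\psi_N$. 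Subdividing $\Gamma_k$ at the finitely many transversal crossings of $\psi_N(\Gamma_k)$ with $\G$ produces arcs $J_i=[a_i,b_i]$ on each of which $\psi_N$ is affine with $\psi_N(J_i)$ contained in a single closed rectangle $R_i$ of $\G$; for $N$ fixed the $J_i$ have length bounded below in terms of $N$ and $\eta$. On $J_i$ I replace $\psi_N$ by the constant-speed parametrization of a generalized segment from $\psi_N(a_i)$ to $\psi_N(b_i)$ inside $R_i$ (Definition~\ref{generalized segments}); when several arcs are assigned to the same rectangle $R$ — as happens when $\psi_N$ covers a segment with multiplicity — I route the successive strands through nested concentric shrinkings $\lambda R$ of $R$ (the notation $\lambda K$ of Section~\ref{SSS}), with $1>\lambda_1>\dots>1-\eta$, joined to their neighbours by short radial connectors, so that strands sharing a rectangle lie in pairwise disjoint sub-rectangles. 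By Proposition~\ref{EverythingIDo} each strand has length at most $(1+\eta)$ times the corresponding chord of $\psi_N$, giving the pointwise and integral energy bounds; since $\gamma(J_i)$ and $\psi_N(J_i)$ both lie in (an $O(\eta)$-neighbourhood of) $R_i$ we get $\|\gamma-\psi_N\|_\infty=O(\eta)$; and a routine bookkeeping shows that for $\eta$ small in terms of $N$ and $\eps$ the connectors contribute negligibly to length, to energy, and to the uniform distance. The role of $g$ is precisely to fix, in each rectangle, the nesting order of the strands and the sides of the generalized segments: the disjoint arcs $g(J_i)$ prescribe a consistent local order, and this is what allows the connectors to be chosen disjointly so that the assembled $\gamma$ is \emph{globally} injective. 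Taking a diagonal sequence $\gamma_l=\gamma_{N_l,\eps_l}$ with $N_l\to\infty$, $\eps_l\to0$ yields $\gamma_l\sto\phi$, $\limsup_l\int_{\Gamma_k}|D_\tau\gamma_l|^p\le\int_{\Gamma_k}|D_\tau\phi|^p$ by Step 1, and uniform integrability of $\{|D_\tau\gamma_l|\}$ from $|D_\tau\gamma_l|\le 2|D_\tau\psi_{N_l}|$ and Step 1.

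\textbf{The main obstacle} is the combinatorial heart of Step 2: choosing the generalized segments — their nesting order in multiply-visited rectangles and their routing inside the shrunken copies — coherently across all rectangles of the arrival grid so that the straightened map is globally injective, while keeping its image within $O(\eta)$ of $\psi_N(\Gamma_k)$ and its length controlled by that of $\psi_N$. This is exactly the construction of \cite{DPP} that the proposition packages, with the auxiliary injective map $g$ serving only to dictate which local orderings are admissible; everything else (the interpolation estimates, Jensen, the uniform-integrability of conditional expectations, the length bound for generalized segments) is routine.
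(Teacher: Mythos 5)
There is a genuine gap, and it sits at the two places your own write-up leans on most. First, Step 1 ends with the claim that, because $\psi_N\sto\phi$ uniformly and $\phi$ is a uniform limit of injective continuous maps, ``so is each $\psi_N$'', and Step 2 then chooses an injective $g$ with $\|g-\psi_N\|_{\infty}$ ``as small as needed''. This does not follow from the hypothesis: for a fixed $N$ the triangle inequality only gives injective maps within roughly $\|\psi_N-\phi\|_{\infty}+\delta$ of $\psi_N$, a quantity bounded below by the (fixed) interpolation error, so you do not have injective approximations of $\psi_N$ at the scale $\eta\ll\epsilon$ at which the arrival-grid combinatorics lives. Nor can you simply substitute $\phi_{\delta}$ for $g$: the crossing pattern of $\psi_N(\Gamma_k)$ with a grid $\G$ of mesh $\eta$ need not match that of $\phi$ (or $\phi_\delta$) when the interpolation error exceeds $\eta$, so the ``order of strands'' that $g$ is supposed to dictate is not canonically transferred from the data you actually have. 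The paper sidesteps this entirely by never introducing an intermediate interpolant: it straightens $\phi$ itself, taking the breakpoints of the piecewise-linear map to be $F=\phi^{-1}(\G)$ (plus the grid vertices of $\Gamma_k$), and taking the values at these breakpoints from a modification $\tilde{\phi}_{\delta}$ of the injective approximation $\phi_{\delta}$ of $\phi$, built so that $\tilde{\phi}_{\delta}$ crosses $\G$ exactly once near each point of $F$; closeness of these values to the values of $\phi$ (the analogue of your endpoint condition) is what feeds the Jensen-type energy estimate.

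Second, the ``main obstacle'' you identify --- choosing the strand orderings, nestings and connectors coherently across all rectangles of $\G$ so that the assembled map is globally injective --- is precisely the content of the proposition, and your proposal does not prove it: the nested shrinkings $\lambda R$ with ``short radial connectors'' and the ``routine bookkeeping'' are asserted, not carried out, and you explicitly defer to the construction of \cite{DPP}. In the paper this step is done concretely: between consecutive points of $F$ the image is replaced by a straight chord when the two endpoint values lie on distinct segments of $\G$, and by a generalized segment $[\cdot]_{\xi}$ when they lie on a common side, with a single $\xi>0$ chosen so small that two generalized segments intersect exactly when the corresponding arcs of $\Gamma_k$ intersect (namely at the grid vertices, where a common crossing point $A_{i,j}$ is assigned); global injectivity is then inherited from the injectivity of $\tilde{\phi}_{\delta}$ by a homotopy/Jordan-type argument, and Proposition~\ref{EverythingIDo} gives the $(1+\xi)$ length loss. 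Without an argument of this kind (or with only an $O(\|\psi_N-\phi\|_\infty)$-accurate injective guide), your Step 2 does not yield injectivity, so the proof is incomplete at its decisive point. A minor further inaccuracy: between consecutive crossings with $\G$ the interpolant $\psi_N$ is only piecewise affine, not affine, and the pointwise bound $|D_{\tau}\gamma|\leq(1+\epsilon)|D_{\tau}\psi_N|$ a.e.\ cannot hold; only chord-averaged (Jensen) bounds are available, which is what the paper uses and which still suffices for \eqref{WeakEstimate} and uniform integrability.
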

\begin{proof}
	Without loss of generality, we may assume that $\phi(\Gamma_k), \phi_{\delta}(\Gamma_k)\subset (-R,R)^2$ for all $\delta>0$ for some $R>0$. We start with a simple observation that an elementary piecewise linear approximation has all the properties we want except for possibly injectivity. Achieving the injectivity of the approximation forms the larger (second) part of the proof. 
	
	Let $\epsilon_l>0$ and let us assume that we have a decomposition of $\Gamma_k$ into essentially disjoint segments. For each of the  segments $X$ let it hold that $\int_{X}|D_{\tau}\phi|<\epsilon_l$. Let the function $\tilde{\gamma}_l:\Gamma_{k}\to \er^2$ be defined on each segment $X$ as the linear interpolation of the values of $\phi$ at the endpoints of $X$. We have
	$$
	\int_{\Gamma_k}|D_{\tau}\tilde{\gamma}_l|^q = \sum_{X\subset \Gamma_k}\Big|\oint_{X} D_{\tau} \phi\Big|^q\H^1(X) \leq \sum_{X\subset\Gamma_k}\int_{X} |D_{\tau}\phi|^q =\int_{\Gamma_k} |D_{\tau}\phi|^q
	$$
	holds for all $q\in[1,\infty)$. This proves \eqref{WeakEstimate} for $\tilde{\gamma}_l$ including the uniform integrability of $|D_{\tau}\tilde{\gamma}_l|$. Further $|\tilde{\gamma}_l(t) - \phi(t)|<2\epsilon_l$ for all $t\in X$.
	
	Moreover, for any $\gamma_l:\Gamma_k \to \er^2$ which is linear on each of the segments $X = t_1t_2$ and satisfies
	\begin{equation}\label{SizesXX}
		\max\big\{  |\gamma_l(t_1) - \tilde{\gamma}_l(t_1)|, |\gamma_l(t_2) - \tilde{\gamma}_l(t_2)| \big\}|< \epsilon_l\min\big\{1, |\phi(t_{1}) - \phi(t_{2})|\big\}
	\end{equation}
	we have $|\gamma_l(t) - \phi(t)|<4\epsilon_l$ for all $t \in \Gamma_k$ and
	$$
	\int_{\Gamma_k}|D_{\tau}\gamma_l|^p \leq (1+2\epsilon_l)^p\sum_{X\subset \Gamma_k}\Big(\oint_{X} |D_{\tau} \phi|\Big)^p\H^1(X) \leq (1+2\epsilon_l)^p\int_{X} |D_{\tau}\phi|^p
	$$
	thus proving \eqref{WeakEstimate}.
	
	With regards to the above it suffices to show that for each $\epsilon>0$ one can decompose $\Gamma_k$ into finitely many essentially disjoint segments, such that $\int_{X}|D_{\tau}\phi|<\epsilon$ for each of the segments $X$ and that there is a corresponding piecewise linear map $\gamma$ like $\gamma_l$ from the previous paragraph that is injective. In the following we describe how to achieve this.

	Using  Proposition~\ref{ArrivalGrids} we find a good arrival grid for $\phi$, precisely
	$$
	\G= \bigcup_{n=0}^M \{w_n\}\times [-R,R] \cup [-R,R]\times\{z_n\}
	$$
	with $M$ chosen large enough so that $0<w_{n+1}- w_n, z_{m+1}- z_m<\epsilon$. We have $F = \phi^{-1}(\G)$ is a finite set thanks to \eqref{Finiteness}. Let us choose $\eta>0$ so small that 
	\begin{itemize}
		\item[\ding{64}] the balls $B(s, \eta)$ are pairwise disjoint for $s\in F$,
		\item[\ding{64}] $\{(x_i,y_j): i,j=1,\dots 2^k-1 \} \cap B(s,\eta) = \emptyset$,
		\item[\ding{64}] for all $s \in F$ and all $t\in B(s,\eta)\cap \Gamma_k$
		\begin{equation}\label{Leave}
			|\phi(t) - \phi(s) - D_{\tau}\phi(s)(t-s)|< \tfrac{1}{4} |D_{\tau}\phi(s)|\cdot|t-s|,
		\end{equation}
		\item[\ding{64}] if $ s\in F, \ \pi_1\circ\phi(s)\in \{w_n\}_{n=0}^M$ then for all $t\in B(s,\eta)\cap \Gamma_k$ 
		\begin{equation}\label{Side1}
			|\pi_1\circ\phi(t) - \pi_1\circ\phi(s) - D_{\tau}\pi_1\circ\phi(s)(t-s)|< \tfrac{1}{4}|D_{\tau}\pi_1\circ\phi(s)|\cdot|t-s|,
		\end{equation}
		\item[\ding{64}] if $ s\in F, \ \pi_2\circ\phi(s)\in \{z_m\}_{m=0}^M$ then for all $t\in B(s,\eta)\cap \Gamma_k$
		\begin{equation}\label{Side2}
			|\pi_2\circ\phi(t) - \pi_2\circ\phi(s) - D_{\tau}\pi_2\circ\phi(s)(t-s)|< \tfrac{1}{4}|D_{\tau}\pi_2\circ\phi(s)|\cdot|t-s|.
		\end{equation} 
	\end{itemize}
	Let us define
	$$
	\begin{aligned}
		v_1 &= \min\Big\{\frac{|D_{\tau}\pi_1\circ \phi(s)|}{|D_{\tau}\phi(s)|}; \ s\in F, \ \pi_1\circ\phi(s)\in \{w_n\}_{n=0}^M  \Big\},\\
		v_2 &= \min\Big\{\frac{|D_{\tau}\pi_2\circ \phi(s)|}{|D_{\tau}\phi(s)|}; \ s\in F, \ \pi_2\circ\phi(s)\in \{z_m\}_{m=0}^M  \Big\}
	\end{aligned}
	$$
	and $v := \tfrac{3}{5}\min\{v_1,v_2\}<1$.
	
	To aid the understanding of the following we refer the reader to Figure~\ref{fig:CrossingAnArrivalGrid}. We find a $d\in (0,\tfrac{1}{3}\eta\min_{s\in F}|D_{\tau}\phi(s)|)$ so small that
	\begin{itemize}
		\item[\ding{64}] $B(a,2d)$ are pairwise disjoint for $a\in \phi(F)$,
		\item[\ding{64}] $B(a,2d)$ do not contain any cross points $(w_n,z_m)$ of $\G$.
	\end{itemize}
	It holds that $\phi(t)\in \er^2\setminus B(\phi(s),d)$ for all $t\in [B(s,\eta)\setminus B(s,\tfrac{1}{2}\eta)]\cap \Gamma_k$ because (using \eqref{Leave})
	\begin{equation}\label{alone}
		|\phi(t)  -\phi(s)| \geq \tfrac{3}{4} |D_{\tau}\phi(s)|\cdot|t-s| > \tfrac{3}{8}|D_{\tau}\phi(s)|\eta \geq\tfrac{9}{8}d.
	\end{equation}
	Then, for each $s\in F$, call $s_1,s_2$ the points of $\overline{B(s,\eta)}\cap \Gamma_k$ farthest points from $s$ in each direction such that $\phi(s_i)\in \partial B(\phi(s),d)$ (i.e. the `first' time $\phi(\Gamma_k \cap B(s,\eta))$ enters $B(\phi(s),d)$ and the `last' time it exits). Using the equations \eqref{Side1} and \eqref{Side2} we have
	$$
	|\pi_i\circ\phi(t)  -\pi_i\circ\phi(s)| \geq \tfrac{3}{4} |D_{\tau}\pi_i\circ\phi(s)|\cdot|t-s|
	$$
	and by \eqref{Leave}
	$$
	|\phi(t)  -\phi(s)| \leq \tfrac{5}{4} |D_{\tau}\phi(s)|\cdot|t-s|.
	$$
	Thus, using the definition of $v$,
	\begin{equation}\label{Angle}
		|\pi_i\circ\phi(t)  -\pi_i\circ\phi(s)| > v|\phi(t)  -\phi(s)|.
	\end{equation}
	Thus, we conclude that either
	$$
	\pi_i\circ\phi(s_1) - \pi_i\circ\phi(s) < -vd < vd< \pi_i\circ\phi(s_2) - \pi_i\circ\phi(s)
	$$
	or
	$$
	\pi_i\circ\phi(s_1) - \pi_i\circ\phi(s) > vd > - vd > \pi_i\circ\phi(s_2) - \pi_i\circ\phi(s). 
	$$

	\begin{figure}
		\begin{tikzpicture}[line cap=round,line join=round,>=triangle 45,x=7.0cm,y=7.0cm]
			\clip(1.5330169722110216,4.94) rectangle (2.4852766177242094,5.8801014311259285);
			\fill[line width=0.7pt,fill=black!10] (1.9,4.) -- (2.1,4.) -- (2.1,8.) -- (1.9,8.) -- cycle;
			\fill[line width=0.7pt,fill=black!20] (1.92,4.) -- (2.08,4.) -- (2.08,8.) -- (1.92,8.) -- cycle;
			\draw [line width=0.7pt] (6.,4.938142671201255) -- (6.,5.8801014311259285);
			\draw [line width=0.7pt] (2.,4.938142671201255) -- (2.,5.8801014311259285);
			\draw [line width=0.7pt,domain=1.5330169722110216:2.4852766177242094] plot(\x,{(--56.-0.*\x)/7.});
			\draw [line width=0.7pt,domain=1.5330169722110216:2.4852766177242094] plot(\x,{(--32.-0.*\x)/8.});
			\draw [line width=0.7pt] (2.,5.4)-- (2.250821945714846,6.178561045441247);
			\draw [line width=0.5pt,dash pattern=on 1pt off 1pt] (2.,5.4) circle (2.8cm);
			\draw [line width=0.7pt] (2.250821945714846,6.178561045441247)-- (2.192621759497969,6.813385804346);
			\draw [line width=0.7pt] (2.,5.4)-- (1.676452234981737,4.460689117710067);
			\draw [line width=0.7pt] (1.9,4.)-- (2.1,4.);
			\draw [line width=0.7pt] (1.92,4.)-- (2.08,4.);
			\draw [line width=0.7pt] (2.08,8.)-- (1.92,8.);
			\draw [line width=0.7pt,dotted,color=ffqqqq] (1.84335951359307,5.012500907545781)-- (1.8830179667591742,5.002766559950465);
			\draw [line width=0.7pt,dotted,color=ffqqqq] (1.8830179667591742,5.002766559950465)-- (1.88482062372127,5.0323301341288325);
			\draw [line width=0.7pt,dotted,color=ffqqqq] (1.88482062372127,5.0323301341288325)-- (1.851291204226291,5.021874723748678);
			\draw [line width=0.7pt,dotted,color=ffqqqq] (1.851291204226291,5.021874723748678)-- (1.9139795137578615,5.126653237955748);
			\draw [line width=0.7pt,dotted,color=ffqqqq] (1.9139795137578615,5.126653237955748)-- (1.910648215530878,5.171856059793091);
			\draw [line width=0.7pt,dotted,color=ffqqqq] (1.910648215530878,5.171856059793091)-- (1.9390280050672684,5.17925948315041);
			\draw [line width=0.7pt,dotted,color=ffqqqq] (1.9390280050672684,5.17925948315041)-- (1.9347093414421654,5.26624970759891);
			\draw [line width=0.7pt,dotted,color=ffqqqq] (1.9347093414421654,5.26624970759891)-- (1.987443213667146,5.335728116672842);
			\draw [line width=0.7pt,dotted,color=ffqqqq] (1.987443213667146,5.335728116672842)-- (1.9802187669606302,5.368639485002525);
			\draw [line width=0.7pt,dotted,color=ffqqqq] (1.9802187669606302,5.368639485002525)-- (2.010594182935993,5.3805525551520645);
			\draw [line width=0.7pt,dotted,color=ffqqqq] (2.010594182935993,5.3805525551520645)-- (1.9870089667148896,5.406438768077665);
			\draw [line width=0.7pt,dotted,color=ffqqqq] (1.9870089667148896,5.406438768077665)-- (2.0140456779927396,5.408739764782163);
			\draw [line width=0.7pt,dotted,color=ffqqqq] (2.0140456779927396,5.408739764782163)-- (1.9921862093000098,5.436926974412262);
			\draw [line width=0.7pt,dotted,color=ffqqqq] (1.9921862093000098,5.436926974412262)-- (2.0278516582197263,5.462237938161738);
			\draw [line width=0.7pt,dotted,color=ffqqqq] (2.0278516582197263,5.462237938161738)-- (2.0169219238733618,5.484672656030591);
			\draw [line width=0.7pt,dotted,color=ffqqqq] (2.0169219238733618,5.484672656030591)-- (2.041106951531704,5.489775709132981);
			\draw [line width=0.7pt,dotted,color=ffqqqq] (2.041106951531704,5.489775709132981)-- (2.0348657219641195,5.559964120131122);
			\draw [line width=0.7pt,dotted,color=ffqqqq] (2.0348657219641195,5.559964120131122)-- (2.072289287890644,5.60310517529642);
			\draw [line width=0.7pt,dotted,color=ffqqqq] (2.072289287890644,5.60310517529642)-- (2.070210200894726,5.662878926429064);
			\draw [line width=0.7pt,dotted,color=ffqqqq] (2.070210200894726,5.662878926429064)-- (2.1122473913025295,5.7907580873157185);
			\draw [line width=0.7pt,dotted,color=ffqqqq] (2.1122473913025295,5.7907580873157185)-- (2.1250026302828933,5.763786368506695);
			\draw [line width=0.7pt,dotted,color=ffqqqq] (2.1250026302828933,5.763786368506695)-- (2.128042626602505,5.81368665468665);
			\draw [line width=0.7pt,dotted,color=ffqqqq] (2.128042626602505,5.81368665468665)-- (2.0856081479822564,5.780661064257253);
			\draw [line width=0.7pt,dotted,color=ffqqqq] (2.0856081479822564,5.780661064257253)-- (2.08388189238078,5.810348619123719);
			\draw [line width=0.7pt,color=qqqqff] (2.08388189238078,5.810348619123719)-- (2.1569953885091864,5.938329849250318);
			\draw [line width=0.7pt,color=qqqqff] (2.1569953885091864,5.938329849250318)-- (2.23301989076101,6.051214716230298);
			\draw [line width=0.7pt,dotted,color=ffqqqq] (1.84335951359307,5.012500907545781)-- (1.8376710539561527,5.054815002535113);
			\draw [line width=0.7pt,dash pattern=on 1pt off 1pt,color=ffqqqq] (1.8376710539561527,5.054815002535113)-- (1.7605226675684065,4.6486929286010925);
			\draw [line width=0.7pt,color=qqqqff] (1.8341016546569613,5.03602508463844)-- (2.0850150386665174,5.7908611559115855);
			\draw [line width=0.7pt,color=qqqqff] (2.0850150386665174,5.7908611559115855)-- (2.08388189238078,5.810348619123719);
			\draw [line width=0.7pt,color=qqqqff] (1.8341016546569613,5.03602508463844)-- (1.7605226675684065,4.6486929286010925);
			\draw (1.8,5.86) node[anchor=north west] {$vd$};
			\draw (1.915,5.76) node[anchor=north west] {$\tfrac{7}{8}vd$};
			\draw (1.85,5.58) node[anchor=north west] {$\tilde{\phi}(s)$};
			\draw [color=ffqqqq](2.1041438509503014,5.709564547494366) node[anchor=north west] {$\phi_{\delta}$};
			\draw [color=blue](1.8,5.3) node[anchor=north west] {$\tilde{\phi}_{\delta}$};
			\draw (1.6,5.05) node[anchor=north west] {$\tilde{\phi}_{\delta}(s_{1,\delta})$};
			\draw (2.03,5.5) node[anchor=north west] {$\phi$};
			\begin{scriptsize}
				\draw [fill=black] (2.,5.535105895826601) circle (2.0pt);
				\draw [fill=uuuuuu] (1.8341016546569613,5.03602508463844) circle (1.5pt);
			\end{scriptsize}
		\end{tikzpicture}
		\caption{The definition of $\tilde{\phi}_{\delta}$ on $V$ using $\phi_{\delta}$.}\label{fig:CrossingAnArrivalGrid}
	\end{figure}
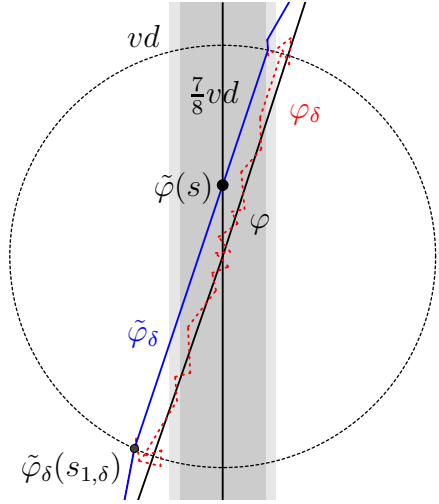

	We can make a similar conclusion for $\phi_{\delta}$ for $\delta \in (0,\tfrac{1}{8}vd)$, i.e. from \eqref{alone} we find $s_{1,\delta}, s_{2,\delta}$ such that $\phi_{\delta}^{-1}(B(\phi(s),d))\cap \Gamma_k\cap B(s,\eta) \subset s_{1,\delta}s_{2,\delta}$ and $\phi_{\delta}(s_{j,\delta})\in \partial B(\phi(s),d)$, $j=1,2$. Because $|\phi(s_{i,\delta}) - \phi(s)|\geq d-\delta \geq \tfrac{7}{8}d$,  the equation \eqref{Angle} with $t=s_{1,\delta}, s_{2,\delta}$ gives either
	$$
	\pi_i\circ\phi(s_{1,\delta}) - \pi_i\circ\phi(s) < -\tfrac{7}{8}vd <\tfrac{7}{8} vd< \pi_i\circ\phi(s_{2,\delta}) - \pi_i\circ\phi(s)
	$$
	and so
	$$
	\pi_i\circ\phi_{\delta}(s_{1,\delta}) - \pi_i\circ\phi(s) < -\tfrac{3}{4}vd <\tfrac{3}{4} vd< \pi_i\circ\phi_{\delta}(s_{2,\delta}) - \pi_i\circ\phi(s)
	$$
	or the same estimates with the roles of $s_{1,\delta}$ and $s_{2,\delta}$ reversed.
	In either case, the segment $\phi_{\delta}(s_{1,\delta}) \phi_{\delta}(s_{2,\delta} )$ intersects $\G$ exactly once at a point we call $\tilde{\phi}_{\delta}(s)$. Further we call $V = V_{\delta} = \bigcup_{s\in F}s_{1,\delta}s_{2,\delta}$ and we assume that
	\begin{equation}\label{PleaseSirCanIHaveSomeMore}
		\delta <\tfrac{1}{2} \dist\big(\G, \phi(\Gamma_k \setminus V)\big)\quad\text{and}\quad \delta <\tfrac{1}{2}\epsilon \min\{|a-b|; a\neq b, a,b \in \phi(F)\}.
	\end{equation}
	The first condition guarantees that $\phi_{\delta}(\Gamma_k\setminus V)\cap \G = \emptyset$ and we will later use the second condition to prove that \eqref{SizesXX} holds.

	Note that, because $\phi(\Gamma_k \setminus V_{\delta})\cap \phi(F) = \emptyset$ for all $\delta \in (0,\tfrac{1}{8}vd)$ (independently of the continuous uniform approximation we take), by choosing $d$ small enough we have $\phi_{\delta}^{-1}(B(a,d)) \setminus V = \emptyset$. We define a map $\tilde{\phi}_{\delta}:\Gamma_k \to \er^2$. For $t\in \Gamma_k\setminus V$ we define $\tilde{\phi}_{\delta}(t) = \phi_{\delta}(t)$. On the segment $s_{1,\delta}s$ we define $\tilde{\phi}_{\delta}$ as the constant speed parametrization of the segment $\phi_{\delta}(s_{1,\delta})\tilde{\phi}_{\delta}(s)$. Similarly on the segment $ss_{2,\delta}$ we define $\tilde{\phi}_{\delta}$ as the constant speed parametrization of the segment $\tilde{\phi}_{\delta}(s)\phi_{\delta}(s_{2,\delta})$. Clearly $\tilde{\phi}_{\delta}$ is continuous. Thanks to the injectivity of ${\phi}_{\delta}$ it is easily observed that the map $\tilde{\phi}_{\delta}$ is also injective. We have that $|\tilde{\phi}_{\delta}(s) - \phi_{\delta}(s)| < 2\delta + 2d$.

	Now we define the map $\gamma$ by first defining its image and then later defining its parametrization. We refer to pairs $s,s' \in F$ as neighboring pairs in $F$ if the segment $ss'\subset \Gamma_k$ and $ss' \cap F = \{s, s'\}$ (i.e. $ss'$ is a segment in $\Gamma_k$ between a neighboring pair of points of $F$). Let $s,s' \in F$ be a neighboring pair in $F$ such that $\tilde{\phi}_{\delta}(s)$ and $\tilde{\phi}_{\delta}(s')$ do not lie on a common segment of $\G$. Then we define the image of the segment $ss'$ in $\gamma$ to be the segment $\tilde{\phi}_{\delta}(s)\tilde{\phi}_{\delta}(s')$. By the choice of $w_n$ and $z_m$ we have $\int_{ss'}|D_{\tau}\phi|<2^{-k+1}\epsilon$.
	
	Let $s,s' \in F$ be a neighboring pair in $F$ with images on a common side of some $\partial R_{n,m} = \partial\big([w_n,w_{n+1}] \times[z_m, z_{m+1}]\big)$. The following observations follow simply from the injectivity of $\tilde{\phi}_{\delta}$ (see \cite[Step~II, Proposition~3.8]{DPP}). Firstly we notice that for any other neighboring pair $t,t' \in F$, either $ss'\cap tt' = \{x_i, y_j\} $ (for some $1\leq i,j\leq 2^k-1$ and $[\tilde{\phi}(s)\tilde{\phi}(s')]_{\xi} \cap [\tilde{\phi}(t)\tilde{\phi}(t')]_{\xi} \neq \emptyset$ (in this case the intersection is a single point which we denote as $A_{i,j} = A_{i,j}(\xi)$) for all $\xi>0$. Otherwise $ss'\cap tt' = \emptyset$ and there exists a $\xi_0>0$ such that $[\tilde{\phi}(s)\tilde{\phi}(s')]_{\xi} \cap [\tilde{\phi}(t)\tilde{\phi}(t')]_{\xi} = \emptyset$ for all $\xi\in(0,\xi_0)$.  Since there are a finite number of neighboring pairs we find a $0< \xi <\epsilon$ such that $[\tilde{\phi}(s)\tilde{\phi}(s')]_{\xi} \cap [\tilde{\phi}(t)\tilde{\phi}(t')]_{\xi} \neq \emptyset$ exactly when  $ss'\cap tt' \neq \emptyset$. We determine that the image of $ss'$ in $\gamma$ is $[\tilde{\phi}(s)\tilde{\phi}(s')]_{\xi}$.
	
	Having fixed $\xi$, we define $\gamma(x_i,y_i)$ as $A_{i,j}$ and $\gamma(s) = \tilde{\phi}_{\delta}(s)$ for $s\in F$. The remaining part of $\Gamma_k$ is made up of segments that have to go onto segments (or the union of two segments in the case when we use generalized segments). It is a simple observation that $\gamma$ is injective thanks to the fact that $\tilde{\phi}_{\delta}$ is injective and the two maps are obviously homotopically equivalent (see \cite[Step~II, Proposition~3.8]{DPP}).
	
	For segments with endpoints mapped onto distinct segments of $\G$ the estimate \eqref{SizesXX} is satisfied immediately from the second condition of \eqref{PleaseSirCanIHaveSomeMore}. For parts where we have to use a generalized segment comprised of 2 segments we use Proposition~\ref{EverythingIDo} to get the estimate in \eqref{SizesXX} only this time the right hand side is multiplied by $(1+\xi)$, but $\xi$ can be chosen as small as we like.
\end{proof}
\begin{remark}
	Let us note that Proposition~\ref{PiecewiseLinearization} we do not need $\Gamma_k$ to be a grid of horizontal and vertical segments. We only did so to simplify the notation. We can replace $\Gamma_k$ with a general grid of piecewise linear curves (so that the concept of piecewise linear curves makes clear sense). The generalization to general grids is a simple exercise which the reader can check once he knows the proof in the case for straight grids. We summarize this in the following claim.
\end{remark}
\begin{prop}
	Let $M\in \en$ and let $\{\sigma_i\}_{i=1}^M$ be a general grid of piecewise linear curves and let $\Gamma = \bigcup_{i=1}^M\sigma_i([0,1])$. Let $p\in [1,\infty)$, let $\phi \in W^{1,p}(\Gamma, \er^2)$ and let for every $\delta>0$ there exists an injective and continuous $\phi_{\delta}:\Gamma \to \er^2$ such that $|\phi_{\delta}(s) - \phi(s)| < \delta$  for all $s\in \Gamma$. Then there exist continuous piecewise linear injective $\gamma_l \sto \phi$ on $\Gamma$ such that
	$$
	\limsup_{l\to \infty}\int_{\Gamma}|D_{\tau}\gamma_l|^p \leq \int_{\Gamma}|D_{\tau}\phi|^p
	$$
	and $\{|D_{\tau}\gamma_l|\}_{l\in \en}$ is uniformly integrable on $\Gamma_k$.
\end{prop}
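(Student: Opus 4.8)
The plan is to repeat the proof of Proposition~\ref{PiecewiseLinearization} almost word for word, since that argument never used that $\Gamma_k$ was a grid of horizontal and vertical lines, but only the following structural features: $\Gamma_k$ is a finite union of injective Lipschitz piecewise linear curves (so that $\pi_i\circ\phi$ is absolutely continuous along it, the area formula \eqref{area} applies, and $\phi^{-1}$ of a transverse arrival curve is finite), its self-intersections are finitely many, pairwise, transverse, with no triple point, and near each point of $F=\phi^{-1}(\G)$ the domain is a single straight segment on which $\phi$ is close to its linearization. For a general grid $\{\sigma_i\}$ of piecewise linear curves these are exactly conditions $(1)$--$(4)$ of the definition of a general grid together with the defining properties of a good arrival grid: the crossing points of the $\sigma_i$ now play the role of the lattice points $(x_i,y_j)$, and the requirement $F\cap\sigma_i([0,1])\cap\sigma_j([0,1])=\emptyset$ in the definition of a good arrival curve forces every $s\in F$ to lie on a single curve $\sigma_i$; if one arranges in addition that $\G$ avoids the $\phi$-images of the finitely many breakpoints of the $\sigma_i$ (a null set), then $s$ even lies in the relative interior of one linear piece of $\sigma_i$.

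First I would note that Proposition~\ref{ArrivalGrids} holds by the same argument with $\Gamma_k$ replaced by $\Gamma=\bigcup_i\sigma_i([0,1])$ — its proof uses only $\phi\in W^{1,1}(\Gamma,\er^2)$ and \eqref{area} — producing a horizontal/vertical good arrival grid $\G$ of mesh $<\epsilon$ for $\phi$, which we additionally require to avoid the $\phi$-images of the breakpoints and crossing points of the $\sigma_i$ (so $F$ is disjoint from those points). The elementary piecewise linear approximation is then unchanged: subdividing each linear piece of each $\sigma_i$ into straight segments $X$ with $\int_X|D_\tau\phi|<\epsilon_l$ and linearly interpolating $\phi$ gives $\tilde\gamma_l$ with
$$
\int_\Gamma|D_\tau\tilde\gamma_l|^q=\sum_X\Big|\oint_X D_\tau\phi\Big|^q\H^1(X)\le\sum_X\int_X|D_\tau\phi|^q=\int_\Gamma|D_\tau\phi|^q
$$
for every $q\in[1,\infty)$, and the same bound up to the factor $(1+2\epsilon_l)^p$ for any $\gamma_l$ linear on each $X$ satisfying the analogue of \eqref{SizesXX}; so it again suffices to produce, for each $\epsilon>0$, such a $\gamma$ that is moreover injective.

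Then I would run the local surgery near $F$ exactly as in the proof of Proposition~\ref{PiecewiseLinearization}. For $s\in F$, choosing $\eta$ smaller than the distance from $s$ to every breakpoint and crossing point, $B(s,\eta)\cap\Gamma$ is a single straight segment, so the purely one-dimensional estimates \eqref{Leave}--\eqref{Angle} (which use only the restriction of $\phi$ to that segment and the transversality of $D_\tau\phi(s)$ and $g'$ built into the good arrival property) go through and produce $s_{1,\delta},s_{2,\delta}$ and the point $\tilde\phi_\delta(s)$ at which the segment $\phi_\delta(s_{1,\delta})\phi_\delta(s_{2,\delta})$ meets $\G$; this defines $\tilde\phi_\delta$ on $V=\bigcup_{s\in F}s_{1,\delta}s_{2,\delta}$, with $\tilde\phi_\delta=\phi_\delta$ off $V$. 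One then builds $\gamma$ from $\tilde\phi_\delta$ on the sub-arcs of the $\sigma_i$ joining neighboring points of $F$ — each such sub-arc is piecewise linear and may run through several crossing points of the $\sigma_i$, exactly as a grid segment ran through several lattice points — pushing it onto the straight segment $\tilde\phi_\delta(s)\tilde\phi_\delta(s')$ when the endpoints land on distinct segments of $\G$ and onto a generalized segment $[\tilde\phi_\delta(s)\tilde\phi_\delta(s')]_\xi$ when they land on a common one, with $\xi\in(0,\epsilon)$ chosen small enough (finitely many conditions) that two such generalized segments meet precisely when the corresponding sub-arcs of $\Gamma$ do, i.e. at a shared crossing point of the $\sigma_i$, where $\gamma$ takes the value of that unique intersection point. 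Proposition~\ref{EverythingIDo} supplies the length control $\H^1([ab]_{\xi})\leq(1+\xi)|b-a|$ needed to deduce the analogue of \eqref{SizesXX} (with a harmless factor $1+\xi$) from the analogue of \eqref{PleaseSirCanIHaveSomeMore}, so \eqref{WeakEstimate} and the uniform integrability follow as before; closeness to $\phi$, the parametrization of the resulting $\gamma_l$, and the passage $\epsilon_l,\delta_l\to0$ are identical to the original proof.

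The main obstacle — and really the only point at which one must check that a general grid behaves like a straight grid — is the injectivity of $\gamma$, which I would prove as in \cite[Step~II, Proposition~3.8]{DPP}: $\gamma$ is homotopic to the injective map $\tilde\phi_\delta$ through maps injective away from neighborhoods of the finitely many crossing points of the $\sigma_i$, and at each such crossing the two sub-arcs of $\Gamma$ through it cross transversally (condition $(3)$) with no third curve present (condition $(4)$), so their $\gamma$-images form a transverse pair of (generalized) segments meeting only in the single point $A_{i,j}(\xi)$, and the local winding/degree argument ruling out any further coincidence goes through unchanged. Since condition $(2)$ guarantees only finitely many crossing points, each contributing at most one such interaction, a single choice of $\xi$ works globally, and the proof concludes exactly as that of Proposition~\ref{PiecewiseLinearization}.
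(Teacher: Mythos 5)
Your proposal is correct and is exactly the route the paper intends: the paper gives no separate argument for this proposition, stating only (in the preceding remark) that it is the straightforward adaptation of the proof of Proposition~\ref{PiecewiseLinearization} to a general grid, and your write-up carries out that adaptation, correctly identifying the only points that need checking (arrival grids avoiding images of breakpoints and crossing points, the local surgery on a single straight piece near each point of $F$, and the injectivity via the transverse, pairwise, finite crossing structure and the homotopy argument of \cite[Step~II, Proposition~3.8]{DPP}). No gaps to report.
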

\subsection{Extension Results}
In the following $\tilde{Q}_r$ denotes the rotated square, specifically the convex envelope of the points $(r,0), (-r,0), (0,r), (0, -r)$. The following extension theorem is the reformulation of a known result of Hencl and Pratelli from \cite{HP}. We reformulate their theorem  by extracting some derivative estimates they show in the course of the proof. 	
\begin{thm}
	There exists a $C>0$ such that for any $r > 0$ and any finitely piece-wise linear and one-to-one function $v:\partial \tilde{Q}_r \to\er^2$ we can find a finitely piece-wise affine homeomorphism $h:Q(0,r)\to \er^2$ such that $h_{\rceil\partial \tilde{Q}_r}=v$ and for almost all $(x,y)\in \tilde{Q}_r$
	\begin{equation}\label{hope}
		\begin{aligned}
			|D_1h(x,y)| &\leq C\oint_{\partial \tilde{Q}_r}|D_{\tau}v| \ d\H^1 \text{ and }\\
			|D_2h(x,y)| &\leq C\big|D_{\tau}v(-r-|y|,y)\big|+ C\big|D_{\tau}v(r-|y|,y)\big| + C\oint_{\partial \tilde{Q}_r}|D_{\tau}v| \ d\H^1
		\end{aligned}
	\end{equation}
	where $D_{\tau}v$ is the tangential derivative of $v$ on $\partial\tilde{Q}_r$.
\end{thm}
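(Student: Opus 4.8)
The plan is to obtain \eqref{hope} directly from the extension construction of Hencl and Pratelli in \cite{HP}: the existence of a finitely piece-wise affine homeomorphism $h$ of $\tilde{Q}_r$ agreeing with $v$ on $\partial\tilde{Q}_r$ is precisely their theorem, and the only new point is to read off the stated pointwise derivative bounds from the affine pieces of $h$. First I would reduce to $r=1$: under the dilation $(x,y)\mapsto(rx,ry)$ the map $v$, the extension $h$, and both sides of each inequality in \eqref{hope} transform consistently (the factor $r$ from the chain rule on $Dh$ is matched by the factor $r$ in $D_\tau v$ and by the rescaling of $\H^1(\partial\tilde{Q}_r)$ in the averages), so it suffices to treat $\tilde{Q}_1$. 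Recall that the \cite{HP} construction subdivides $\tilde{Q}_1$ into finitely many triangles, defines $h$ to be affine on each one with $h=v$ prescribed on $\partial\tilde{Q}_1$, and certifies that the resulting map is a homeomorphism by a non-overlapping/degree argument; in particular $Dh$ is constant on each triangle and defined off a finite union of segments, so \eqref{hope} becomes a finite list of estimates, one per affine piece.

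The estimates themselves are read off from the geometry of that construction, which foliates $\tilde{Q}_1$ by its horizontal sections $\{(x,y):|x|\le 1-|y|\}$ (after \cite{HP}'s normalisation of the boundary parametrisation). Fix a triangle $T$ meeting the section at height $y$. The horizontal derivative $D_1h$ on $T$ measures the rate at which $h$ spreads the $x$-direction along that section; in the construction this is a ratio of the length of a sub-arc of the Jordan curve $v(\partial\tilde{Q}_1)$ to the length of the corresponding part of $\partial\tilde{Q}_1$, hence is controlled by $\int_{\partial\tilde{Q}_1}|D_\tau v|\,d\H^1$ at the unit scale, i.e. by a fixed multiple of $\oint_{\partial\tilde{Q}_1}|D_\tau v|\,d\H^1$; this is the first line of \eqref{hope}. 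For the vertical derivative $D_2h$ on $T$ one additionally records how fast $v$ runs along $\partial\tilde{Q}_1$ as the section at height $y$ is crossed near its two ends, and that local rate is exactly $|D_\tau v|$ evaluated at the two points where the line through height $y$ meets $\partial\tilde{Q}_1$ — which is what the two distinguished terms in \eqref{hope} are — while the remaining, genuinely global, contribution of the motion of the interior vertices of the triangulation is again absorbed into $C\oint_{\partial\tilde{Q}_1}|D_\tau v|\,d\H^1$. Adding the contributions on $T$ gives the second line of \eqref{hope}, and undoing the dilation returns to general $r$.

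The main obstacle is the bookkeeping inside the \cite{HP} proof rather than any new idea: there the derivative control is typically recorded in integrated form, of the type $\int_{\tilde{Q}_1}|Dh|\le C\int_{\partial\tilde{Q}_1}|D_\tau v|$, so one must revisit each affine piece of $h$ and upgrade this to the sharper pointwise statement. Concretely, the two things to verify are (i) that the only \emph{localised} boundary contribution felt by $D_2h$ on the section at height $y$ comes from that section's own two endpoints, and from no other point of $\partial\tilde{Q}_1$, so that no third term appears in \eqref{hope}; and (ii) that every ``crossing'' term and every term coming from the images of the interior vertices is dominated by the average $\oint_{\partial\tilde{Q}_1}|D_\tau v|\,d\H^1$, so that the constant $C$ can be taken absolute. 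One also has to keep track of the orientation conventions — on which edges of $\partial\tilde{Q}_r$ the two endpoints of the horizontal section at height $y$ lie, and that $D_\tau v$ is defined there for almost every $y$, which holds because $v$ is finitely piece-wise linear. Once this is carried out, the statement is exactly the \cite{HP} extension result with the constants made explicit.
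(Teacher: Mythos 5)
Your proposal follows essentially the same route as the paper: reduce to $r=1$ by scaling and extract the pointwise derivative bounds from the Hencl--Pratelli shortest-curve extension, whose construction (affine pieces organised along the horizontal sections of $\tilde Q_1$, with $D_1h$ controlled by the boundary average and $D_2h$ by the tangential derivative at the two section endpoints plus an averaged term) is exactly what you describe. The only difference is that no ``upgrading'' from integrated to pointwise form is actually needed: the paper simply quotes the pointwise estimates already established inside the proof of \cite[Theorem~2.1]{HP} (their equation (2.11) and the estimates of steps 8--9), so your remaining ``bookkeeping'' is precisely the content of those steps.
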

\begin{proof}
	The extension theorem used here is exactly the `shortest curve extension', \cite[Theorem~2.1]{HP}. The estimates on the derivatives are already proved in their step 9 of their proof. Let $[A_1A_2]$ and $[B_1B_2]$ be a pair of disjoint segments in $\partial\tilde{Q}_r$ with the same projection onto the $y$-axis and $v$ is linear on each of the segments. The authors of \cite{HP} prove
	$$
	|D_1h(x,y)| \leq C\oint_{\partial\tilde{Q}_r} |D_{\tau}v| \ d\H^1
	$$	
	in their equation (2.11) (note that they prove it only for the case $r=1$ but a scaling argument immediately gives the above). The second to last estimate of step 9 is precisely 
	$$
	|D_2h(x,y)| \leq 5\big|D_{\tau}v(-r-|y|,y)\big|+ 5\big|D_{\tau}v(r-|y|,y)\big| + 3C\oint_{\partial\tilde{Q}_r} |D_{\tau}v| \ d\H^1
	$$
	for almost all $(x,y) \in \co\{A_1,A_2,B_1,B_2\}$. For $(x,y)$ in triangles near $(0,r)$ or $(0,-r)$ the estimates are proved in step 8.
\end{proof}
\begin{corollary}\label{SpecialExtension}
	Let $v_j:\partial \tilde{Q}_r \to\er^2$ be continuous injective and (for each $j$ but not necessarily uniformly) finitely piece-wise linear mappings such that $v_j$ converge weakly in $W^{1,1}(\partial\tilde{Q}_r, \er^2)$. Then there exist equicontinuous finitely piecewise affine homeomorphisms $h_j:\tilde{Q}_r\to \er^2$ with $|Dh_j|$ uniformly integrable. Further, for all $p\in [1,\infty)$, 
	\begin{equation}\label{UseMe}
		\int_{\tilde{Q}_r} |Dh_j|^p \ d\mathcal{L}^2 \leq C r \int_{\partial\tilde{Q}_r} |D_{\tau}v_j|^p \ d\H^1.
	\end{equation}
\end{corollary}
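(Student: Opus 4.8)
The plan is to apply the extension theorem just stated to each $v_j$ separately, producing for every $j$ a finitely piecewise affine homeomorphism $h_j:\tilde{Q}_r\to\er^2$ with $h_j=v_j$ on $\partial\tilde{Q}_r$ and satisfying \eqref{hope} almost everywhere, and then to extract every uniform conclusion from the weak $W^{1,1}$-convergence of $\{v_j\}$. From $v_j\rightharpoonup v$ in $W^{1,1}(\partial\tilde{Q}_r,\er^2)$ one reads off that $M:=\sup_j\int_{\partial\tilde{Q}_r}|D_\tau v_j|\,d\H^1<\infty$ and, since the sequence together with its limit is weakly compact in $L^1$, that $\{|D_\tau v_j|\}$ is equi-integrable on $\partial\tilde{Q}_r$ by the Dunford--Pettis theorem; the uniform convergence $v_j\to v$, which also holds, will not actually be needed.

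To prove \eqref{UseMe} I would split $|Dh_j|^p\le C_p(|D_1h_j|^p+|D_2h_j|^p)$ and treat the two terms in turn. For the first, Jensen's inequality applied to the average on the right of \eqref{hope} gives $|D_1h_j(x,y)|^p\le C^p\oint_{\partial\tilde{Q}_r}|D_\tau v_j|^p\,d\H^1$; since $\tilde{Q}_r$ has area $2r^2$ and $\partial\tilde{Q}_r$ has length $4\sqrt2\,r$, integrating this constant bound over $\tilde{Q}_r$ produces $\int_{\tilde{Q}_r}|D_1h_j|^p\le Cr\int_{\partial\tilde{Q}_r}|D_\tau v_j|^p\,d\H^1$. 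For the second term, one uses that the maps $y\mapsto(r-|y|,y)$ and the companion map entering \eqref{hope} each parametrize half of $\partial\tilde{Q}_r$ at $\sqrt2$ times arclength, and that the horizontal section of $\tilde{Q}_r$ at height $y$ has length $2(r-|y|)\le2r$; Fubini's theorem then bounds each boundary-derivative contribution in \eqref{hope} by $Cr\int_{\partial\tilde{Q}_r}|D_\tau v_j|^p\,d\H^1$, and the averaged term is handled exactly as before. Summing gives \eqref{UseMe} with a constant depending only on the Hencl--Pratelli constant and on $p$.

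For the uniform integrability of $\{|Dh_j|\}$, note first that $|D_1h_j|$ is, by \eqref{hope}, bounded by $C\oint_{\partial\tilde{Q}_r}|D_\tau v_j|\le C(r)M$, a bound independent of $j$, so the $D_1$ part is harmless; for the $D_2$ part and a measurable $E\subset\tilde{Q}_r$, Fubini gives $\int_E|D_2h_j|\le\int_{-r}^{r}g_j(y)\,\mathcal{L}^1(E_y)\,dy+C(r)M|E|$, where $g_j(y)$ is $C$ times the sum of $|D_\tau v_j|$ at the two boundary points at height $y$ and hence forms an equi-integrable family on $(-r,r)$ by the reparametrization above; splitting at a level $\lambda$ and using $\mathcal{L}^1(E_y)\le2r$ bounds this by $\lambda|E|+2r\sup_j\int_{\{g_j>\lambda\}}g_j$, which is made small by first choosing $\lambda$ large and then $|E|$ small. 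Equicontinuity is obtained along the same lines: $|D_1h_j|\le C(r)M$ makes each $h_j$ horizontally $C(r)M$-Lipschitz uniformly in $j$, and for a vertical increment $|h_j(x,y_1)-h_j(x,y_2)|\le\int_{y_1}^{y_2}|D_2h_j(x,t)|\,dt\le C(r)M|y_1-y_2|+C\int_A|D_\tau v_j|\,d\H^1$, with $A\subset\partial\tilde{Q}_r$ an arc of length $\le\sqrt2|y_1-y_2|$, so equi-integrability supplies a $j$-independent vertical modulus $\omega$. For arbitrary $P_1=(x_1,y_1)$, $P_2=(x_2,y_2)$ in $\tilde{Q}_r$ one connects them by a vertical segment followed by a horizontal segment through the auxiliary point $(x_1,y_2)$, choosing $x_1$ to be the coordinate of smaller modulus so that convexity of $\tilde{Q}_r=\{|x|+|y|\le r\}$ keeps this L-shaped path inside $\tilde{Q}_r$; the two one-dimensional estimates then combine into a common modulus of continuity for all $h_j$. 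Boundedness of the family is immediate, since $h_j(\tilde{Q}_r)$ is the Jordan domain bounded by $v_j(\partial\tilde{Q}_r)$, whose diameter is at most $M/2$.

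All of these steps are routine computations; the only one that demands genuine care is the last, namely upgrading the one-dimensional moduli of continuity in the two coordinate directions to a single two-dimensional modulus. The hard part there is verifying that the connecting L-path does not leave the rotated square $\tilde{Q}_r$, which is precisely where the choice of the more interior horizontal coordinate and the convexity of $\tilde{Q}_r$ are used.
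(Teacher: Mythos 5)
Your proposal is correct and follows essentially the same route as the paper: apply the preceding extension theorem to each $v_j$ and deduce all uniform conclusions (the $L^p$ bound via Jensen and Fubini, uniform integrability and equicontinuity via the equi-integrability of $\{|D_{\tau}v_j|\}$ supplied by weak $W^{1,1}$-convergence) directly from the pointwise estimate \eqref{hope}. You merely spell out details the paper leaves implicit, notably the change of variables along the boundary and the L-shaped path argument (with the corner at the coordinate of smaller modulus) upgrading the two one-dimensional moduli to a genuine modulus of equicontinuity on $\tilde{Q}_r$.
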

\begin{proof}
	Let $(x,y), (\tilde{x}, \tilde{y})\in\tilde{Q}_r$ be a pair of distinct points. Because $\int_{\partial\tilde{Q}_r}|D_{\tau}v_j|$ is bounded independently of $j$, the equation \eqref{hope} gives a uniform bound on $|D_1 h_j|$. Then it suffices to prove equicontinuity in the second variable.  Since $|D_{\tau}v_j|$ are uniformly integrable the estimate \eqref{hope} gives that for every $\epsilon>0$ there exists a $\delta>0$ such that when $|y-\tilde{y}|<\delta$ then $|h_j(x,y) - h_j(x,\tilde{y})|<\epsilon$ which proves the equicontinuity of $\{h_j\}$. The uniform integrability of $|Dh_j|$ follows immediately from \eqref{hope} and the uniform integrability of $|D_{\tau}v_j|$. The $L^p$ estimates follow immediately from $\eqref{hope}$.
\end{proof}

The following generalization of the extension from \cite{HP} was published in \cite{ER}. We provide the version of the theorem corresponding to Remark~2.1 from that paper.
\begin{thm}\label{PExtension}
	There exists a $C>0$ such that for any $r > 0$ and any finitely piece-wise linear and one-to-one function $v:\partial \tilde{Q}_r \to\er^2$ we can find a finitely piece-wise affine homeomorphism $h:Q(0,r)\to \er^2$ such that $h_{\rceil\partial \tilde{Q}_r}=v$ and for almost all $(x,y)\in \tilde{Q}_r$
	\begin{equation}\label{phope}
		\oint_{\tilde{Q}_r} |Dh|^p \leq C \oint_{\partial \tilde{Q}_r}|D_{\tau}v|^p \ d\H^1 
	\end{equation}
	where $D_{\tau}v$ is the tangential derivative of $v$ on $\partial\tilde{Q}_r$.
\end{thm}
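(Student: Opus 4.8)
The plan is to take for $h$ the very extension already produced by the preceding (reformulated Hencl--Pratelli) theorem and to derive the averaged $L^p$ bound \eqref{phope} directly by integrating the pointwise estimates \eqref{hope}; this is exactly the computation sketched at the end of the proof of Corollary~\ref{SpecialExtension} (``the $L^p$ estimates follow immediately from \eqref{hope}''), so the whole content is in tracking the constants and their homogeneity in $r$. Since the preceding theorem is already stated for every $r>0$, no preliminary rescaling to $r=1$ is needed (though one could reduce to that case if preferred). Concretely: let $h\colon\tilde{Q}_r\to\er^2$ be the finitely piece-wise affine homeomorphism with $h_{\rceil\partial\tilde{Q}_r}=v$ satisfying \eqref{hope} a.e.; because $|Dh|^p\le c_p\big(|D_1h|^p+|D_2h|^p\big)$ with $c_p$ dimensional, it suffices to bound $\int_{\tilde{Q}_r}|D_ih|^p$ for $i=1,2$.

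For $i=1$ I would raise the first line of \eqref{hope} to the $p$-th power and apply Jensen's inequality to the average, obtaining $|D_1h(x,y)|^p\le C^p\oint_{\partial\tilde{Q}_r}|D_\tau v|^p\,d\mathcal{H}^1$ a.e.\ on $\tilde{Q}_r$. Integrating over $\tilde{Q}_r$ and using $\mathcal{L}^2(\tilde{Q}_r)=2r^2$ together with $\mathcal{H}^1(\partial\tilde{Q}_r)=4\sqrt2\,r$ gives $\int_{\tilde{Q}_r}|D_1h|^p\le \tfrac{1}{2\sqrt2}\,C^p\,r\int_{\partial\tilde{Q}_r}|D_\tau v|^p\,d\mathcal{H}^1$.

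For $i=2$ I would use $(a+b+c)^p\le 3^{p-1}(a^p+b^p+c^p)$ and Jensen on the averaged term in the second line of \eqref{hope}, so that $|D_2h(x,y)|^p$ is controlled, up to the factor $3^{p-1}C^p$, by $\oint_{\partial\tilde{Q}_r}|D_\tau v|^p\,d\mathcal{H}^1$ plus the values of $|D_\tau v|^p$ at the (at most two) points where the horizontal line through $(x,y)$ meets $\partial\tilde{Q}_r$. The averaged term is integrated exactly as in the case $i=1$. Each of the two ``boundary-trace'' terms is handled by Fubini: for fixed $y$ the horizontal slice of $\tilde{Q}_r$ has length $2(r-|y|)\le 2r$, so the integral over $\tilde{Q}_r$ of such a term is at most $2r\int_{-r}^{r}|D_\tau v(\cdot,y)|^p\,dy$, with $D_\tau v(\cdot,y)$ the tangential derivative at the relevant boundary point; since that side of $\partial\tilde{Q}_r$ is made of segments at $45^\circ$ to the axes, its arclength element is $\sqrt2\,dy$, whence $\int_{-r}^{r}|D_\tau v(\cdot,y)|^p\,dy\le\tfrac1{\sqrt2}\int_{\partial\tilde{Q}_r}|D_\tau v|^p\,d\mathcal{H}^1$ (the finitely many non-differentiability points of the piece-wise linear $v$ being $\mathcal{H}^1$-negligible). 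Collecting the three contributions yields $\int_{\tilde{Q}_r}|D_2h|^p\le C'\,r\int_{\partial\tilde{Q}_r}|D_\tau v|^p\,d\mathcal{H}^1$ with $C'$ depending only on $p$.

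Adding the two estimates, $\int_{\tilde{Q}_r}|Dh|^p\le C''\,r\int_{\partial\tilde{Q}_r}|D_\tau v|^p\,d\mathcal{H}^1$; dividing the left side by $\mathcal{L}^2(\tilde{Q}_r)=2r^2$ and writing $\int_{\partial\tilde{Q}_r}=4\sqrt2\,r\oint_{\partial\tilde{Q}_r}$ on the right, the powers of $r$ cancel and we obtain $\oint_{\tilde{Q}_r}|Dh|^p\le C\oint_{\partial\tilde{Q}_r}|D_\tau v|^p\,d\mathcal{H}^1$ with $C$ independent of both $r$ and $v$, which is \eqref{phope}. I do not expect a genuine analytic obstacle here, since the homeomorphism $h$ has already been constructed in the cited extension theorem; the only point that wants a little care is the homogeneity bookkeeping just described, together with the observation that the somewhat awkward trace-type term in \eqref{hope} becomes harmless precisely because the Fubini slicing above converts it into a genuine boundary $L^p$-norm. (This is the argument of \cite{ER}, Remark~2.1.)
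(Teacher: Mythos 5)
Your derivation is correct, but it takes a genuinely different route from the paper: the paper does not prove Theorem~\ref{PExtension} at all, it imports it as Remark~2.1 of \cite{ER} (Radici's $L^p$ generalization of the Hencl--Pratelli extension), whereas you deduce the averaged bound \eqref{phope} from the pointwise estimates \eqref{hope} of the preceding shortest-curve extension theorem. Your computation is sound: Jensen on the averaged term, the convexity inequality for the three-term bound on $|D_2h|$, and the Fubini slicing that turns the trace terms $|D_{\tau}v(\pm(r-|y|),y)|^p$ into boundary integrals with the factor $\sqrt2$ coming from the $45^{\circ}$ sides (you correctly read $(-r-|y|,y)$ in \eqref{hope} as the typo it is, the intended point being $(-r+|y|,y)$); the homogeneity bookkeeping with $\mathcal{L}^2(\tilde{Q}_r)=2r^2$ and $\H^1(\partial\tilde{Q}_r)=4\sqrt2\,r$ is right, and it is precisely the computation the paper itself invokes without detail at the end of Corollary~\ref{SpecialExtension}. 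The trade-off between the two routes is mainly in the constant: your argument is self-contained modulo the reformulated \cite{HP} theorem already stated in the paper, but it produces a constant of the form $c_p3^{p-1}C^p$, which degrades as $p$ grows, while the theorem quoted from \cite{ER} is an independent construction usually stated with a constant not obtained by raising a pointwise bound to the $p$-th power (in particular not of the form $C^p$). Since the paper only ever applies the estimate for a fixed $p\in[1,2]$, your $p$-dependent constant is entirely sufficient for every use made of \eqref{phope} here, so the difference is cosmetic in context, though worth flagging if one wants the statement with a single constant valid uniformly in $p$.
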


\subsection{The $\INV$ condition, the no-crossing condition and other monotonicity-type conditions}
In the following we use the notation that $\mathbb{S}$ is the unit circle, i.e. $\mathbb{S} = \partial B(0,1)$ in $\er^2$.

We refer to the following as the $\INV$ condition. The so-called INV condition was introduced in \cite{MS} and can be recovered from the following definition by assuming that $\phi$ is affine. It was shown in \cite[Theorem 9.1]{MS} that the two concepts are the same if $\det Du >0$ a.e.. An example in \cite[Section 5.1]{DPP} shows that without this additional assumption the two conditions are not the same, not even for Lipschitz continuous maps.

\begin{definition}[$\INV$ functions]\label{defINV}
	Let us call $\theta(t) = (\cos(t),\sin(t))$ for $t\in \er$. Let $u\in W^{1,1}((-1,1)^2, \er^2)$, then we say that $u$ satisfies the $\INV$ condition if the following holds. Let $\phi: \mathbb{S} \mapsto (-1,1)^2$ be an injective Lipschitz continuous embedding of the unit circle with constant $|(\phi\circ\theta)'(t)| = C$ almost everywhere on $\er$ such that $u_{\rceil \phi(\mathbb{S})} \in W^{1,1}(\phi(\mathbb{S}), \er^2)$ (i.e. $u\circ\phi\circ \theta \in W^{1,1}_{\loc}(\er,\er^2)$) then
	\begin{enumerate}
		\item[a)] for almost every $(x,y)$ lying inside the interior of $\phi(\mathbb{S})$ (i.e. lying in the bounded component of $\er^2\setminus \phi(\mathbb{S})$) it holds that
		$$u(x,y)\in \{(w,z): \deg((w,z), u, \varphi(\mathbb{S}))\neq 0\}\cup u(\varphi(\mathbb{S}))$$
		\item[b)] for almost every $(x,y)$ lying outside $\phi(\mathbb{S})$ (i.e. lying in the unbounded component of $\er^2\setminus \phi(\mathbb{S})$) it holds that
		$$u(x,y)\in \{(w,z): \deg((w,z), u, \varphi(\mathbb{S}))= 0\} \cup u(\varphi(\mathbb{S})). $$
	\end{enumerate}
\end{definition}
A very useful result for INV maps is the following. A proof can be found for example in \cite[Lemma 2.5]{DPP}.
\begin{lemma}\label{BadSize}
	Let $1\leq p <2$ and let $u \in W^{1,p}((-1,1)^2,\er^2)$ satisfy the INV condition. Then, there exists a set $S_u\subset (-1,1)^2$ such
	that $\H^{2-p}(S_u) = 0$ and there is a representative of $u$ such that $u_{\rceil Q\setminus S_u}$ is continuous.
\end{lemma}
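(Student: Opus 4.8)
The plan is to reduce the lemma to a degree-theoretic oscillation estimate on small circles — which is the only place where the $\INV$ hypothesis is used — together with a Fubini selection of ``good'' circles and a standard Hausdorff-density argument for the exceptional set. \emph{Step 1 (oscillation bound on good circles).} Fix $x_0\in(-1,1)^2$ and $r>0$ with $\overline{B(x_0,r)}\subset(-1,1)^2$. Since $u\in W^{1,p}\subset W^{1,1}$, Fubini in polar coordinates about $x_0$ gives that for a.e.\ such $r$ the precise representative of $u$ restricted to $\partial B(x_0,r)$ lies in $W^{1,1}(\partial B(x_0,r),\er^2)$, hence is absolutely continuous, so that $u(\partial B(x_0,r))$ is compact and connected with $\H^1(u(\partial B(x_0,r)))\le\int_{\partial B(x_0,r)}|D_{\tau}u|\,d\H^1$. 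Parametrized at constant speed, $\partial B(x_0,r)$ is an admissible curve $\phi(\mathbb{S})$ in Definition~\ref{defINV}, so part a) of the $\INV$ condition forces, for a.e.\ $(x,y)\in B(x_0,r)$, the value $u(x,y)$ to lie in $\{(w,z):\deg((w,z),u,\partial B(x_0,r))\ne0\}\cup u(\partial B(x_0,r))$. This set is contained in the filled set $\er^2\setminus(\text{unbounded component of }\er^2\setminus u(\partial B(x_0,r)))$, which in turn lies in $\overline{\co}\,u(\partial B(x_0,r))$ (the complement of the latter is open, connected, unbounded and disjoint from the curve), and hence has diameter $\le\diam u(\partial B(x_0,r))$. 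Combining this with Hölder on the circle,
$$
\operatorname{ess\,osc}_{B(x_0,r)}u\ \le\ \int_{\partial B(x_0,r)}|D_{\tau}u|\,d\H^1\ \le\ \Big(\int_{\partial B(x_0,r)}|D_{\tau}u|^p\,d\H^1\Big)^{1/p}(2\pi r)^{1-1/p}.
$$

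\emph{Step 2 (selecting the scale and defining $S_u$).} For small $\rho$, $\int_\rho^{2\rho}\big(\int_{\partial B(x_0,s)}|D_{\tau}u|^p\,d\H^1\big)\,ds\le\int_{B(x_0,2\rho)}|Du|^p$, so a positive-length set of radii $s\in(\rho,2\rho)$ is simultaneously ``good'' (the $W^{1,1}$ trace condition holds) and satisfies $\int_{\partial B(x_0,s)}|D_{\tau}u|^p\le 2\rho^{-1}\int_{B(x_0,2\rho)}|Du|^p$. Picking such an $s$ and using $B(x_0,\rho)\subset B(x_0,s)$ in Step~1 yields $\operatorname{ess\,osc}_{B(x_0,\rho)}u\le C\rho^{\,1-2/p}\big(\int_{B(x_0,2\rho)}|Du|^p\big)^{1/p}$; consequently, if $\limsup_{\rho\to0}\operatorname{ess\,osc}_{B(x_0,\rho)}u>0$ then $\limsup_{\rho\to0}\rho^{\,p-2}\int_{B(x_0,\rho)}|Du|^p>0$. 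I would therefore set
$$
S_u:=\Big\{x_0\in(-1,1)^2:\ \limsup_{\rho\to0}\rho^{\,p-2}\!\!\int_{B(x_0,\rho)}\!\!|Du|^p>0\Big\}.
$$

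\emph{Step 3 (the exceptional set is $\H^{2-p}$-null; continuity off it).} Put $\mu:=|Du|^p\,\mathcal{L}^2$, a finite Borel measure on $(-1,1)^2$, and write $S_u=\bigcup_{m}A_{1/m}$ with $A_t=\{x_0:\limsup_{\rho\to0}\mu(B(x_0,\rho))/\rho^{2-p}\ge t\}$. The standard upper-density comparison lemma (see e.g.\ Mattila's book or Evans--Gariepy) gives $\H^{2-p}(A_t)\le Ct^{-1}\mu(A_t)\le Ct^{-1}\mu((-1,1)^2)<\infty$. Since $2-p<2$, a set of finite $\H^{2-p}$-measure is $\mathcal{L}^2$-null, so $\mathcal{L}^2(A_t)=0$; hence $\mu(A_t)=0$ by absolute continuity, and re-applying the lemma $\H^{2-p}(A_t)\le Ct^{-1}\mu(A_t)=0$. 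Thus $\H^{2-p}(S_u)=0$ and a fortiori $\mathcal{L}^2(S_u)=0$. For $x_0\notin S_u$ one has $\operatorname{ess\,osc}_{B(x_0,r)}u\to0$, so the essential range of $u$ on $B(x_0,r)$ shrinks to a single point $u^*(x_0)$, with $|u(y)-u^*(x_0)|\le\operatorname{ess\,osc}_{B(x_0,r)}u$ for a.e.\ $y\in B(x_0,r)$; comparing $u^*(x)$ and $u^*(x')$ for $x,x'\notin S_u$ through a ball contained in a small neighbourhood of both shows $u^*$ is continuous on $(-1,1)^2\setminus S_u$. Extending $u^*$ arbitrarily on $S_u$ gives a representative of $u$ (because $\mathcal{L}^2(S_u)=0$) with the asserted property.

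The step I expect to be the main obstacle is Step~1: one must ensure that the class of circles allowed in Definition~\ref{defINV} (constant-speed Lipschitz parametrization, $W^{1,1}$ trace of $u$) is available for a full-measure set of radii $r$, and that the degree-nonvanishing region is genuinely trapped inside the convex hull of the image circle — this is the only genuinely non-soft point and the sole use of the hypothesis. The density bootstrap in Step~3 ($\H^{2-p}(A_t)<\infty\Rightarrow\mathcal{L}^2(A_t)=0\Rightarrow\mu(A_t)=0\Rightarrow\H^{2-p}(A_t)=0$) is a minor subtlety but otherwise routine, and Step~2 is a textbook annular averaging.
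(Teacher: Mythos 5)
Your argument is correct, and it is essentially the same proof as the one the paper relies on: the paper does not prove this lemma itself but cites \cite[Lemma 2.5]{DPP}, whose argument runs exactly along your lines (the degree/INV oscillation bound on a.e.\ circle, annular averaging to select good radii, and the standard upper-density comparison showing the exceptional set is $\H^{2-p}$-null). Your bootstrap $\H^{2-p}(A_t)<\infty\Rightarrow\mathcal{L}^2(A_t)=0\Rightarrow\mu(A_t)=0\Rightarrow\H^{2-p}(A_t)=0$ is a clean way to finish that step.
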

Note that the property that $u$ is continuous outside a set $S_u$ with $\H^{2-p}(S_u) = 0$ is stronger information than that a $W^{1,p}$ map is $p$-quasi-continuous, since already $\H^{2-p}(S_u) < \infty$ implies that $C_p(S_u)= 0$ where $C_p$ is the capacity defined by taking the infimum over integrals of the type $\int_{\er^2}|\nabla v|^p + |v|^p$. This is especially useful in our context when we consider \cite[Corollary 2.7]{DPP}, i.e.
\begin{thm}\label{HomImpINV}
	Let $u\in \overline{H\cap W_{\id}^{1,p}((-1,1)^2, \er^2)}$, then $u$ satisfies the $\INV$ condition.
\end{thm}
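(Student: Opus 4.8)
The plan is to argue in two stages: first that every homeomorphism in $W^{1,p}_{\id}((-1,1)^2,\er^2)$ satisfies the $\INV$ condition, and then that the $\INV$ condition is stable under weak $W^{1,p}$ convergence. For the first stage, let $h$ be such a homeomorphism and let $\phi(\mathbb{S})$ be an admissible Lipschitz circle as in Definition~\ref{defINV}; write $V$, $W$ for the bounded and unbounded components of $\er^2\setminus\phi(\mathbb{S})$, so $V\subset(-1,1)^2$. By invariance of domain $h$ maps $(-1,1)^2$ homeomorphically onto an open subset of $\er^2$, hence $h(V)$ is open, bounded, connected, with $\partial h(V)=h(\phi(\mathbb{S}))$; since $h(\phi(\mathbb{S}))$ is a Jordan curve, a connected open set whose boundary is all of it must be one of the two complementary components, so $h(V)$ is exactly the bounded one, and $h(W\cap(-1,1)^2)$ avoids it by injectivity. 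Therefore $\deg(\,\cdot\,,h,\phi(\mathbb{S}))$ is $0$ on the unbounded complementary component of $h(\phi(\mathbb{S}))$ and a constant $\pm1$ on $h(V)$ (constant since $V$ is connected, nonzero since $h$ is injective there), which is precisely the pair of inclusions in Definition~\ref{defINV}; as $\phi$ was arbitrary, $h$ is $\INV$.

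For the second stage, let $u_k\in H\cap W^{1,p}_{\id}((-1,1)^2,\er^2)$ with $u_k\rightharpoonup u$ in $W^{1,p}$, and fix an admissible circle $\Gamma_0=\phi(\mathbb{S})$ for $u$. After passing to a subsequence we may assume $M:=\sup_k\int_{(-1,1)^2}|Du_k|^p<\infty$ and, by the Rellich--Kondrachov embedding, $u_k\to u$ in $L^1$ and $\mathcal{L}^2$-a.e. I would foliate a thin two-sided neighbourhood $N$ of $\Gamma_0$ by the level sets $\Gamma_t$, $|t|<\epsilon_0$, of the signed distance to $\Gamma_0$; for $|t|$ small these are Lipschitz Jordan curves. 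The coarea formula gives $\int_{-\epsilon_0}^{\epsilon_0}\int_{\Gamma_t}|Du_k|^p\,d\H^1\,dt=\int_N|Du_k|^p\le M$ (uniformly in $k$, and the analogue holds for $u$), so by Fatou, for a.e.\ $t$ the leaf $\Gamma_t$ is admissible for $u$ and there is a subsequence along which $\sup_k\int_{\Gamma_t}|Du_k|^p\,d\H^1<\infty$; together with $u_k\to u$ in $L^1(\Gamma_t)$ for a.e.\ $t$ (coarea again, after a further extraction) this yields $u_k|_{\Gamma_t}\rightharpoonup u|_{\Gamma_t}$ weakly in $W^{1,p}$ on $\Gamma_t$, hence uniformly on $\Gamma_t$ when $p>1$ by the one-dimensional Sobolev embedding. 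Fixing such a good $t$: for $(w,z)\notin u(\Gamma_t)$ the degree is stable under the uniform perturbation, $\deg((w,z),u_k,\Gamma_t)=\deg((w,z),u,\Gamma_t)$ for $k$ large; since each $u_k$ is a homeomorphism its $\INV$ property forces $\deg(u_k(x,y),u_k,\Gamma_t)\neq0$ for a.e.\ $(x,y)$ inside $\Gamma_t$ and $=0$ for a.e.\ $(x,y)$ in the outer component of $\er^2\setminus\Gamma_t$ intersected with $(-1,1)^2$; letting $k\to\infty$ along the chosen subsequence, using $u_k(x,y)\to u(x,y)$ a.e.\ and the degree stability, gives that $u$ satisfies the $\INV$ inclusions relative to $\Gamma_t$ for a.e.\ $t$.

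It remains to upgrade ``$\INV$ relative to $\Gamma_t$ for a.e.\ $t$'' to ``$\INV$ relative to $\Gamma_0$'', and I expect this to be the main obstacle. The natural move is to let $t\to0$ through admissible leaves from each side, using that each trace $u|_{\Gamma_t}$ is absolutely continuous, so $u(\Gamma_t)$ is $\mathcal{L}^2$-null; the difficulty is that, $u$ being merely Sobolev on the annular region between $\Gamma_t$ and $\Gamma_0$, the degree $\deg((w,z),u,\Gamma_t)$ need not stabilise as $t\to0$ for a fixed $(w,z)\notin u(\Gamma_0)$, so controlling degrees across nested curves requires the degree-theoretic machinery of Müller--Spector type worked out carefully in \cite{DPP}; one also has to check that for $\mathcal{L}^2$-a.e.\ $(x,y)$ the exceptional set $\{t:\,u(x,y)\in u(\Gamma_t)\}$ is $\mathcal{L}^1$-negligible, which is part of the same analysis. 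A secondary difficulty is the borderline exponent $p=1$, where $\{Du_k\}$ need not be equi-integrable, so on $\Gamma_t$ one obtains only weak-$*$ $BV$ convergence rather than uniform convergence; then the degree stability must be argued more delicately, via the $\mathcal{L}^2$-a.e.\ subsequence and lower semicontinuity of the degree.
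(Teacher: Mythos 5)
The paper does not actually prove this statement: it is quoted verbatim from \cite[Corollary 2.7]{DPP}, so there is no in-paper argument to compare against, and your proposal has to stand on its own. Your Stage 1 (homeomorphisms in $W^{1,p}_{\id}$ satisfy $\INV$, via invariance of domain, the Jordan curve theorem and constancy of the degree) is fine, and your Stage 2 strategy (coarea plus Fatou on a collar of $\Gamma_0$, $L^1$ and a.e.\ convergence from Rellich, one-dimensional Sobolev embedding to get uniform convergence of traces on a.e.\ leaf, then stability of the degree) does yield the $\INV$ inclusions of Definition~\ref{defINV} relative to almost every leaf $\Gamma_t$ when $p>1$.

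The genuine gap is exactly the step you flag and then defer: Definition~\ref{defINV} quantifies over \emph{every} admissible Lipschitz circle with $W^{1,1}$ trace, not over almost every leaf of some foliation, so without the passage $t\to 0$ to the prescribed curve $\Gamma_0$ you have not proved the theorem but a strictly weaker a.e.-curve statement; writing that this "requires the degree-theoretic machinery of M\"uller--Spector type worked out carefully in \cite{DPP}" is an appeal to the very result being proved. Note, moreover, that for $p>1$ the obstacle is surmountable by your own device applied to $u$ alone: coarea and Fatou give a sequence $t_j\to 0$ with $\int_{\Gamma_{t_j}}|Du|^p\,d\H^1$ bounded, and since the traces converge in $L^1$ to the (admissible, hence $W^{1,1}$) trace on $\Gamma_0$, a subsequence converges uniformly; then for $(w,z)=u(x,y)\notin u(\Gamma_0)$ one has $(w,z)\notin u(\Gamma_{t_j})$ and $\deg((w,z),u,\Gamma_{t_j})=\deg((w,z),u,\Gamma_0)$ for $j$ large, which transfers the a.e.-leaf conclusion to $\Gamma_0$ — so you should carry this out rather than cite it away. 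Where the argument genuinely breaks, as you note, is $p=1$ (which the theorem covers, since the paper works with $p\in[1,2]$): a bound on $\int_{\Gamma_t}|Du_k|\,d\H^1$ gives only weak-$*$ BV compactness of the traces, uniform convergence and hence your degree-stability step fail, and no fix is offered. As it stands the proposal proves the statement only for $p>1$ modulo the missing $t\to0$ step, and not at all for $p=1$; this is presumably why the present paper simply invokes \cite[Corollary 2.7]{DPP}, where that low-regularity degree analysis is done.
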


In the following we define a multi-function representative of $u\in W^{1,1}((-1,1)^2,\er^2)$ called $u_{\text{multi}}$. The concept makes sense for all $u$ satisfying the $\INV$ condition. Almost exclusively we work with $u$ on $(-1,1)^2\setminus S_u$ where $u_{\text{multi}}(x,y) = \{u(x,y)\}$ for the precise representative of $u$. We are mostly interested in the resulting set operation $u^{-1}$. 
\begin{definition}\label{defIMG}
	Let us call $\theta(t) = (\cos(t),\sin(t))$ for $t\in \er$. Let $u\in W^{1,1}((-1,1)^2, \er^2)$ and let $(x,y) \in (-1,1)^2$. We say $(w,y)\in u_{\text{multi}}(x,y)$ if for every $k\in \en$ there is an injective Lipschitz continuous embedding of the unit circle $\phi_k: \mathbb{S} \mapsto B((x,y), \tfrac{1}{k})\subset (-1,1)^2$ with constant $|(\phi_k\circ\theta)'(t)|$ almost everywhere on $\er$ such that $u\circ\phi_k\circ \theta \in W^{1,1}_{\loc}(\er,\er^2)$) and it holds that
	$$
	(w,z)\in \{(a,b): \deg((a,b), u, \varphi_k(\mathbb{S}))\neq 0\}\cup u(\varphi_k(\mathbb{S})).
	$$ 
	For each $C\subset \er^2$ we define
	$$
	u^{-1}(C) = \{(x,y): u_{\text{multi}}(x,y)\cap C \neq \emptyset\}.
	$$
\end{definition}

The following two lemmas provide a characterization for maps satisfying the $\INV$ condition. We believe that this characterization is already known to experts of the subject, but we provide an independent proof for the first half, for the lack of an easily accessible reference.
\begin{lemma}\label{IDidThisOne}
	Let $u\in W^{1,1}((-1,1)^2, \er^2)$ satisfy the $\INV$ condition. Then the preimage of a closed connected set is a closed connected set.
\end{lemma}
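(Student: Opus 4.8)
The plan is to reconstruct $u^{-1}$ from the topological images of small discs, where elementary degree theory is available, and then to push closedness and connectedness through the $\INV$ condition. Fix, via Lemma~\ref{BadSize}, a set $S_u$ with $\H^1(S_u)=0$ outside of which $u$ is continuous and $u_{\text{multi}}=\{u\}$. For $(x,y)\in(-1,1)^2$ call a small radius $r$ \emph{admissible for $(x,y)$} if $\partial B((x,y),r)$ misses $S_u$ and $u_{\rceil\partial B((x,y),r)}\in W^{1,1}$; by Fubini/coarea almost every small $r$ is admissible. For admissible $r$ set $I_r(x,y):=\{(a,b):\deg((a,b),u,\partial B((x,y),r))\neq0\}\cup u(\partial B((x,y),r))$; it is compact, and it is connected because it consists of $u(\partial B((x,y),r))$ together with the bounded complementary components carrying non-zero degree, whose boundaries lie in $u(\partial B((x,y),r))$.

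The engine is a pair of structural properties of $\INV$ maps which I would invoke as standard (cf.\ \cite{MS} and the $W^{1,p}$ treatment in \cite{DPP}). First, the $\INV$ condition of Definition~\ref{defINV}, stated there for Lipschitz Jordan curves, holds verbatim for every bounded open $U\subset(-1,1)^2$ whose boundary is a finite union of disjoint Lipschitz Jordan curves: almost every point of $U$ is mapped into $\operatorname{im}_T(u,U):=\{(a,b):\deg((a,b),u,\partial U)\neq0\}\cup u(\partial U)$, and almost every point of $(-1,1)^2\setminus\overline U$ into the complementary set $\{(a,b):\deg((a,b),u,\partial U)=0\}\cup u(\partial U)$. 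Second, topological images are monotone, $\operatorname{im}_T(u,U')\subseteq\operatorname{im}_T(u,U)$ whenever $\overline{U'}\subseteq U$, with the companion ``outside'' statement $\operatorname{im}_T(u,U')\subseteq\{(a,b):\deg((a,b),u,\partial U)=0\}\cup u(\partial U)$ whenever $\overline{U'}$ lies in $(-1,1)^2\setminus\overline U$. Granting these, one reads off from Definition~\ref{defIMG} that $u_{\text{multi}}(x,y)\subseteq I_r(x,y)$ for every admissible $r$: a point of $u_{\text{multi}}(x,y)$ belongs to $\operatorname{im}_T(u,V)$ for some Jordan domain $V$ bounded by a Lipschitz circle contained in an arbitrarily small disc about $(x,y)$, hence to $I_r(x,y)$ by monotonicity. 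Conversely $I_r(x,y)$ itself has the form admitted in Definition~\ref{defIMG} using circles of radius $r$, so $u_{\text{multi}}(x,y)=\bigcap\{I_r(x,y):r\ \text{admissible}\}$. Thus $u_{\text{multi}}(x,y)$ is a decreasing intersection of compact continua, hence a compact continuum, and $u_{\text{multi}}$ is upper semicontinuous: if $(x_n,y_n)\to(x,y)$, $p_n\in u_{\text{multi}}(x_n,y_n)$ and $p_n\to p$, then $B((x_n,y_n),r/2)\subseteq B((x,y),r)$ for large $n$, so $p_n\in I_{r/2}(x_n,y_n)\subseteq I_r(x,y)$ by monotonicity and $p\in I_r(x,y)$ for every admissible $r$, i.e.\ $p\in u_{\text{multi}}(x,y)$.

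Closedness of $u^{-1}(C)$ follows at once: for $u^{-1}(C)\ni(x_n,y_n)\to(x,y)$ pick $p_n\in u_{\text{multi}}(x_n,y_n)\cap C$; these lie in the bounded set $I_r(x,y)$ for a fixed admissible $r$ and large $n$, so a subsequence converges to some $p\in C$, and $p\in u_{\text{multi}}(x,y)$ by upper semicontinuity, whence $(x,y)\in u^{-1}(C)$. For connectedness, suppose $u^{-1}(C)=A_1\sqcup A_2$ with $A_1,A_2$ non-empty; they are disjoint and relatively closed in $(-1,1)^2$, so one may pick a bounded open $U$ with $A_1\subseteq U$, $\overline U\cap A_2=\emptyset$, and $\partial U$ a finite union of disjoint smooth Jordan curves missing $S_u$ with $u_{\rceil\partial U}\in W^{1,1}$ (a generic choice, using $\H^1(S_u)=0$ and Fubini). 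Since $\partial U\cap u^{-1}(C)=\emptyset$ and $u_{\text{multi}}=\{u\}$ on $\partial U$, the compact set $u(\partial U)$ is disjoint from $C$; as $C$ is connected it lies in a single component of $\er^2\setminus u(\partial U)$ and $\deg(\cdot,u,\partial U)$ equals a constant $\delta$ on $C$. If $\delta\neq0$, then $C$ is disjoint from the closed set $\{(a,b):\deg((a,b),u,\partial U)=0\}\cup u(\partial U)$, and for every $z\in(-1,1)^2\setminus\overline U$ the inclusion $u_{\text{multi}}(z)\subseteq I_\rho(z)$ (small admissible $\rho$) together with outside monotonicity forces $u_{\text{multi}}(z)\cap C=\emptyset$; this contradicts $\emptyset\neq A_2\subseteq(-1,1)^2\setminus\overline U$. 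If $\delta=0$, the symmetric argument using the ``inside'' $\INV$ statement and inner monotonicity gives $u_{\text{multi}}(z)\cap C=\emptyset$ for every $z\in U$, contradicting $A_1\neq\emptyset$. Hence $u^{-1}(C)$ is connected.

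The main obstacle is exactly the pair of structural facts of the second paragraph: turning the pointwise $\INV$ condition into its form for bounded Lipschitz domains and into the monotonicity (and outside monotonicity) of topological images. This is where $\INV$ is genuinely used; a self-contained treatment would run through degree additivity across annular regions and exhaustions of $U$ by Jordan domains, while keeping careful track of the $\H^1$-null set $S_u$ and of the representatives of $u$ on the various curves — routine but somewhat lengthy bookkeeping. Everything else is point-set topology and classical degree theory.
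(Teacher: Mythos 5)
Your argument is correct at essentially the same level of rigor as the paper's, and the engine is the same: the degree of the connected set $C$ with respect to an admissible curve system enclosing one piece of a putative disconnection of $u^{-1}(C)$ is constant on $C$, and both the value zero and a non-zero value are excluded by the inside/outside halves of the $\INV$ condition. What differs is the packaging. The paper picks two components of $u^{-1}(C)$ at positive distance, encloses them by two disjoint admissible Lipschitz circles, and works only with Definition~\ref{defINV} for single Jordan curves; you instead set up $u_{\text{multi}}(x,y)=\bigcap_r I_r(x,y)$, prove it is an upper semicontinuous compact-continuum-valued map (which makes the closedness step cleaner and more careful than the paper's, where the convergence of the $y_k$ and the passage to the limiting circle are tacit), and then separate the pieces by a possibly multiply connected domain $U$, running a clean $\delta=0$ versus $\delta\neq 0$ dichotomy. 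The price is that you must invoke two black boxes: the extension of $\INV$ to domains bounded by finitely many disjoint Lipschitz Jordan curves, and the inner/outer monotonicity of topological images. The monotonicity statements are exactly what the paper uses silently when it deduces $C\subset\{\deg(\cdot,u,\phi_1)\neq 0\}$ and $u_{\text{multi}}(E_2)\cap C=\emptyset$, so making them explicit is a gain rather than a gap; but the multiply connected extension is an extra unproven step that you could avoid entirely by enclosing your piece $A_1$ with a single admissible Jordan curve, exactly as the paper does, at no loss of generality. Two minor points to tighten: in the upper semicontinuity argument, $r/2$ need not be admissible for $(x_n,y_n)$, so replace it by a nearby admissible radius $\rho_n$ with $\overline{B((x_n,y_n),\rho_n)}\subset B((x,y),r)$; and your construction of $U$ with finitely many boundary curves, just like the paper's assertion that distinct components of $u^{-1}(C)$ are at positive distance, really requires the two pieces of the separation to be at positive distance (automatic when $u^{-1}(C)$ is compact, or when one separates into two relatively clopen pieces rather than components) --- both write-ups gloss this identical point, so it does not distinguish yours from the paper's.
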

\begin{proof}
	First we prove that $u^{-1}(C)$ is closed. Let $C\subset (-1,1)^2$ be closed and connected. Let $x_k\in u^{-1}(y_k)$ for some $y_k\in C$ such that $x_k \to x \in (-1,1)^2$. We assume that $y_k\to y \in C$. For each $k$ there exists some curve $\phi_{k}(\mathbb{S})\subset B(x_k,\tfrac{1}{k})$ enclosing $x_k$ such that either $y_k\in u\circ\phi_k(\mathbb{S})$ or $\deg(y_k, u,\phi_k(\mathbb{S}))\neq 0$. Let $\phi(\mathbb{S})$ be any curve enclosing $x$, then there is some $k_0$ such that $\phi_k(\mathbb{S})$ is entirely enclosed by $\phi$ for all $k\geq k_0$. This means also that either $y\in u\circ\phi(\mathbb{S})$ or $\deg(y, u,\phi(\mathbb{S}))\neq 0$. Therefore $y\in u(x)$ and so $x\in u^{-1}(C)$. Thus, we see $u^{-1}(C)$ is closed.
	
	Let us assume that $u^{-1}(C)$ has two distinct components $E_1$ and $E_2$. Because $u^{-1}(C)$ is closed, necessarily $3\rho = \min\{\dist(E_1,u^{-1}(C)\setminus E_1), \dist(E_2,u^{-1}(C)\setminus E_2)\}>0$. Then it is possible to find injective Lipschitz curves $\phi_1,\phi_2:\mathbb{S}\to \er^2$ such that $u\circ\phi_i\circ\theta\in W^{1,1}_{\loc}(\er, \er^2)$, $\phi_i(\mathbb{S})\subset [E_i+B(0,\rho)]\setminus E_i$ and $E_i$ is in the interior of $\phi_i(\mathbb{S})$. We have that $\phi_1(\mathbb{S})\cap \phi_2(\mathbb{S}) = \emptyset$. Then either $\inter\phi_1(\mathbb{S})\cap \inter\phi_2(\mathbb{S}) = \emptyset$ or without loss of generality $\inter\phi_1(\mathbb{S})\subset \inter\phi_2(\mathbb{S})$. In either case we may assume that $E_2$ lies in the unbounded component of $\er^2\setminus \phi_1(\mathbb{S})$. Since $u\circ \phi_1$ is continuous and $u\circ\phi_1(\mathbb{S})\cap u^{-1}(C) = \emptyset$, the $\INV$ condition gives that $C \subset \{(w,z): \deg((w,z), u, \phi(\mathbb{S}))\neq 0\}$ and therefore $u_{\text{multi}}(E_2) \cap C = \emptyset$, which is a contradiction. 
\end{proof}
The second of the lemmas is \cite[Lemma 2.8]{DPP}. 
\begin{lemma}\label{StolenFromAldo}
	Let $u\in W^{1,1}_{\id}((-1,1)^2, \er^2)$ be such that the preimage of a point is a closed connected set. Then $u$ satisfies the $\INV$ condition.
\end{lemma}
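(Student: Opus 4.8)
The plan is to read off the $\INV$ condition directly from the connectedness of point-preimages, the governing principle being that a connected fibre of $u$ which is disjoint from an admissible test circle must lie \emph{entirely} inside or \emph{entirely} outside that circle. Fix an admissible Jordan curve $\phi(\mathbb S)\subset(-1,1)^2$ with $u_{\rceil\phi(\mathbb S)}\in W^{1,1}(\phi(\mathbb S),\er^2)$; write $\Omega$ for its interior and $\Sigma:=u(\phi(\mathbb S))$, which is compact and connected since $u\circ\phi$ is continuous. Unwinding Definition~\ref{defINV}, it suffices to prove the pointwise statement: for every $w\notin\Sigma$ with $E_w:=u^{-1}(\{w\})\neq\emptyset$, one has $\deg(w,u,\phi(\mathbb S))\neq0$ whenever $E_w$ meets $\Omega$, and $\deg(w,u,\phi(\mathbb S))=0$ whenever $E_w$ meets the exterior of $\phi(\mathbb S)$. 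Indeed $w\notin\Sigma$ forces $E_w\cap\phi(\mathbb S)=\emptyset$, so connectedness puts $E_w$ wholly in $\Omega$ or wholly in the exterior; the two conclusions cannot conflict, and together they give both clauses of Definition~\ref{defINV} (in a stronger ``for every point'' form, consistent with Lemma~\ref{IDidThisOne}). I would first record two preparatory facts, obtained by coarea/slicing in the spirit of Proposition~\ref{ArrivalGrids} and Remark~\ref{Alibistic}: (i) around a fixed centre there is a sequence of radii tending to $0$ on whose circles $u$ has oscillation tending to $0$, hence degree $0$ about any point that stays away from them; and (ii) if $V\subset(-1,1)^2$ is an annular region bounded by two good Jordan curves (one enclosing the other) with $u^{-1}(\{w\})\cap\overline V=\emptyset$, then $w$ has the same degree with respect to either boundary curve.

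Granting (i) and (ii): if $E_w\subset(-1,1)^2\setminus\overline\Omega$, exhaust $\Omega$ from inside by a nested sequence of good Jordan curves; each region between consecutive curves is $w$-free, so by (ii) the degree of $w$ is constant along the exhaustion, and letting the curves collapse to a point and applying (i) this common value is $0$. If instead $E_w\subset\Omega$, invoke the boundary condition: since $u=\id$ near $\partial(-1,1)^2$, the value $w$ cannot lie on $\partial(-1,1)^2$ (else $E_w\ni w\notin\Omega$), and $\deg(w,u,\partial(-1,1)^2)$ equals $1$ if $w\in(-1,1)^2$ and $0$ otherwise; when $w\in(-1,1)^2$ the annulus $(-1,1)^2\setminus\overline\Omega$ is $w$-free (it avoids $E_w\subset\Omega$ and, by the previous remark, avoids $\partial(-1,1)^2$), so (ii) yields $\deg(w,u,\phi(\mathbb S))=\deg(w,u,\partial(-1,1)^2)=1\neq0$. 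The only remaining configuration is $E_w\subset\Omega$ with $w\notin\overline{(-1,1)^2}$; here I would note that the same failure of the pointwise statement occurs already for the test curve $\phi=\partial(-1,1)^2$, which reduces matters to excluding the following: a value covered from deep inside $(-1,1)^2$ while the identity loop $u_{\rceil\partial(-1,1)^2}$ does not wind around it. In that situation the coverage would have to cancel on nested good curves, forcing the fibre $E_w$ (or a fibre over a nearby value) to split into separated pieces — contradicting connectedness.

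The genuinely delicate point is this last ``no pancaking'' step: one must show that a $W^{1,1}$ map all of whose point-preimages are connected cannot attain a value at which the relative topological degree vanishes. Making this rigorous requires controlling how a connected fibre sits with respect to the level sets of the (off-$\Sigma$ locally constant) degree function, and ruling out the cancelling-sheet structure that a vanishing degree would entail — essentially the argument of Lemma~\ref{IDidThisOne} run in reverse. I would carry out this analysis, together with the routine passage from generic admissible test curves to arbitrary ones, along the lines of \cite[Lemma~2.8]{DPP}.
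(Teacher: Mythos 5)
There is a genuine gap, and it sits exactly where the work of this lemma lies. Note first that the paper itself gives no argument here: the statement is quoted verbatim from \cite[Lemma 2.8]{DPP}. Your sketch, however, ends by deferring its decisive step to that same citation, so it cannot count as a proof of the lemma. Concretely, the case you label as delicate --- the fibre $E_w$ meets the interior of $\phi(\mathbb{S})$ while the reference degree could vanish (in particular $w\notin\overline{(-1,1)^2}$, where $\deg(w,u,\partial(-1,1)^2)=0$, so your ``transfer the boundary degree through a fibre-free annulus'' trick gives $0$, not $1$) --- is the actual content of the implication ``connected fibres $\Rightarrow \INV$''. Saying it is ``essentially the argument of Lemma~\ref{IDidThisOne} run in reverse'' is not an argument: Lemma~\ref{IDidThisOne} assumes $\INV$ and uses the degree to separate two components of a preimage, whereas here you must manufacture, from $\deg(w,u,\phi(\mathbb{S}))=0$ together with coverage of $w$ from inside, two separated pieces of a \emph{single} fibre; your phrase ``the fibre $E_w$ (or a fibre over a nearby value) to split'' shows the mechanism is unspecified, since connectedness is only assumed fibre by fibre and nothing in the sketch explains which fibre gets disconnected and why.

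The preparatory claims are also not established, and (i) is false as stated: for a fixed centre that is a cavitation point --- e.g.\ the centre of the map $u_{r,R}$ of Lemma~\ref{HaHaJoke}, which satisfies the hypotheses of the present lemma --- every sufficiently small circle has oscillation bounded below by a fixed constant, so ``oscillation tending to $0$ along some radii'' only holds for almost every centre (Lebesgue points of $|Du|$), and you would need to say that you choose such a centre. Claim (ii) (degree transfer across a fibre-free annulus for a merely $W^{1,1}$, possibly discontinuous map) is plausible via a grid subdivision and additivity of winding numbers, but it is asserted, not proved, and it is not a soft fact. Finally, the dichotomy ``$E_w$ wholly inside or wholly outside'', on which your whole case analysis rests, uses $E_w\cap\phi(\mathbb{S})=\emptyset$; but with the generalized preimage of Definition~\ref{defIMG} --- the only reading under which the hypothesis of the lemma makes sense for a discontinuous $W^{1,1}$ map --- the condition $w\notin u(\phi(\mathbb{S}))$ does not force the fibre to miss the curve (a cavitation point lying on $\phi(\mathbb{S})$ carries in its multifunction image many values never attained by the trace $u\circ\phi$), and that configuration is not covered by any of your cases. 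In short: the degree-theoretic blueprint is reasonable, but the lemma's core step and the supporting facts are missing, so the proposal does not prove the statement.
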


The `no-crossing' condition was introduced in \cite{DPP}. The following is a formulation for Sobolev maps.
\begin{definition}[NC functions]\label{defNC}
	Let $u\in W^{1,p}_{\id}([-1,1]^2,\er^2)$. We say that $u$ satisfies the \emph{no-crossing condition} if the following holds: 
	
	Let $M\in \en$ and let $\{\gamma_i\}_{i=1}^M$ be a general grid called $\Gamma  = \bigcup_{i=1}^M \gamma_i([0,1])$ such that $u_{\rceil \Gamma} \in W^{1,p}(\Gamma, \er^2)$. Then, for every $\epsilon>0$ there exists a continuous and injective function $v:\Gamma\to\er^2$ such that $\|v-u\|_{L^\infty(\Gamma)}<\epsilon$.
\end{definition}
We want to use this concept, but we prefer to formulate a no crossing condition which is easier to prove. We now define our version of the no crossing condition and below we show that the two concepts are equivalent.
\begin{definition}\label{StayCalmPlease}
	Let $u\in W^{1,p}_{\id}([-1,1]^2,\er^2)$. We say that $u$ satisfies the no-crossing condition if there exists $\mathfrak{G} = \{\Gamma_k\}_{k\in \en},$ a suitable grid system for $u$ such that for each $k$ there is a sequence of continuous injective
	$\phi_{k,m} : \Gamma_k \to \er^2$ such that $\phi_{k,m} \sto u_{\rceil \Gamma_k}$ on $\Gamma_k$ as $m\to \infty$.
\end{definition}
\begin{remark}\label{WeakRemark}
	With respect to Proposition~\ref{PiecewiseLinearization} the condition that \emph{there are continuous, injective $\phi_{k,m} : \Gamma_k \to \er^2$ such that $\phi_{k,m} \sto u_{\rceil \Gamma_k}$ on $\Gamma_k$}, in fact, implies that \emph{there are continuous, piecewise linear, injective $\phi_{k,m} \in W^{1,p}(\Gamma_k, \er^2)$ with $\phi_{k,m} \deb u_{\rceil \Gamma_k}$ in $W^{1,p}(\Gamma, \er^2)$}.
\end{remark}
Formally the no-crossing condition of Definition~\ref{StayCalmPlease} appears to be much weaker than the condition with the same name from Definition~\ref{defNC}. In Proposition~\ref{NoCrossEquivalence} we use the equivalence of the weak and strong closures of homeomorphisms shown in \cite[Theorem~B]{DPP} to prove that the conditions are in fact equivalent.

\begin{prop}\label{NoCrossEquivalence}
	Let $u\in W^{1,p}_{\id}([-1,1]^2,\er^2)$  and $S_u \subset (-1,1)^2$ be such that $\H^1(S_u) = 0$ and $u_{\rceil [-1,1]^2 \setminus S_u}$ is continuous. Then $u\in \overline{H\cap W^{1,p}_{\id}((-1,1)^2, \er^2)}$ if and only if $u$ satisfies the no-crossing condition of Definition~\ref{StayCalmPlease}.
\end{prop}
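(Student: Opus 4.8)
\emph{($\Rightarrow$).} This direction is soft. If $u\in\overline{H\cap W^{1,p}_{\id}((-1,1)^2,\er^2)}$, I fix homeomorphisms $h_j\in H\cap W^{1,p}_{\id}((-1,1)^2,\er^2)$ with $h_j\to u$ in $W^{1,p}((-1,1)^2,\er^2)$ (strong convergence is available since, by \cite[Theorem~B]{DPP}, the weak sequential and strong closures coincide). By Fubini, $x\mapsto\|h_j(x,\cdot)-u(x,\cdot)\|_{W^{1,p}((-1,1))}\to0$ in $L^p(\mathcal L^1)$ and likewise in the other variable, so after passing to a subsequence (still written $h_j$) I obtain full-measure sets $\widehat G_x\subset G_x$, $\widehat G_y\subset G_y$ of lines along which $h_j\to u$ in $W^{1,p}$. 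Now I run the inductive construction of Proposition~\ref{GridExist}, at each step choosing the new coordinate inside $E^1_{k,i}\cap\widehat G_x$ (resp. $E^2_{k,j}\cap\widehat G_y$); this is nonempty because $\widehat G_x$ has full measure while $\mathcal L^1(E^1_{k,i})>0$. The outcome is a suitable grid system $\mathfrak G=\{\Gamma_k\}$ for $u$ on whose lines, additionally, $h_j\to u$ in $W^{1,p}$; a diagonal argument over the countably many lines makes this hold on every $\Gamma_k$. Since $W^{1,p}(\Gamma_k)$-convergence entails uniform convergence and $h_j|_{\Gamma_k}$ is continuous and injective, the maps $\phi_{k,m}:=h_m|_{\Gamma_k}$ witness the no-crossing condition of Definition~\ref{StayCalmPlease}.

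\emph{($\Leftarrow$).} Here I essentially reassemble the construction of \cite{DPP}. After the standard reduction I may assume $u=\id$ near $\partial[-1,1]^2$. Let $\mathfrak G=\{\Gamma_k\}$ and $\phi_{k,m}\sto u|_{\Gamma_k}$ be as in Definition~\ref{StayCalmPlease}, and fix $k$ so large that the outer ring of grid rectangles lies where $u=\id$. Feeding $\phi=u|_{\Gamma_k}$ and $\phi_\delta$ chosen among the $\phi_{k,m}$ into Proposition~\ref{PiecewiseLinearization} yields continuous, piecewise linear, injective $\gamma_{k,l}\sto u|_{\Gamma_k}$ with $\limsup_l\int_{\Gamma_k}|D_\tau\gamma_{k,l}|^p\le\int_{\Gamma_k}|D_\tau u|^p$ and $\{|D_\tau\gamma_{k,l}|\}_l$ uniformly integrable. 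On each grid rectangle $R$ I extend $\gamma_{k,l}|_{\partial R}$ to a finitely piecewise affine homeomorphism $h^R_{k,l}$ of $R$ onto the Jordan domain it encloses by Theorem~\ref{PExtension} (after an affine change of variables onto a $\tilde Q_r$; grid rectangles have aspect ratio $<4$, so all constants stay uniform), obtaining $\int_R|Dh^R_{k,l}|^p\le C2^{-k}\int_{\partial R}|D_\tau\gamma_{k,l}|^p\,d\H^1$; on the outer ring I simply put $h^R=\id$. Gluing the $h^R_{k,l}$ along the shared grid segments produces $v_{k,l}\colon[-1,1]^2\to[-1,1]^2$, which is a homeomorphism because $\gamma_{k,l}$ is injective on $\Gamma_k$ (the planar extension step of \cite[Proposition~3.8]{DPP}); it equals $\id$ near $\partial[-1,1]^2$, so $v_{k,l}\in H\cap W^{1,p}_{\id}((-1,1)^2,\er^2)$.

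Summing the rectangle estimates, and noting each interior segment of $\Gamma_k$ is shared by two rectangles, gives $\int_{[-1,1]^2}|Dv_{k,l}|^p\le C2^{-k}\big(2\int_{\Gamma_k}|D_\tau\gamma_{k,l}|^p+C_0\big)$; letting $l\to\infty$ and invoking the suitable-grid inequalities \eqref{CkGrid1}--\eqref{CkGrid2} summed over the $\sim2^k$ lines — whose defining strips overlap boundedly — bounds this by $C'\int_{(-1,1)^2}|Du|^p+C''$, \emph{uniformly in $k$}. Choosing $l(k)$ with $\|\gamma_{k,l(k)}-u\|_{L^\infty(\Gamma_k)}<2^{-k}$ and setting $w_k:=v_{k,l(k)}$, the sequence $\{w_k\}$ is bounded in $W^{1,p}_{\id}$ (with equi-integrable derivatives when $p=1$, inherited through Corollary~\ref{SpecialExtension}). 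Moreover $w_k\to u$ $\mathcal L^2$-a.e.: for $x\notin S_u$, the grid rectangle $R_k\ni x$ has $\diam R_k\le2^{3-k}\to0$, its boundary $\partial R_k\subset\Gamma_k$ avoids $S_u$, so continuity of $u$ off $S_u$ forces $\diam u(\partial R_k)\to0$; since $w_k(R_k)$ is the Jordan domain bounded by $\gamma_{k,l(k)}(\partial R_k)$, which is $2^{-k}$-close to $u(\partial R_k)$, both $u(x)$ and $w_k(x)$ lie in a set shrinking to $\{u(x)\}$, and $\mathcal L^2(S_u)=0$. Extracting a weakly convergent subsequence in $W^{1,p}$ (reflexivity for $p>1$, Dunford--Pettis for $p=1$) and identifying its limit with the a.e.\ limit, $w_k\rightharpoonup u$ in $W^{1,p}_{\id}$. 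Hence $u$ lies in the weak sequential closure of $H\cap W^{1,p}_{\id}$, which by \cite[Theorem~B]{DPP} is $\overline{H\cap W^{1,p}_{\id}((-1,1)^2,\er^2)}$.

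The forward implication is routine; the work is in the converse, and I expect it to concentrate in two coupled estimates. First, making the energy bound independent of $k$: this succeeds only because the $2^{-k}$ gained from the bounded extension of \cite{HP}/\cite{ER} exactly offsets the $2^{k}$ lost in \eqref{CkGrid1}--\eqref{CkGrid2}. Second, the a.e.\ convergence $w_k\to u$, which requires the oscillation of $u$ on the shrinking boundaries $\partial R_k$ to vanish — precisely where the hypotheses $\H^1(S_u)=0$ and continuity of $u$ off $S_u$ are used. (For $p=1$ the equi-integrability bookkeeping carried across these two steps is the fiddly part.)
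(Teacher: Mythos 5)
Your converse follows essentially the paper's own route: piecewise-linearize the injective approximations via Proposition~\ref{PiecewiseLinearization} (Remark~\ref{WeakRemark}), extend rectangle-by-rectangle with the Hencl--Pratelli/Radici extension (Theorem~\ref{PExtension}, Corollary~\ref{SpecialExtension}), glue to homeomorphisms, obtain a $k$-independent energy bound from the cancellation of the $2^{-k}$ in the extension estimate against the $2^{k}$ in \eqref{CkGrid1}--\eqref{CkGrid2}, prove convergence to $u$, and conclude by weak convergence together with \cite[Theorem~B]{DPP}. You genuinely deviate in two places. First, for the forward implication the paper simply restricts the NC condition of Definition~\ref{defNC} (i.e.\ \cite[Theorem~A]{DPP}) to the suitable grids of Proposition~\ref{GridExist}, whereas you re-derive the injective approximations from strong convergence of the homeomorphisms (available by \cite[Theorem~B]{DPP}) via Fubini, choosing the grid lines of Proposition~\ref{GridExist} inside the full-measure set of lines along which $h_j\to u$ in $W^{1,p}$; this is a correct, more self-contained route that trades Theorem~A for Theorem~B plus a standard selection argument. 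Second, for the convergence $h_{k,m(k)}\to u$ the paper runs a counting argument over ``bad'' rectangles with large boundary oscillation, using the line integrals and an $\INV$-type bound \eqref{BigOscillation} to control $u$ inside rectangles, whereas you prove pointwise convergence off $S_u$ from the shrinking boundaries $\partial R_k\subset\Gamma_k$ and continuity of $u$ off $S_u$, then use boundedness; your variant actually exploits the hypothesis on $S_u$ stated in the proposition and avoids invoking $\INV$ for $u$, which is not among the hypotheses of this direction. One small inaccuracy: Definition~\ref{StayCalmPlease} hands you the grid system, so you may not assert that $\partial R_k$ avoids $S_u$; this is harmless, since $\H^1(S_u)=0$ makes $\partial R_k\setminus S_u$ dense in $\partial R_k$, and $u_{\rceil\Gamma_k}$ is continuous on the grid and agrees off $S_u$ with the representative that is continuous at the point $x\notin S_u$, so $\diam u(\partial R_k)\to 0$ still follows; with that patch the argument is complete.
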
 
\begin{proof}
	The `only if' part of the claim follows straight forwardly. By Proposition~\ref{GridExist} we have the existence of suitable $k$-grids $\Gamma_k$ for $u$ for each $k\in \en$. Let us choose some fixed $k\in \en$. The no-crossing condition of Definition~\ref{defNC} gives injective continuous $\tilde{\phi}_{k,m} \sto u_{\rceil\Gamma_k}$ on $\Gamma_k$ as $m\to \infty$. Thus, we satisfy Definition~\ref{StayCalmPlease}.
	
	On the other hand, let us assume that the condition of Definition~\ref{StayCalmPlease} is satisfied. Let $\Gamma_k$ be a suitable $k$-grid for $u$. We have continuous injective $\phi_{k,m} \sto u_{\rceil \Gamma_k}$ on $\Gamma_k$ and by Remark~\ref{WeakRemark} also $\phi_{k,m} \deb u_{\rceil \Gamma_k}$ in $W^{1,p}(\Gamma_k ,\er^2)$. Employing a uniformly bi-Lipschitz linear change of variables we use Theorem~\ref{PExtension} on each $K_{k,i,j} = [x_{k,i},x_{k,i+1}]\times[y_{k,j}, y_{k,j+1}]$, rectangle of $\Gamma_k$. We get a finitely piecewise affine homeomorphism $h_{k,m}$ with
	$$
	\int_{(-1,1)^2}|Dh_{k,m}|^p \leq C2^{-k}\sum_{i,j=0}^{2^k-1}\int_{\partial K_{k,i,j}} |D_{\tau}\phi_{k,m}|^p \ d\H^1
	$$
	Now we apply the \eqref{CkGrid1}, \eqref{CkGrid2} from the definition of a $k$-grid to get
	$$
	\int_{(-1,1)^2}|Dh_{k,m}|^p \leq C\int_{(-1,1)^2}|Du|^p
	$$ 
	for all $k$ and $m$, and $C$ independent of parameters. This suffices to prove that $h_{k,m}$ is bounded in $W^{1,p}$ and (thanks to the boundedness of our maps) it suffices to find a sequence $m_k \to \infty$ such that $h_{k,m_k} \to u$ in $L^1$ in order to prove $h_{k,m_k} \deb u$ in $W^{1,p}$.
	
	Let $\epsilon >0$ be fixed and arbitrary. We have $\phi_{k,m} \sto u_{\rceil \Gamma_k}$ as $m\to \infty$. Therefore, for each $k\in \en$, we find an $m(k, \epsilon)$, such that $|\phi_{k,m(k, \epsilon)}(z) - u_{\rceil \Gamma_k}(z)|< \epsilon$ for all $z\in \Gamma_k$. By the integrability of  $|Du|$ we find a $k\in \en$ such that
	$$
	\int_{A} \int_{-1}^{1}|\partial_1u(x,y)| \, dx dy < \epsilon^2
	$$
	and
	$$
	\int_{A} \int_{-1}^{1}|\partial_2u(x,y)| \, dydx  < \epsilon^2
	$$
	for any $A\subset (-1,1)$ with $\mathcal{L}^1(A)\leq 2^{-k}$. By \eqref{CkGrid1} and \eqref{CkGrid2} with $q=1$, we may assume that
	$$
	\int_{-1}^1|\partial_1u(x,y_{k,j})| dx \leq C2^{k}\int_{y_{k,j}-2^{-k-4}}^{y_{k,j}+2^{-k-4}} \int_{-1}^{1}|Du|(x,y) \, dx dy  <C2^k\epsilon^2
	$$
	and 
	$$
	\int_{-1}^1|\partial_2u(x_{k,i},y) |dy \leq C2^{k}\int_{x_{k,i}-2^{-k-4}}^{x_{k,i}+2^{-k-4}} \int_{-1}^{1}|Du|(x,y) \, dy dx  <C2^k\epsilon^2.
	$$
	
	We have that
	$$
	\int_{-1}^1|\partial_2u(x_{k,i},y) |dy= \sum_{j=0}^{2^k-1} \int_{-y_{k,j}}^{y_{k,j+1}}|\partial_2u(x_{k,i},y) |dy<C2^k\epsilon^2
	$$
	it follows that there are at most $\left \lfloor {C2^k\epsilon}\right \rfloor$ intervals $[y_j, y_{j+1}]$ such that 
	$$
	\int_{y_{j}}^{y_{j+1}}|\partial_2u(x_{k,i},y) |> \epsilon.
	$$
	This means that for any fixed $i$ the number of rectangles $K_{k,i,j}$  with a vertical side whose image has length greater than $\epsilon$ is at most $\left \lfloor{C2^k\epsilon}\right \rfloor$, where $\left \lfloor{a}\right \rfloor$ denotes the integer part of $a$.
	The same holds for horizontal rows. The number of rows and columns is $2^k$ each having height resp. width approximately $2^{-k}$. Then  there exists a $C>0$ such that
	\begin{equation}\label{SmallOscillation}
		\text{ there exist at most $C2^{2k}\epsilon$ rectangles $K_{k,i,j}$ of $\Gamma_k$ such that }\diam u_{\rceil \partial K_{k,i,j}} \geq 4\epsilon.
	\end{equation}
	On the these \emph{bad} rectangles $K_{k,i,j}$ of $\Gamma_k$ we can use the following estimate which is always true thanks to $u\in W^{1,p}_{\id}((-1,1)^2, \er^2)$ and $u$ satisfies the $\INV$ condition
	\begin{equation}\label{BigOscillation}
		\diam u(\partial K_{k,i,j}) < 4.
	\end{equation}
	
	In the following we use $|h_{k,m(k, \epsilon)}(x,y) - u(x,y)|< 12\epsilon$ on \emph{good} rectangles, for which it holds that $\diam u(K_{k,i,j}) \leq 4\epsilon$. On the \emph{bad} rectangles $K_{k,i,j}$ where $\diam u(K_{k,i,j}) \geq \epsilon$  we use the fact that $\mathcal{L}^2(K_{k,i,j})\leq C2^{-2k}$, the estimate from \eqref{SmallOscillation} and \eqref{BigOscillation}. Altogether we get
	$$
	\|h_{k,m(k,\epsilon)} - u\|_{L^1((-1,1)^2)} \leq \sum_{i=0}^{M_{k}^x-1} \sum_{j=0}^{M_{k}^y-1}\|h_{k,m(k,\epsilon)} - u\|_{L^1(K_{k,i,j})} \leq C\epsilon + C\epsilon.
	$$
	Then choosing an infinitesimal sequence $\epsilon_n$ we find a sequence of homeomorphisms $h_{k_n, m_n} \to u$ in $L^1((-1,1)^2)$ with $\|Dh_{k_n,m_n}\|_p$ bounded and so $h_{k_n,m_n}\deb u$ in $W^{1,p}((-1,1)^2)$. By \cite[Theorem~B]{DPP} (i.e. the weak sequential closure of Sobolev homeomorphisms equals the strong closure) and \cite[Theorem~A]{DPP} (i.e. a limit of Sobolev homeomorphisms satisfies the NC condition) then $u$ satisfies the no-crossing condition of Definition~\ref{defNC}.
\end{proof}

\begin{remark}
	The convergence in $L^1$ argument in the proof above follows the arguments of \cite[Section 4.2]{CKR}.
\end{remark}

A strictly weaker condition than the no-crossing condition is the so-called NCL condition introduced in \cite{CPR}.
\begin{definition}[NCL functions]\label{defNCL}
	Let $u\in W^{1,1}_{\id}((-1,1)^2,\er^2)$. We say that the function $u$ satisfies the \emph{no-crossing on lines (NCL)} condition if for every injective and Lipschitz continuous $\phi:((0,1)) \to (-1,1)^2$ with $|\phi'|$constant almost everywhere on $(0,1)$ such that $u\circ\phi \in W^{1,1}((0,1),\er^2)$ and for every $\epsilon>0$ there exists a continuous and injective function $v:(0,1)\to(-1,1)^2$, such that $\|u\circ\phi-v\|_{L^\infty(0,1)}<\epsilon$.
\end{definition}

\begin{remark}
	The above condition is seemingly stronger than the condition NCLoop, where we assume that $\phi$ is an injective and Lipschitz mapping of $\mathbb{S}$. On the other hand, for mappings in $W^{1,1}_{\id}((-1,1)^2,\er^2)$ it is not hard to prove that these conditions are in fact equivalent. NCLoop implies NCL by simply using half of $\mathbb{S}$ to draw the original curve we were interested in and then joining up the ends with the other half. Given an injective Lipschitz loop $\phi:\mathbb{S} \to \er^2$ we prove that $u$ can be `injectified' on $\phi(\mathbb{S})$ by injectifying it on $\tilde{\phi}$ which is a curve that starts at $\partial(-1,1)^2$ goes to a point on $\phi(\mathbb{S})$ goes almost all the way around $\phi(\mathbb{S})$ and then follows the first part of $\tilde{\phi}$ back to the boundary. Since we may assume that $u$ is equal to the identity on a neighborhood of the boundary, the existence of injective approximations of $u_{\rceil \tilde{\phi}([0,1])}$ suffices to find injective approximations of $u_{\rceil \phi(\mathbb{S})}$.
\end{remark}

We refer to the following property as quasi-monotonicity. We prove that it is equivalent with the NC condition.
\begin{definition}\label{DefQuasiMonotne}
	Let $X,Y\subset \er^2$ be compact connected sets and let $u:X\to Y$ and $S_u \subset X$ with $\H^1(S_u) = 0$ be such that $u$ is continuous on $X\setminus S_u$. We say that $u$ is a quasi-monotone map from $X$ onto $Y$ if for every $\delta>0$ there exists an open set $U_{\delta}$ satisfying $\H^1_{\delta}(U_{\delta})<\delta$ and a monotone map $v_{\delta}$ (in the sense of Definition~\ref{defMonotone}) of $X$ onto $Y$ such that $u_{\rceil X\setminus U_{\delta}} = [v_{\delta}]_{\rceil X\setminus U_{\delta}}$.
\end{definition}

We call the following the QM condition. We prove that it is equivalent with the NC condition.
\begin{definition}[Quasi-monotone condition]\label{defQM}
	Let $u\in W^{1,1}_{\id}((-1,1)^2,\er^2)$ satisfy the $\INV$ condition and for $\delta >0$ let $U_{\delta}$ be open sets satisfying $\H^1_{\delta}(U_{\delta})<\delta$ such that $u$ is continuous on $(-1,1)^2\setminus U_{\delta}$. We say that $u$ satisfies the \emph{quasi-monotone (QM)} condition if for every $r>0$, for every $\delta>0$ and for every pair of disjoint closed connected sets $C_1,C_2\subset (-1,1)^2$ such that $(-1,1)^2\setminus (C_1\cup C_2)$ is simply connected, there exists a pair of Jordan domains $V_1,V_2 \subset \er^2$ such that 
	\begin{enumerate}
		\item $\overline{V_1}\cap\overline{V_2} = \emptyset$,
		\item $\overline{V_i}\subset u^{-1}(C_i + B(0,r))\cup U_{\delta}$ and
		\item $u^{-1}(C_i)\setminus U_{\delta}\subset V_i$.
	\end{enumerate}
\end{definition}
\begin{remarks}\label{QMRemarks}$\empty$
	
	\begin{enumerate}
		\item The QM condition is a kind of separation condition of the preimages of sets $C_1$ and $C_2$. Suppose that $u^{-1}(C_i)$ were both injective Lipschitz curves, then we could rephrase the QM condition as $u^{-1}(C_1)$ does not cross $u^{-1}(C_2)$. Let us be more specific, let $\delta>0$ be fixed, then there exists an $r_{\delta}>0$ such that 
		$$
			u^{-1}(C_1+B(0,r_{\delta}))\cap u^{-1}(C_2+B(0,r_{\delta})) \subset U_{\delta}
		$$
		because $u$ is uniformly continuous on $[-1,1]^2\setminus U_{\delta}$. Then the QM condition is equivalent with the existence of two curves $\gamma_i \subset u^{-1}(C_i+B(0,r))\cup U_{\delta}$, $i=1,2$, $r\in(0,r_{\delta})$ such that $\gamma_1\cap \gamma_2 = \emptyset$ and $\gamma_i$ intersects every component of $u^{-1}(C_i+B(0,r)) \setminus U_{{\delta}}$ for $r\in (0,r_{\delta})$ and $\delta>0$.
		\item It suffices to verify the above condition for a single $r>0$ such that $C_1+B(0,r) \cap C_2+B(0,r) = \emptyset$.
		\item The QM condition is also satisfied as soon as one can finds $V_1$ with $\overline{V_1}\cap u^{-1}(C_2) = \emptyset$ although this condition is not necessary.
		\item Suppose that $C_1,C_2, \dots, C_n$ are closed connected sets such that $(-1,1)^2\setminus \bigcup_{i=1}^n C_i$ is connected. Then there is a closed connected set $\tilde{C}_2\supset \bigcup_{i=2}^nC_i$. The QM condition used on the sets $C_1$ and $\tilde{C}_2$ means it is possible to disconnect $u^{-1}(C_1)$ from all $C_2,C_3,\dots C_n$ simultaneously. Thus we see the condition as defined above with pairs $C_1, C_2$ is equivalent with the seemingly stronger condition when we separate $u^{-1}(C_1)$, $u^{-1}(C_2)$, $\dots$ $u^{-1}(C_n)$ simultaneously with domains $V_1, V_2,\dots, V_n$. 
	\end{enumerate}
\end{remarks}

Another condition that characterizes $\overline{H\cap W_{\id}^{1,p}((-1,1)^2, \er^2)}$ is the \textit{three curve condition}. In order to formulate this property, let us make the following definitions.
\begin{definition}\label{AppropriateCouple}
	Let $u\in W_{\id}^{1,p}((-1,1)^2, \er^2)$ such that for all $\delta>0$ there is an open set $U_{\delta}$ with $\H^1_{\delta}(U_{\delta})< \delta$ and $u_{\rceil [-1,1]^2\setminus U_{\delta}}$ is continuous on $[-1,1]^2\setminus U_{\delta}$. Let $\varphi:[0,1] \to [-1,1]^2\setminus U_{\delta}$ for some $\delta>0$ and $\psi:[0,1] \to [-1,1]^2$ be Lipschitz continuous and injective such that
	\begin{enumerate}
		\item there exists $\psi'(t)$ at every $t\in \psi^{-1}\circ u\circ\varphi([0,1])$,
		\item there exists $\phi'(s)\neq 0$ and $(u\circ\varphi)'(s)$ at every $s\in(u\circ\varphi)^{-1}\circ\psi([0,1])$,
		\item the sets $\{t \in [0,1]:\psi(t) \in  u\circ\varphi([0,1])\}$ and $\{s \in [0,1]: u\circ\varphi(s) \in \psi([0,1])\}$ are finite and
		\item for any $s,t \in [0,1]$ such that $\psi(t) =  u\circ\varphi(s)$ the vectors $\psi'(t)$ and $(u\circ\varphi)'(s)$ are linearly independent.
	\end{enumerate}
	Then we say the couple $\varphi$ and $\psi$ are a \textit{pair of good test curves for $u$}.
\end{definition}	
We call the property that characterizes limits of homeomorphisms the three-curve property. Loosely speaking it means that whenever we have $\varphi, \psi$ a pair of good test curves for $u$ that we can construct an injective curve $\gamma$ that meets $\varphi$ exactly at the points that $u\circ\varphi$ meets $\psi$ and does so in the correct order.
\begin{definition}\label{ThreeCurvePropertyDef}
	Let $u\in W_{\id}^{1,p}((-1,1)^2, \er^2)$ be such that for all $\delta>0$ there is an open set $U_{\delta}$ with $\H^1_{\delta}(U_{\delta})< \delta$ and $u_{\rceil [-1,1]^2\setminus U_{\delta}}$ is continuous on $[-1,1]^2\setminus U_{\delta}$. We say that $u$ satisfies the \textit{three curve condition} if for every pair of good test curves $\varphi, \psi$ for $u$ there is an injective Lipschitz continuous $\gamma:[0,1]\to [-1,1]^2$ and there is a finite set of pairwise disjoint intervals $\{I_{t};t \in \psi^{-1}\circ u\circ\varphi([0,1])\}$ such that $\gamma(I_t) \cap \varphi([0,1]) = \varphi^{-1}\circ u^{-1}\circ\psi(t)$ satisfying 
	\begin{equation}\label{IntervalOrder}
		\text{$s<t$ for all $s\in I_{t_1}$ and all $t\in I_{t_2}$ if $t_1<t_2$.}
	\end{equation}
	Also, the derivative $\gamma'(t)$ exists at each $t \in \gamma^{-1}\circ u\circ\varphi([0,1])$
	\begin{equation}\label{GammaCross}
		\text{and the vectors $\gamma'(t)$ and $\phi'(\phi^{-1}\circ\gamma(t))$ are linearly independent.}
	\end{equation}
\end{definition}

\section{Proof of monotonicity theorems}
Theorem~\ref{partial} is an intermediary step towards proving Theorem~\ref{QuasiMonotne} which may be of independent interest since it is a different characterization of limits of Sobolev homeomorphisms.
\begin{thm}\label{partial}
	Let $p\in [1,\infty)$, let $u\in W^{1,p}_{\id}((-1,1)^2, \er^2)$. Then
	\begin{enumerate}
		\item[a)] if $u\in \overline{H\cap W_{\id}^{1,p}((-1,1)^2, \er^2)}$ then for any $\mathfrak{G} = \{\Gamma_k\}_{k\in \en}$ suitable grid system for $u$ there exists a sequence of monotone mappings $\{g_k\}_{k\in \en}$, $g_k:[-1,1]^2\to \er^2$ such that $[g_k]_{\rceil \Gamma_k} = u_{\rceil\Gamma_k}$.
		\item[b)] Conversely if there exists a $\mathfrak{G} = \{\Gamma_k\}_{k\in \en}$ suitable grid system for $u$ and a sequence of monotone mappings $\{g_k\}_{k\in \en}$, $g_k:[-1,1]^2\to \er^2$ such that $[g_k]_{\rceil \Gamma_k} = u_{\rceil\Gamma_k}$ then $u\in \overline{H\cap W_{\id}^{1,p}((-1,1)^2, \er^2)}$.
	\end{enumerate}
\end{thm}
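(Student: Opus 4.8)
For part (b), given a suitable grid system $\mathfrak{G}=\{\Gamma_k\}_{k\in\en}$ for $u$ and monotone maps $g_k:[-1,1]^2\to\er^2$ with $[g_k]_{\rceil\Gamma_k}=u_{\rceil\Gamma_k}$, the plan is as follows. Since $\partial[-1,1]^2\subset\Gamma_k$ and $u_{\rceil\Gamma_k}$ is the identity there (because $u\in W^{1,p}_{\id}$), each $g_k$ equals $\id$ on $\partial[-1,1]^2$; a short argument via Theorem~\ref{Young} shows that a monotone self-map of the square fixing the boundary has image exactly $[-1,1]^2$ (its uniformly approximating homeomorphisms restrict to homeomorphisms of $\partial[-1,1]^2$ tending to $\id$, forcing the image boundary to be $\partial[-1,1]^2$). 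By Theorem~\ref{Young} each $g_k$ is a uniform limit of homeomorphisms of $[-1,1]^2$; restricting to $\Gamma_k$ gives continuous injective maps converging uniformly to $u_{\rceil\Gamma_k}$, so $u$ satisfies the no-crossing condition of Definition~\ref{StayCalmPlease} with $\mathfrak{G}$. I would then rerun the ``if'' half of the proof of Proposition~\ref{NoCrossEquivalence}: replace the injective approximations by piecewise linear ones converging weakly in $W^{1,p}(\Gamma_k)$ (Remark~\ref{WeakRemark}, Proposition~\ref{PiecewiseLinearization}), extend rectangle by rectangle via Theorem~\ref{PExtension}, bound the resulting homeomorphisms in $W^{1,p}((-1,1)^2)$ by $C\int_{(-1,1)^2}|Du|^p$ uniformly in $k$ using \eqref{CkGrid1} and \eqref{CkGrid2}, and finish with the $L^1$-convergence/diagonalization step; the only extra ingredient needed there is the uniform bound $\diam u(\partial K_{k,i,j})=\diam g_k(\partial K_{k,i,j})\le\diam[-1,1]^2<4$ just observed. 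This yields homeomorphisms converging weakly to $u$ in $W^{1,p}$, i.e.\ $u\in\overline{H\cap W^{1,p}_{\id}((-1,1)^2,\er^2)}$.

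For part (a), let $u\in\overline{H\cap W^{1,p}_{\id}((-1,1)^2,\er^2)}$ and let $\mathfrak{G}=\{\Gamma_k\}$ be an arbitrary suitable grid system for $u$; we may assume $u=\id$ near $\partial[-1,1]^2$. Being a limit of Sobolev homeomorphisms, $u$ satisfies the no-crossing condition of Definition~\ref{defNC} (\cite[Theorem~A]{DPP}; see also Proposition~\ref{NoCrossEquivalence}). Fixing $k$, the grid $\Gamma_k$ is a general grid with $u_{\rceil\Gamma_k}\in W^{1,p}(\Gamma_k)$, so Definition~\ref{defNC} supplies continuous injective $v_{k,m}:\Gamma_k\to\er^2$ with $v_{k,m}\sto u$ on $\Gamma_k$ as $m\to\infty$. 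After a harmless modification near $\partial[-1,1]^2$ (where $v_{k,m}$ is already within $1/m$ of $\id$) I take $v_{k,m}=\id$ there, and by Proposition~\ref{PiecewiseLinearization} I take $v_{k,m}$ piecewise linear with $v_{k,m}\deb u_{\rceil\Gamma_k}$ in $W^{1,p}(\Gamma_k)$ and $\{|D_\tau v_{k,m}|\}_m$ uniformly integrable.

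The crucial step is to extend $v_{k,m}$ to a homeomorphism $H_{k,m}$ of $[-1,1]^2$ onto itself. The image $v_{k,m}(\Gamma_k)$ is an injective topological copy of the grid graph $\Gamma_k$ inside $[-1,1]^2$ whose outer cycle is $\partial[-1,1]^2$; one verifies that it separates $(-1,1)^2$ into exactly $M_k^xM_k^y$ Jordan domains, in incidence-preserving bijection with the open rectangles of $\Gamma_k$ (Euler's formula fixes the number of complementary faces, and prescribing the outer face via the identity boundary values pins the embedding down up to the irrelevant reflections at the degree-two corner vertices; the coarsest grids are handled by inspection). Extending $v_{k,m}$ over each rectangle by Corollary~\ref{SpecialExtension} (after a bi-Lipschitz linear change of variables onto a rotated square; its hypothesis holds since the boundary data converge weakly in $W^{1,1}$) gives homeomorphisms of the rectangles onto the corresponding Jordan domains that agree along shared edges, hence glue to a homeomorphism $H_{k,m}:[-1,1]^2\to[-1,1]^2$; moreover $\{H_{k,m}\}_m$ is equicontinuous, equicontinuity being provided on each of the finitely many rectangles by Corollary~\ref{SpecialExtension} and preserved under gluing.

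For each fixed $k$, the Arzel\`a--Ascoli theorem then gives a subsequence of $\{H_{k,m}\}_m$ converging uniformly on $[-1,1]^2$ to a continuous $g_k:[-1,1]^2\to[-1,1]^2$. As a uniform limit of homeomorphisms of the square onto itself, $g_k$ is monotone by Theorem~\ref{Young}; and since $H_{k,m}$ restricted to $\Gamma_k$ equals $v_{k,m}$ and $v_{k,m}\sto u$ on $\Gamma_k$, we get $[g_k]_{\rceil\Gamma_k}=u_{\rceil\Gamma_k}$. Carrying this out for every $k$ produces the asserted sequence. I expect the main technical care to go into the extension step of part (a): the rectangle-by-rectangle extensions must both (i) glue to a \emph{globally} injective map of the square --- equivalently, the topological grid $v_{k,m}(\Gamma_k)$ must cut $[-1,1]^2$ into pieces with the same combinatorics as the rectangles of $\Gamma_k$ --- and (ii) form an equicontinuous family, so that Arzel\`a--Ascoli yields a continuous (hence, by Youngs, monotone) limit. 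Point (ii) is exactly what Corollary~\ref{SpecialExtension} is designed to deliver; once (i) is secured, the energy bounds and the identification of the limit are routine from the suitable-grid estimates and Youngs' theorem.
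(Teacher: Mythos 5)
Your proposal is correct and follows essentially the same route as the paper: in part (b) you apply Theorem~\ref{Young} to the $g_k$ to obtain injective uniform approximations on $\Gamma_k$, hence the no-crossing condition of Definition~\ref{StayCalmPlease}, and then run the extension/weak-convergence machinery of Proposition~\ref{NoCrossEquivalence} (which the paper simply cites rather than reproves), while in part (a) you use the NC condition, Proposition~\ref{PiecewiseLinearization}, rectangle-by-rectangle extension via Corollary~\ref{SpecialExtension}, Arzel\`a--Ascoli and Theorem~\ref{Young}, exactly as the paper does. The additional details you flag (that $g_k$ maps onto $[-1,1]^2$, and that the per-rectangle extensions glue to a globally injective map) are points the paper leaves implicit, not a different argument.
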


\begin{proof}[Proof of Theorem~\ref{partial}]
	
	Let us have any suitable grid system $\mathfrak{G} = \{\Gamma_k\}_{k\in\en}$ for $u$. Let us assume that there are monotone $g_k:[-1,1]^2\to \er^2$ coinciding with $u$ on $\Gamma_k$. By Youngs' theorem (Theorem~\ref{Young}), there exist homeomorphisms $\phi_{k,m}:[-1,1]^2 \to \er^2$ converging uniformly to $g_k$ on $[-1,1]^2$. Then the continuous injective $[\phi_{k,m}]_{\rceil \Gamma_k} \sto u_{\rceil \Gamma_k}$ on $\Gamma_k$. Therefore $u$ satisfies the no-crossing condition of Definition~\ref{StayCalmPlease} and by Proposition~\ref{NoCrossEquivalence} and \cite[Theorem A]{DPP} (i.e. if $u$ satisfies the NC condition then $u$ is a limit of homeomorphisms) $u\in \overline{H\cap W^{1,p}_{\id}((-1,1)^2, \er^2)}$.

	Alternatively let $u\in \overline{H\cap W^{1,p}_{\id}((-1,1)^2, \er^2)}$. Then $u$ satisfies the no-crossing condition (of Definition~\ref{StayCalmPlease}); there exists a suitable grid system $\mathfrak{G} = \{\Gamma_k\}_{k\in \en}$ for $u$ and injective continuous $\phi_{k,m} \sto u_{\rceil \Gamma_k}$ on $\Gamma_k$. By Remark~\ref{WeakRemark} we may assume that $\phi_{k,m}$ are also piecewise linear and $\phi_{k,m} \xdeb{m\to \infty} u_{\rceil \Gamma_k}$ in $W^{1,p}(\Gamma_k)$. Therefore $|D_{\tau}\phi_{k,m}|$ is uniformly integrable on $\Gamma_k$ with respect to $\H^1_{\rceil \Gamma_k}$. Then using an affine change of variables and Corollary~\ref{SpecialExtension} on each rectangle $K_{k,i,j} = [x_{k,i},x_{k,i+1}]\times[y_{k,j},y_{k,j+1}]$ of $\Gamma_k$ separately we get a sequence of uniformly bounded equicontinuous homeomorphisms $h_{k,m} \in W^{1,p}_{\id}((-1,1)^2, \er^2)$ with bounded norm in $W^{1,p}$. Then by the Arzela-Ascoli theorem we may assume that there exists some $g_k$ such that $h_{k,m} \sto g_k$ on $[-1,1]^2$ as $m\to \infty$. Therefore, by Youngs' theorem $g_k$ is monotone. On the other hand because $[h_{k,m}]_{\rceil \Gamma_k} = \phi_{k,m} \sto u_{\rceil \Gamma_k}$ on $\Gamma_k$ we have that $[g_k]_{\rceil \Gamma_k} = u_{\rceil \Gamma_k}$.
	
\end{proof}

The proof of the equivalence between points $a)$ and $b)$ of Theorem~\ref{MainTheorem} are proved by the following theorem. 
\begin{thm}\label{QuasiMonotne}
	Let $p\in [1,2]$ and let $u\in W^{1,p}_{\id}((-1,1)^2, \er^2)$.
	\begin{enumerate}
		\item[a)] If for every $\delta>0$ there is a monotone map $g_{\delta}:[-1,1]^2 \to \er^2$ satisfying
		$$
		\mathcal{H}^{1}_{\delta}\Big( \big\{(x,y) \in [-1,1]^2: \ u(x,y)\neq g_{\delta}(x,y)\big\}\Big)<\delta
		$$
		then $u \in  \overline{H\cap W_{\id}^{1,p}((-1,1)^2, \er^2)}$.
		\item[b)] Conversely if $u \in  \overline{H\cap W_{\id}^{1,p}((-1,1)^2, \er^2)}$ then for every $\delta>0$ there is a monotone $g_{\delta}:[-1,1]^2 \to \er^2$ such that
		$$
		\mathcal{H}^{2-p}_{\delta}\Big( \big\{(x,y) \in [-1,1]^2: \ u(x,y)\neq g_{\delta}(x,y)\big\}\Big)<\delta.
		$$
	\end{enumerate}
	Especially a map $u\in W^{1,p}_{\id}((-1,1)^2, \er^2)$ lies in $\overline{H\cap W_{\id}^{1,p}((-1,1)^2, \er^2)}$ if and only if it is quasi-monotone on $[-1,1]^2$.
\end{thm}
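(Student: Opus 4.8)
Parts a) and b) give the equivalence once we observe two elementary facts. First, since $p\ge 1$ we have $(\diam U)^{2-p}\ge\diam U$ whenever $\diam U\le 1$, so $\mathcal H^1_\delta(E)\le\mathcal H^{2-p}_\delta(E)$ for every $E\subset[-1,1]^2$ and $\delta\in(0,1]$; thus b) in particular gives a monotone $g_\delta$ with $\mathcal H^1_\delta(\{u\ne g_\delta\})<\delta$. Second, a covering witnessing $\mathcal H^1_\delta(\{u\ne g_\delta\})<\delta$ can be fattened to an open $U_\delta\supset\{u\ne g_\delta\}$ with $\mathcal H^1_\delta(U_\delta)<\delta$, and since $g_\delta$ is continuous and $u=g_\delta$ off $U_\delta$, the pair $(U_\delta,g_\delta)$ is exactly what Definition~\ref{DefQuasiMonotne} asks for. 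So I only prove a) and b).

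\textbf{Part a).} By Theorem~\ref{partial}, part b), it suffices to produce a suitable grid system $\mathfrak G=\{\Gamma_k\}_{k\in\en}$ for $u$ and monotone maps $g_k:[-1,1]^2\to\er^2$ with $[g_k]_{\rceil\Gamma_k}=u_{\rceil\Gamma_k}$. Put $\delta_j:=2^{-j-10}$, let $g_j$ be the monotone map furnished by the hypothesis for $\delta=\delta_j$, and set $A_j:=\{u\ne g_j\}$. As $\pi_i$ is $1$-Lipschitz, $\mathcal L^1(\pi_i(A_j))\le\mathcal H^1_{\delta_j}(A_j)<\delta_j$, hence $\mathcal L^1\big(\pi_i(\bigcup_{j\ge k}A_j)\big)<2^{-k-9}$; moreover $S_u:=\bigcap_j\bigcup_{i\ge j}\overline{A_i}$ is $\mathcal H^1$-null and $u$ is continuous on $[-1,1]^2\setminus S_u$ (near such a point $u$ coincides locally with a continuous $g_j$), so Proposition~\ref{GridExist} applies. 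Construct $\Gamma_k$ inductively, $\Gamma_0$ the boundary grid: given the suitable $(k-1)$-grid $\Gamma_{k-1}$ all of whose lines avoid $\bigcup_{j\ge k-1}A_j$, insert the new lines of $\Gamma_k$ choosing each position in the admissible set of Proposition~\ref{GridExist} (of length $\ge2^{-k-3}$) after deleting a Lebesgue-null set (the projections of $S_u$ and of the exceptional set for Sobolev restriction) together with $\pi_i(\bigcup_{j\ge k}A_j)$ (of length $<2^{-k-9}$) — a valid position remains, and the old lines a fortiori avoid $\bigcup_{j\ge k}A_j$. Then $\Gamma_k\cap A_k=\emptyset$, so $u_{\rceil\Gamma_k}=[g_k]_{\rceil\Gamma_k}\in W^{1,p}(\Gamma_k)$, each $g_k$ is a monotone map of $[-1,1]^2$, and $\mathfrak G$ is a suitable grid system; Theorem~\ref{partial} gives $u\in\overline{H\cap W^{1,p}_{\id}((-1,1)^2,\er^2)}$.

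\textbf{Part b).} Now $u$ satisfies $\INV$ (Theorem~\ref{HomImpINV}) and the no-crossing condition of Definition~\ref{defNC} (by \cite[Theorem~A]{DPP}), and by Lemma~\ref{BadSize} there is $S_u$ with $\mathcal H^{2-p}(S_u)=0$ and a representative of $u$ continuous on $[-1,1]^2\setminus S_u$. Fix $\delta\in(0,1)$ and take an open $U_\delta\supset S_u$ which is a countable union of balls $B_l$ with $\diam B_l\le\delta$ and $\sum_l(\diam B_l)^{2-p}<\delta$; then $\mathcal H^{2-p}_\delta(U_\delta)<\delta$, while ($p\ge1$) $\mathcal L^1(\pi_i(U_\delta))<\delta^p$ is small, and $u$ is uniformly continuous on the compact set $[-1,1]^2\setminus U_\delta$. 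Build nested axis-parallel grids $\Gamma_1\subset\Gamma_2\subset\cdots$, all lines avoiding $\pi_i(U_\delta)$ and the null set of bad positions, so that every cell disjoint from $U_\delta$ has diameter $\le2^{-k}$ while the cells meeting $U_\delta$ form a fixed, finite-to-one family $\{Q_l\}$, each $Q_l$ of diameter comparable with that of the ball(s) it encloses (so $\sum_l(\diam Q_l)^{2-p}$ is small) and \emph{independent of $k$} — one refines only in cells disjoint from $U_\delta$. On each $\Gamma_k$ the no-crossing condition together with Proposition~\ref{PiecewiseLinearization} gives continuous piecewise linear injective $\phi_{k,m}\sto u_{\rceil\Gamma_k}$ with $\{|D_\tau\phi_{k,m}|\}_m$ uniformly integrable; since $\partial Q_l\subset\Gamma_1\subset\Gamma_k$ is the same for all $k$, we may take $\phi_{k,m}|_{\partial Q_l}=:\chi_{l,m}$ independent of $k$. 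Extending $\phi_{k,m}$ cell by cell by the canonical (boundary-data dependent) extension of Corollary~\ref{SpecialExtension}, after a bi-Lipschitz straightening of each cell, yields a homeomorphism $H_{k,m}$ of $(-1,1)^2$ whose restriction to each $Q_l$ depends only on $\chi_{l,m}$; the family $\{H_{k,m}\}_m$ is equicontinuous, so a subsequence converges uniformly to a monotone (Theorem~\ref{Young}) map $h_k$ with $[h_k]_{\rceil\Gamma_k}=u_{\rceil\Gamma_k}$, and $h_k|_{Q_l}$ is independent of $k$. Finally let $k\to\infty$: on a cell $\bar R$ disjoint from $U_\delta$, $h_k(\bar R)$ is a nested limit of Jordan domains, so $\diam h_k(\bar R)=\diam h_k(\partial R)=\diam u(\partial R)\to0$ uniformly, whence $h_k\to u$ uniformly on $[-1,1]^2\setminus\bigcup_l Q_l$, while $h_k$ is constant in $k$ on $\bigcup_l Q_l$. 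Thus $h_k\to g_\delta$ uniformly on $[-1,1]^2$; $g_\delta$ is continuous, is monotone (a diagonal subsequence of the homeomorphisms $H_{k,m}$ converges to it uniformly), and equals $u$ off $\bigcup_l Q_l$. Hence $\{u\ne g_\delta\}\subset\bigcup_l Q_l$ and $\mathcal H^{2-p}_\delta(\{u\ne g_\delta\})\le\sum_l(\diam Q_l)^{2-p}<C\delta$; renaming $\delta$, this is b).

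\textbf{The main difficulty.} Part a) is essentially bookkeeping — the one real point is that $\mathcal H^1_\delta$-smallness of $\{u\ne g_\delta\}$ shrinks its coordinate projections, which leaves room to thread a genuine suitable $k$-grid through the complement and then appeal to Theorem~\ref{partial}. The work is concentrated in b): one must choose the nested grids so that they become arbitrarily fine off $U_\delta$ while the (at most countably many, locally finitely many) cells touching $U_\delta$ are frozen, and so that the homeomorphisms injectifying $u$ on those cells are likewise frozen (via the canonical extension with fixed boundary data), which is what allows the limit over $k$ to remain a uniform limit of homeomorphisms, hence monotone, and to equal $u$ outside a set of small $\mathcal H^{2-p}_\delta$-measure; the bound $\sum_l(\diam Q_l)^{2-p}<C\delta$ is precisely what $\mathcal H^{2-p}(S_u)=0$ supplies. (For $p=2$ one has $S_u=\emptyset$, $u$ is continuous, and b) is immediate with $g_\delta=u$.)
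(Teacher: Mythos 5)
Your part a) and the overall architecture of b) follow the paper's route (thread a suitable grid system through the complement of the exceptional set and invoke Theorem~\ref{partial}; for b), cover $S_u$, freeze the cells that absorb the cover, keep the injective approximations consistent across $k$, extend by Corollary~\ref{SpecialExtension}, and pass to the limit via Arzela--Ascoli and Theorem~\ref{Young}). But the heart of b) -- the construction of the nested grids with frozen cells -- has a genuine gap. Your rule ``one refines only in cells disjoint from $U_{\delta}$'' is incompatible with your claim that each frozen cell has ``diameter comparable with that of the ball(s) it encloses'': a coarse cell that merely touches one tiny ball $B_l$ is frozen immediately at its current (large) size and can never be refined down to the scale of $B_l$, so the bound $\sum_l(\diam Q_l)^{2-p}<C\delta$ does not follow. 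Worse, $S_u$ can be dense for maps in the closure (this is exactly Example~\ref{I'mDense}), in which case $U_{\delta}$ is open and dense, \emph{no} cell is ever disjoint from $U_{\delta}$, and your scheme never refines anywhere. The missing idea is the quantitative freezing criterion used in the paper: a cell $K$ of the $k$-th grid is frozen only when $\sum_{Q_l\subset K}r_l\geq 2^{-k-3}$; otherwise one continues to refine \emph{through} cells that still meet the cover, which is possible precisely because the enclosed squares then have total radius smaller than the length $2^{-k-2}$ of the admissible interval for the new line. This yields $\diam K\leq 2^{5}\sum_{Q_l\subset K}r_l$ for frozen cells (see \eqref{ComparisionFord}), and only from that does $\sum_{K}(\diam K)^{2-p}\lesssim\sum_l r_l^{2-p}<\delta$ follow. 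Your subsequent steps (boundary data $\phi_{k,m}=[\phi_{m,m}]_{\rceil\Gamma_k}$ frozen on frozen cells, equicontinuity, monotonicity of the limits, Cauchyness of $h_k$ off the frozen cells) are sound once the correct grid construction is in place, since they mirror the paper's argument.

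A secondary point in a): your claim that $S_u:=\bigcap_j\bigcup_{i\geq j}\overline{A_i}$ is $\H^1$-null is unjustified -- passing to closures can increase Hausdorff content arbitrarily (a set with zero $\H^1_{\delta}$-content can have closure of positive length), so the bound $\H^1_{\delta_i}(A_i)<\delta_i$ gives no control on $\overline{A_i}$. What your argument actually needs, and what does hold, is only that the projections of the sets $A_j$ themselves are small, so that a line of the grid can be threaded through their complement together with the exceptional set of Proposition~\ref{GridExist}; the continuity-off-$S_u$ hypothesis should be handled either by taking it from the quasi-monotonicity definition itself (Definition~\ref{DefQuasiMonotne} supplies such an $S_u$) or by observing that the measure-theoretic selection in Proposition~\ref{GridExist} does not use it. With that adjustment, your part a) is essentially the paper's proof.
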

\begin{proof}
	Let us assume that for each $k\in \en$ there exists a monotone $g_k:[-1,1]^2 \to \er^2$ such that $\H^{1}_{2^{-k-3}}(\{u\neq g_k\})< 2^{-k-3}$. It follows that $$\mathcal{L}^1(\pi_1(\{u\neq g_k\})),\mathcal{L}^1(\pi_2(\{u\neq g_k\}))<2^{-k-3}$$ where $\pi_1(x,y) =x$ and $\pi_2(x,y) = y$. Then for each $k$ and $i$ the set $E_{k,i}^1$ from Proposition~\ref{GridExist} has $E_{k,i}^1 \setminus \pi_1(\{u\neq g_k\}) \neq \emptyset$. Similarly, $E_{k,j}^1 \setminus \pi_2(\{u\neq g_k\}) \neq \emptyset$. Thus we construct $\mathfrak{G} = \{\Gamma_k\}_{k\in \en}$ a suitable grid system for $u$ and a sequence $g_k$ of monotone maps on $[-1,1]^2$ such that $u_{\rceil \Gamma_k} = [g_{k}]_{\rceil \Gamma_k}$. Thus, by Theorem~\ref{partial}, $u\in\overline{H\cap W^{1,p}_{\id}((-1,1)^2, \er^2)}$.

	Let $\delta >0$ be fixed and let us choose $k_0 \in \en$ such that $2^{-k_0}<2^{-5(2-p)}\delta$. Using Theorem~\ref{HomImpINV} and Proposition~\ref{BadSize} we find squares $\{Q_{l}\}_{l\in \en} = \{Q((a_{l},b_{l}), r_{l})\}_{l\in \en} $ with $2r_{l}<2^{-k_0-3}< \delta$ such that $U_{\delta} = \bigcup_l Q_l \supset S_u$ and such that $\sum_{l=1}^{\infty}(2r_{l})^{2-p}<2^{-k_0-3}$. Our aim now is to construct an increasing sequence of general grids $\{\Gamma_k\}_{k\in \en}$ each of which satisfies the hypothesis in Definition~\ref{defNC} but does not intersect $U_{\delta}$. Once we have constructed the grids we can apply our extension results (especially Corollary~\ref{SpecialExtension}) to construct the desired $g_{\delta}$.
	
	We construct $\Gamma_0, \subset \Gamma_1\subset \dots \subset\Gamma_{k_0}$ using Proposition~\ref{GridExist} and since
	$$
	\mathcal{L}^1\big(\pi_1\big(\bigcup_l Q_{l}\big)\big), \mathcal{L}^1\big(\pi_2\big(\bigcup_l Q_{l}\big)\big) < 2^{-k_0-3}<\mathcal{L}^1(E_{k_0,i}^1),\mathcal{L}^1(E_{k_0,j}^2),
	$$
	we can assume that $\Gamma_{k_0} \cap \bigcup_l Q_{l} = \emptyset$. For $m=1,\dots, k$, $\Gamma_m$ is (or will be) the finite union of horizontal and vertical segments.  By rectangles of $\Gamma_k$ we refer to the closure of the components of $[-1,1]^2 \setminus \Gamma_k$.

	\begin{figure}
		\begin{tikzpicture}[line cap=round,line join=round,>=triangle 45,x=0.6cm,y=0.6cm]
			\clip(0.46292119324500214,-3.348894937565878) rectangle (14.351973658953773,7.3097639794479425);
			\fill[line width=0.7pt,fill=black!10] (3.6,7.) -- (3.6,1.) -- (4.4,1.) -- (4.4,7.) -- cycle;
			\fill[line width=0.7pt,fill=black!10] (6.6,7.) -- (6.6,1.) -- (7.4,1.) -- (7.4,7.) -- cycle;
			\fill[line width=0.7pt,fill=black!10] (1.,4.4) -- (10.,4.4) -- (10.,3.6) -- (1.,3.6) -- cycle;
			\fill[line width=0.7pt,fill=black!10] (10.,4.4) -- (14.,4.4) -- (14.,3.6) -- (10.,3.6) -- cycle;
			\fill[line width=0.7pt,fill=black!10] (3.6,1.) -- (3.6,-3.) -- (4.4,-3.) -- (4.4,1.) -- cycle;
			\fill[line width=0.7pt,fill=black!10] (6.6,1.) -- (6.6,-3.) -- (7.4,-3.) -- (7.4,1.) -- cycle;
			\fill[line width=0.7pt,fill=black!20] (3.6,4.4) -- (3.6,3.6) -- (4.4,3.6) -- (4.4,4.4) -- cycle;
			\fill[line width=0.7pt,fill=black!20] (6.6,4.4) -- (6.6,3.6) -- (7.4,3.6) -- (7.4,4.4) -- cycle;
			\draw [line width=0.7pt] (1.,7.)-- (1.,1.);
			\draw [line width=0.7pt] (1.,1.)-- (10.,1.);
			\draw [line width=0.7pt] (10.,1.)-- (10.,7.);
			\draw [line width=0.7pt] (10.,7.)-- (1.,7.);
			\draw [line width=0.7pt,dash pattern=on 2pt off 2pt] (3.7999053315484126,1.)-- (3.7999053315484126,7.);
			\draw [line width=0.7pt,dash pattern=on 2pt off 2pt] (7.264037077386787,7.)-- (7.264037077386787,1.);
			\draw [line width=0.7pt,dash pattern=on 2pt off 2pt] (1.,4.2437479442183825)-- (10.,4.2437479442183825);
			\draw [line width=0.7pt] (1.,1.)-- (1.,-3.);
			\draw [line width=0.7pt] (1.,-3.)-- (10.,-3.);
			\draw [line width=0.7pt] (10.,-3.)-- (10.,1.);
			\draw [line width=0.7pt] (10.,1.)-- (14.,1.);
			\draw [line width=0.7pt] (14.,1.)-- (14.,-3.);
			\draw [line width=0.7pt] (14.,-3.)-- (10.,-3.);
			\draw [line width=0.7pt] (14.,7.)-- (14.,1.);
			\draw [line width=0.7pt] (14.,7.)-- (10.,7.);
			\draw [line width=0.7pt,dash pattern=on 2pt off 2pt] (4.242393457496349,1.)-- (4.242393457496349,-3.);
			\draw [line width=0.7pt,dash pattern=on 2pt off 2pt] (6.66492019031439,1.)-- (6.66492019031439,-3.);
			\draw [line width=0.7pt,dash pattern=on 2pt off 2pt] (10.,4.)-- (14.,4.);
		\end{tikzpicture}
		\caption{Given rectangles of $\Gamma_k$, we make $\Gamma_{k+1}$ by dividing them into smaller rectangles each of which have side length approximately $2^{-k-1}$. In some cases we split a side in three parts, in some cases we split it in two and if it is short, we do not split it.}\label{fig:Patchwork}
	\end{figure}
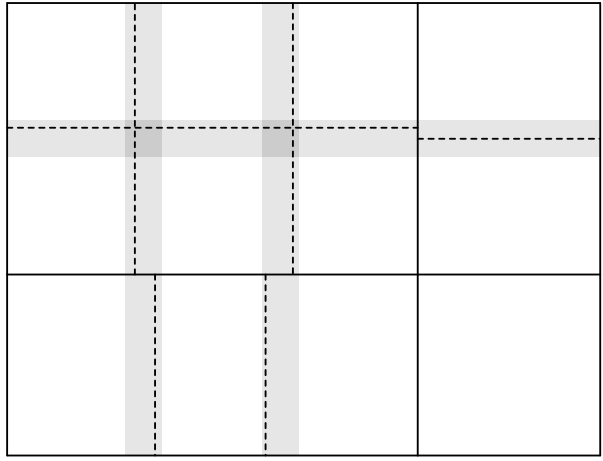

	We define $\Gamma_k$ inductively following the approach of the proof of Proposition~\ref{GridExist} but instead of working on the whole of $(-1,1)^2$ at once, we define $\Gamma_{k+1}$ by refining the grid $\Gamma_k$ in each of its rectangles $K$ individually. Let $K = [x,\tilde{x}]\times[y,\tilde{y}]$ be a rectangle in $\Gamma_{k}$ for some $k\geq k_0$. There are several cases of how we refine the grid inside $K$ depending on the dimensions of $K$ and the size of $\bigcup_l Q_{l}\cap K$, we either keep $K$ the same or we split it into 2,3,4,6 or 9 smaller rectangles as is depicted in Figure~\ref{fig:Patchwork}.
	
	Firstly, if $\sum_{Q_l\subset K}r_l \geq 2^{-k-3}$ then we call the rectangle $K$ a fixed rectangle. We do not refine it in any of the further steps and $K$ is a rectangle of $\Gamma_m$ for all $m\geq k$. In the following, then we can assume that $\sum_{Q_l\subset K}r_l < 2^{-k-3}$.

	If 
	\begin{equation}\label{ThreeCasesOfBeer}
		\tilde{x}-x\in
		\begin{cases}
			(2^{-k}, 2^{1-k}] \ & \text{ then we do not split } K \text{ by any vertical line}\\
			(2^{1-k}, \tfrac{3}{2} 2^{1-k}] \ & \text{ then we split } K \text{ by one vertical line}\\
			(\tfrac{3}{2} 2^{1-k}, 2^{2-k})\ & \text{  then we  split } K \text{ by two vertical lines}.
		\end{cases}
	\end{equation}
	In the second case we call 
	$$
	F =  \Big(\frac{\tilde{x}+x}{2} - 2^{-k-3},  \frac{\tilde{x} +x}{2} + 2^{-k-3}\Big) \cap G_x \setminus \pi_1(\bigcup_l Q_{l})
	$$
	and then $\mathcal{L}^1(F)  > 2^{-k-3}$. For all $t\in F$ it holds that 
	\begin{equation}\label{widther}
		[\tilde{x} - t], [t- x] \in (2^{-k}-2^{-k-3}, \tfrac{3}{2} 2^{-k} +2^{-k-3} ) \subset (2^{-k-1}, 2^{1-k}).
	\end{equation}
	Moreover, we find a set $E\subset F$ with $\mathcal{L}^1(E) > 2^{-k-4}$ such that the estimates
	\begin{equation}\label{Sobostimate1}
		\oint_{y}^{\tilde{y}}| \partial_2u(t,s)|  \ d\H^1(s)\leq \hat{C} \oint_{K}| Du|d\mathcal{L}^2
	\end{equation}
	and
	\begin{equation}\label{Sobostimate2}
		\oint_{y}^{\tilde{y}}| \partial_2u(t,s)|^p  \ d\H^1(s)\leq \hat{C} \oint_{K}| Du|^p \ d\mathcal{L}^2
	\end{equation}
	hold for all $t\in E$ with some absolute constant $\hat{C}$.
	
	In the third case of \eqref{ThreeCasesOfBeer} the approach is similar, and the details can be easily extrapolated from the proof of Proposition~\ref{GridExist}. We refine $K$ with horizontal lines with the corresponding properties and estimates. It is easy to observe that the grids $\Gamma_0, \Gamma_1,\dots $ are general grids for which $u_{\rceil \Gamma_k} \in W^{1,p}(\Gamma_{k}, \er^2)$.
	
	Let us call $\mathfrak{R}_k$ be the set of all rectangles of $\Gamma_k$. Let us call $\mathfrak{F}_k$ the set of all fixed rectangles of $\Gamma_k$ and $\mathfrak{F} = \bigcup_k \mathfrak{F}_k$. We have $\bigcup_l Q_{l} \subset\bigcup_{K\in \mathfrak{F}}K$. Further, using \eqref{widther} for $K\in\mathfrak{F}_k\setminus \mathfrak{F}_{k-1}$, we have
	\begin{equation}\label{ComparisionFord}
		\diam K < 2^{2-k} \text{ and } \sum_{Q_l \subset K}r_l \geq 2^{-k-3} \text{ and therefore } \diam K \leq 2^{5}\sum_{Q_l \subset K}r_l .
	\end{equation}
	Let us estimate $\sum_{K\in \mathfrak{F}_k}(\diam K)^{2-p}$. By \eqref{ComparisionFord} and the choice of $r_l$ we have
	$$
	\sum_{K\in \mathfrak{F}_k}(\diam K)^{2-p} \leq \sum_{{K\in \mathfrak{F}}}\Big(2^{5}\sum_{Q_l \subset K_i}r_l \Big)^{2-p} \leq 2^{5(2-p)}\sum_{l=1}^{\infty}r_l^{2-p} \leq 2^{5(2-p)-3-k_0}<\delta
	$$
	for all $k\in \en$ thanks to the choice of $k_0$. We denote $U = U_{\delta}: = \bigcup_{K\in \mathfrak{F}} \inter K$, where $\inter K$ is the topological interior of $K$. We have $\H^{2-p}_{\delta}(U)<\delta$.
	
	By the choice of $\Gamma_k$ especially \eqref{Sobostimate1} and \eqref{Sobostimate2} we have $u \in W^{1,p}(\Gamma_k, \er^2)$. Because $u \in  \overline{H\cap W_{\id}^{1,p}((-1,1)^2, \er^2)}$, $u$ satisfies the no-crossing condition of Definition~\ref{defNC} and there exist continuous injective $\tilde{\phi}_{k,m} \sto u_{\rceil \Gamma_k}$ on $\Gamma_k$ for each $k$. By Proposition~\ref{PiecewiseLinearization} we can replace $\tilde{\phi}_{k,m} $ with continuous injective piecewise linear $\phi_{k,m} \deb u_{\rceil \Gamma_k}$ in $W^{1,p}(\Gamma_k, \er^2)$. We may assume that $\|\phi_{k,m} - u_{\rceil\Gamma_k}\|_{L^{\infty}(\Gamma_k)}\leq m^{-1}$ and that $\phi_{k,m} = [\phi_{m,m}]_{\rceil \Gamma_k}$ for all $m\geq k$. We use Corollary~\ref{SpecialExtension} on each $K$ rectangle of $\Gamma_k$ to get a sequence of homeomorphisms $h_{k,m, K} : K\to \er^2$ which is equicontinuous and $h_{k,m,K}= \phi_{k,m}$ on $\partial K$. Note that the map defined by $h_{k,m}(x,y) = h_{k,m,K}(x,y)$ whenever $(x,y)\in K$ is a well-defined homeomorphism on $[-1,1]^2$ and for each $k$ the sequence of mappings $h_{k,m}$ is equicontinuous on $[-1,1]^2$. Then, for each $k$, (up to taking a subsequence) by the Arsela-Ascoli theorem we find a $h_k$ such that $h_{k,m} \sto h_k$ uniformly on $[-1,1]^2$. By the Young's theorem (Theorem~\ref{Young}) $h_k$ is monotone on $[-1,1]^2$. Further, without loss of generality we assume for each $K\in \mathfrak{F}_{o}$ that $h_{k} = h_{o}$ on $K$ for all $k\geq o$ because for every $k_1,k_2$ and every $m\geq \max\{k_1,k_2\}$ we have $\phi_{k_1,m} =\phi_{k_2,m} = \phi_{m,m}$.
	
	Because $S_u \subset U_{\delta}$ and because $U_{\delta}$  is open, $u$ is uniformly continuous on $[-1,1]^2\setminus U_{\delta}$. Therefore, because $\Gamma_k \cap U_{\delta} = \emptyset$ for each $k\in \en$, we have that $[h_k]_{\rceil \Gamma_m}$ is equicontinuous for each $m$. For every $K \in \mathfrak{R}_k \setminus \mathfrak{F}_k$ we have $\diam K < 2^{2-k}$. Therefore, for each $\epsilon>0$ we find a $k$ such that for all $m\geq k$ it holds that $\diam h_m(K) = \diam u(K) = \diam u(\partial K) < \epsilon$ for all $K \in \mathfrak{R}_k \setminus \mathfrak{F}_k$. Therefore, for all $x\in \bigcup_{K\in \mathfrak{R}_k \setminus \mathfrak{F}_k}K$, we have $|h_m(x) - h_n(x)|\leq 2\epsilon$ for all $m,n\geq k$. Simultaneously however, $h_m = h_n$ on $\bigcup_{K\in\mathfrak{F}_{k}}K$ for $m,n \geq k$. Therefore $h_k$ is a Cauchy sequence in $\mathcal{C}^0([-1,1]^2)$ and there is some $g_{\delta}$ such that $h_k \sto g_{\delta}$. By Youngs' theorem (Theorem~\ref{Young}) monotone maps are closed under uniform convergence and therefore we have that $g_{\delta}$ is monotone. Since $[h_k]_{\rceil [-1,1]^2\setminus U_{\delta}} \sto u_{\rceil [-1,1]^2\setminus U_{\delta}} $ we have $u_{\rceil [-1,1]^2\setminus U_{\delta}} = [g_{\delta}]_{\rceil [-1,1]^2\setminus U_{\delta}}$.	 	
\end{proof}
\begin{remark}
	Using the estimate \eqref{Sobostimate2} it is not difficult to prove that $[h_k]_{\rceil (-1,1)^2\setminus U_{\delta}}$ is bounded in $W^{1,p}((-1,1)^2\setminus U_{\delta},\er^2)$.
\end{remark}

\section{The three-curve condition characterization}
We start this section by proving the following lemma.
\begin{lemma}\label{DegreesAreUseful}
	Let $u\in W_{\id}^{1,p}((-1,1)^2, \er^2)$ and let $S_u \subset (-1,1)^2$ with $\H^1(S_u) = 0$ be such that $u_{\rceil (-1,1)^2\setminus S_u}$ is continuous on $(-1,1)^2\setminus S_u$ and let $u$ satisfy the three curve condition. Let $\Gamma_k$ be a suitable $k$-grid for $u$ avoiding $S_u$. Then for every $(a,b) \in (-1,1)^2 \setminus u(\Gamma_k)$ and every $R = [x_i,x_{i+1}]\times[y_j,y_{j+1}]$ it holds that 
	$$
	\deg((a,b), u , \partial R) \in \{0,1\}.
	$$
\end{lemma}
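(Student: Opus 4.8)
The plan is to reduce, via additivity of the topological degree, to proving that the degree is non-negative, and then to rule out an orientation reversal by feeding a ``keyhole'' test curve and a generic ray into the three-curve condition.

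First I would carry out the reduction. Fix $(a,b)\in(-1,1)^2\setminus u(\Gamma_k)$. Since $\Gamma_k$ is the union of the boundaries of the grid rectangles and every interior grid segment is traversed once in each direction by the boundaries of its two neighbouring rectangles, and since $u$ is continuous on $\Gamma_k$ with $(a,b)\notin u(\Gamma_k)$, the pull-backs under $u$ of the angular form centred at $(a,b)$ add up over the boundaries of the rectangles to the one over $\partial(-1,1)^2$; that is, $\sum_R\deg((a,b),u,\partial R)=\deg((a,b),u,\partial(-1,1)^2)=1$, the sum being over all grid rectangles $R$ and the last equality holding because $u=\id$ on $\partial(-1,1)^2$. (Only the values of $u$ on $\Gamma_k$ enter, so possible discontinuities of $u$ inside the rectangles are irrelevant.) Consequently it suffices to prove $\deg((a,b),u,\partial R)\ge 0$ for every grid rectangle $R$ and every $(a,b)\in(-1,1)^2\setminus u(\Gamma_k)$: being non-negative integers summing to $1$, the degrees then all lie in $\{0,1\}$.

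Next I would set up the test curves for a proof by contradiction. Suppose $d:=\deg((a,b),u,\partial R)\le -1$. Because $\H^1(S_u)=0$, I may cover $S_u$ by countably many balls of diameter at most $\delta$ whose diameters sum to less than $\delta$; their union $V_\delta$ is open, lies in the $\delta$-neighbourhood of $S_u$, has $\H^1_\delta(V_\delta)<\delta$, and $u$ is continuous on $[-1,1]^2\setminus V_\delta$, so the sets $V_\delta$ may be used in place of the $U_\delta$ of Definitions~\ref{AppropriateCouple}--\ref{ThreeCurvePropertyDef}. As $\Gamma_k$ is compact and at positive distance from $S_u$, one has $\Gamma_k\cap V_\delta=\emptyset$ for all small $\delta$, and a generic short Lipschitz arc joining $\partial(-1,1)^2$ to $\partial R$ also misses the $\H^1$-null set $S_u$. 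I then let $\varphi$ be the injective \emph{keyhole} arc that runs once positively around $\partial(-1,1)^2$, down one side of a thin channel to $\partial R$, once negatively around $\partial R$, and back up the other side of the channel, with tiny arcs omitted near two corners so that $\varphi$ stays injective and the channel chosen to avoid $S_u$; for $\delta$ small enough $\varphi$ is a good test arc for $u$. Using $u=\id$ on $\partial(-1,1)^2$ and the cancellation of the two close channel traversals, the loop $L$ obtained by closing up $u\circ\varphi$ with a short arc near its (generic) endpoint image has winding number $1-d\ge 2$ about $(a,b)$. Since an embedded closed curve has winding number in $\{-1,0,1\}$, $L$ — hence $u\circ\varphi$ — has a genuine self-intersection, so there is a point $P^\ast\in u(\varphi([0,1]))$ with at least two preimages in $\varphi([0,1])$ (for $d\le -2$, $u_{\rceil\partial R}$ is already non-injective; for $d=-1$ the self-intersection cannot be produced by the thin channel alone).

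Finally I would derive the contradiction. After moving $(a,b)$ to a generic point $(a',b')$ of its component of $\er^2\setminus u(\Gamma_k)$ (which leaves $d$ unchanged), let $\psi$ be the straight ray from $(a',b')$ through $P^\ast$; by a genericity argument in the spirit of Proposition~\ref{ArrivalGrids} and Remark~\ref{Alibistic}, $\varphi$ and $\psi$ form a pair of good test curves, the crossings of $u\circ\varphi$ with $\psi$ are finitely many and transverse, their signed total equals the winding number of $L$ about $(a',b')$ (of absolute value at least $2$), and $\psi$ meets $u\circ\varphi$ at $P^\ast$, whose $u$-preimage meets $\varphi([0,1])$ in at least two points. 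The three-curve condition then supplies an injective Lipschitz $\gamma$ and pairwise disjoint intervals $\{I_t\}$, ordered by the parameter $t$ along $\psi$, such that $\gamma$ meets $\varphi([0,1])$ transversally and exactly at the $u$-preimages of the crossing points of $\psi$ with $u\circ\varphi$, the preimages of each such point lying in a single $I_t$. The hard part — and the main obstacle — is to show that no such injective $\gamma$ can exist: since $u\circ\varphi$ wraps around $(a',b')$ more than once, the cyclic pattern in which these forced intersection points sit along the arc $\varphi([0,1])$, together with the linear order dictated by $\psi$ and the grouping of the preimages of $P^\ast$, is incompatible with $\gamma$ being an embedded arc. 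This planar-combinatorial rigidity is the analogue for $u^{-1}$ of the ``$S$ versus $Z$'' phenomenon discussed after Theorem~\ref{MainTheorem}, and it is where the technical effort of this section is concentrated. This contradiction yields $d\ge 0$, and combined with the reduction it gives $\deg((a,b),u,\partial R)\in\{0,1\}$.
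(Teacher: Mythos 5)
Your opening reduction is exactly the paper's: additivity of the degree over the grid rectangles, $\deg((a,b),u,\partial(-1,1)^2)=1$ because $u=\id$ on the boundary, hence it suffices to exclude negative degrees. After that, however, your argument stops at precisely the point where the lemma's actual content lies. Having produced the keyhole curve $\varphi$, the crossing point $P^\ast$, and the ray $\psi$, you write that the forced intersection pattern is ``incompatible with $\gamma$ being an embedded arc'' and explicitly defer this as ``the hard part \dots where the technical effort of this section is concentrated.'' That incompatibility is the whole theorem: the three-curve condition only forbids certain \emph{orderings} of intersections, and winding number $\ge 2$ of a non-simple curve does not by itself produce an order-obstruction — one must exhibit a concrete configuration in which any injective $\gamma$ respecting \eqref{IntervalOrder} is forced either to cross itself or to meet $\varphi([0,1])$ at a forbidden point. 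The paper does this by localizing differently: it reduces (via regular values) to a point $(x_0,y_0)$ with local degree $-1$, takes $\phi$ to be a segment $T$ from the boundary plus a small circle $\partial B((x_0,y_0),\rho)$, engineers $\psi$ to hug $\partial(-1,1)^2$ and cut once through $u(B((x_0,y_0),\rho))$, isolates the two intersection points $N,S$, and then runs a parity case analysis (intersections of $u(T)$ with the north and south portions of $\psi$) showing in each case that $\gamma$ cannot visit the preimages of $N$ and $S$ in the order \eqref{IntervalOrder} while remaining injective. Nothing of this kind appears in your proposal, so the proof is not complete.

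Beyond the missing core step, two of your intermediate claims would also need repair in your global set-up. First, the ``cancellation of the two close channel traversals'' is not automatic: to keep $\varphi$ injective the two channel sides are distinct curves, and since $S_u$ may be dense (only $\H^1(S_u)=0$ is assumed), $u$ need not be uniformly continuous near the channel, so their image winding contributions need not nearly cancel; the paper avoids this by instead counting, via parity, how often $\psi$ crosses the images of the two nearby segments of $\phi$. Second, relating the self-intersection point $P^\ast$, its $u$-preimages on $\varphi$, and the signed crossings along the ray to an order violation for $\gamma$ requires controlling \emph{where} on $\varphi$ those preimages sit — which is exactly what the paper's small-circle construction (with the blow-up at a point of negative Jacobian) provides and what the keyhole construction by itself does not.
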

\begin{remark}
	The condition that the degree for a sense preserving mapping $u$ has values in $\{0,1\}$ is a necessary condition for $u$ to satisfy the $\INV$ condition. This can be observed from Lemma~\ref{IDidThisOne} and standard  degree arguments. The reader may also reference \cite[Lemma~2.8]{DPP}.
\end{remark}
\begin{proof}[Proof of Lemma~\ref{DegreesAreUseful}]
	Clearly $\deg((a,b), u , \partial(-1,1)^2) = \deg((a,b), \id, \partial(-1,1)^2)= 1$ for all $(a,b) \in (-1,1)^2$. Recall our notation $\theta(t) = (\cos(t),\sin(t))$. For any Jordan domain $G$ whose boundary is the image of an injective Lipschitz curve $\phi:\mathbb{S}\to \er^2$ with $|(\phi\circ \theta)'|$ constant, such that $u\circ\phi\circ\theta\in W^{1,1}_{\loc}(\er,\er^2)$ we know that $\deg((a,b), u, G) = \operatorname{Wind}((a,b), u\circ\phi)$ where $\operatorname{Wind}$ is the winding number. We have the additivity of the degree i.e. $\sum_R\deg((a,b), u , \partial R)=1$ where we sum over all rectangles of type $R = [x_i,x_{i+1}]\times[y_j, y_{j+1}]$. Therefore it suffices to prove that $\deg((a,b), u , \partial R) \geq 0$. Since we may assume that $(a,b)$ is a regular value of $u$ it suffices to prove that there is no $(x,y)\in (-1,1)^2$ and no $r>0$ such that $\deg(u(x,y), u, B((x,y), r) ) = -1$. We may assume that there is some $\xi >0$ such that $u(x,y) =(x,y)$ for all $(x,y)$ satisfying $\dist((x,y), \partial(-1,1)^2) < \xi$.
	
	Let us assume that we have an $(x_0,y_0)$ and $r>0$ such that $\deg(u(x_0,y_0), u, B((x_0,y_0), r) ) = \det Du(x_0,y_0) = -1$. In the following we construct a good test pair for $u$. In order to help our construction of $\phi$, we find a segment $T$ from $\partial(-1,1)^2$ to $\partial B((x_0,y_0), \rho)$. The image of the curve $\phi$ is made of a segment very close to $T$ an arc in $\partial B((x_0,y_0), \rho)$ and then back to the close to the boundary along another segment close to $T$. This can be seen in Figure~\ref{fig:ZadniPsi} in the lower part of the diagram. We may assume that $\phi$ maps $[0,1]$ onto its image at constant speed and we can choose $\rho$ and the segments in such a way that $u\circ\phi\in W^{1,1}((0,1),\er^2)$. The curve $\psi$ starts very close to $T$ inside $\{(s,t)\in (-1,1)^2; \dist((s,t), \partial(-1,1)^2) < \xi\}$ and stays very close to the boundary, then it goes through $u\big(B((x_0,y_0),\rho)\big)$ by a segment back into $\{(s,t)\in (-1,1)^2; \dist((s,t), \partial(-1,1)^2) < \xi\}$ and goes close to the boundary around to a point again close to the segment $T$. The reader can refer to Figure~\ref{fig:ZadniPsi} (especially the upper part of it). By choosing the segment of $\psi$ that goes through $u\big(B((x_0,y_0),\rho)\big)$ we can guarantee that $\phi$ and $\psi$ are a good test pair for $u$ by the arguments of Proposition~\ref{ArrivalGrids}. Because we may assume that $u\in W^{1,1}(T,\er^2)$ and that $\phi([0,1]) \cup T \subset (-1,1)^2\setminus S_u$ we may assume that the number of intersections of $\psi$ with the image in $u$ of the first and second segments of $\phi$ are both equal to the number of intersections of $\psi$ with $u(T)$, i.e. $\psi$ can cross over the image of both of the segments of $\phi$ but cannot cross only one of them.

	\begin{figure}[h]
		\begin{tikzpicture}[line cap=round,line join=round,>=triangle 45,x=0.9cm,y=0.9cm]
			\clip(0.0,0.) rectangle (16.,10.);
			\draw [line width=0.7pt] (0.,10.)-- (0.,6.);
			\draw [line width=0.7pt] (0.,6.)-- (4.,6.);
			\draw [line width=0.7pt] (4.,6.)-- (4.,10.);
			\draw [line width=0.7pt] (4.,10.)-- (0.,10.);
			\draw [line width=0.7pt] (6.,10.)-- (6.,6.);
			\draw [line width=0.7pt] (6.,6.)-- (10.,6.);
			\draw [line width=0.7pt] (10.,6.)-- (10.,10.);
			\draw [line width=0.7pt] (10.,10.)-- (6.,10.);
			\draw [line width=0.7pt,color=qqqqff] (6.2,7.95)-- (6.2,6.2);
			\draw [line width=0.7pt,color=qqqqff] (6.2,6.2)-- (8.,6.2);
			\draw [line width=0.7pt,color=qqqqff] (8.,6.2)-- (8.,9.8);
			\draw [line width=0.7pt,color=qqqqff] (8.,9.8)-- (6.2,9.8);
			\draw [line width=0.7pt,color=qqqqff] (6.2,9.8)-- (6.2,8.05);
			\draw [shift={(8.,8.)},line width=0.7pt,color=yqqqyq]  plot[domain=0.09966865249116451:6.181882701402558,variable=\t]({1.*0.5*cos(\t r)+0.*0.5*sin(\t r)},{0.*0.5*cos(\t r)+1.*0.5*sin(\t r)});
			\draw [line width=0.7pt,color=yqqqyq] (8.497436638788585,7.949435285121798)-- (9.05,8.45);
			\draw [line width=0.7pt,color=yqqqyq] (9.05,8.45)-- (7.6,8.6);
			\draw [line width=0.7pt,color=yqqqyq] (8.497518595104994,8.0497518595105)-- (9.14,8.48);
			\draw [line width=0.7pt,color=yqqqyq] (9.14,8.48)-- (7.6,8.65);
			\draw [line width=0.7pt,color=yqqqyq] (6.3,8.1)-- (7.6,8.65);
			\draw [line width=0.7pt,color=yqqqyq] (7.6,8.6)-- (6.3,7.9);
			\draw [line width=0.7pt] (12.,10.)-- (12.,6.);
			\draw [line width=0.7pt] (12.,6.)-- (16.,6.);
			\draw [line width=0.7pt] (16.,6.)-- (16.,10.);
			\draw [line width=0.7pt] (16.,10.)-- (12.,10.);
			\draw [line width=0.7pt] (0.,4.)-- (0.,0.);
			\draw [line width=0.7pt] (0.,0.)-- (4.,0.);
			\draw [line width=0.7pt] (4.,0.)-- (4.,4.);
			\draw [line width=0.7pt] (4.,4.)-- (0.,4.);
			\draw [line width=0.7pt] (6.,4.)-- (6.,0.);
			\draw [line width=0.7pt] (6.,0.)-- (10.,0.);
			\draw [line width=0.7pt] (10.,0.)-- (10.,4.);
			\draw [line width=0.7pt] (10.,4.)-- (6.,4.);
			\draw [line width=0.7pt] (12.,4.)-- (12.,0.);
			\draw [line width=0.7pt] (12.,0.)-- (16.,0.);
			\draw [line width=0.7pt] (16.,0.)-- (16.,4.);
			\draw [line width=0.7pt] (16.,4.)-- (12.,4.);
			\draw [line width=0.7pt,color=yqqqyq] (6.1,8.1)-- (6.3,8.1);
			\draw [line width=0.7pt,color=yqqqyq] (6.3,7.9)-- (6.1,7.9);
			\draw [line width=0.7pt,color=qqqqff] (0.2,7.95)-- (0.2,6.190763909584986);
			\draw [line width=0.7pt,color=qqqqff] (0.2,8.05)-- (0.2,9.8);
			\draw [line width=0.7pt,color=qqqqff] (0.2,9.8)-- (2.,9.8);
			\draw [line width=0.7pt,color=qqqqff] (0.2,6.190763909584986)-- (2.,6.2);
			\draw [line width=0.7pt,color=qqqqff] (2.,9.8)-- (2.,6.2);
			\draw [line width=0.7pt,color=yqqqyq] (12.1,8.1)-- (12.3,8.1);
			\draw [line width=0.7pt,color=yqqqyq] (12.3,8.1)-- (12.8,8.6);
			\draw [line width=0.7pt,color=yqqqyq] (12.1,7.9)-- (12.3,7.9);
			\draw [line width=0.7pt,color=yqqqyq] (12.3,7.9)-- (12.8,8.5);
			\draw [line width=0.7pt,color=yqqqyq] (12.8,8.6)-- (14.4,9.2);
			\draw [line width=0.7pt,color=yqqqyq] (12.8,8.5)-- (14.4,9.1);
			\draw [line width=0.7pt,color=yqqqyq] (15.1,8.4)-- (14.2,7.3);
			\draw [line width=0.7pt,color=yqqqyq] (14.2,7.3)-- (13.3,7.6);
			\draw [line width=0.7pt,color=yqqqyq] (13.3,7.6)-- (13.4,7.9);
			\draw [line width=0.7pt,color=yqqqyq] (15.2,8.4)-- (14.2,7.2);
			\draw [line width=0.7pt,color=yqqqyq] (14.2,7.2)-- (13.2,7.6);
			\draw [line width=0.7pt,color=yqqqyq] (13.2,7.6)-- (13.4,7.94);
			\draw [line width=0.7pt,color=yqqqyq] (14.4,9.2)-- (15.2,8.4);
			\draw [line width=0.7pt,color=yqqqyq] (14.4,9.1)-- (15.1,8.4);
			\draw [shift={(14.,8.)},line width=0.7pt,color=yqqqyq]  plot[domain=-2.9398459506537016:2.940206476849828,variable=\t]({1.*0.5*cos(\t r)+0.*0.5*sin(\t r)},{0.*0.5*cos(\t r)+1.*0.5*sin(\t r)});
			\draw [line width=0.7pt,color=yqqqyq] (13.4,7.94)-- (13.51014096668138,7.899809543986732);
			\draw [line width=0.7pt,color=yqqqyq] (13.4,7.9)-- (13.510104877233895,8.100013842491839);
			\draw [line width=0.7pt,color=qqqqff] (12.2,7.95)-- (12.2,6.2);
			\draw [line width=0.7pt,color=qqqqff] (12.2,6.2)-- (14.,6.2);
			\draw [line width=0.7pt,color=qqqqff] (14.,6.2)-- (14.,9.8);
			\draw [line width=0.7pt,color=qqqqff] (14.,9.8)-- (12.2,9.8);
			\draw [line width=0.7pt,color=qqqqff] (12.2,9.8)-- (12.2,8.05);
			\draw [line width=0.7pt,color=yqqqyq] (0.1,2.1)-- (1.5101515620487123,2.1002422457683556);
			\draw [line width=0.7pt,color=yqqqyq] (0.1,1.9)-- (1.5101140868294851,1.8999410569859414);
			\draw [shift={(2.,2.)},line width=0.7pt,color=yqqqyq]  plot[domain=-2.9401144143980478:2.9397402257173133,variable=\t]({1.*0.5*cos(\t r)+0.*0.5*sin(\t r)},{0.*0.5*cos(\t r)+1.*0.5*sin(\t r)});
			\draw [line width=0.7pt,color=qqqqff] (0.2,2.05)-- (0.2,3.8);
			\draw [line width=0.7pt,color=qqqqff] (0.2,3.8)-- (2.,3.8);
			\draw [line width=0.7pt,color=qqqqff] (0.2,1.95)-- (0.2,0.2);
			\draw [line width=0.7pt,color=qqqqff] (0.2,0.2)-- (2.,0.2);
			\draw [shift={(2.,1.6)},line width=0.7pt,color=ffqqqq]  plot[domain=-1.5707963267948966:1.5707963267948966,variable=\t]({1.*1.4*cos(\t r)+0.*1.4*sin(\t r)},{0.*1.4*cos(\t r)+1.*1.4*sin(\t r)});
			\draw [shift={(2.,2.4)},line width=0.7pt,color=qqqqff]  plot[domain=-1.5707963267948966:1.5707963267948966,variable=\t]({1.*1.4*cos(\t r)+0.*1.4*sin(\t r)},{0.*1.4*cos(\t r)+1.*1.4*sin(\t r)});
			\draw [line width=0.7pt,color=qqqqff] (2.,3.)-- (2.,1.);
			\draw [line width=0.7pt,color=yqqqyq] (6.1,2.1)-- (7.51012262772888,2.1001007499309696);
			\draw [line width=0.7pt,color=yqqqyq] (6.1,1.9)-- (7.510124941305595,1.8998879284543921);
			\draw [line width=0.7pt,color=qqqqff] (6.2,0.2)-- (6.2,1.95);
			\draw [line width=0.7pt,color=qqqqff] (6.2,2.05)-- (6.2,3.8);
			\draw [line width=0.7pt,color=qqqqff] (6.2,3.8)-- (7.,3.8);
			\draw [line width=0.7pt,color=qqqqff] (6.2,0.2)-- (8.,0.2);
			\draw [line width=0.7pt,color=qqqqff] (7.,3.8)-- (7.,1.);
			\draw [line width=0.7pt,color=qqqqff] (8.,0.2)-- (8.,3.);
			\draw [line width=0.7pt,color=qqqqff] (7.,1.)-- (7.5,1.);
			\draw [shift={(7.75,2.)},line width=0.7pt,color=ffqqqq]  plot[domain=-1.8157749899217608:1.3258176636680323,variable=\t]({1.*1.0307764064044151*cos(\t r)+0.*1.0307764064044151*sin(\t r)},{0.*1.0307764064044151*cos(\t r)+1.*1.0307764064044151*sin(\t r)});
			\draw [line width=0.7pt,color=yqqqyq] (12.1,1.9)-- (13.516777279764293,1.9015340667235952);
			\draw [line width=0.7pt,color=yqqqyq] (12.1,2.1)-- (13.516785283840777,2.0985052052185695);
			\draw [line width=0.7pt,color=qqqqff] (12.2,1.95)-- (12.2,0.2);
			\draw [line width=0.7pt,color=qqqqff] (12.2,0.2)-- (14.,0.2);
			\draw [line width=0.7pt,color=qqqqff] (12.2,2.05)-- (12.2,3.8);
			\draw [line width=0.7pt,color=qqqqff] (12.2,3.8)-- (14.,3.8);
			\draw [line width=0.7pt,color=qqqqff] (14.,3.)-- (14.,1.);
			\draw [shift={(13.5,2.)},line width=0.7pt,color=qqqqff]  plot[domain=1.5707963267948966:4.71238898038469,variable=\t]({1.*0.5*cos(\t r)+0.*0.5*sin(\t r)},{0.*0.5*cos(\t r)+1.*0.5*sin(\t r)});
			\draw [line width=0.7pt,color=qqqqff] (14.,1.)-- (13.5,1.5);
			\draw [line width=0.7pt,color=ffqqqq] (13.5,2.5)-- (14.,3.8);
			\draw [line width=0.7pt,color=qqqqff] (14.,0.2)-- (13.5,1.);
			\draw [shift={(13.75,2.)},line width=0.7pt,color=qqqqff]  plot[domain=1.3258176636680326:4.4674103172578254,variable=\t]({1.*1.0307764064044151*cos(\t r)+0.*1.0307764064044151*sin(\t r)},{0.*1.0307764064044151*cos(\t r)+1.*1.0307764064044151*sin(\t r)});
			\draw [line width=0.7pt,color=yqqqyq] (1.4,8.1)-- (1.5101618536016237,7.8997074761858945);
			\draw [line width=0.7pt,color=yqqqyq] (1.4,7.9)-- (1.5099982782216863,8.099490264118094);
			\draw [line width=0.7pt,color=yqqqyq] (0.1,8.1)-- (1.4,8.1);
			\draw [line width=0.7pt,color=yqqqyq] (0.1,7.9)-- (1.4,7.9);
			\draw [shift={(2.,8.)},line width=0.7pt,color=yqqqyq]  plot[domain=-2.9396375846349327:2.9412751165272244,variable=\t]({1.*0.5*cos(\t r)+0.*0.5*sin(\t r)},{0.*0.5*cos(\t r)+1.*0.5*sin(\t r)});
			\draw [shift={(8.,2.)},line width=0.7pt,color=yqqqyq]  plot[domain=-2.940005962371435:2.9400290735458094,variable=\t]({1.*0.5*cos(\t r)+0.*0.5*sin(\t r)},{0.*0.5*cos(\t r)+1.*0.5*sin(\t r)});
			\draw [shift={(14.,2.)},line width=0.7pt,color=yqqqyq]  plot[domain=-2.940575451948491:2.9404941803820885,variable=\t]({1.*0.4931528539590842*cos(\t r)+0.*0.4931528539590842*sin(\t r)},{0.*0.4931528539590842*cos(\t r)+1.*0.4931528539590842*sin(\t r)});
			\begin{scriptsize}
				\draw [fill=black] (2.,8.5) circle (2.pt);
				\draw[color=black] (2.3,8.7) node {$N$};
				\draw[color=yqqqyq] (0.7,8.3) node {$u\circ\phi$};
				\draw[color=yqqqyq] (6.7,8.6) node {$u\circ\phi$};
				\draw[color=yqqqyq] (12.7,8.8) node {$u\circ\phi$};
				\draw[color=yqqqyq] (0.5,2.3) node {$\phi$};
				\draw[color=yqqqyq] (6.5,2.3) node {$\phi$};
				\draw[color=yqqqyq] (12.5,2.3) node {$\phi$};
				\draw[color=blue] (0.7,6.4) node {$\psi$};
				\draw[color=blue] (6.7,6.4) node {$\psi$};
				\draw[color=blue] (12.7,6.4) node {$\psi$};
				\draw[color=blue] (0.7,0.4) node {$\gamma$};
				\draw[color=blue] (6.7,0.4) node {$\gamma$};
				\draw[color=blue] (12.7,0.4) node {$\gamma$};
				\draw [fill=black] (8.,8.5) circle (2.pt);
				\draw[color=black] (8.2,8.8) node {$N$};
				\draw [fill=black] (14.,8.5) circle (2.pt);
				\draw[color=black] (14.3,8.7) node {$N$};
				\draw [fill=black] (2.,2.5) circle (2.pt);
				\draw[color=black] (2.3,2.7) node {$S$};
				\draw [fill=black] (8.,2.5) circle (2.pt);
				\draw[color=black] (8.2,2.7) node {$N$};
				\draw [fill=black] (14,2.5) circle (2.pt);
				\draw[color=black] (14.2,2.8) node {$S$};
				\draw [fill=black] (2.,1.5) circle (2.pt);
				\draw[color=black] (2.2,1.75) node {$N$};
				\draw [fill=black] (8.,7.5) circle (2.0pt);
				\draw[color=black] (8.2,7.75) node {$S$};
				\draw [fill=uuuuuu] (14.,7.5) circle (2.0pt);
				\draw[color=uuuuuu] (14.2,7.75) node {$S$};
				\draw [fill=uuuuuu] (14.,1.5) circle (2.0pt);
				\draw[color=uuuuuu] (14.2,1.8) node {$N$};
				\draw [fill=uuuuuu] (8.,1.5) circle (2.0pt);
				\draw[color=uuuuuu] (8.1,1.8) node {$S$};
				\draw [fill=uuuuuu] (2.,7.5) circle (2.0pt);
				\draw[color=uuuuuu] (2.1,7.2) node {$S$};
			\end{scriptsize}
		\end{tikzpicture}
		\caption{Three cases are depicted in this figure. Above is the case in the image and below is the case in the preimage. In all cases the good test pair $\phi$ and $\psi$ are the same. The case on the left-hand side corresponds to an even number of intersections of $u\circ\phi$ with $\psi$ and an even number of intersections of $u\circ\phi$ with $\psi$. The case in the middle corresponds to an odd number of intersections of $u\circ\phi$ with $\psi$ and an even number of intersections of $u\circ\phi$ with $\psi$. The case on the right-hand side corresponded to an odd number of intersections of $u\circ\phi$ with $\psi$ and an odd number of intersections of $u\circ\phi$ with $\psi$. In every case it is impossible to construct an injective $\gamma$ which visits the points $N$ and $S$ in the correct order.}\label{fig:ZadniPsi}
	\end{figure}
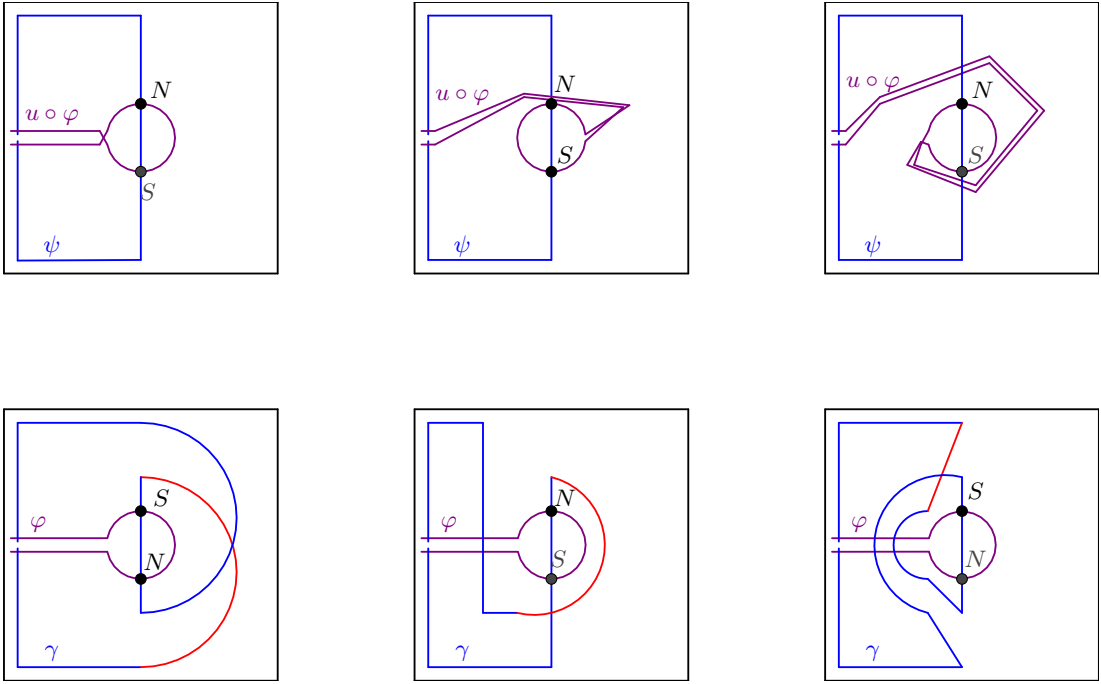

	Using the three-curve property of $u$ we find a continuous injective $\gamma$ which especially satisfies \eqref{IntervalOrder}. Especially, by assuming that $\rho$ is small enough and $\psi$ has been chosen well, we may assume that $\psi([0,1]) \cap u(\partial B((x_0,y_0), \rho)) = \{N,S\}$ consists of exactly two points. Our aim is to prove a contradiction with the three-curve property, i.e. that $\gamma$ cannot be injective. Let us call the part of the segment of $\psi$ above $N$ the north segment of $\psi$ and we call the part bellow $S$ the south segment of $\psi$ (for reference see the upper part of Figure~\ref{fig:ZadniPsi}). We consider three cases, the first cases is where the number of intersections of $u(T)$ with the north and south part of the segment of $\psi$ is even. The second case is either when the number of intersections of $\psi$ with the north segment of $\psi$ is odd and with the south segment of $\psi$ is even or visa versa. The third case is when the number of intersections of $u(T)$ with both segments of $\psi$ is odd.
	
	As can be observed in Figure~\ref{fig:ZadniPsi} we get a contradiction in each case. The first case is equivalent to saying that the curve $\gamma$ cannot intersect the segments of $\phi$. Then the part of the curve $\gamma$ until it gets to $u^{-1}(N)\cap \partial B((x_0,y_0),\rho)$ disconnects the point $u^{-1}(S)\cap \partial B((x_0,y_0),\rho)$ from $\gamma(1)$ in $(-1,1)^2\setminus\phi([0,1])$ and so either $\gamma$ is not injective or it must intersect $\phi([0,1])$ at a point that it may not. Either of these is a contradiction. The other two cases lead similarly to contradictions as can be observed from Figure~\ref{fig:ZadniPsi}.
\end{proof}

Now we prove that condition $a)$ from Theorem~\ref{MainTheorem} is equivalent with condition $c)$.
\begin{thm}\label{ThreeCurveChar}
	Let $u\in W_{\id}^{1,p}((-1,1)^2, \er^2)$ and let $S_u \subset (-1,1)^2$ with $\H^1(S_u) = 0$ be such that $u_{\rceil (-1,1)^2\setminus S_u}$ is continuous on $(-1,1)^2\setminus S_u$. Then $u \in \overline{H\cap W_{\id}^{1,p}((-1,1)^2, \er^2)}$ if and only if $u$ has the three-curve property.
\end{thm}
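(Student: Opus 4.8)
The plan is to establish the two implications $a)\Rightarrow c)$ and $c)\Rightarrow a)$ separately, the second being by far the more involved. For $a)\Rightarrow c)$ I intend to exploit quasi-monotonicity. Let $\varphi,\psi$ be a pair of good test curves for $u$ and fix $\delta>0$ with $\varphi([0,1])\subset[-1,1]^2\setminus U_{\delta}$. By Theorem~\ref{QuasiMonotne} there is a monotone $g_{\delta}$ with $g_{\delta}=u$ on $[-1,1]^2\setminus U_{\delta}$, and by Theorem~\ref{Young} together with the standard approximation of planar homeomorphisms by finitely piecewise affine (hence bi-Lipschitz) ones I may choose a bi-Lipschitz homeomorphism $h$ with $\|h-g_{\delta}\|_{L^\infty}$ as small as I like, so that $\|h\circ\varphi-u\circ\varphi\|_{L^\infty}$ is tiny. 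I will then set $\gamma_0:=h^{-1}\circ\psi$, which is automatically injective and Lipschitz, with its crossings of $\varphi([0,1])$ occurring in the order of the $\psi$-parameter. Since the finitely many intersection points of $\psi([0,1])$ with $u\circ\varphi([0,1])$ are transversal, once $h$ is close enough $\gamma_0$ meets $\varphi([0,1])$ only in small neighbourhoods of the points of $\varphi^{-1}\circ u^{-1}\circ\psi(t)$, an odd number of times near each; finitely many local modifications — collapsing each cluster of crossings to one, sliding it onto the exact point $\varphi(s)$ with $u\circ\varphi(s)=\psi(t)$, and adjusting the derivative there so that \eqref{GammaCross} holds — then deliver an injective Lipschitz $\gamma$ together with disjoint, correctly ordered intervals $I_t$ satisfying $\gamma(I_t)\cap\varphi([0,1])=\varphi^{-1}\circ u^{-1}\circ\psi(t)$ and \eqref{IntervalOrder}, i.e. the three-curve property.

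For $c)\Rightarrow a)$ I will first note, using Lemma~\ref{DegreesAreUseful} and a covering argument that compares an arbitrary admissible Lipschitz loop with a union of grid rectangles, that $u$ satisfies the $\INV$ condition, so that the operator $u^{-1}$ of Definition~\ref{defIMG} is at our disposal. The crux is an intermediate \emph{stronger property}: for every suitable $k$-grid $\Gamma$ for $u$ avoiding $S_u$ and every sufficiently fine good arrival grid $\G$ there exists an injective Lipschitz $g:\G\to\er^2$ with $g(\G)\cap\Gamma=u^{-1}(\G)\cap\Gamma$. I will show first that the stronger property yields $a)$: the injectivity of $g$ forces a consistent ordering of the preimage sheets of $\G$ along $\Gamma$ (in the spirit of \cite[Proposition~4.19]{DPP}), from which one builds injective continuous $\phi_{k,m}\sto u_{\rceil\Gamma_k}$, i.e. the no-crossing condition of Definition~\ref{StayCalmPlease}, and hence $a)$ follows from Proposition~\ref{NoCrossEquivalence}. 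Then I will derive the stronger property from the three-curve condition: given $\Gamma$ and $\G$, I will construct a pair of good test curves in which $\varphi$ traces $\Gamma$ but detours around each crossing point of $\Gamma$, leaving a small gap there (Figure~\ref{fig:wavy}), while $\psi$ first traces a detoured copy of $\G$ on $[0,\tfrac12]$ and on $[\tfrac12,1]$ loops around the $u$-images of the crossing points of $\Gamma$ (Figure~\ref{fig:arcs}); after a generic perturbation of the data (by Proposition~\ref{ArrivalGrids} and Remark~\ref{Alibistic}) making $\varphi,\psi$ a pair of good test curves, the three-curve property produces an injective Lipschitz $\gamma$. The second half of $\psi$ prevents $\gamma$ from passing through the gaps of $\varphi$, so the image of $\varphi$ may be replaced by the whole of $\Gamma$; what remains is an injective Lipschitz $\gamma$ that coincides with $u^{-1}(\G)$ except for small gaps near the preimages of the crossing points of $\G$ and meets $\Gamma$ exactly in $u^{-1}(\G)\cap\Gamma$. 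Finally, a careful examination of $\gamma$ near each such preimage will let me plug the gaps with short arcs disjoint from the rest of $\gamma([0,1])$ and from $\Gamma$, producing $g$.

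The main obstacle will be this concluding ``hole-plugging'' surgery: one has to reconnect the pieces of $\gamma$ near the preimages of the vertices of $\G$ so that the resulting $g$ stays globally injective and neither creates nor destroys any intersection with $\Gamma$. This plays the role of the order-uniqueness analysis of \cite{DPP}, but the clean combinatorial input furnished by the three-curve property should make it markedly shorter; by comparison the crossing-matching in $a)\Rightarrow c)$ is routine, though it still requires the local surgery above to place the intersection points exactly and in the correct order.
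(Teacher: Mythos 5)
Your proposal follows essentially the same route as the paper in both directions: quasi-monotonicity (Theorem~\ref{QuasiMonotne}) plus Youngs' theorem and pulling $\psi$ back through the inverse of the approximating homeomorphism, with local surgery at the transversal crossings, for the forward implication; and, for the converse, the same intermediate ``stronger property'' for a grid $\Gamma_k$ and arrival grid $\G$, obtained by detoured curves $\varphi,\psi$ with the second half of $\psi$ encircling the images of the vertices, followed by the hole-plugging reconstruction of $g$. The surgery you defer as the ``main obstacle'' is precisely the paper's Step~\ref{Step4of3Cproof}, which it completes via the degree and parity arguments of Lemma~\ref{DegreesAreUseful} together with a sense-preserving boundary map transferring the cyclic order from the image side, and its Step~\ref{Step1of3Cproof} uses a Jordan--Sch\"onflies embedding of $\Gamma_k$ into the rectangles of $\G$ rather than your sheet-ordering argument, a cosmetic difference.
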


\begin{proof}[Proof that limits of homeomorphisms satisfy the three-curve property]
	Let us assume that $u \in \overline{H\cap W_{\id}^{1,p}((-1,1)^2, \er^2)}$, then by Theorem~\ref{QuasiMonotne} $u$ is quasi-monotone. Let $\delta>0$ and let $\phi$, $\psi$ be a good test pair for $u$ with $\varphi([0,1]) \subset (-1,1)^2\setminus U_{\delta}$. There is a monotone map $g_{\delta}:[-1,1]^2 \to [-1,1]^2$ such that $u=g_{\delta}$ on $\varphi([0,1])$. By the Young's theorem, for every $\epsilon>0$ we find a homeomorphism $h_{\epsilon}:[-1,1]^2 \to [-1,1]^2$ with $\|h_{\epsilon} - g_{\delta}\|_{\infty}< \epsilon$. Let us call the set $\tilde{\gamma}_{\epsilon} = h_{\epsilon}^{-1}(\psi([0,1]))$.
	
	There are now two operations necessary to define the curve $\gamma$. The first is to prove that by choosing a very small $\epsilon>0$ and then doing a slight alteration we can construct a curve $\hat{\gamma}$ such that $\hat{\gamma} \cap \phi([0,1]) =u^{-1}(\psi([0,1]))\cap \phi([0,1]) =:F$ and the property \eqref{IntervalOrder} holds. The second operation is to make $\gamma([0,1])$ by modifying $\hat{\gamma}$ so that it can be parameterized by a Lipschitz continuous curve.

	We have the existence of two linearly independent vectors $(u\circ\phi)'(t)$ and $\psi'(s_t)$ for each $t\in T = \phi^{-1}(F)$, i.e. $u\circ\phi(t)\in \psi((0,1))$ and where $s_t = \psi^{-1}(u\circ\phi(t))$. Therefore we can find a $\rho>0$ such that the balls $B((w,z), \rho)$ are pairwise disjoint for $(w,z) \in u(F)$ and for each $t\in T$ there is an interval $I_{t,\rho} = (a_{t,\rho}, b_{t,\rho})$ of $\phi^{-1}\circ u^{-1}\big(B((w,z),\rho)\big)$ containing $t$ and no other elements of $T$. Similarly we may assume that for each $s\in \psi^{-1}(u(F))$ there is an interval $s\in J_{s,\rho} \subset \psi^{-1}\big(B(\psi(s),\rho)\big)$, the intervals are pairwise disjoint and each interval contains exactly one point $s\in \psi^{-1}(u(F))$. The blow up of $u\circ\phi([0,1])$ at the points $(w,z) \in u(F)$ i.e.
	$$
	\frac{u\circ\phi([0,1]) - (w,z)}{\rho}\cap B(0,1)
	$$
	converges (for example in Haussdorf metric) to a segment parallel to $(u\circ\phi)'(t)$. Similarly, the blow up of $\psi([0,1])$ converges to a segment parallel to $\psi'(s_t)$. Because these segments are non-parallel and intersect at $(0,0)$ they must cross. This means we can find a $\rho$ such that the two points $u\circ\phi(a_{t,\rho})$ and $u\circ\phi(b_{t,\rho})$ lie in different components of $B((w,z),\rho)\setminus\psi(J_{s_t,\rho})$.
	
	It is possible to find an $\epsilon>0$ such that for any continuous $h_{\epsilon}$ such that $|h_{\epsilon}\circ\phi(t)- u\circ\phi(t)|<\epsilon$ for all $t\in[0,1]$ we have that
	$$
	\phi^{-1}\circ h_{\epsilon}^{-1}(\psi([0,1]))\subset \bigcup_{t\in T}I_{t,\rho}
	$$
	and $h_{\epsilon}\circ\phi(a_{t,\rho})$ lies in the same component of $B((w,z),\rho)\setminus\psi(J_{s_t,\rho})$ as does $u\circ\phi(a_{t,\rho})$ and $h_{\epsilon}\circ\phi(b_{t,\rho})$ lies in the same component of $B((w,z),\rho)\setminus\psi(J_{s_t,\rho})$ as does $u\circ\phi(b_{t,\rho})$. Further, according to the first paragraph it is possible to find such a homeomorphic $h_{\epsilon}$.
	
	There is a $\rho_0>0$ and a $\delta>0$ (depending on $Du(x,y)$, $(x,y)\in F$) such that for all $\rho\in (0,\rho_0)$ there are $d,\epsilon>0$ sufficiently small that the inclusions
	$$
	h_{\epsilon}^{-1}(\psi([0,1]))\cap \phi(I_{t,\rho})\subset B(\phi(t), \delta^{-1}\epsilon) \subset B(\phi(t), d) \subset h_{\epsilon}^{-1}(B(u\circ\phi(t), \rho))
	$$
	hold for all $t\in T$. By choosing $\epsilon\ll d$ we may assume that there is only one component of $h_{\epsilon}^{-1}(\psi([0,1])) \cap B((x,y), d)$ intersecting $\phi([0,1])$. In order to construct $\hat{\gamma}$ we alter $\tilde{\gamma}_{\epsilon}$ in the following way. We keep all of $\tilde{\gamma}_{\epsilon}$ outside of $\bigcup_{(x,y)\in F}B((x,y), d)$ without doing anything. We replace each component of $\tilde{\gamma}_{\epsilon} \cap B((x,y), d)$, $(x,y)\in F$, which does not intersect $\phi([0,1])$ with a segment which has the same endpoints as the replaced part of $\tilde{\gamma}_{\epsilon}$. We replace the component of $\tilde{\gamma}_{\epsilon} \cap B((x,y), d)$ intersecting $\phi([0,1])$ with an injective Lipschitz curve inside $ B((x,y), d)$ which has the same end points as the replaced component. We may assume that this curve intersects $\phi([0,1])$ exactly once at $(x,y)$ perpendicular to $\phi'(\phi^{-1}(x,y))$ and we may assume that $\hat{\gamma}$ is an injective curve. The existence of this curve is easy to observe especially since we have proved that $h_{\epsilon}\circ\phi(I_{t,\rho})$ crosses $\psi([0,1])$ and $h_{\epsilon}$ is a homeomorphism. This is very reminiscent of a similar step in the proof of Proposition~\ref{PiecewiseLinearization} where we did something similar with the union of two segments, here however $\phi$ is just an injective Lipschitz curve close to a segment so we must be a bit more careful.
	
	It not difficult to observe that the injective and mutually disjoint curves in $\hat{\gamma}\setminus \bigcup_{(x,y)\in F}B((x,y), d)$ which may have infinite length can be replaced by injective Lipschitz curves of finite length while maintaining the fact that they are disjoint. We call the resulting set $\gamma([0,1])$ and the mapping $\gamma$ may be the constant-speed parametrization of $\gamma([0,1])$. The ordering property \eqref{IntervalOrder} is easily checked since $\psi$ intersects each of the sets $h_{\epsilon}\circ\phi(I_{t,\rho})$ inside $B((w,z), \rho)$ for $(w,z)\in u(F)$ exactly when $u\circ\phi(t) = (w,z)$ and the balls $B((w,z), \rho)$ are pairwise disjoint.  The \eqref{GammaCross} property holds because we constructed $\gamma$ so that $\gamma'$ is non-zero perpendicular to $\phi'$ at their mutual intersection points. Thus $u$ has the three-curve property.
\end{proof}

\begin{proof}[Proof that if $u$ satisfies the three-curve property then it is the limit of homeomorphisms]
	\step{1}{Verifying the NC condition if we have a stronger version of the three-curve property for grids}{Step1of3Cproof}
	
	Firstly, let us assume that we have a following improved version of the three curve property where we replace $\varphi([0,1])$ with $\Gamma_k$ a suitable $k$-grid for $u$ and replace $\psi(0,1)$ with $\G$, a good arrival grid for $\Gamma_k$ from Proposition~\ref{ArrivalGrids}. Specifically let us assume for every $\Gamma_k$ a suitable $k$-grid for $u$ and every good arrival grid for $u_{\rceil \Gamma_k}$
	$$
	\G= \bigcup_{n=0}^M \{w_n\}\times [-1,1] \cup [-1,1]\times\{z_n\}.
	$$
	that there exists an injective continuous map $g: \G \to [-1,1]^2$ such that $g(\G)\cap \Gamma_k = \Gamma_k \cap u^{-1}(\G)$. Then thanks to the Jordan Sch\"onflies theorem there is a homeomorphism between the bounded components of $\er^2 \setminus g(\G)$ and rectangles of $\G$, i.e. bounded components of $\er^2\setminus \G$. Thus, for each $A$ bounded component of $\er^2 \setminus g(\G)$ there is a continuous injective embedding of $A \cap\Gamma_k$ into the rectangle of $\G$ whose boundary is $g^{-1}(\partial A)$. It is not difficult to observe that these piecewise homeomorphisms with disjoint images can be adapted to be continuous on their common boundaries. This yields $\Phi$, an injective continuous mapping of $\Gamma_k$ into $\er^2$ such that $\Phi(x,y)$ lies in the same rectangle of $\G$ as does $u(x,y)$. Assuming that the rectangles of $\G$ have diameter at most $ \epsilon /4$ we have $\|\Phi - u_{\rceil \Gamma_k}\|_{\infty} < \epsilon$. Thus, we have exactly verified the no-crossing condition of Definition~\ref{StayCalmPlease}.
	
	\step{2}{Replace $\Gamma_k$ with $\phi([0,1])$ and $\G$ with $\psi([0,1])$ and use the three-curve property}{Step2of3Cproof}
	
	Without loss of generality, we may assume that there is a neighborhood of $\partial [-1,1]^2$ called $P$ such that $u(x,y) =(x,y)$ for $(x,y)\in P$. Also without loss of generality we may assume that every $R=[x_i,x_{i+1}]\times[y_j,y_{j+1}]$ rectangle of $\Gamma_k$ such that $R\cap \partial[-1,1]^2\neq \emptyset$ we have $R \subset P$. Similarly, for every $\tilde{R}=[w_n,w_{n+1}]\times[z_m,z_{m+1}]$ rectangle of $\G$ such that $\tilde{R}\cap \partial[-1,1]^2$ we have $\tilde{R} \subset P$.
	
	It now remains to show how the three-curve property implies the stronger property from step~\ref{Step1of3Cproof}. Let $\Gamma_k$ be a suitable $k$-grid for $u$ and let $\G$ be a corresponding good arrival grid as in step~\ref{Step1of3Cproof}. We start by constructing a curve $\psi$. Recall that the set $F = \Gamma_k \cap u^{-1}(\G)$ is finite, that $(x,y)\in F$ lies on a side of $\Gamma_k$ never on a vertex, $u(x,y)$ lies on sides of $\G$, never on vertexes of $\G$ and that $D_{\tau}u(x,y)$ exists and has non-zero component perpendicular to $\G$ at $u(x,y)$ for all $(x,y)\in F$.

	\begin{figure}[h]
		\begin{tikzpicture}[line cap=round,line join=round,>=triangle 45,x=1.5cm,y=1.5cm]
			\clip(-0.1,-0.1) rectangle (5.1,5.1);
			\draw [line width=0.5pt,dotted] (1.,5.)-- (1.,0.);
			\draw [line width=0.5pt,dotted] (2.,0.)-- (2.,5.);
			\draw [line width=0.5pt,dotted] (3.,5.)-- (3.,0.);
			\draw [line width=0.5pt,dotted] (4.,0.)-- (4.,5.);
			\draw [line width=0.5pt,dotted] (5.,4.)-- (0.,4.);
			\draw [line width=0.5pt,dotted] (0.,3.)-- (5.,3.);
			\draw [line width=0.5pt,dotted] (5.,2.)-- (0.,2.);
			\draw [line width=0.5pt,dotted] (0.,1.)-- (5.,1.);
			\draw [->,line width=0.5pt] (1.,3.8) -- (1.,3.2);
			\draw [line width=0.5pt] (1.,3.2) -- (1.2,3.);
			\draw [line width=0.5pt] (1.2,3.) -- (1.8,3.);
			\draw [line width=0.5pt] (1.8,3.) -- (2.,2.8);
			\draw [->,line width=0.5pt] (2.,2.8) -- (2.,2.2);
			\draw [line width=0.5pt] (2.,2.2) -- (2.2,2.);
			\draw [->,line width=0.5pt] (2.2,2.) -- (2.8,2.);
			\draw [line width=0.5pt] (2.8,2.) -- (3.,1.8);
			\draw [line width=0.5pt] (3.,1.8) -- (3.,1.2);
			\draw [line width=0.5pt] (3.,1.2) -- (3.2,1.);
			\draw [->,line width=0.5pt] (3.2,1.) -- (3.8,1.);
			\draw [line width=0.5pt] (3.8,1.) -- (4.,0.8);
			\draw [->,line width=0.5pt] (4.,0.8) -- (4.,0.2);
			\draw [line width=0.5pt, color=blue] (3.,0.2) -- (3.,0.8);
			\draw [line width=0.5pt, color=blue] (3.,0.8) -- (2.8,1.);
			\draw [->,line width=0.5pt, color=blue] (2.8,1.) -- (2.2,1.);
			\draw [line width=0.5pt, color=blue] (2.2,1.) -- (2.,1.2);
			\draw [line width=0.5pt, color=blue] (2.,1.2) -- (2.,1.8);
			\draw [line width=0.5pt, color=blue] (2.,1.8) -- (1.8,2.);
			\draw [->,line width=0.5pt, color=blue] (1.8,2.) -- (1.2,2.);
			\draw [line width=0.5pt, color=blue] (1.2,2.) -- (1.,2.2);
			\draw [->,line width=0.5pt, color=blue] (1.,2.2) -- (1.,2.8);
			\draw [line width=0.5pt, color=blue] (1.,2.8) -- (0.8,3.);
			\draw [->,line width=0.5pt, color=blue] (0.8,3.) -- (0.2,3.);
			\draw [line width=0.5pt] (0.2,3.) -- (0.2,2.);
			\draw [line width=0.5pt, color=red] (0.2,2.) -- (0.8,2.);
			\draw [line width=0.5pt, color=red] (0.8,2.) -- (1.,1.8);
			\draw [->,line width=0.5pt, color=red] (1.,1.8) -- (1.,1.2);
			\draw [line width=0.5pt, color=red] (1.,1.2) -- (1.2,1.);
			\draw [->,line width=0.5pt, color=red] (1.2,1.) -- (1.8,1.);
			\draw [line width=0.5pt, color=red] (1.8,1.) -- (2.,0.8);
			\draw [->,line width=0.5pt, color=red] (2.,0.8) -- (2.,0.2);
			\draw [line width=0.5pt] (2.,0.2) -- (1.,0.2);
			\draw [->,line width=0.5pt] (1.,0.2) -- (1.,0.8);
			\draw [line width=0.5pt] (1.,0.8) -- (0.8,1.);
			\draw [line width=0.5pt] (0.8,1.) -- (0.2,1.);
			\draw [->,line width=0.5pt] (4.,0.2) -- (3.,0.2);
			\draw [->,line width=0.5pt] (4.,5.) -- (4.,4.2);
			\draw [line width=0.5pt] (4.,4.2) -- (4.2,4.);
			\draw [line width=0.5pt] (4.2,4.) -- (4.8,4.);
			\draw [line width=0.5pt] (4.8,4.) -- (4.8,3.);
			\draw [->,line width=0.5pt] (4.8,3.) -- (4.2,3.);
			\draw [line width=0.5pt] (4.2,3.) -- (4.,3.2);
			\draw [->,line width=0.5pt] (4.,3.2) -- (4.,3.8);
			\draw [line width=0.5pt] (4.,3.8) -- (3.8,4.);
			\draw [->,line width=0.5pt] (3.8,4.) -- (3.2,4.);
			\draw [line width=0.5pt] (3.2,4.) -- (3.,4.2);
			\draw [->,line width=0.5pt] (3.,4.2) -- (3.,4.8);
			\draw [->,line width=0.5pt] (3.,4.8) -- (2.,4.8);
			\draw [->,line width=0.5pt] (2.,4.8) -- (2.,4.2);
			\draw [line width=0.5pt] (2.,4.2) -- (2.2,4.);
			\draw [line width=0.5pt] (2.2,4.) -- (2.8,4.);
			\draw [line width=0.5pt] (2.8,4.) -- (3.,3.8);
			\draw [line width=0.5pt] (3.,3.8) -- (3.,3.2);
			\draw [line width=0.5pt] (3.,3.2) -- (3.2,3.);
			\draw [line width=0.5pt] (3.2,3.) -- (3.8,3.);
			\draw [line width=0.5pt] (3.8,3.) -- (4.,2.8);
			\draw [->,line width=0.5pt] (4.,2.8) -- (4.,2.2);
			\draw [line width=0.5pt] (4.,2.2) -- (4.2,2.);
			\draw [line width=0.5pt] (4.2,2.) -- (4.8,2.);
			\draw [line width=0.5pt] (4.8,2.) -- (4.8,1.);
			\draw [->,line width=0.5pt] (4.8,1.) -- (4.2,1.);
			\draw [line width=0.5pt] (4.2,1.) -- (4.,1.2);
			\draw [->,line width=0.5pt] (4.,1.2) -- (4.,1.8);
			\draw [line width=0.5pt] (4.,1.8) -- (3.8,2.);
			\draw [line width=0.5pt] (3.8,2.) -- (3.2,2.);
			\draw [line width=0.5pt] (3.2,2.) -- (3.,2.2);
			\draw [->,line width=0.5pt] (3.,2.2) -- (3.,2.8);
			\draw [line width=0.5pt] (3.,2.8) -- (2.8,3.);
			\draw [line width=0.5pt] (2.8,3.) -- (2.2,3.);
			\draw [line width=0.5pt] (2.2,3.) -- (2.,3.2);
			\draw [line width=0.5pt] (2.,3.2) -- (2.,3.8);
			\draw [line width=0.5pt] (2.,3.8) -- (1.8,4.);
			\draw [line width=0.5pt] (1.8,4.) -- (1.2,4.);
			\draw [line width=0.5pt] (1.2,4.) -- (1.,4.2);
			\draw [->,line width=0.5pt] (1.,4.2) -- (1.,4.8);
			\draw [line width=0.5pt] (1.,4.8) -- (0.2,4.8);
			\draw [line width=0.5pt] (0.2,4.8) -- (0.2,4.);
			\draw [->,line width=0.5pt] (0.2,4.) -- (0.8,4.);
			\draw [line width=0.5pt] (0.8,4.) -- (1.,3.8);
		\end{tikzpicture}
		\caption{A curve made from $\G$ by replacing crosses at its vertices with $NE$, $SW$ segments. We make curves that go left and up or go down and right (for example the red and the blue part). These are then connected  together close to $\partial(-1,1)^2$ inside $P$. The set depicted above is called $\tilde{\Psi}$.}\label{fig:wavy}
	\end{figure}

	For an illustration of the following paragraph see Figure~\ref{fig:wavy}. Since the set $\{(w_n,z_m)\}$ of vertexes of $\G$ is finite (and therefore compact), $u(\Gamma_k)$ is a compact set and the two sets are disjoint, we find a $\rho_0$ such that
	$$
	B((w_n,z_m),\rho_0)\cap [u(\Gamma_k)+B(0, \rho_0)] = \emptyset \text{ for every  $(w_n,z_m)$ vertex of }\G.
	$$
	Assuming that $\rho\in (0,\rho_0)$ is small enough so that $B((w_n,z_m),\rho)$ are pairwise disjoint, there are exactly four segments of $\G$ intersecting $B((w_n,z_m),\rho)$ each ending in $(w_n,z_m)$. Let us denote the vertical segment above $(w_n,z_m)$ the north segment, and the vertical segment below $(w_n,z_m)$ the south segment. Similarly, we denote the left segment west and the right segment east. For each $(w_n,z_m)$ we call $NE(w_n,z_m)$ the segment joining the intersection of the north segment and $\partial B((w_m,z_n),\rho)$ and the intersection of the east segment and $\partial B((w_m,z_n),\rho)$. Similarly, $SW(w_n,z_m)$ is the corresponding segment for the south and west segments. The set
	$$
	\Big[\G \setminus \bigcup_{n,m=1}^M B((w_n,z_m), \rho)\Big] \cup  \bigcup_{n,m=1}^M NE(w_n,z_m)\cup SW(w_n,z_m)
	$$
	has $2M-1$ components, each of which is (the image of) an injective Lipschitz curve. We connect these components inside the neighborhood of the boundary $P$, to get (the image of) a single injective Lipschitz curve as shown in Figure~\ref{fig:wavy}. Let us call this set $\tilde{\Psi}$.

	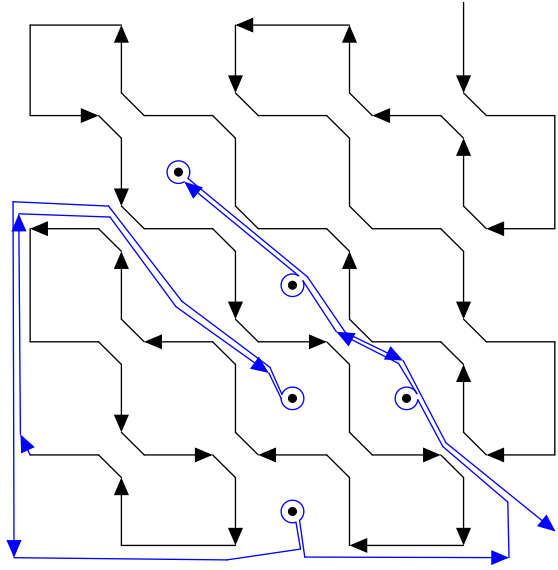
\begin{figure}[h]
		\begin{tikzpicture}[line cap=round,line join=round,>=triangle 45,x=1.5cm,y=1.5cm]
			\draw [->,line width=0.5pt] (1.,3.8) -- (1.,3.2);
			\draw [line width=0.5pt] (1.,3.2) -- (1.2,3.);
			\draw [line width=0.5pt] (1.2,3.) -- (1.8,3.);
			\draw [line width=0.5pt] (1.8,3.) -- (2.,2.8);
			\draw [->,line width=0.5pt] (2.,2.8) -- (2.,2.2);
			\draw [line width=0.5pt] (2.,2.2) -- (2.2,2.);
			\draw [->,line width=0.5pt] (2.2,2.) -- (2.8,2.);
			\draw [line width=0.5pt] (2.8,2.) -- (3.,1.8);
			\draw [line width=0.5pt] (3.,1.8) -- (3.,1.2);
			\draw [line width=0.5pt] (3.,1.2) -- (3.2,1.);
			\draw [->,line width=0.5pt] (3.2,1.) -- (3.8,1.);
			\draw [line width=0.5pt] (3.8,1.) -- (4.,0.8);
			\draw [->,line width=0.5pt] (4.,0.8) -- (4.,0.2);
			\draw [line width=0.5pt] (3.,0.2) -- (3.,0.8);
			\draw [line width=0.5pt] (3.,0.8) -- (2.8,1.);
			\draw [->,line width=0.5pt] (2.8,1.) -- (2.2,1.);
			\draw [line width=0.5pt] (2.2,1.) -- (2.,1.2);
			\draw [line width=0.5pt] (2.,1.2) -- (2.,1.8);
			\draw [line width=0.5pt] (2.,1.8) -- (1.8,2.);
			\draw [->,line width=0.5pt] (1.8,2.) -- (1.2,2.);
			\draw [line width=0.5pt] (1.2,2.) -- (1.,2.2);
			\draw [->,line width=0.5pt] (1.,2.2) -- (1.,2.8);
			\draw [line width=0.5pt] (1.,2.8) -- (0.8,3.);
			\draw [->,line width=0.5pt] (0.8,3.) -- (0.2,3.);
			\draw [line width=0.5pt] (0.2,3.) -- (0.2,2.);
			\draw [line width=0.5pt] (0.2,2.) -- (0.8,2.);
			\draw [line width=0.5pt] (0.8,2.) -- (1.,1.8);
			\draw [->,line width=0.5pt] (1.,1.8) -- (1.,1.2);
			\draw [line width=0.5pt] (1.,1.2) -- (1.2,1.);
			\draw [->,line width=0.5pt] (1.2,1.) -- (1.8,1.);
			\draw [line width=0.5pt] (1.8,1.) -- (2.,0.8);
			\draw [->,line width=0.5pt] (2.,0.8) -- (2.,0.2);
			\draw [line width=0.5pt] (2.,0.2) -- (1.,0.2);
			\draw [->,line width=0.5pt] (1.,0.2) -- (1.,0.8);
			\draw [line width=0.5pt] (1.,0.8) -- (0.8,1.);
			\draw [line width=0.5pt] (0.8,1.) -- (0.2,1.);
			\draw [->,line width=0.5pt] (4.,0.2) -- (3.,0.2);
			\draw [->,line width=0.5pt] (4.,5.) -- (4.,4.2);
			\draw [line width=0.5pt] (4.,4.2) -- (4.2,4.);
			\draw [line width=0.5pt] (4.2,4.) -- (4.8,4.);
			\draw [line width=0.5pt] (4.8,4.) -- (4.8,3.);
			\draw [->,line width=0.5pt] (4.8,3.) -- (4.2,3.);
			\draw [line width=0.5pt] (4.2,3.) -- (4.,3.2);
			\draw [->,line width=0.5pt] (4.,3.2) -- (4.,3.8);
			\draw [line width=0.5pt] (4.,3.8) -- (3.8,4.);
			\draw [->,line width=0.5pt] (3.8,4.) -- (3.2,4.);
			\draw [line width=0.5pt] (3.2,4.) -- (3.,4.2);
			\draw [->,line width=0.5pt] (3.,4.2) -- (3.,4.8);
			\draw [->,line width=0.5pt] (3.,4.8) -- (2.,4.8);
			\draw [->,line width=0.5pt] (2.,4.8) -- (2.,4.2);
			\draw [line width=0.5pt] (2.,4.2) -- (2.2,4.);
			\draw [line width=0.5pt] (2.2,4.) -- (2.8,4.);
			\draw [line width=0.5pt] (2.8,4.) -- (3.,3.8);
			\draw [line width=0.5pt] (3.,3.8) -- (3.,3.2);
			\draw [line width=0.5pt] (3.,3.2) -- (3.2,3.);
			\draw [line width=0.5pt] (3.2,3.) -- (3.8,3.);
			\draw [line width=0.5pt] (3.8,3.) -- (4.,2.8);
			\draw [->,line width=0.5pt] (4.,2.8) -- (4.,2.2);
			\draw [line width=0.5pt] (4.,2.2) -- (4.2,2.);
			\draw [line width=0.5pt] (4.2,2.) -- (4.8,2.);
			\draw [line width=0.5pt] (4.8,2.) -- (4.8,1.);
			\draw [->,line width=0.5pt] (4.8,1.) -- (4.2,1.);
			\draw [line width=0.5pt] (4.2,1.) -- (4.,1.2);
			\draw [->,line width=0.5pt] (4.,1.2) -- (4.,1.8);
			\draw [line width=0.5pt] (4.,1.8) -- (3.8,2.);
			\draw [line width=0.5pt] (3.8,2.) -- (3.2,2.);
			\draw [line width=0.5pt] (3.2,2.) -- (3.,2.2);
			\draw [->,line width=0.5pt] (3.,2.2) -- (3.,2.8);
			\draw [line width=0.5pt] (3.,2.8) -- (2.8,3.);
			\draw [line width=0.5pt] (2.8,3.) -- (2.2,3.);
			\draw [line width=0.5pt] (2.2,3.) -- (2.,3.2);
			\draw [line width=0.5pt] (2.,3.2) -- (2.,3.8);
			\draw [line width=0.5pt] (2.,3.8) -- (1.8,4.);
			\draw [line width=0.5pt] (1.8,4.) -- (1.2,4.);
			\draw [line width=0.5pt] (1.2,4.) -- (1.,4.2);
			\draw [->,line width=0.5pt] (1.,4.2) -- (1.,4.8);
			\draw [line width=0.5pt] (1.,4.8) -- (0.2,4.8);
			\draw [line width=0.5pt] (0.2,4.8) -- (0.2,4.);
			\draw [->,line width=0.5pt] (0.2,4.) -- (0.8,4.);
			\draw [line width=0.5pt] (0.8,4.) -- (1.,3.8);
			\draw [shift={(1.5,3.5)},line width=0.5pt, color=blue]  plot[domain=-0.5886745790152181:5.25649370279071,variable=\t]({1.*0.1*cos(\t r)+0.*0.1*sin(\t r)},{0.*0.1*cos(\t r)+1.*0.1*sin(\t r)});
			\draw [shift={(2.5,2.5)},line width=0.5pt, color=blue]  plot[domain=-5.288660486959367:0.39849164124891656,variable=\t]({1.*0.1*cos(\t r)+0.*0.1*sin(\t r)},{0.*0.1*cos(\t r)+1.*0.1*sin(\t r)});
			\draw [shift={(3.5,1.5)},line width=0.5pt, color=blue]  plot[domain=0.43027365812548035:6.154820193529883,variable=\t]({1.*0.1*cos(\t r)+0.*0.1*sin(\t r)},{0.*0.1*cos(\t r)+1.*0.1*sin(\t r)});
			\draw [shift={(2.5,0.5)},line width=0.5pt, color=blue]  plot[domain=-0.901381083619305:5.0218835872048855,variable=\t]({1.*0.1*cos(\t r)+0.*0.1*sin(\t r)},{0.*0.1*cos(\t r)+1.*0.1*sin(\t r)});
			\draw [->,line width=0.5pt, color=blue] (0.2,1.) -- (0.11460599170345026,1.1820617505145035);
			\draw [->,line width=0.5pt, color=blue] (0.11460599170345026,1.1820617505145035) -- (0.10032373952821229,3.131589172434544);
			\draw [line width=0.5pt, color=blue] (0.10032373952821229,3.131589172434544) -- (0.9001298613415384,3.103024668084067);
			\draw [line width=0.5pt, color=blue] (0.9001298613415384,3.103024668084067) -- (1.478561074438676,2.3103596723583366);
			\draw [->,line width=0.5pt, color=blue] (1.478561074438676,2.3103596723583366) -- (2.29264944842724,1.7247873331735624);
			\draw [line width=0.5pt, color=blue] (2.29264944842724,1.7247873331735624) -- (2.4000109451242446,1.5014794948641883);
			\draw [line width=0.5pt, color=blue] (2.406831039697851,1.5363255397236797) -- (2.3019029534778697,1.7751373811545426);
			\draw [line width=0.5pt, color=blue] (2.3019029534778697,1.7751373811545426) -- (1.53085061248044,2.3581417834686835);
			\draw [line width=0.5pt, color=blue] (1.53085061248044,2.3581417834686835) -- (0.8874908141818282,3.1998873754181862);
			\draw [line width=0.5pt, color=blue] (0.8874908141818282,3.1998873754181862) -- (0.049858128126479384,3.2384395074123664);
			\draw [->,line width=0.5pt, color=blue] (0.049858128126479336,3.2384395074123664) -- (0.058605040238803745,0.09137705534830287);
			\draw [line width=0.5pt, color=blue] (0.058605040238803745,0.09137705534830273) -- (1.9244299435731755,0.07202589762603781);
			\draw [line width=0.5pt, color=blue] (1.9244299435731755,0.07202589762603781) -- (2.570838829306321,0.16766802867839356);
			\draw [line width=0.5pt, color=blue] (2.570838829306321,0.16766802867839356) -- (2.530457729461343,0.4047512377190148);
			\draw [->,line width=0.5pt, color=blue] (2.6076225802273396,0.09704023519994667) -- (4.3984788854570995,0.09193808048419223);
			\draw [line width=0.5pt, color=blue] (4.3984788854570995,0.09193808048419223) -- (4.388047625296427,0.5830515481808669);
			\draw [line width=0.5pt, color=blue] (4.388047625296427,0.5830515481808669) -- (3.818121499715109,1.0741199708515758);
			\draw [line width=0.5pt, color=blue] (3.818121499715109,1.0741199708515758) -- (3.5991772505561195,1.4871987120910146);
			\draw [line width=0.5pt, color=blue] (3.5908851635558285,1.5417119532679817) -- (3.429228009592026,1.8092181559670302);
			\draw [->,line width=0.5pt, color=blue] (3.429228009592026,1.8092181559670302) -- (2.8842693803406245,2.088678135929872);
			\draw [line width=0.5pt, color=blue] (2.8842693803406245,2.088678135929872) -- (2.5921647328570634,2.538802860943314);
			\draw [->,line width=0.5pt, color=blue] (2.55449013893094,2.5838500134721927) -- (1.551765230654162,3.414440892388236);
			\draw [->,line width=0.5pt, color=blue] (2.965572571177941,2.0805478168461398) -- (3.470421221058876,1.8341908879399245);
			\draw [->,line width=0.5pt, color=blue] (3.8449953128957284,1.1081488092088159) -- (4.806247589134377,0.3233764430608477);
			\draw [line width=0.5pt, color=blue] (3.8449953128957284,1.1081488092088159)-- (3.470421221058876,1.8341908879399245);
			\draw [line width=0.5pt, color=blue] (2.965572571177941,2.0805478168461398)-- (2.6314034107306408,2.574167713495625);
			\draw [line width=0.5pt, color=blue] (1.5831677361585557,3.444474081166892)-- (2.6314034107306408,2.574167713495625);
			\draw [line width=0.5pt, color=blue] (2.6076225802273396,0.09704023519994667)-- (2.56205275358243,0.42158153423562633);
			\draw [shift={(2.5,1.5)},line width=0.5pt, color=blue]  plot[domain=-3.156388142030479:2.7698330506528097,variable=\t]({1.*0.1*cos(\t r)+0.*0.1*sin(\t r)},{0.*0.1*cos(\t r)+1.*0.1*sin(\t r)});
			\begin{scriptsize}
				\draw [fill=black] (2.5,2.5) circle (1.5pt);
				\draw [fill=black] (1.5,3.5) circle (1.5pt);
				\draw [fill=black] (3.5,1.5) circle (1.5pt);
				\draw [fill=black] (2.5,0.5) circle (1.5pt);
				\draw [fill=black] (2.5,1.5) circle (1.5pt);
			\end{scriptsize}
		\end{tikzpicture}
		\caption{The construction of $\Psi = \tilde{\Psi} \cup \hat{\Psi}$, whose parametrization is called $\psi$. The curve $\hat{\Psi}$ (depicted in blue) goes close to all images of crossing points of $\Gamma_k$ denoted by $\{(x_i,y_j)\}$.}\label{fig:arcs}
	\end{figure}

	We index the vertex points of the grid $\Gamma_k$ as $(x_i,y_j)$. The set $u(x_i,y_j)$ is finite and disjoint from $\tilde{\Psi}$. By Remark~\ref{Alibistic} we find an $\eta>0$ such that
	\begin{enumerate}
		\item $\overline{B((X,Y), \eta)}$ are pairwise disjoint for $(X,Y)\in \{u(x_i,y_j)\}$,
		\item $B((X,Y), \eta) \cap \tilde{\Psi} = \emptyset$ for $(X,Y)\in \{u(x_i,y_j)\}$,
		\item the set of points in $\Gamma_k$ mapped by $u$ onto $\partial B((X,Y), \eta)$ is finite for each $(X,Y)\in \{u(x_i,y_j)\}$
		\item for every $(x,y)\in \Gamma_k$ with $u(x,y)\in \partial B((X,Y), \eta)$ for some $(X,Y)\in \{u(x_i,y_j)\}$ there exists $D_{\tau}u(x,y)$ and has non-zero normal component to $\partial B((X,Y),\eta)$ at $u(x,y)$.
	\end{enumerate}
	For each of these circles $\partial B((X,Y), \eta)$ we subtract a small arc in $\partial B((X,Y), \eta)$ that does not intersect the finite set $u(\Gamma_k) \cap \partial B((X,Y), \eta)$. It is possible to connect all of these arcs with an injective Lipschitz curve that does not intersect $\tilde{\Psi}$ and this can be seen in Figure~\ref{fig:arcs}. Call this set $\hat{\Psi}$. Note that it is possible that $\hat{\Psi} \cap u(\Gamma_k)\neq \emptyset$. On the other hand, the image in $u$ of the subset of $\Gamma_k$ where $D_{\tau}u$ does not exist or the derivative is zero has $\H^1$ measure zero. Therefore, it cannot disconnect the plane and in fact $\hat{\Psi}$ can avoid it completely. Therefore we may assume that there are a finite number of points in $(x,y)\in\Gamma_k$ such that $u(x,y) \in \hat{\Psi}$, that $D_{\tau}u(x,y)$ exists and has non-zero component perpendicular the tangent set of $\hat{\Psi}$ at $u(x,y)$, which we may assume to exist. Since we may assume that $\hat{\Psi}$ starts at an endpoint of $\tilde{\Psi}$ we have that $\Psi = \tilde{\Psi}\cup \hat{\Psi}$ is the image of an injective Lipschitz curve. By $\psi:[0,1]\to \er^2$ we denote a Lipschitz parametrization of $\Psi$ such that $\psi([0,\tfrac{1}{2}]) = \tilde{\Psi}$ and $\psi([\tfrac{1}{2},1]) = \hat{\Psi}$.

	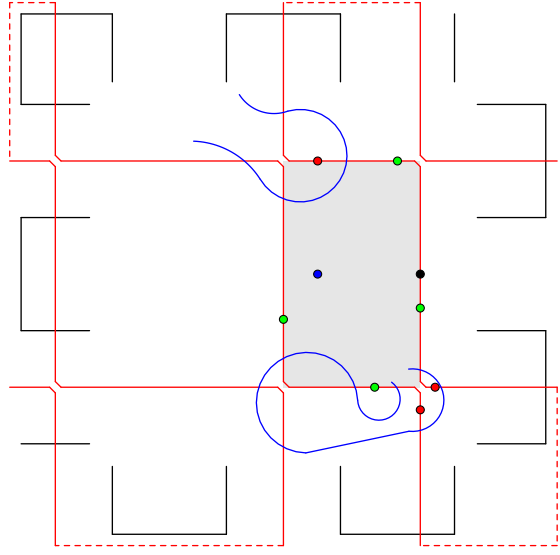
\begin{figure}[h]
		\begin{tikzpicture}[line cap=round,line join=round,>=triangle 45,x=1.5cm,y=1.5cm]
			\clip(-0.1,-0.1) rectangle (5.1,5.1);
			\fill[line width=0.pt,fill=black!10] (2.5,3.5) -- (3.7,3.5) -- (3.7,1.5) -- (2.5,1.5) -- cycle;
			\draw [line width=0.5pt] (4.,0.8) -- (4.,0.2);
			\draw [line width=0.5pt] (3.,0.2) -- (3.,0.8);
			\draw [line width=0.5pt] (0.8,3.) -- (0.2,3.);
			\draw [line width=0.5pt] (0.2,3.) -- (0.2,2.);
			\draw [line width=0.5pt] (0.2,2.) -- (0.8,2.);
			\draw [line width=0.5pt] (2.,0.8) -- (2.,0.2);
			\draw [line width=0.5pt] (2.,0.2) -- (1.,0.2);
			\draw [line width=0.5pt] (1.,0.2) -- (1.,0.8);
			\draw [line width=0.5pt] (0.8,1.) -- (0.2,1.);
			\draw [line width=0.5pt] (4.,0.2) -- (3.,0.2);
			\draw [line width=0.5pt] (4.2,4.) -- (4.8,4.);
			\draw [line width=0.5pt] (4.8,4.) -- (4.8,3.);
			\draw [line width=0.5pt] (4.8,3.) -- (4.2,3.);
			\draw [line width=0.5pt] (3.,4.2) -- (3.,4.8);
			\draw [line width=0.5pt] (3.,4.8) -- (2.,4.8);
			\draw [line width=0.5pt] (2.,4.8) -- (2.,4.2);
			\draw [line width=0.5pt] (4.2,2.) -- (4.8,2.);
			\draw [line width=0.5pt] (4.8,2.) -- (4.8,1.);
			\draw [line width=0.5pt] (4.8,1.) -- (4.2,1.);
			\draw [line width=0.5pt] (1.,4.2) -- (1.,4.8);
			\draw [line width=0.5pt] (1.,4.8) -- (0.2,4.8);
			\draw [line width=0.5pt] (0.2,4.8) -- (0.2,4.);
			\draw [line width=0.5pt] (0.2,4.) -- (0.8,4.);
			\draw [line width=0.5pt] (4.,4.8) -- (4.,4.2);
			\draw [line width=0.5pt,color=ffqqqq] (4.9,3.5)-- (3.75,3.5);
			\draw [line width=0.5pt,color=ffqqqq] (3.75,3.5)-- (3.7,3.55);
			\draw [line width=0.5pt,color=ffqqqq] (3.7,4.9)-- (3.7,3.55);
			\draw [line width=0.5pt,dash pattern=on 2pt off 2pt,color=ffqqqq] (3.7,4.9)-- (2.5,4.9);
			\draw [line width=0.5pt,color=ffqqqq] (2.5,4.9)-- (2.5,3.55);
			\draw [line width=0.5pt,color=ffqqqq] (2.5,3.55)-- (2.55,3.5);
			\draw [line width=0.5pt,color=ffqqqq] (2.55,3.5)-- (3.65,3.5);
			\draw [line width=0.5pt,color=ffqqqq] (3.65,3.5)-- (3.7,3.45);
			\draw [line width=0.5pt,color=ffqqqq] (3.7,3.45)-- (3.7,1.55);
			\draw [line width=0.5pt,color=ffqqqq] (3.7,1.55)-- (3.75,1.5);
			\draw [line width=0.5pt,color=ffqqqq] (3.75,1.5)-- (4.9,1.5);
			\draw [line width=0.5pt,dash pattern=on 2pt off 2pt,color=ffqqqq] (4.9,1.5)-- (4.9,0.1);
			\draw [line width=0.5pt,dash pattern=on 2pt off 2pt,color=ffqqqq] (4.9,0.1)-- (3.7,0.1);
			\draw [line width=0.5pt,color=ffqqqq] (3.7,0.1)-- (3.7,1.45);
			\draw [line width=0.5pt,color=ffqqqq] (3.7,1.45)-- (3.65,1.5);
			\draw [line width=0.5pt,color=ffqqqq] (3.65,1.5)-- (2.55,1.5);
			\draw [line width=0.5pt,color=ffqqqq] (2.55,1.5)-- (2.5,1.55);
			\draw [line width=0.5pt,color=ffqqqq] (2.5,1.55)-- (2.5,3.45);
			\draw [line width=0.5pt,color=ffqqqq] (2.5,3.45)-- (2.45,3.5);
			\draw [line width=0.5pt,color=ffqqqq] (2.45,3.5)-- (0.55,3.5);
			\draw [line width=0.5pt,color=ffqqqq] (0.55,3.5)-- (0.5,3.55);
			\draw [line width=0.5pt,color=ffqqqq] (0.5,3.55)-- (0.5,4.9);
			\draw [line width=0.5pt,dash pattern=on 2pt off 2pt,color=ffqqqq] (0.5,4.9)-- (0.1,4.9);
			\draw [line width=0.5pt,dash pattern=on 2pt off 2pt,color=ffqqqq] (0.1,4.9)-- (0.1,3.5);
			\draw [line width=0.5pt,color=ffqqqq] (0.1,3.5)-- (0.45,3.5);
			\draw [line width=0.5pt,color=ffqqqq] (0.45,3.5)-- (0.5,3.45);
			\draw [line width=0.5pt,color=ffqqqq] (0.5,3.45)-- (0.5,1.55);
			\draw [line width=0.5pt,color=ffqqqq] (0.5,1.55)-- (0.55,1.5);
			\draw [line width=0.5pt,color=ffqqqq] (0.55,1.5)-- (2.45,1.5);
			\draw [line width=0.5pt,color=ffqqqq] (2.45,1.5)-- (2.5,1.45);
			\draw [line width=0.5pt,color=ffqqqq] (2.5,1.45)-- (2.5,0.1);
			\draw [line width=0.5pt,dash pattern=on 2pt off 2pt,color=ffqqqq] (2.5,0.1)-- (0.5,0.1);
			\draw [line width=0.5pt,color=ffqqqq] (0.5,0.1)-- (0.5,1.45);
			\draw [line width=0.5pt,color=ffqqqq] (0.5,1.45)-- (0.45,1.5);
			\draw [line width=0.5pt,color=ffqqqq] (0.45,1.5)-- (0.1,1.5);
			\draw [shift={(1.6844373276990798,2.9516976138375233)},line width=0.5pt,color=qqqqff]  plot[domain=0.5630813197284342:1.5318219204773191,variable=\t]({1.*0.7240650298397869*cos(\t r)+0.*0.7240650298397869*sin(\t r)},{0.*0.7240650298397869*cos(\t r)+1.*0.7240650298397869*sin(\t r)});
			\draw [shift={(2.648360634429539,3.546867018815714)},line width=0.5pt,color=qqqqff]  plot[domain=-2.6060353516319794:1.8412324519478873,variable=\t]({1.*0.4088952693236433*cos(\t r)+0.*0.4088952693236433*sin(\t r)},{0.*0.4088952693236433*cos(\t r)+1.*0.4088952693236433*sin(\t r)});
			\draw [shift={(2.416507753145919,4.277203594859147)},line width=0.5pt,color=qqqqff]  plot[domain=3.7024934608421822:5.062010270154613,variable=\t]({1.*0.3579584064972491*cos(\t r)+0.*0.3579584064972491*sin(\t r)},{0.*0.3579584064972491*cos(\t r)+1.*0.3579584064972491*sin(\t r)});
			\draw [shift={(3.3369231821453145,1.3949923475916708)},line width=0.5pt,color=qqqqff]  plot[domain=-3.091634257867849:0.9307081740331696,variable=\t]({1.*0.18647567841209603*cos(\t r)+0.*0.18647567841209603*sin(\t r)},{0.*0.18647567841209603*cos(\t r)+1.*0.18647567841209603*sin(\t r)});
			\draw [shift={(2.706929320808355,1.3634926545248218)},line width=0.5pt,color=qqqqff]  plot[domain=0.0499583957219444:4.688043738766803,variable=\t]({1.*0.4443051836830991*cos(\t r)+0.*0.4443051836830991*sin(\t r)},{0.*0.4443051836830991*cos(\t r)+1.*0.4443051836830991*sin(\t r)});
			\draw [shift={(3.6302559385790283,1.3856801965937748)},line width=0.5pt,color=qqqqff]  plot[domain=-1.6784861976277128:1.6888883552139853,variable=\t]({1.*0.27663515296003466*cos(\t r)+0.*0.27663515296003466*sin(\t r)},{0.*0.27663515296003466*cos(\t r)+1.*0.27663515296003466*sin(\t r)});
			\draw [line width=0.5pt,color=qqqqff] (2.696113672222608,0.9193191321336935)-- (3.6005226825058054,1.1106475779164515);
			\begin{scriptsize}
				\draw [fill=black] (3.7,2.5) circle (1.5pt);
				\draw [fill=green] (3.7,2.2) circle (1.5pt);
				\draw [fill=green] (3.3,1.5) circle (1.5pt);
				\draw [fill=green] (2.5,2.1) circle (1.5pt);
				\draw [fill=green] (3.5,3.5) circle (1.5pt);
				\draw [fill=red] (3.7,1.3) circle (1.5pt);
				\draw [fill=red] (3.83,1.5) circle (1.5pt);
				\draw [fill=red] (2.8,3.5) circle (1.5pt);
				\draw [fill=blue] (2.8,2.5) circle (1.5pt);
			\end{scriptsize}
		\end{tikzpicture}
		\caption{The curve $\varphi$ (the curve depicted in red) is constructed by replacing crosses of $\Gamma_k$ with $NE$ and $SW$ segments. Then these curves are connected with each other close to the boundary of $(-1,1)^2$ inside $P$ and these segments are showed as dashed. The black curves are (the preimage in $u$ of) $\tilde{\Psi} \cap P$ and the dashed red curves do not intersect the black curves. The blue curves are the part of  $\gamma([1/2,1])$ each of which corresponds to a circular arc around some $u(x_i,y_j)$. The black point corresponds to some $\gamma(s)$ and the following intersection $\gamma(t) \in \phi([0,1])$ can be for example one of the green points but cannot be a red point. The blue point represents $\gamma(s/2+t/2)$. The point $\gamma(t)$ cannot be the upper-left red point because it is disconnected from $\gamma(s/2+ t/2)$ by $\gamma([1/2,1])$ in $[-1,1]^2\setminus \phi([0,1])$ and $\gamma$ is injective. The point $\gamma(t)$ cannot be one of the lower-right red points because these points are mapped by $u$ into $B(u(x_{i+1},y_j), \eta)$ but $|u(\gamma(t)) - u(x_{i+1},y_j)|\geq2\eta$.}\label{fig:MakePhi}
	\end{figure}

	Now it is the turn of $\Gamma_k$ to be replaced with a $\varphi$ such that $\varphi$ and $\psi$ are a good test pair for $u$. The construction of the curve $\varphi$ has two stages. The first one is similar to the first stage for the construction of $\psi$, i.e. we replace crosses in $\Gamma_k$ with pairs of $NE$ and $SW$ segments. We can find a $\delta_0>0$ so small that if the $NE$, $SW$ segments are made in $B((x_i,y_j), \delta)$ for any $\delta\in (0, \delta_0)$ then the image in $u$ of the added segments do not intersect $\partial B(u(x_i,y_j), \eta)$ and therefore there are no points on the added segments whose image intersects $\Psi$ at all (because $\dist(u(x_i,y_j), \Psi)> \eta$). The second stage is to connect up the curves resulting from this process. Because $\tilde{\Psi} \Subset (-1,1)^2$ we can join our components using segments very close to the boundary and so (because $u(x,y) = (x,y)$ in $P$) they do not intersect $\tilde{\Psi}$. This can be seen in Figure~\ref{fig:MakePhi}. Any intersection between $u\circ\varphi([0,1])$ and $\psi([0,1])$ inside $(-1,1)^2\setminus P$ have already been chosen so that they satisfy the properties for a good test pair. Any intersections between $\hat{\Psi}$ and $u\circ\varphi([0,1])$ inside $P$ can easily be made to satisfy the conditions. Finally, we get a single curve which we call $\varphi$ such that $u\circ \varphi (t) \in \tilde{\Psi}$ exactly when $\varphi(t) \in F$.
	
	By construction  $\varphi$ and $\psi$ are a good test pair for $u$ and so we find the injective Lipschitz curve $\gamma$ such that $\gamma([0,1])\cap \varphi([0,1]) = u^{-1}(\Psi) \cap \varphi([0,1])$. By the definition of the three curve property we may assume that $\gamma([0,1/2])\cap \varphi([0,1]) = u^{-1}(\tilde{\Psi}) \cap \varphi([0,1])$ and $\gamma([1/2,1])\cap \varphi([0,1]) = u^{-1}(\hat{\Psi}) \cap \varphi([0,1])$.

	\step{3}{Replace $\phi([0,1])$ with $\Gamma_k$ again}{Step3of3Cproof}
	
	Let $0<s<t<1/2$ be a subsequent pair of points of $\gamma^{-1}(\Gamma_k)$ (which is a finite set). We want to prove that there is some rectangle $R = [x_i, x_{i+1}]\times[y_j, y_{j+1}]$ of $\Gamma_k$ with $\gamma(s),\gamma(t) \in \partial R$ and it is possible by homotopy in $\er^2\setminus \phi([0,1])$ to require that $\gamma([s,t])\subset R$. We suggest the reader refer to Figure~\ref{fig:MakePhi} where we take $R$ to be the shaded rectangle and $\gamma(s)$ to be the black point on its boundary. Because  $\gamma((s,t))\cap \phi(0,1) = \emptyset$ we have that $\gamma(s), \gamma(t)$ lie in the closure of the component of $[-1,1]^2\setminus [\varphi([0,1]) \cup \gamma([1/2,1])]$ that contains $\gamma(s/2+t/2)$. The point $\gamma(t)$ must lie on $\varphi([0,1])$ (the red curve in Figure~\ref{fig:MakePhi}), not on $\gamma([1/2,1])$ (therefore not on blue curves in the picture). If the point $\gamma(t)$ does not lie on the boundary of $R$, then it must lie on part of $\varphi([0,1])\setminus R$ which is not disconnected from $R$ by $\gamma([1/2, 1])$. By the construction of $\Psi$ (especially the circular arcs around points in $u(\{(x_i,y_j)\})$) we have for every $(x,y) \in \varphi([0,1])$ which is not disconnected from $R$ by $\gamma([1/2,1])$ that $u(x,y)$ lies in $B((X,Y), \eta)$ for some $(X,Y) \in \{u(x_i,y_j); i,j=1,\dots M\}$. On the other hand, however, $\gamma(s) \in \tilde{\Psi}$ and by the choice of $\eta$ (point 2), $u(\gamma(s)) \notin B((X,Y), \eta)$. Therefore $\gamma(s) \in \partial R \cap \varphi([0,1])$.
	
	The last fact means that $\gamma(s)$ and $\gamma(t)$ lie in $\partial R \setminus \bigcup_{i,j}B((x_i,y_j), \delta)$ and after some simple homotopic correction to $\gamma$ we can assume that $\gamma([0,1/2])$ does not intersect $\overline{B((x_i,y_j), \delta)}$ at all, i.e. $\gamma([s,t])$ lies entirely inside $R$. Effectively this means we can replace $\varphi([0,1])$ with $\Gamma_k$, i.e. $\gamma([0,1/2]) \cap \Gamma_k = F = \Gamma_k \cap u^{-1}(\G)$. Then, to finish the proof, it remains to construct a continuous injective $g$ on $\G$ with $g(\G) \cap \Gamma_k = \gamma([0,1/2])\cap \phi([0,1]) = \Gamma_k\cap u^{-1}(\G)$.
	
	\step{4}{Replacing $\psi([0,1])$ with $\G$ again}{Step4of3Cproof}
	
	The overall idea is simple: for each $(w_n,z_m)$ cross point of $\G$ we subtract two small arcs from $\gamma([0,1/2])$ corresponding to the $NE$ and $SW$ segments that we added in the construction of $\tilde{\Psi}$, then we choose a point $g(w_n,z_m)$ and connect it up by four disjoint injective Lipschitz curve with the four ends where we subtracted the two arcs. The things we need to guarantee when we do this are:
	\begin{enumerate}
		\item the new added curves do not intersect $\Gamma_k$,
		\item the added curves do not intersect each other (except in the case that they share a common endpoint, then they intersect only at that point),
		\item the added curves do not intersect $\gamma([0,1/2])$ except at their respective endpoints where they connect up.
	\end{enumerate}
	
	We only need to deal with the rectangles $R$ of $\Gamma_k$ that are not contained completely in $P$ since for those $R\subset P$ the claim is obvious and we can put $g(w,z) = (w,z)$ for $(w,y) \in \G\cap P$.
	
	Firstly, we make the following observation using  Lemma~\ref{DegreesAreUseful}. Because $\deg((w,z), u, (-1,1)^2)=1$ for all $(w,z)\in (-1,1)^2\setminus u(\Gamma_k)$, there is exactly one rectangle $R$ of $\Gamma_k$ such that $\deg((w,z), u, \inter R)=1$. The mapping that assigns this $R$ to each $(w,z)\in (-1,1)^2\setminus u(\Gamma_k)$ is constant on components of $(-1,1)^2\setminus u(\Gamma_k)$ by the homotopic properties of the degree. As a shorthand  for the previous, when we write $R_{(w,z)}$ we mean the rectangle of $\Gamma$ such that $\deg((w,z), u, \inter R)=1$. By the choice of $\eta$ we have that $R_{(w,z)} = R_{(w_n,z_m)}$ for all $(w,z)$ on the $SW$ and $NE$ segments of $\tilde{\Psi}$ close to $(w_n,z_m)$.
	
	Let $L = NE$ (or $L=SW$) be a segment in $\tilde{\Psi}$. There exists exactly one pair $\xi_L, \zeta_L\in [0,1/2]\cap \psi^{-1}\circ u\circ\phi([0,1])$ of subsequent points (i.e. $\xi_L<\zeta_L$ and $\psi((\xi_L,\zeta_L))\cap u\circ\phi([0,1]) = \emptyset$) such that $L\subset \psi((\xi_L,\zeta_L))$. From \eqref{IntervalOrder} we have a pair of disjoint closed intervals $I_{\xi_L}$ and $I_{\zeta_L}$ and we call $I_{L}$ the open interval lying between them, i.e. $I_{L} = \{\lambda\in \er: s<\lambda<t, s\in I_{\xi_L}, t\in I_{\zeta_L} \}$. Note that there may be several $L =NE$ (or $SW$) segments which have the same interval $I_L$. Let $\xi, \zeta\in [0,1/2]\cap \psi^{-1}\circ u\circ\phi([0,1])$ be any subsequent pair, then for each of the $NE$ (or $SW$) segments $L\subset \psi((\xi,\zeta))$ let us choose a closed interval $J_L \subset I_{L}$ such that the intervals $J_L$ are pairwise disjoint and in the correct order, i.e. if $L\neq L'$ then for all $s\in J_{L}$ and all $t\in J_{L'}$ we have $s<t$ exactly when $a<b$ for all $a\in \psi^{-1}(L)$ and $b\in \psi^{-1}(L')$. 
	
	Let us prove that we can find curves satisfying (1). Let $SW$ and $NE$ be the pair of segments close to $(w_n,z_m)$, a cross point of $\G$. From the construction of $\tilde{\Psi}$ (see Figure~\ref{fig:wavy}) there are a pair of disjoint closed intervals $T_{SW} \supset \psi^{-1}(SW)$ and $T_{NE}\supset\psi^{-1}(NE)$ such that on each $T_L$ the curve $\psi$ starts close to $\partial(-1,1)^2$ and ends close to another side of $\partial(-1,1)^2$. In Figure~\ref{fig:wavy} $\psi(T_{NE})$ is depicted in blue and $\psi(T_{SW})$ is depicted in red. We split each $T_L$ into two parts, the part before $L$ and the part after $L$ i.e. the two subintervals $T_{L}^-\{\lambda \in T_L: \lambda <\sigma, \sigma \in \psi^{-1}(L)\}$ and $T_{L}^+\{\lambda \in T_L :\lambda >\sigma, \sigma \in \psi^{-1}(L)\}$  of $T_{L}$. Using Lemma~\ref{DegreesAreUseful} and the equivalence of the degree and the winding number we know that there are an even number of points in $\partial R\cap u^{-1}\circ\psi(T_{L}^{\pm})$ for every rectangle $R$ which is not completely contained in $P$ except for $R_{w_n,z_m}$ where there is an odd number of intersections. From the very definition of the three-curve property, the same is true for the number of intersections of $\gamma$ on the two intervals of $[0,1/2]\setminus J_L$ with $\partial R$. This means that the endpoints of $J_L$ lie inside the rectangle $R_L$. Since $R_{SW} = R_{NE}$ we see that the four points of $\gamma(\partial J_{SW} \cup \partial J_{NE})$ all lie in $\inter R_{(w_n,z_m)}$ and so can be connected inside $\inter R_{(w_n,z_m)}$, i.e. without intersecting $\Gamma_k$. 
	
	Now we turn our attention to property (2). Let us have vertexes $(w_{n},z_m)$ and $(w_{n+o},z_{m-o})$ of $\G$ whose $NE$ and whose $SW$ segments each lie on a component of $\psi([0,\tfrac{1}{2}])\setminus u(\Gamma_k)$, i.e. $I_{SW(n,m)} = I_{SW(n+o,m-o)}$ and $I_{NE(n,m)} = I_{NE(n+o,m-o)}$. We want to prove that a Lipschitz injective curve from $\gamma(J_{SW(n,m)})$ to $\gamma(J_{NE(n,m)})$ does not disconnect $\gamma(J_{SW(n+o,m-o)})$ from $\gamma(J_{NE(n+o,m-o)})$. By $(a_{NE},b_{NE})$ let us denote the component of $\gamma^{-1}(\er^2\setminus \Gamma_k)$ containing $I_{NE(n,m)}$ and by $(a_{SW},b_{SW})$ let us denote the component of $\gamma^{-1}(\er^2\setminus \Gamma_k)$ containing $I_{SW(n,m)}$. Let $\Omega$ be the bounded component of $\er^2\setminus u(\Gamma_k)$ containing $(w_n,z_m)$ (and $(w_{n+o}, z_{m-o})$). It is possible to find a continuous injective $v:\partial R_{(w_n,z_m)} = \partial R_{(w_{n+k},z_{m-k})} \to \partial\Omega$ such that
	$$
	v(x,y) = u(x,y) \quad \text{ and }\quad  v^{-1}(u(x,y)) =\{(x,y)\} 
	$$
	for all $(x,y) \in \{\gamma(a_{SW}),\gamma(a_{NE}),\gamma(b_{SW}),\gamma(b_{NE})\}$ and, by homotopy,
	$$
	\deg((w,z),v, \partial R_{w_n,z_m}) = \deg((w,z),u, \partial R_{w_n,z_m})
	$$
	for all $(w,z)$ in $\Omega$. Then clearly the order of the points $\gamma(a_{SW})$, $\gamma(a_{NE})$, $\gamma(b_{SW})$, $\gamma(b_{NE})$ on $\partial R_{(w_n,z_m)}$ is the same as the order of $v\big(\gamma(a_{SW})\big)$, $v\big(\gamma(a_{NE})\big)$, $v\big(\gamma(b_{SW})\big)$, $v\big(\gamma(b_{NE})\big)$ in $\partial\Omega$ because $v$ is sense-preserving (i.e. the degree is $+1$ in $\Omega$). Therefore clearly, because injective Lipschitz curves in $\inter v(R_{(w_n,z_m)}) \setminus \psi([0,1/2])$ connecting $SW(n,m)$ to $NE(n,m)$ do not disconnect $SW(n+o,m-o)$ from $NE(n+o,m-o)$, also injective Lipschitz curves in $\inter R_{(w_n,z_m)}\setminus \gamma([0,1/2])$ from $\gamma(J_{SW(n,m)})$ to $\gamma(J_{NE(n,m)})$ do not disconnect $\gamma(J_{SW(n+o,m-o)})$ from $\gamma(J_{NE(n+o,m-o)})$.
	
	We show condition (3) by contradiction. Suppose that there is a pair $SW$ and $NE$ and a component $C$ of $[0,1/2]\setminus\gamma^{-1}( \phi([0,1]))$ such that $\gamma(J_{SW})$ is disconnected from $\gamma(J_{NE})$ inside $\inter R$ by $\gamma(\overline{C})$, where $R$ is the rectangle containing $\gamma(SW)$ and $\gamma(NE)$. Following the argument of the previous paragraph there must be a component of $\psi([0,1/2]) \setminus u(\Gamma_k)$ which disconnects $SW$ from $NE$ inside $\{(w,z): \deg((w,z), u,\partial R_{w_n,z_m)}) = 1\}$ which is clearly impossible from the construction of $\tilde{\Psi}$.
	
	Thus, it is possible to construct the $g:\G\to\er^2$ which we wanted.
\end{proof}

\section{The equivalence of the QM condition}
We use the QM condition to express the preimage of closed connected sets which do not disconnect the preimage, which allows us to prove the $\INV$ condition. Further we show that the QM condition implies the NCL condition.
\begin{lemma}\label{OneRing}
	Let $u\in W_{\id}^{1,p}((-1,1)^2, \er^2)$ satisfy the QM condition (Definition~\ref{defQM}), then $u$ satisfies the NCL condition (Definition~\ref{defNCL}).
\end{lemma}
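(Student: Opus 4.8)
The plan is to verify Definition~\ref{defNCL} by reducing it to a local non‑crossing statement. Since $u\in W^{1,p}_{\id}((-1,1)^2,\er^2)$ we may assume $u=\id$ on a neighbourhood $N$ of $\partial(-1,1)^2$. Given an injective Lipschitz $\phi$ with $u\circ\phi\in W^{1,1}$ (so $u\circ\phi$ is continuous) and $\varepsilon>0$, I would first prolong $\phi$ (it extends to the closed parameter interval, being Lipschitz, and can be continued along a path in $N$) so that both its endpoints lie in $N$; an injective approximation of $u\circ\phi$ on the prolonged curve restricts to one on the original. Cutting the parameter interval at finitely many points into sub‑arcs with $u\circ\phi$‑image of diameter $<\varepsilon/8$, the problem of producing an injective $v$ with $\|u\circ\phi-v\|_\infty<\varepsilon$ reduces, by a planar‑topology argument, to the following: for every $Q$ and every small $\rho$, the \emph{visits} of $u\circ\phi$ to $B(Q,\rho)$ (the maximal sub‑arcs whose $u\circ\phi$‑image lies in $\overline{B(Q,\rho)}$) can be redrawn as pairwise disjoint simple arcs inside a slightly larger ball with approximately the same endpoints on $\partial B(Q,\rho)$ — and this is possible, and the pieces then patch into a global injective $v$, precisely when for every $Q,\rho$ the chords on $\partial B(Q,\rho)$ joining the two endpoints of each visit form a non‑crossing chord diagram. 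So the proof comes down to showing that the QM condition rules out any such crossing.

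Suppose then that two visits $J_a,J_b$ to some $B(Q,\rho)$, with $\rho<\varepsilon/8$ and $Q\notin N$, are interleaved on $\partial B(Q,\rho)$. After passing to an ``innermost'' offending pair and a routine perturbation of $\rho$ ensuring that the four points $u\circ\phi(\partial J_a\cup\partial J_b)$ and their $\phi$‑preimages miss $u(U_\delta)$ and $U_\delta$, I would split the parameter interval into $J_a$, $J_b$ and the complementary pieces and build two sets in the target: $C_1$, the union of the $u\circ\phi$‑images of the complementary pieces attached to the prolonged ends of $\phi$ (hence reaching into $N$), joined up inside $N$ and carried out to $\partial(-1,1)^2$ along arcs where $u=\id$; and $C_2$, the $u\circ\phi$‑image of the complementary piece between $J_a$ and $J_b$, carried out to $\partial(-1,1)^2$ along an auxiliary arc chosen to keep $C_1$ and $C_2$ disjoint. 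With a little care $C_1,C_2$ become disjoint, closed, connected, with $(-1,1)^2\setminus(C_1\cup C_2)$ simply connected, and the complementary pieces of $\phi$ lie in $u^{-1}(C_1)$ respectively $u^{-1}(C_2)$.

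Now I would apply the QM condition (Definition~\ref{defQM}) with $r$ so small that $\overline{C_1+B(0,r)}$ and $\overline{C_2+B(0,r)}$ are disjoint, and with $\delta$ small: it produces disjoint Jordan domains $V_1,V_2$ with $\overline{V_i}\subset u^{-1}(C_i+B(0,r))\cup U_\delta$ and $u^{-1}(C_i)\setminus U_\delta\subset V_i$. Hence, up to the set $U_\delta$ of small Hausdorff content, the complementary pieces of $\phi$ sit in $V_1$ resp.\ $V_2$, so the two ``bridge'' arcs $\phi(J_a),\phi(J_b)$ each run from $\overline{V_1}$ to $\overline{V_2}$ and thus cross the gap $(-1,1)^2\setminus(\overline{V_1}\cup\overline{V_2})$, while $u(\overline{V_i})\subset(C_i+B(0,r))\cup u(U_\delta)$ are essentially disjoint. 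The contradiction I would extract is that the interleaving of $u\circ\phi(\partial J_a)$ and $u\circ\phi(\partial J_b)$ on $\partial B(Q,\rho)$, together with the winding‑number/degree behaviour of $u$ on the Jordan domain bounded by the prolonged $\phi$ (here one uses the $\INV$ condition built into QM, the fact that $u^{-1}(\text{point})$ is closed connected from Lemma~\ref{IDidThisOne}, and standard degree arguments), cannot coexist with $\overline{V_1}$ and $\overline{V_2}$ being disjoint.

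The hard part will be exactly this last step: converting the purely combinatorial interleaving on $\partial B(Q,\rho)$ into a genuine topological obstruction. This needs a delicate degree‑plus‑Jordan‑curve bookkeeping relating the order in which $\phi(J_a)$ and $\phi(J_b)$ leave $\overline{V_1}$ and enter $\overline{V_2}$ to the cyclic order of their images on $\partial B(Q,\rho)$, while simultaneously controlling the exceptional open set $U_\delta$ (which may meet $\phi$, though only in a set of arbitrarily small Hausdorff content) — which is why $\rho$, $\delta$, the partition and the sets $C_i$ must be chosen in the right order. A secondary point requiring care is the ``innermost crossing'' reduction, which is what lets one argue with a single pair of visits at a time.
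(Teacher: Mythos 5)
The decisive step of your argument is missing, and with your choice of $C_1,C_2$ it is doubtful it can be supplied. Write $J_a=[a_1,a_2]$, $J_b=[b_1,b_2]$ with $a_2<b_1$, so the complementary pieces are $P_0=[0,a_1]$, $P_1=[a_2,b_1]$, $P_2=[b_2,1]$; you put $u\circ\phi(P_0)\cup u\circ\phi(P_2)$ into $C_1$ and $u\circ\phi(P_1)$ into $C_2$. Interleaving of the two visit-chords means the cyclic order on $\partial B(Q,\rho)$ is $u\circ\phi(a_1),\,u\circ\phi(b_1),\,u\circ\phi(a_2),\,u\circ\phi(b_2)$ (or its reflection); since $u\circ\phi(a_1),u\circ\phi(b_2)$ lie on $C_1$ and $u\circ\phi(a_2),u\circ\phi(b_1)$ on $C_2$, the $C_1$- and $C_2$-points occur in the order $C_1,C_2,C_2,C_1$ (resp.\ $C_1,C_1,C_2,C_2$): they do \emph{not} alternate, so the crossing produces no local obstruction at $Q$ to separating $u^{-1}(C_1)$ from $u^{-1}(C_2)$. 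On the domain side there is none either: the middle subarc $\phi(P_1)$ of a simple arc can always be enclosed in a Jordan domain whose closure is disjoint from one containing $\phi(P_0)\cup\phi(P_2)$, so the connectivity requirements imposed on $V_1$ and $V_2$ by your sets are not linked. The contradiction you announce (``interleaving plus disjointness of $\overline{V_1},\overline{V_2}$ is impossible'') is therefore not a consequence of the data you have isolated, and the ``degree-plus-Jordan-curve bookkeeping'' you defer is precisely the content of the lemma. Contrast this with the paper's proof: there the failure of NCL is first converted into two \emph{disjoint} injective curves $\phi_1,\phi_2$ whose images under $u$ cross relative to a Jordan curve $\beta$, and $C_1,C_2$ are chosen in the image ($C_1$ a large subarc of $\beta$, $C_2$ a nearby curve just inside $\beta$) so that \emph{each} $C_i$ meets both $u\circ\phi_1$ and $u\circ\phi_2$ near all four endpoints. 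Each Jordan domain then has to contain a connected skeleton $\gamma_i$ visiting one point near each endpoint of $\phi_1$ \emph{and} of $\phi_2$, in the cyclic pattern $\gamma_1(0)=\phi_1(0)$, $\gamma_1(\tfrac13)=\phi_2(0)$, $\gamma_1(\tfrac23)=\phi_1(1)$, $\gamma_1(1)=\phi_2(1)$, which forces $\gamma_1$ and $\gamma_2$ to intersect. That linking of the connectivity constraints across two disjoint curves is what makes a crossing incompatible with QM, and it is absent from your configuration, where all constraints live along a single arc.

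A secondary gap is the opening reduction: the claim that uniform approximability of $u\circ\phi$ by injective maps is equivalent to all visit-chord diagrams being non-crossing is asserted ``by a planar-topology argument'' but is itself a nontrivial characterization that needs proof and careful formulation (tangential visits, coincident chord endpoints --- e.g.\ a segment traversed three times is approximable yet yields geometrically identical chords --- possibly infinitely many visits, and the patching of locally redrawn arcs into a globally injective map). The paper avoids this by extracting from the failure of NCL only the weaker statement that some pair of disjoint injective curves has crossing images, which is all its contradiction requires.
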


\begin{proof}
	
	Let us assume that $u$ does not satisfy the NCL condition. Then we can find two Lipschitz continuous injective curves $\phi_1,\phi_2:[0,1] \to \er^2$ with disjoint images and a Jordan curve $\beta:\mathbb{S} \to \er^2$ such that $u\circ\phi_i((0,1))$ lies in the interior of $\beta(\mathbb{S})$, but $u\circ\phi_1$ crosses $u\circ\phi_2$, i.e.
	$$
	\begin{aligned}
		u(\phi_1(0)) = \beta(1,0), & \quad u(\phi_1(1)) = \beta(-1,0),\\
		u(\phi_2(0)) = \beta(0,1),& \quad u(\phi_2(1)) = \beta(0,-1). 
	\end{aligned}
	$$ 
	We let $C_1$ be the image of $[0,\frac{3}{2}\pi]$ in the mapping $\beta(\cos(\cdot), \sin(\cdot))$ and we let $C_2$ be a curve in the interior of $\beta(\mathbb{S})$ and stays very close to $\beta$ as shown in Figure~\ref{fig:hhh}. It is possible to choose $\phi_i$ and $C_i$ so that $u^{-1}(C_i) \cap \phi_j([0,1])$ is exactly two points, one near $\phi_j(0)$ and the other near $\phi_j(1)$. A necessary condition for the existence of the disjoint Jordan domains $V^1_{k^{-1},m^{-1}}$ and $V^2_{k^{-1},m^{-1}}$ is that there are a pair of injective continuous curves $\gamma_1$, $\gamma_2$ with disjoint images and such that $\gamma_i([0,1]) \cap \phi_i([0,1]) = \phi_i^{-1}\circ u^{-1}(C_i)$. We proceed to follow that such curves cannot exist.
	
	\begin{figure}
		\begin{tikzpicture}[line cap=round,line join=round,>=triangle 45,x=0.3cm,y=0.3cm]
			\clip(-20.847376750036133,-1.1295955883538953) rectangle (27.103770934687542,13.413527886841413);
			\fill[line width=0.7pt,fill=black!10] (4.84,4.94) -- (2.72,8.24) -- (5.,12.) -- (13.64,11.76) -- (11.14,9.78) -- (16.82,7.24) -- (18.84,12.5) -- (26.,8.32) -- (22.,1.) -- (15.684,3.83) -- (11.196,0.904) -- (8.49,4.512) -- cycle;
			\draw [line width=0.7pt] (4.84,4.94)-- (2.72,8.24);
			\draw [line width=0.7pt] (2.72,8.24)-- (5.,12.);
			\draw [line width=0.7pt] (5.,12.)-- (13.64,11.76);
			\draw [line width=0.7pt] (13.64,11.76)-- (11.14,9.78);
			\draw [line width=0.7pt] (11.14,9.78)-- (16.82,7.24);
			\draw [line width=0.7pt] (16.82,7.24)-- (18.84,12.5);
			\draw [line width=0.7pt] (18.84,12.5)-- (26.,8.32);
			\draw [line width=0.7pt] (26.,8.32)-- (22.,1.);
			\draw [line width=0.7pt] (22.,1.)-- (15.684,3.83);
			\draw [line width=0.7pt] (15.684,3.83)-- (11.196,0.904);
			\draw [line width=0.7pt] (11.196,0.904)-- (8.49,4.512);
			\draw [line width=0.7pt] (8.49,4.512)-- (4.84,4.94);
			\draw [line width=0.7pt,color=ffqqqq] (4.086019031857675,10.492733140256517)-- (7.,8.);
			\draw [line width=0.7pt,color=ffqqqq] (7.,8.)-- (10.98,7.3);
			\draw [line width=0.7pt,color=ffqqqq] (18.96,4.52)-- (21.46,3.3);
			\draw [line width=0.7pt,color=ffqqqq] (21.46,3.3)-- (21.140010898421124,1.3853339388011745);
			\draw [line width=0.7pt,color=yqqqyq] (10.98,7.3)-- (13.42,5.02);
			\draw [line width=0.7pt,color=yqqqyq] (13.42,5.02)-- (15.3,6.36);
			\draw [line width=0.7pt,color=yqqqyq] (15.3,6.36)-- (18.96,4.52);
			\draw [line width=0.7pt,color=qqqqff] (5.802764780780162,4.827105938034546)-- (7.78,6.66);
			\draw [line width=0.7pt,color=qqqqff] (7.78,6.66)-- (10.98,7.3);
			\draw [line width=0.7pt,color=qqqqff] (18.96,4.52)-- (22.7,7.1);
			\draw [line width=0.7pt,color=qqqqff] (22.7,7.1)-- (22.445478425325145,10.395125723762694);
			\draw [line width=0.7pt] (4.086019031857675,10.492733140256517)-- (2.72,8.24);
			\draw [line width=0.7pt] (2.72,8.24)-- (4.84,4.94);
			\draw [line width=0.7pt] (4.84,4.94)-- (5.802764780780162,4.827105938034546);
			\draw [line width=0.7pt] (5.802764780780162,4.827105938034546)-- (8.49,4.512);
			\draw [line width=0.7pt] (8.49,4.512)-- (11.196,0.904);
			\draw [line width=0.7pt] (11.196,0.904)-- (15.684,3.83);
			\draw [line width=0.7pt] (15.684,3.83)-- (22.,1.);
			\draw [line width=0.7pt] (22.,1.)-- (26.,8.32);
			\draw [line width=0.7pt] (26.,8.32)-- (22.445478425325145,10.395125723762694);
			\draw [line width=0.7pt,color=zzttqq] (4.442136480355891,10.188096364862266)-- (3.2227361732564836,8.237668587313);
			\draw [line width=0.7pt,color=zzttqq] (3.2227361732564836,8.237668587313)-- (5.07137392810552,5.290190208952269);
			\draw [line width=0.7pt,color=zzttqq] (5.07137392810552,5.290190208952269)-- (8.797784952291813,4.886570607576248);
			\draw [line width=0.7pt,color=zzttqq] (8.797784952291813,4.886570607576248)-- (11.280673417855791,1.4494740258347119);
			\draw [line width=0.7pt,color=zzttqq] (11.280673417855791,1.4494740258347119)-- (15.672130632039908,4.2884385394697775);
			\draw [line width=0.7pt,color=zzttqq] (15.672130632039908,4.2884385394697775)-- (21.900062898676165,1.5857509520615645);
			\draw [line width=0.7pt,color=zzttqq] (21.900062898676165,1.5857509520615645)-- (25.442094457329127,8.149420807195797);
			\draw [line width=0.7pt,color=zzttqq] (25.442094457329127,8.149420807195797)-- (22.49224210685591,9.789706673740241);
			\draw [line width=0.7pt] (-20.,12.)-- (-20.,0.);
			\draw [line width=0.7pt] (-20.,0.)-- (-8.,0.);
			\draw [line width=0.7pt] (-8.,0.)-- (-8.,12.);
			\draw [line width=0.7pt] (-8.,12.)-- (-20.,12.);
			\draw [line width=0.7pt,color=ffqqqq] (-16.80864861475176,8.43449719049069)-- (-11.217508362842993,8.488780105557765);
			\draw [line width=0.7pt,color=qqqqff] (-16.781507157218226,3.9561566974569145)-- (-11.163225447775918,3.9832981549904525);
			\draw [line width=0.7pt] (-16.80864861475176,8.43449719049069)-- (-16.781507157218226,3.9561566974569145);
			\draw [line width=0.7pt] (-16.781507157218226,3.9561566974569145)-- (-16.737963009767444,1.9724834760420693);
			\draw [line width=0.7pt] (-16.737963009767444,1.9724834760420693)-- (-10.,2.);
			\draw [line width=0.7pt] (-10.,2.)-- (-10.,10.);
			\draw [line width=0.7pt] (-10.,10.)-- (-11.217508362842993,8.488780105557765);
			\draw [line width=0.7pt] (-11.217508362842993,8.488780105557765)-- (-11.163225447775918,3.9832981549904525);
			\draw [line width=0.7pt,color=zzttqq] (-16.44693193450557,8.438009002920264)-- (-16.38047470156619,3.9580940523151367);
			\draw [line width=0.7pt,color=zzttqq] (-11.99989978864716,3.979256249962089)-- (-11.987163617282516,8.481307724446703);
			\draw [line width=0.7pt,color=zzttqq] (-11.987163617282516,8.481307724446703)-- (-11.352152175023669,10.030885206190504);
			\draw [line width=0.7pt,dotted,color=zzttqq] (-11.352152175023669,10.030885206190504)-- (-10.72092246749564,9.081165524103262);
			\draw [line width=0.7pt,dotted,color=zzttqq] (-10.72092246749564,9.081165524103262)-- (-10.575553521437282,2.780443831178698);
			\draw [line width=0.7pt,dotted,color=zzttqq] (-10.575553521437282,2.780443831178698)-- (-16.376716694930145,2.799101272182173);
			\draw [line width=0.7pt,dotted,color=zzttqq] (-16.376716694930145,2.799101272182173)-- (-16.38047470156619,3.9580940523151367);
			\draw [->,line width=0.7pt] (-5.337105330359512,5.452934410647067) -- (0.827664307172034,5.452934410647067);
			\draw (2.5,4.6) node[anchor=north west] {$\psi_1([0,1])$};
			\draw [color=zzttqq] (-2.5,11.5) node[anchor=north west] {$\psi_2([0,1])$};
			\draw [color=black] (-3.3,7) node[anchor=north west] {$u$};
			\draw [color=red] (-15,10) node[anchor=north west] {$\phi_1$};
			\draw [color=blue] (-15,5.5) node[anchor=north west] {$\phi_2$};
			\draw [color=black] (-15,2.2) node[anchor=north west] {$\gamma_1$};
			\draw [color=zzttqq] (-16.5,7.5) node[anchor=north west] {$\gamma_2$};
			\draw [color=red] (4,12) node[anchor=north west] {$u\circ\phi_1([0,1])$};
			\draw [color=blue] (13.5,8) node[anchor=north west] {$u\circ\phi_2([0,1])$};
			\begin{scriptsize}
				\draw [fill=ffqqqq] (-10.72092246749564,9.081165524103262) circle (2.5pt);
			\end{scriptsize}
		\end{tikzpicture}
		\caption{The curves $\gamma_1$ and $\gamma_2$ must cross and therefore it is impossible to find a pair of disjoint Jordan domains $V^1_{k^{-1},m^{-1}}$ and $V^2_{k^{-1},m^{-1}}$.}\label{fig:hhh}
	\end{figure}
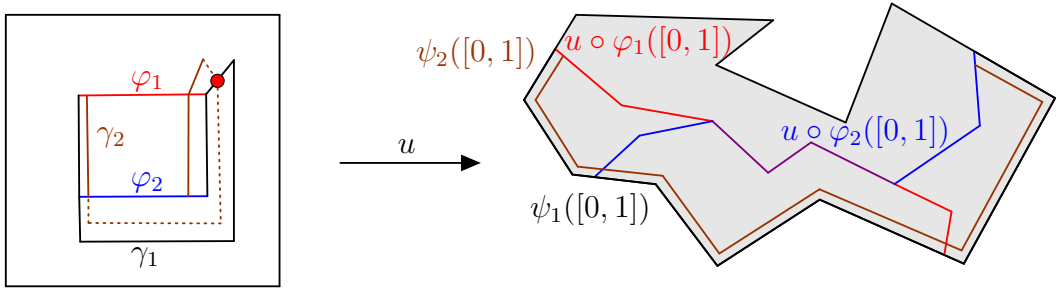
	
	Up to renumbering and a change of orientation of $\phi_i$ we have $\gamma_1(0) = \phi_1(0)$, $\gamma_1(\tfrac{1}{3}) = \phi_2(0)$, $\gamma_1(\tfrac{2}{3}) = \phi_1(1)$ and $\gamma_1(1) = \phi_2(1)$ and $\gamma_1$ has no other intersections with $\phi_1([0,1])$ or $\phi_2([0,1])$. Then  $\phi_2((0,1))$ lies in the bounded component of $\er^2 \setminus \gamma_1([0,1])\cup \phi_1([0,1])$ (see Figure~\ref{fig:hhh}). The curve $\gamma_2$ starts near $\gamma_1(0)$, then it intersects $\phi_2((0,1))$ once. After this it lies in the bounded component of $\er^2 \setminus \gamma_1([\tfrac{1}{3},1])\cup \phi_2([0,1])$ and then it needs to intersect $\phi_1$ without intersecting $\phi_2([0,1])$ or $\gamma_1([0,1])$, which is impossible.  
\end{proof}

\begin{thm}\label{PointwiseChar}
	Let $u\in W_{\id}^{1,p}((-1,1)^2, \er^2)$. Then $u\in \overline{H\cap W_{\id}^{1,p}((-1,1)^2, \er^2)}$ if and only if $u$ satisfies the QM condition.
\end{thm}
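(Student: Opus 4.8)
For the implication $u\in\overline{H\cap W^{1,p}_{\id}((-1,1)^2,\er^2)}\Rightarrow$ QM I would go through Theorem~\ref{QuasiMonotne}: then $u$ is quasi-monotone, so for each $\delta>0$ there are a monotone map $g_\delta:[-1,1]^2\to\er^2$ and an open set $U_\delta$ with $\H^1_\delta(U_\delta)<\delta$, $S_u\subset U_\delta$, and $u=g_\delta$ on $[-1,1]^2\setminus U_\delta$; these $U_\delta$ are the sets in Definition~\ref{defQM}. Fix $r,\delta$ and $C_1,C_2$ as there; by Remark~\ref{QMRemarks}(2) we may assume $\overline{C_1+B(0,r)}\cap\overline{C_2+B(0,r)}=\emptyset$. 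One first notes that, because $(-1,1)^2\setminus(C_1\cup C_2)$ is simply connected, neither $C_i$ separates the plane (a bounded complementary component of $C_i$ would lie in $(-1,1)^2$ and would be a separate component of $(-1,1)^2\setminus(C_1\cup C_2)$), and that $\deg(\cdot,g_\delta,(-1,1)^2)\equiv1$ on $(-1,1)^2$, so $C_i\subset g_\delta([-1,1]^2)$. Being a continuous monotone map, $g_\delta$ satisfies all the equivalent monotonicity conditions recalled in the introduction, so $g_\delta^{-1}(C_i)$ is a closed connected set which does not separate the plane; such a continuum has $\er^2\setminus g_\delta^{-1}(C_i)$ simply connected and is therefore cellular, so it admits arbitrarily small Jordan-domain neighborhoods $V_i$. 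Choosing them small — using $\dist(g_\delta^{-1}(C_1),g_\delta^{-1}(C_2))>0$ (the preimages are disjoint since $g_\delta$ is single valued and $C_1\cap C_2=\emptyset$) and the uniform continuity of $g_\delta$ — gives $\overline{V_1}\cap\overline{V_2}=\emptyset$ and $\overline{V_i}\subset g_\delta^{-1}(C_i+B(0,r))$. Splitting $g_\delta^{-1}(C_i+B(0,r))$ into its part in $U_\delta$ and its part off $U_\delta$, where $u=g_\delta$ and $u_{\text{multi}}=\{u\}$, yields $\overline{V_i}\subset u^{-1}(C_i+B(0,r))\cup U_\delta$, while $u^{-1}(C_i)\setminus U_\delta=g_\delta^{-1}(C_i)\setminus U_\delta\subset V_i$; so Definition~\ref{defQM} holds. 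This is the short exercise on Theorem~\ref{Young} promised in the introduction.

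For the converse, that the QM condition forces $u\in\overline{H\cap W^{1,p}_{\id}((-1,1)^2,\er^2)}$, by Theorem~\ref{ThreeCurveChar} it suffices to verify the three-curve property (Definition~\ref{ThreeCurvePropertyDef}); and by Lemma~\ref{OneRing} we already know $u$ satisfies NCL, which I would use freely (in particular to attach to each point of $(-1,1)^2\setminus u(\varphi([0,1]))$ a well-defined side of $u(\varphi([0,1]))$). Let $\varphi,\psi$ be a pair of good test curves for $u$ (Definition~\ref{AppropriateCouple}) with $\varphi([0,1])\subset(-1,1)^2\setminus U_\delta$, let $0\le t_1<\dots<t_N\le1$ be the finitely many parameters with $\psi(t_j)\in u\circ\varphi([0,1])$, and let $J_0,\dots,J_N$ be the closed subintervals of $[0,1]$ they cut out, so $\psi(\mathring J_j)$ is disjoint from $u\circ\varphi([0,1])$. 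Since $\varphi([0,1])$ avoids $U_\delta\supset S_u$ we have $u_{\text{multi}}=\{u\}$ along $\varphi$, whence $u^{-1}(\psi(\mathring J_j))\cap\varphi([0,1])=\emptyset$, and for $i\ne j$ the continua $u^{-1}(\psi(t_i))$ and $u^{-1}(\psi(t_j))$ (and the sets $u^{-1}(\psi(\mathring J_i))$, $u^{-1}(\psi(\mathring J_j))$) are disjoint off $U_\delta$. I would then perturb the closed subarcs $\psi(\overline{J_j})$ to pairwise disjoint closed connected sets $\tilde C_j$ with $(-1,1)^2\setminus\bigcup_j\tilde C_j$ connected, fine enough (with $r>0$ fixed first and small) that the $V_j$ below still reach small preimage-neighborhoods of $\psi(t_{j-1})$ and $\psi(t_j)$; the QM condition, via Remark~\ref{QMRemarks}(4), then produces pairwise disjoint Jordan domains $V_j$ with $u^{-1}(\tilde C_j)\setminus U_\delta\subset V_j\subset u^{-1}(\tilde C_j+B(0,r))\cup U_\delta$.

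Finally $\gamma$ is assembled so that, following $\psi$, it passes successively through $V_0,V_1,\dots,V_N$: inside each Jordan domain $V_j$ — where injective routing is free — it joins a point near $u^{-1}(\psi(t_{j-1}))$ to a point near $u^{-1}(\psi(t_j))$ without meeting $\varphi([0,1])$ (possible since $u^{-1}(\psi(\mathring J_j))\cap\varphi([0,1])=\emptyset$ and $V_j$ fattens $u^{-1}(\tilde C_j)$ only by $B(0,r)$), and in a small neighborhood of the continuum $u^{-1}(\psi(t_j))$ (connected, as the QM condition implies $\INV$ and hence preimages of points are connected by Lemma~\ref{IDidThisOne}) it inserts the crossing piece $\gamma(I_{t_j})$, using the transversality of $\psi$ and $u\circ\varphi$ built into the good-test-pair definition to make it cross $\varphi([0,1])$ exactly at the points $\varphi(\varphi^{-1}\circ u^{-1}\circ\psi(t_j))$, consecutively and perpendicularly — giving \eqref{GammaCross}. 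The middle pieces are mutually disjoint because the $V_j$ are; the crossing pieces are mutually disjoint and disjoint from the middle pieces because the relevant continua are disjoint off $U_\delta$, which $\varphi$ avoids, and the neighborhoods are taken small; and the batches $\gamma(I_{t_1}),\dots,\gamma(I_{t_N})$ occur in this order along $\gamma$ since it visits $V_0,\dots,V_N$ in order, so \eqref{IntervalOrder} holds. A re-parametrization by a finite-length Lipschitz curve finishes the construction. The main obstacle is exactly this last step: turning the pairwise disjoint Jordan domains handed over by the QM condition into a single globally injective Lipschitz $\gamma$ realizing precisely — and only — the prescribed crossings of $\varphi$ in the prescribed order; the delicate bookkeeping (matching exit and entry points of consecutive $V_j$, treating a $\psi(t_j)$ whose preimage meets $\varphi$ in several points, and carrying out all the local surgeries near $\varphi$ without losing injectivity) is in the spirit of, but more involved than, the corresponding steps in Proposition~\ref{PiecewiseLinearization} and in the proof of Theorem~\ref{ThreeCurveChar}, which I would model the argument on.
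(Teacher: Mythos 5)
Your necessity direction is fine and is a mild variant of the paper's: the paper applies Theorem~\ref{Young} to the quasi-monotone approximation and simply sets $V_i=h^{-1}(C_i+B(0,2r))$ for a homeomorphism $h$ uniformly $r$-close to $u$ off $U_\delta$, whereas you work with the monotone map $g_\delta$ itself and invoke the fact that a planar continuum with connected complement admits arbitrarily small Jordan-domain neighborhoods; both routes close the argument (your parenthetical that neither $C_i$ separates the plane needs the extra half-line ruling out a bounded complementary component of $C_1$ swallowed by $C_2$, which contradicts $C_1\cap C_2=\emptyset$, but that is cosmetic).

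The sufficiency direction, however, has a genuine gap, and it is exactly the step you defer at the end. In the paper the curve $\gamma$ is built from \emph{three} simultaneously separated families of domains obtained from the QM condition via Remark~\ref{QMRemarks}(4): domains $V_i$ for the point preimages $u^{-1}(w_i,z_i)$, domains $\tilde V_i$ for the arc preimages $u^{-1}(\psi(I_i))$, and enlarged domains $\hat V_i$ for $u^{-1}(B((w_i,z_i),3r\alpha^{-1}))$ that overlap the neighbouring $\tilde V$'s and give room to splice the pieces together. In your set-up you only apply QM to the perturbed arcs $\tilde C_j$, and the crossing pieces are to be drawn ``in a small neighbourhood of the continuum $u^{-1}(\psi(t_j))$''; but this point preimage can be a large, wild continuum (it is only known to be connected), nothing in your construction confines its neighbourhood away from the other $V_j$'s or from the other crossing continua, and, more importantly, you never prove that an injective connector which crosses $\varphi([0,1])$ exactly once at each point of $\varphi([0,1])\cap u^{-1}(\psi(t_j))$ and nowhere else \emph{exits on the side of $\varphi([0,1])$ containing the next arc preimage}. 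That side/parity statement is the heart of the paper's proof: it uses the NCL approximation $g$ of $u\circ\varphi$ (modified so that $g^{-1}(\psi([0,1]))=\varphi^{-1}(F)$, with the $M_i$ linear crossings near $(w_i,z_i)$), the side-consistency facts \eqref{BoundarySideUniqueness} and \eqref{SideUniqueness}, and the auxiliary path $\sigma$ through the boundary identity zone to transfer ``which side of $\varphi$'' to ``which side of $g$'' for the sets $u^{-1}(\psi(I_i))$; only then does one know the endpoints of each crossing piece land where they must, after which the piecewise-linear un-crossing surgery and the concatenation inside the disjoint $V,\tilde V,\hat V$ give global injectivity and \eqref{IntervalOrder}. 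Saying the bookkeeping is ``in the spirit of'' Proposition~\ref{PiecewiseLinearization} and Theorem~\ref{ThreeCurveChar} does not supply this argument: without the simultaneous QM separation for the point preimages and the NCL-based side/parity lemma, the assembly of a single injective $\gamma$ is not justified, so the implication QM $\Rightarrow$ three-curve property remains unproved in your proposal.
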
	

\begin{proof}[Proof of the necessity of the QM condition]
	Let us prove that each $u\in \overline{H\cap W_{\id}^{1,p}((-1,1)^2, \er^2)}$ satisfies the QM property. Note that the $\INV$ condition is immediate from Theorem~\ref{HomImpINV}. Let $\delta >0$, let $C_1,C_2$ be disjoint closed connected sets such that $(-1,1)^2\setminus (C_1 \cup C_2)$ is connected and let $r>0$  be such that $C_1+B(0,4r)\cap C_2+B(0,4r) = \emptyset$. Using Theorem~\ref{QuasiMonotne} and Theorem~\ref{Young} we find a homeomorphism $h$ with $\|u-h\|_{L^{\infty}((-1,1)^2\setminus U_{\delta})}<r$.  Then $\overline{h^{-1}(C_1 +B(0,2r))}\cap \overline{h^{-1}(C_2 +B(0,2r))} = \emptyset$. Further 
	$$
	u^{-1}(C_i)\setminus U_{\delta} \subset h^{-1}(C_i +B(0,2r)) \subset \overline{h^{-1}(C_i +B(0,2r))}\subset u^{-1}(C_i+B(0,3r))\cup U_{\delta}.
	$$
	Then it suffices to put $V_i =h^{-1}(C_i +B(0,2r))$.
\end{proof}

Before starting the proof of the sufficiency of the condition, let us add a short comment about the strategy of our proof.  We want to prove that $u$ satisfies the three-curve property, so we take $\phi$ and $\psi$, a good test pair for $u$. For any closed interval $I\subset[0,1]$, $C = \psi(I)$ is a closed set that does not disconnect the plane. Then by the $\INV$ condition and Lemma~\ref{IDidThisOne} also $u^{-1}(C)$ is closed and connected. We can find curves $\gamma_I$ inside $V_{k^{-1},m^{-1}}$ which cross $\phi$ exactly at the points $\phi([0,1])\cap u^{-1}\circ \psi(I)$.

We may assume that $\phi([0,1])$ disconnects $[-1,1]^2$ into two parts a left and right side. The NCL condition allows us to say that any point in $[-1,1]^2\setminus u\circ\phi([0,1])$ lies on one of two `sides' of $u\circ\phi([0,1])$ even if $[-1,1]^2 \setminus u\circ\phi([0,1])$ has many components, not just two. This allows us to prove that we can connect the curves $\gamma_I$ found using the $\INV$ condition without making any additional intersections with $\phi([0,1])$.

It is the QM condition that guarantees that the curves constructed do not intersect each other and therefore our final $\gamma$ is injective.
\begin{proof}[Proof the of sufficiency of the QM condition]			
	Let $\delta>0$ be fixed, let $\phi,\psi$ be a good test pair for $u$, call $F = u^{-1}\circ\psi([0,1])\cap \phi([0,1])$ and let $r>0$ be such that $B((w,z), 5r)$ are pairwise disjoint for $(w,z)\in u(F)$. By $K\in \en$ let us denote the number of points in $u(F)$.
	
	Because $(u\circ\phi)'(t)$ and $\psi'(s)$ are linearly independent whenever $u\circ\phi(t)=\psi(s)$, we have
	$$
	\alpha :=\frac{1}{3}\min_{u\circ\phi(t)=\psi(s)}\Big\langle \frac{(u\circ\phi)'(t)}{|(u\circ\phi')(t)|}, \frac{[\psi'(s)]^{\bot}}{|\psi'(s)|} \Big\rangle \in (0,1)
	$$
	where $(v_1,v_2)^{\bot} = (-v_2,v_1)$. Assuming that $r>0$ is sufficiently small, we can find exactly $K+1$ closed intervals $I_1, I_2,\dots, I_{K+1}$, components of $[0,1]\setminus \psi^{-1}\big(\bigcup_{i=1}^K B((w_i,z_i),2r\alpha^{-1})\big)$, such that
	$$\psi(I_i) \cap \overline{B((w_{i-1},z_{i-1}),2r\alpha^{-1})} \neq \emptyset \neq \psi(I_i) \cap\overline{B((w_{i},z_{i}),2r\alpha^{-1})} $$ for $i=2,\dots, K$ and 
	$$
	B((w_i,z_i),4r\alpha^{-1}) \cap \psi(I_j) = \emptyset  \ \ \text{ for all $j\neq i-1$ and $j\neq i$.}
	$$
	Similarly,  $\psi(I_1)$ intersects $\overline{B((w_{1},z_{1}),2r\alpha^{-1})}$ but not $B((w_i,z_i),4r\alpha^{-1})$, $i\neq 1$ and $\psi(I_{K+1})$ intersects $\overline{B((w_{K},z_{K}),2r\alpha^{-1})}$ but not $B((w_i,z_i),4r\alpha^{-1})$, $i\neq K$. Because $\psi'(s)$ exists and $\psi'(s)\neq 0$ whenever $\psi(s)\in u(F)$, we may assume that $r$ is so small that $\psi(I_i)+B(0,r)$ are pairwise disjoint. Again, because $\psi'(s)$ exists and $\psi'(s)\neq 0$ whenever $\psi(s)\in u(F)$,  we may assume that $r$ is so small that $\psi(\hat{I}_i) \subset (B((w_i,z_i),3r\alpha^{-1}))$, where $\hat{I}_i = [\max I_{i}, \min I_{i+1}]$ for $i=1,2,\dots, K$.  Moreover, for $r$ small enough we have that 
	\begin{equation}\label{ACDC}
		[\psi(I_i) + B(0, 2r) ]\cap u\circ\phi([0,1]) = \emptyset   \ \ \text{ for $i=1,\dots K+1$. }
	\end{equation}
	
	Having fixed $r$ we fix $\delta>0$ and use the QM condition (see Remark~\ref{QMRemarks} (4)) to find domains $V_1, V_2, \dots V_K, \tilde{V}_1,\tilde{V}_2,\dots, \tilde{V}_{K+1}$ with pairwise disjoint closures such that $V_i \supset u^{-1}(w_i,z_i)  \setminus U_{\delta}$ and $\tilde{V}_i \supset u^{-1}(\psi(I_i))\setminus U_{\delta}$. From the QM condition we have that $\overline{\tilde{V}_i}\subset u^{-1}(\psi(I_i) + B(0,r))\cup U_{\delta}$, the choice of $\phi$ ensures that $\phi([0,1]) \cap U_{\delta} = \emptyset$ so by \eqref{ACDC} we see that $\overline{\tilde{V}_i} \cap \phi([0,1]) = \emptyset$. Thus, we have $\dist(\tilde{V}_i, \phi([0,1]))>0$. Similarly we find domains $\hat{V}_1, \hat{V}_2,\dots, \hat{V}_K$ with pairwise disjoint closures such that $\hat{V}_i\supset\overline{u^{-1}(B((w_i,z_i), 3r\alpha^{-1}))}\setminus U_{\delta}$, $\hat{V}_i\supset V_i$ and $\hat{V}_i \cap \tilde{V}_j = \emptyset$ exactly when $j\neq i-1$ and $j\neq i$.
	
	Without loss of generality we may assume that $\phi^{-1}(\partial(-1,1)^2) = \{0,1\}$ and so $\phi([0,1])$ disconnects $[-1,1]^2$ into two components, further we may assume that $|\phi(0) - \phi(1)|\geq 1$. We call the two components of $[-1,1]^2\setminus \phi([0,1])$ the left hand and right-hand side respectively in the obvious way. Using the NCL condition (thanks to Lemma~\ref{OneRing}), for any $\epsilon>0$ we find a continuous injective $g:[0,1]\to [-1,1]^2$ such that $\|u\circ\phi - g\|_{L^{\infty}((0,1),\er^2)}<\epsilon$. Because $u$ is the identity at $\partial(-1,1)^2$ we may assume that $g(0) = u\circ\phi(0)$ and $g(1) = u\circ\phi(1)$ (at least for all $\epsilon$ sufficiently small) and that there are exactly two components of $[-1,1]^2\setminus g([0,1])$ which we call the left hand and right hand side of $g([0,1])$ in the obvious way. Note that 
	\begin{equation}\label{BoundarySideUniqueness}
		\begin{aligned}
			&\text{$(x,y)\in \partial(-1,1)^2\setminus \phi([0,1])$ is on the left side of $\phi([0,1])$ exactly when }\\
			&\text{$(x,y)$ is on the left side of $g([0,1])$.}
		\end{aligned}
	\end{equation} In the course of the first half of the proof of Theorem~\ref{ThreeCurveChar} we showed that, if $r$ and $\epsilon$ are sufficiently small, then we can modify $g$ so that $g^{-1}(\psi([0,1])) = \phi^{-1}(F)$ holds and there exists $g'$ at each point $\phi^{-1}(F)$ and $g'(t)$ is linearly independent of $\psi'(s)$ whenever $g(t) = \psi(s)$. In fact, we may assume that $g$ is linear on each interval in $g^{-1}B((w,z),r \alpha^{-1})$ for each $(w,z)\in u(F)$.
	
	Let  $(w,z)\in (-1,1)^2\setminus u\circ\phi([0,1])$ and let $g_1,g_2:[0,1]\to \er^2$ be continuous and injective satisfying $\|g_i-u\circ\phi\|_{\infty}< \dist\big((w,z), u\circ\phi([0,1])\big)$, then 
	\begin{equation}\label{SideUniqueness}
		\text{$(w,z)$ is on the left of $g_1$ exactly when $(w,z)$ is on the left side of $g_2$.} 
	\end{equation}
	Especially, the side of $g([0,1])$ which contains $\psi(I_i)$ is invariant of the choice of continuous injective $g$ satisfying $\|u\circ\phi - g\|_{\infty}< r$, because $\dist(\psi(I_i), u\circ \phi([0,1])) \geq 2 r$. 
	
	Further, let us prove that $u^{-1}(\psi(I_i))$ lies on the left side of $\phi([0,1])$ exactly when $\psi(I_i)$ lies on the left side of any injective continuous $g([0,1])$ satisfying $\|g-u\circ\phi\|_{\infty}< r$. We can find a Lipschitz continuous injective path $\sigma$ parametrized from $[0,1]$ which on $[0,1/3]$ goes from some point $(x,y) \in \tilde{V}_i \setminus S_u$ to $\partial(-1,1)^2$ inside $[-1,1]^2\setminus \phi([0,1])$, on $[1/3,2/3]$ the path $\sigma$ follows $\partial(-1,1)^2$ till it gets to $\phi(0)$ and on $[2/3,1]$ we define $\sigma(t) = \phi(3t-2)$. We may assume that $u\circ\sigma\in W^{1,1}((0,1),\er^2)$. We use the NCL condition on $\sigma$ and get a $\|g_{\sigma}-u\circ\sigma\|_{\infty}< r$, we may assume that $g_{\sigma}(t) = \sigma(t)$ on $[1/3,2/3]$. Because $(x,y)\in (-1,1)^2\setminus S_u$ the value $u(x,y)$ is well defined. By \eqref{SideUniqueness} we have that $u(x,y)$ is on the left side of $g_{\sigma}([2/3,1])$ exactly when $u(x,y)$ is on the left side of any $g([0,1])$ where $g$ is injective continuous satisfying $\|g-u\circ\phi\|_{\infty}< r$. Because $g_{\sigma}$ is injective $u(x,y)$ is on the left side of $g_{\sigma}([2/3,1])$ exactly when $g_{\sigma}(1/2)$ is on the left side of $g_{\sigma}([2/3,1])$. By \eqref{BoundarySideUniqueness} $\sigma(1/2)$ is on the left side of $\phi([0,1])$ exactly when $g_{\sigma}(1/2)$ is on the left side of $g_{\sigma}([2/3,1])$. Because $\sigma([0,1/2])\cap \phi([0,1]) = \emptyset$, the point $\sigma(0) = (x,y)$ lies on the left of $\phi([0,1])$ exactly when $\sigma(1/2)$ lies on the left of $\phi([0,1])$. Collectively $u(x,y)$ lies on the left side of any continuous injective $g$ satisfying $\|g-u\circ\phi\|_{\infty}< r$ exactly when $(x,y)$ lies on the left side of $\phi([0,1])$. Because $u^{-1}(\psi(I_i))\subset \tilde{V}_i\cup U_{\delta}$, there is a component of $\tilde{V}_i\cup U_{\delta}$ containing $u^{-1}(\psi(I_i))$. Because this component does not intersect $\phi([0,1])$ it lies entirely on one side of $\phi([0,1])$ where $(x,y)$ lies.

	Let $M_{i}$ denote the number of points $(x,y) \in F$ such that $u(x,y) = (w_i,z_i)\in u(F)$ for $i=1,\dots , K$. By our adaption of $g$ there are $M_{i}$ intervals $J_{i,1}, \dots, J_{i,M_{i}}$ such that $g$ is linear on each $J_{i,o}$, $g(J_{i,o})$ disconnects $B((w_i,z_i),\alpha^{-1}r)$ and $g^{-1}(B((w_i,z_i),r)\cap \psi([0,1])) \subset \bigcup_{k=1}^{M_{i}}J_{i,k}$. Then the number of intersections of $g([0,1])$ with $\psi([0,1])\cap B((w_i,z_i),r)$ is also $M_i$. This means that any continuous $\gamma_{i}([0,1])$ that starts at the side of $\phi([0,1])$ containing $\tilde{V}_i$, crosses $\phi([0,1])$ exactly once at every point of $\phi([0,1])\cap u^{-1}(w_i,z_i)$ and has no other intersections with $\phi([0,1])$ ends on the side of $\phi([0,1])$ which contains $\tilde{V}_{i+1}$. Thus we find a continuous $\gamma_i:[0,1] \to \er^2$ with $\gamma_i([0,1])\subset V_i$ such that $\gamma_i(0)$ lies on the same side of $\phi([0,1])$ as $\tilde{V}_i$,  $\gamma_i(1)$ lies on the same side of $\phi([0,1])$ as $\tilde{V}_{i+1}$, $\gamma_i$ crosses $\phi([0,1])$ exactly once at each point of $\phi([0,1])\cap u^{-1}(w_i,z_i)$ and has no other intersections with $\phi([0,1])$. Since $V_i$ is open we may assume that $\gamma_i$ is finitely piecewise linear, i.e. $[0,1]$ is the union of a finite number of non-overlapping closed intervals each of which is mapped to a segment at constant, non-zero speed. By slightly moving the endpoints of the segments in the image we can ensure that no three segments intersect at the same point and if any pair of segments intersect, then their intersection is a singleton (which we call a crossing point of $\gamma_i$). 
	
	\begin{figure}
		\begin{tikzpicture}[line cap=round,line join=round,>=triangle 45,x=1.0cm,y=1.0cm]
			\clip(-0.1803666981981683,-0.42848439255843424) rectangle (4.1713425399820485,3.3201958508791223);
			\draw [line width=0.7pt] (4.,0.)-- (0.,0.);
			\draw [line width=0.7pt,dash pattern=on 1pt off 1pt] (2.,2.) circle (0.5cm);
			\draw [line width=0.7pt,color=qqqqff] (1.5527864045000421,2.223606797749979)-- (1.646446609406726,1.646446609406726);
			\draw [line width=0.7pt,color=qqqqff] (2.353553390593274,1.646446609406726)-- (2.447213595499958,2.223606797749979);
			\draw [line width=0.7pt] (0.,3.)-- (1.5527864045000421,2.223606797749979);
			\draw [line width=0.7pt] (2.447213595499958,2.223606797749979)-- (4.,3.);
			\draw [line width=0.7pt] (0.,0.)-- (1.646446609406726,1.646446609406726);
			\draw [line width=0.7pt] (2.353553390593274,1.646446609406726)-- (4.,0.);
			\draw [line width=0.7pt,dotted] (1.5527864045000421,2.223606797749979)-- (2.353553390593274,1.646446609406726);
			\draw [line width=0.7pt,dotted] (1.646446609406726,1.646446609406726)-- (2.447213595499958,2.223606797749979);
		\end{tikzpicture}
		\caption{If we have a piece-wise linear curve that crosses itself we can replace it with a curve that does not cross itself by replacing the dotted segments with the blue segments and reversing the orientation on the ``disconnected part'' of the curve.}\label{fig:Uncross}
	\end{figure}
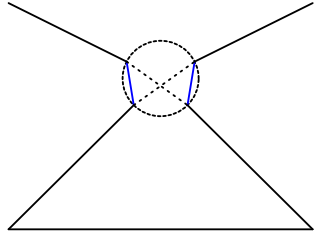
	
	Let us prove that it is possible to assume that $\gamma_i$ is injective (our approach is depicted in Figure~\ref{fig:Uncross}). There is a finite number of crossing points of $\gamma_i$ and so we find finitely many pairwise disjoint balls $B_j$ each centered at a crossing point and $\gamma_i([0,1]) \cap B_j$ is a pair of oriented segments $S_1 = ab$ and $S_2 = cd$ with $\gamma_i^{-1}(a)< \gamma_i^{-1}(b)<\gamma_i^{-1}(c)<\gamma_i^{-1}(d)$. On $[\gamma_i^{-1}(a),\gamma_i^{-1}(b)]$ we replace $ab$ with the segment $ac$ on $[\gamma_i^{-1}(b),\gamma_i^{-1}(c)]$ we reverse the orientation of $\gamma_i$ and on $[\gamma_i^{-1}(c),\gamma_i^{-1}(d)]$ we replace $cd$ with $bd$. This gives a curve $\gamma_i$, which is finitely piecewise affine, continuous,  $\gamma_i([0,1])\subset V_i$, $\gamma_i(0)$ lies on the same side of $\phi([0,1])$ as $\tilde{V}_i$,  $\gamma_i(1)$ lies on the same side of $\phi([0,1])$ as $\tilde{V}_{i+1}$, $\gamma_i$ crosses $\phi([0,1])$ exactly once at each point of $\phi([0,1])\cap u^{-1}(w_i,z_i)$ and has no other intersections with $\phi([0,1])$ but does not have a crossing point inside $B_j$. We do this for all the crossing points and we have that $\gamma_i$ is injective.
	
	For each $i=1,\dots K$ let us find a point $(x_i^-,y_i^-) \in \tilde{V}_{i-1}\cap\hat{V}_i \neq \emptyset$ and a point $(x_i^+,y_i^+) \in \tilde{V}_i\cap\hat{V}_i \neq \emptyset$. It holds that $\gamma_i([0,1])$ does not disconnect $\hat{V}_i$, $\gamma_i(0)$ is on the same side of $\phi([0,1])$ as $(x_i^-,y_i^-)$ and $\gamma_i(1)$ is on the same side of $\phi([0,1])$ as $(x_i^+,y_i^+)$ we can extend $\gamma_i$ onto $[-1,2]$ in such a way that each $\gamma_i$ is continuous injective and  $\gamma_i(-1) = (x_i^-,y_i^-)$ and $\gamma_i(2) = (x_i^+,y_i^+)$. See Figure~\ref{fig:Connecting}.
	
	\begin{figure}
		\begin{tikzpicture}[line cap=round,line join=round,>=triangle 45,x=0.3cm,y=0.3cm]
			\clip(-7.678286068693612,-4.927321455562875) rectangle (28.967724581267998,22.598491773849407);
			\fill[line width=0.7pt,color=sqsqsq,fill=sqsqsq!10] (27.81200457833495,-8.858627502902186) -- (21.190330231294695,-7.773107118141484) -- (15.556972727460344,-2.898783063662781) -- (14.919691483576846,2.2873677486305253) -- (17.029168756378688,2.64788857556125) -- (17.783771740181518,-0.6163829600240025) -- (23.723211129069657,-3.6119456432254617) -- cycle;
			\fill[line width=0.7pt,color=sqsqsq,fill=sqsqsq!10] (7.018942621152939,23.525317512034597) -- (6.402905154957156,16.30887862231254) -- (2.618675005468776,15.164809042234655) -- (-3.8057157134301014,19.2130552486641) -- (-2.5736407810385358,37.43016317759661) -- (4.412781175304548,42.757805945552676) -- (6.3695115018719095,35.517903737253405) -- cycle;
			\fill[line width=0.7pt,color=sqsqsq,fill=sqsqsq!10] (3.5990432699173804,17.320146817467275) -- (4.635893155247703,14.313282150009323) -- (12.96525390073463,13.691172218811126) -- (12.343143969536436,12.930815636235552) -- (7.262579531417855,12.377829030726044) -- (5.8109896919554025,6.329538032965793) -- (7.1588945428848225,2.907933411375708) -- (12.896130575045943,2.0093301774227568) -- (17.147215104900265,1.0070419549367722) -- (15.16296582886637,6.489359518627789) -- (10.476814175941856,6.329538032965793) -- (9.820142581899317,9.336402700423745) -- (15.868433579659536,10.373252585754075) -- (18.7370515957401,14.313282150009323) -- (16.42142018516904,17.804010097288096) -- cycle;
			\draw [line width=0.7pt,dotted] (12.,-4.927321455562875) -- (12.,22.598491773849407);
			\fill[line width=0.7pt,color=sqsqsq,fill=sqsqsq!20] (10.739524899359601,14.337522518459508) -- (10.,16.) -- (14.985205574188992,16.230866603180722) -- (16.534305279869987,13.993278139419289) -- (13.895098373894958,11.153262012337471) -- (8.874867846225072,10.5221473174304) -- (10.195034885060778,5.461127281889825) -- (13.263983678987888,5.301107568653727) -- (14.29671681610855,3.8093819261461057) -- (12.891052268360982,3.1208931680656655) -- (10.423967551906067,3.4077634839325155) -- (7.55526439323756,6.247779611014333) -- (7.469203298477505,11.526193422964377) -- (13.005800394707721,12.501552496911668) -- (14.640961195148769,14.366209550046193) -- cycle;
			\draw [line width=0.7pt] (11.203862180634122,15.3221514630554)-- (14.907299809830421,15.309597437193718);
			\draw [line width=0.7pt] (14.907299809830421,15.309597437193718)-- (15.547555128776223,14.116964980333883);
			\draw [line width=0.7pt] (15.547555128776223,14.116964980333883)-- (14.,12.);
			\draw [line width=0.7pt] (14.,12.)-- (8.291328180723813,10.97845851491327);
			\draw [line width=0.7pt] (8.291328180723813,10.97845851491327)-- (8.680502982435966,6.459009204707583);
			\draw [line width=0.7pt] (8.680502982435966,6.459009204707583)-- (9.93693893467331,4.311427139254739);
			\draw [line width=0.7pt] (9.93693893467331,4.311427139254739)-- (13.057553607539958,4.123720993518399);
			\draw [line width=0.7pt] (26.,-8.)-- (18.62458336760644,-3.82794748986379);
			\draw [line width=0.7pt] (18.62458336760644,-3.82794748986379)-- (16.313013413235893,-1.6523522386915042);
			\draw [line width=0.7pt] (16.313013413235893,-1.6523522386915042)-- (16.,2.);
			\draw [line width=0.7pt] (1.969995552421627,31.540556573904766)-- (0.9992164620058273,22.28348453315405);
			\draw [line width=0.7pt] (0.9992164620058273,22.28348453315405)-- (4.639638051065076,16.59749271786147);
			\draw [line width=0.7pt,dash pattern=on 4pt off 4pt,color=qqqqff] (11.203862180634122,15.3221514630554)-- (7.239939186107397,15.210665445838892);
			\draw [line width=0.7pt,dash pattern=on 4pt off 4pt,color=qqqqff] (7.239939186107397,15.210665445838892)-- (4.639638051065076,16.59749271786147);
			\draw [line width=0.7pt,dash pattern=on 4pt off 4pt,color=qqqqff] (13.057553607539958,4.123720993518399)-- (15.226085198101115,3.2224515151484394);
			\draw [line width=0.7pt,dash pattern=on 4pt off 4pt,color=qqqqff] (15.226085198101115,3.2224515151484394)-- (16.,2.);
			\draw (11.6,21) node[anchor=north west] {$\phi([0,1])$};
			\draw (16.2,15) node[anchor=north west] {$V_{i}$};
			\draw [color=ffqqqq](-1.3,17.7) node[anchor=north west] {$(x_i^-,y_i^-)$};
			\draw [color=ffqqqq](16.2,3.4) node[anchor=north west] {$(x_i^+ y_i^+)$};
			\draw (18.4,15.3) node[anchor=north west] {$\hat{V}_i$};
			\draw (6.5,21) node[anchor=north west] {$\tilde{V}_{i-1}$};
			\draw (13.6,0) node[anchor=north west] {$\tilde{V}_i$};
			\begin{scriptsize}
				\draw [fill=ffqqqq] (16.,2.) circle (2.5pt);
				\draw [fill=ffqqqq] (4.639638051065076,16.59749271786147) circle (2.5pt);
			\end{scriptsize}
		\end{tikzpicture}
		\caption{We find curves that connect $(x_{i}^-, y^-_{i})$ to $(x_{i}^+, y^+_{i})$ by extending $\gamma_i$ from $[0,1]$. The added image is displayed in blue. The blue curves can be constructed so they do not cross $\phi([0,1])$ and the final curve is injective.}\label{fig:Connecting}
	\end{figure}
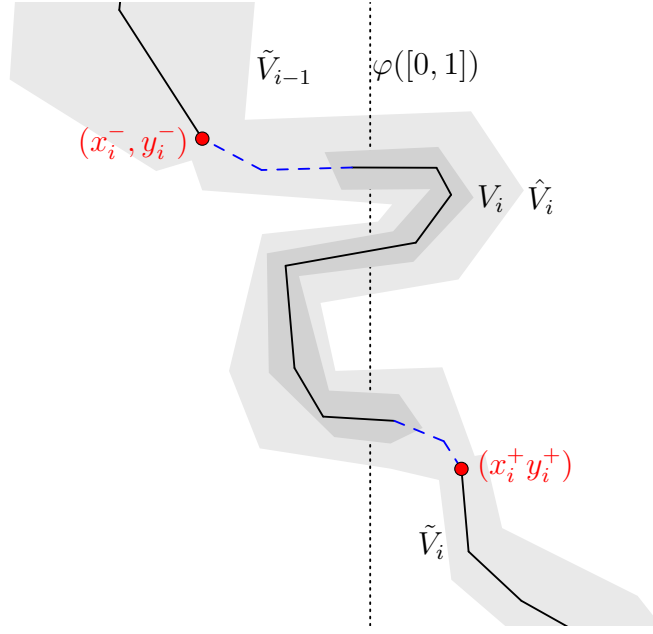
	
	Because $\bigcup_{j=1}^K\gamma_{j}([-1,2])$ does not disconnect any $\tilde{V}_j$ we can find continuous injective curves $\tilde{\gamma}_i:[0,1] \to \er^2$ with $\tilde{\gamma}_i(0) = (x_{i-1}^+, y^+_{i-1})$ , $\tilde{\gamma}_i(1) = (x_{i}^-, y^-_{i})$ and $\tilde{\gamma}_i((0,1)) \subset \tilde{V}_j\setminus \bigcup_{j=1}^K\gamma_{j}([-1,2])$ for each $i=2,\dots, K$. We define $\gamma:[0,4K-1]\to \er^2$ as
	$$
	\gamma_1 + \tilde{\gamma}_1 + \gamma_2 + \dots + \tilde{\gamma}_{K-1} +\gamma_K.
	$$
	Let us prove that $\gamma$ is injective. Assume that $\gamma(s) = \gamma(t)$. Then we find a $\tilde{V}_i \ni \gamma(s)$ or a $\hat{V}_i \ni \gamma(s)$. The mappings $\gamma_i$ and $\tilde{\gamma}_i$ are injective and have disjoint images except for the common endpoint and therefore $s=t$. The map $\gamma_i$ was made to cross $\phi([0,1])$ exactly once at every point of $u^{-1}(w_i,z_i)\cap \phi([0,1])$. Therefore, we see that $u$ satisfies the three-curve property and by Theorem~\ref{ThreeCurveChar} $u\in  \overline{H\cap W_{\id}^{1,p}((-1,1)^2, \er^2)}$.
\end{proof}

\section{Examples}
\subsection{The set $S_u$ can be dense and the very weak monotonicity property is strictly weaker than the weak monotonicity property}
\begin{example}\label{I'mDense}
	There is a $u\in  \overline{H\cap W_{\id}^{1,p}((-1,1)^2, \er^2)}$ for all $p<2$ such that the set $S_u$ of discontinuity points of $u$ is dense. Further, $u$ satisfies the very weak monotonicity property, but not the weak monotonicity property.
\end{example}
\begin{lemma}\label{HaHaJoke}
	Let $1\geq R>r>0$. The mapping $u_{r, R}:(-1,1)^2 \to \er^2$ defined as 
	$$
	u(x,y) = \begin{cases}
		r \frac{(x,y)}{|(x,y)|} + (R-r)(x,y) \quad &(x,y)\in Q(0,R)\\
		(x,y) \quad &(x,y)\in (-1,1)^2 \setminus Q(0,R)\\
	\end{cases}
	$$
	belongs to $\overline{H\cap W_{\id}^{1,p}((-1,1)^2, \er^2)}$ for all $p<2$. Further,
	\begin{equation}\label{LilJoke}
		\int_{Q(0,R)}|Du_{r,R}|^p \leq   4R^{2-p}(r^p + R^p(R-r)^p)
	\end{equation}
	and $u_{r,R}$ satisfies the very weak monotonicity property, but not the weak monotonicity property.
\end{lemma}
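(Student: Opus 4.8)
The plan is to verify the four assertions of the lemma one at a time, exploiting the structure of $u_{r,R}$: away from the single point $0$ it is injective (indeed a homeomorphism onto its image), while at $0$ it opens the point into the circle $\partial B(0,r)$, mapping small punctured disks about $0$ homeomorphically onto thin annuli hugging $\partial B(0,r)$ from the outside, so that $|u_{r,R}(x)|\to r^+$ precisely as $|x|\to 0^+$. For the Sobolev regularity and \eqref{LilJoke} I would simply compute $Du_{r,R}$ in polar coordinates: on the set where $u_{r,R}\neq\id$ its differential has a bounded radial eigenvalue of size $O(R-r)$ and an angular eigenvalue of size $O(r|x|^{-1})$, so $|Du_{r,R}|^p\lesssim (R-r)^p+r^p|x|^{-p}$, which is integrable near the origin in the plane exactly when $p<2$; integrating this pointwise bound in polar coordinates over $Q(0,R)$ gives \eqref{LilJoke}, and since $u_{r,R}-\id$ is supported in $Q(0,R)\Subset(-1,1)^2$ and vanishes on $\partial Q(0,R)$ we get $u_{r,R}\in W^{1,p}_{\id}((-1,1)^2,\er^2)$ for $p<2$.

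To place $u_{r,R}$ in $\overline{H\cap W^{1,p}_{\id}((-1,1)^2,\er^2)}$ I would show it is quasi-monotone and invoke Theorem~\ref{QuasiMonotne}. Fix $\delta>0$, pick $\delta'\in(0,\min\{\delta/2,R\})$, and let $c_{\delta'}$ be the radius of the circle onto which $u_{r,R}$ sends $\partial B(0,\delta')$. Set $g_{\delta}=u_{r,R}$ on $[-1,1]^2\setminus B(0,\delta')$ and $g_{\delta}(x)=(c_{\delta'}/\delta')\,x$ on $B(0,\delta')$. Since $u_{r,R}$ preserves angles the two prescriptions agree on $\partial B(0,\delta')$, and $g_{\delta}$ is injective on each piece with images meeting only along that circle, so $g_{\delta}$ is a homeomorphism of $[-1,1]^2$, in particular a monotone map in the sense of Definition~\ref{defMonotone}. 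As $\{u_{r,R}\neq g_{\delta}\}\subseteq B(0,\delta')$, one has $\mathcal H^1_{\delta}(\{u_{r,R}\neq g_{\delta}\})\le\diam B(0,\delta')=2\delta'<\delta$, so $u_{r,R}$ is quasi-monotone and Theorem~\ref{QuasiMonotne} yields $u_{r,R}\in\overline{H\cap W^{1,p}_{\id}((-1,1)^2,\er^2)}$.

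For the very weak monotonicity property: the representative of $u_{r,R}$ is continuous off $S_u=\{0\}$, and by Theorem~\ref{HomImpINV} and Lemma~\ref{IDidThisOne} the map satisfies the $\INV$ condition with closed point-preimages. A small-loop/degree computation shows $u_{\text{multi}}(0)=\overline{B(0,r)}$, while $u_{\text{multi}}(x)=\{u_{r,R}(x)\}$ with $|u_{r,R}(x)|>r$ for $x\neq 0$; hence each $u_{r,R}^{-1}(y)$ is a single point ($\{0\}$ when $|y|\le r$, the unique classical preimage otherwise), and a singleton is closed, connected and non-separating, so VWM holds. The real point is the failure of WM. Take $C$ to be the arc from $q_1=(r,0)$ to $q_2=(0,r)$ that runs radially out to $(\tfrac{r+R}{2},0)$, then along $\partial B(0,\tfrac{r+R}{2})$ to $(0,\tfrac{r+R}{2})$, then radially in to $(0,r)$. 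Then $C$ is compact and connected, lies in $\overline{B(0,\tfrac{r+R}{2})}\subset B(0,R)$, meets $\partial B(0,r)$ only at $q_1,q_2$, and — being an arc — does not disconnect the plane. Since $q_1,q_2\in\overline{B(0,r)}=u_{\text{multi}}(0)$ and these are the only points of $C$ lying in $\overline{B(0,r)}$, we have $0\in u_{r,R}^{-1}(C)$; and because $u_{r,R}$ is injective off $0$, the preimage of the open arc $C\setminus\{q_1,q_2\}$ is an injective curve $\gamma_0$ avoiding $0$ whose two ends both converge to $0$ (as $|u_{r,R}(x)|\to r$ forces $|x|\to 0$). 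Thus $u_{r,R}^{-1}(C)=\gamma_0\cup\{0\}$ is a Jordan curve, which by the Jordan curve theorem disconnects the plane; so $C$ is a closed connected non-separating set whose preimage separates the plane, and $u_{r,R}$ fails the WM condition.

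The first three items are essentially routine (a derivative computation, the easy quasi-monotone construction, and bookkeeping of point-preimages). The crux is the last step: one must exhibit a connected, non-separating set whose preimage separates, and the decisive observation is that an arc which merely \emph{touches} the cavity circle $\partial B(0,r)$ at two distinct points has those two points collapsed by $u_{r,R}^{-1}$ to the single cavitation point $0$, so an arc is turned into a loop. Once the right $C$ is chosen the verification is short, and this is exactly the mechanism separating VWM from WM that makes the example worth recording.
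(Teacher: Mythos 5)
Your proof is correct in substance, and for most of the lemma it follows the paper's own argument: the Sobolev bound comes from the same polar-coordinate estimate of the radial and angular parts of the derivative, the VWM claim is the same bookkeeping showing every point preimage is a singleton, and your witness for the failure of WM uses exactly the paper's mechanism --- an arc meeting the cavity circle $\partial B(0,r)$ at precisely two points, whose preimage closes up into a Jordan curve through the cavitation point and therefore separates the plane; the paper routes its arc (the curve $\psi$) out to the sides of $Q(0,R)$ and back, you stay on the circle of radius $\tfrac{r+R}{2}$, which changes nothing. The one genuinely different step is membership in $\overline{H\cap W_{\id}^{1,p}((-1,1)^2,\er^2)}$: the paper builds explicit Lipschitz homeomorphisms $u_k$ (replacing the cavitation by $k(x,y)$ on $Q(0,\tfrac{r}{k})$), checks a uniform bound on $\int|Du_k|^p$, and obtains the closure statement and \eqref{LilJoke} simultaneously from the weak convergence of the $u_k$; you instead verify quasi-monotonicity by gluing the linear map $(c_{\delta'}/\delta')x$ onto $u_{r,R}$ across $\partial B(0,\delta')$ and invoke Theorem~\ref{QuasiMonotne}. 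That route is legitimate and not circular (the lemma is only quoted as motivation and is not used in the proof of Theorem~\ref{QuasiMonotne}), but it buys little beyond brevity: it leans on the heaviest machinery of the paper where a short explicit approximation suffices, and it decouples the closure statement from \eqref{LilJoke}, which you must then recover by direct integration. On that last point, be aware that integrating $r^p|x|^{-p}$ over $Q(0,R)$ produces a factor of order $\tfrac{1}{2-p}R^{2-p}r^p$, so the constant $4$ in \eqref{LilJoke} does not literally drop out of your pointwise bound (the paper's ``direct computation'' is equally cavalier about this); it is immaterial for the only application of the estimate, in Example~\ref{I'mDense}, where one merely needs the right-hand side to be small when $r$ and $R$ are small at fixed $p<2$.
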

\begin{proof}
	Let $1\geq R>r>0$. We define
	$$
	u_k(x,y) = \begin{cases}
		k(x,y) \quad & (x,y) \in Q(0,\tfrac{r}{k})\\
		r\frac{(x,y)}{|(x,y)|} + (R-r)(x,y) \quad &(x,y)\in Q(0,R) \setminus Q(0,\tfrac{r}{k})\\
		(x,y) \quad &(x,y)\in (-1,1)^2 \setminus Q(0,R).\\
	\end{cases}
	$$
	Then each $u_k$ is a Lipschitz homeomorphism. A direct computation easily gives
	$$
	\int_{(-1,1)^2}|Du_k|^p \leq  4\frac{r^2}{k^{2-p}} + 4r^pR^{2-p} + 4R^2(R-r)^p + (4-2R^2) < \infty
	$$
	and sending $k\to \infty$ we have
	$$
	\limsup_{k\to\infty} \int_{Q(0,R)}|Du_k|^p \leq   4R^{2-p}(r^p + R^p(R-r)^p)
	$$
	Since $u_k$ is a sequence of bounded functions converging in $L^1$ with $\int_{(-1,1)^2}|Du_k|^p$ bounded, we have that its limit is in $W^{1,p}((-1,1)^2,\er^2)$ for all $p\in (1,2)$.
	
	For each point $(w,z)\in \overline{Q(0,r)}$ we have $u^{-1}(w,z) = (0,0)$. For each point $(w,z)\in Q(0,R)\setminus \overline{Q(0,r)}$ we have
	$u^{-1}(w,z) = \frac{(w,z)}{|(w,z)|} \frac{|(w,z)| - r}{R-r}$ and $u^{-1}(w,z) = (w,z)$ for $|(w,z)|_{\infty}\geq R$. We define
	$\psi:[0,4]\to \er^2$ as follows
	$$
	\psi(t) = \begin{cases}
		(r(1-t) + Rt, 0) \quad & t\in [0,1]\\
		(R, R(1-t)) \quad & t\in [1,2]\\
		(R(3-t), R) \quad & t\in [2,3]\\
		(0, r(t-3) + R(4-t)) \quad & t\in [3,4].
	\end{cases}
	$$ 
	Clearly $\psi([0,4])$ is a closed set which does not disconnect $(-1,1)^2$. On the other hand, $u^{-1}(\psi[0,4])$ is the boundary of a square with corners $(0,0)$, $(R,0),$ $(R,R)$, $(0,R)$ which disconnects $(-1,1)^2$. Thus $u_{r,R}$ does not satisfy the weak monotonicity property.
\end{proof}
\textit{Construction of $u$ from Example~\ref{I'mDense}}

We construct a sequence of finite sets $A_k$, $k\in \en_0$ such that $\bigcup_{k=0}^{\infty} A_k$ is dense in $(-1,1)^2$ inductively. Let $A_0 = \{0\}$, $A_k = \big[(-1,1)^2 \cap 2^{-k}\zet^2\big] \setminus \bigcup_{j=0}^{k-1}A_j$. The number of points in $A_k$ is estimated by $2^{3k}$. Assume that $R_k \leq 2^{-2k-2}$ $r_k \leq 2^{-2k-4}$. For $k\in \en_0$ we define
$$
u_{k}(x,y) = \begin{cases}
	(a,b) + u_{r_k,R_k}((x,y) - (a,b)) \quad &(x,y)\in Q((a,b), R_k), (a,b)\in A_k\\
	(x,y) \quad &\text{elsewhere}.
\end{cases}
$$
We define $v_k = u_k\circ u_{k-1}\circ \dots \circ u_0$. Because $u_{j}$ is locally bi-Lipschitz on $(-1,1)^2\setminus A_j$ and $\dist(A_i, A_j) \geq 2^{-\min\{i,j\}}$ for $i\neq j$ we observe that $v_k\in W^{1,p}((-1,1)^2, \er^2)$ as soon as
$\int_{(-1,1)^2} |\nabla v_k|^p < \infty$. Let us calculate inductively 
$v_0 \in W^{1,p}((-1,1)^2, \er^2)$. By the choice of $R_k$ (especially 
$\dist (A_k ,\bigcup{j=0}^{k-1} A_j ) \geq 2R_k$), we find some $C_k$ such that $|\nabla v_{k-1}| \leq C_k$ on $A_k + Q(0,R_k)$. Then we can calculate
$|\nabla v_k(x,y)| \leq C_k|\nabla u_k(v_{k-1}(x,y))|$ for $(x,y) \in A_k + Q(0,R_k)$ and  $|\nabla v_k(x,y)| = |\nabla v_{k-1}(x,y)|$ elsewhere. By choosing $r_k$ and $R_k$ small enough (see \eqref{LilJoke}) we have
$$
\int|\nabla v_{k} - \nabla v_{k-1}|^p < 2^{-k}
$$ 
Thus $v_k$ is bounded in $W^{1,p}$. Convergence of $v_k$ in $L^1$ is easily observed and so we define $u = \lim_k v_k \in W^{1,p}((-1,1)^2, \er^2)$. Since each $u_k$ can be approximated by a Sobolev homeomorphism in $W^{1,p}$ we see that $u \in \overline{H\cap W_{\id}^{1,p}((-1,1)^2, \er^2)}$ for all $p<2$. The arguments that $u$ satisfy the very weak monotonicity property but not the monotonicity property are analogous to Lemma~\ref{HaHaJoke}. 
\qed

\subsection{A map satisfying $\INV$ and the NCL condition that does not satisfy the QM condition}
\begin{example}\label{NoWindingConditionFail} 
	There exists a $u\in W^{1,p}_{\id}((-1,1)^2,\er^2)$ for all $p\in[1,2)$ which satisfies the conditions $\INV$ and NCL, but does not satisfy the QM condition
\end{example}
The map here is from \cite{DPP}[Section 5.2]. We define $\omega:((0,4)\setminus\{1\})\times(-\pi,\pi)\to \er$ as follows
$$
\omega(r,\theta) = \begin{cases}
	\theta \quad &r\geq 2\\
	\frac{\theta}{r-1}  \quad &r\in (1,2), \theta\in ((1-r)\pi, (r-1)\pi)\\
	\pi \quad &r\in (1,2), \theta\in (-\pi, \pi)\setminus [(1-r)\pi, (r-1)\pi]\\
	\pi + \frac{\theta}{1-r}  \quad &r\in (\tfrac{1}{2},1), \theta\in (2(r-1)\pi, 2(1-r)\pi)\\
	\pi \quad &r\in (\tfrac{1}{2},1),  \theta\in (-\pi, \pi)\setminus [2(1-r)\pi, 2(r-1)\pi]\\
	\pi + 2\theta \quad &r\in (0,\tfrac{1}{2}], \theta\in (-\pi, \pi).
\end{cases}
$$
By $\Phi$ we denote standard polar coordinates, i.e. $\Phi(r,\theta) = (r\cos\theta, r\sin \theta)$, $r>0, \theta\in(-\pi,\pi)$. We define $u$ as
$$
u(x,y) = (r\cos\omega(r,\theta), 2\max\{0,r-2\}\sin\omega(r,\theta)) \quad \text{where } (x,y) = \Phi(r,\theta)
$$ 
for $|(x,y)|<4$ and $u(x,y) = (x,y)$ for $|(x,y)|\geq 4$.

Let us note some basic properties of $u$. Firstly $u$ is (or can be extended as) continuous on $\er^2\setminus \{(1,0)\}$. Secondly $u\in W^{1,p}_{\loc}(\er^2,\er^2)$ especially since the derivative is bounded everywhere away from the point $(1,0)$ and close to $(1,0)$ we can estimate
$$
|\nabla u(\Phi(r,\theta))| \approx |\nabla \omega(r,\theta)|\approx |1-r|^{-1}
$$
on the set $|\theta|<2|1-r|$. Integrating we have
$$
\int_{B((1,0),1)}|\nabla u|^p\leq C\int_{0}^{1}\int_{0}^s\tfrac{1}{s^{p-1}}\ dt\ ds < \infty
$$
for $p<2$.

From \cite{DPP}[Lemma 5.2], we have that $u$ satisfies the $\INV$ condition (so the preimage of a point is a closed connected set). By \cite{CPR}[Remark 3.2] it is known that this map satisfies the NCL condition. Let us now prove that the QM condition  is not satisfied.

The set $U_{\delta}$ can be taken to be $Q((1,0), \delta)$ for any $\delta >0$ It suffices to consider the preimage of the points $(-\tfrac{1}{2},0)$ and $(\tfrac{1}{2},0)$. From the definition of $u$ we easily check that 
$$
u^{-1}(-\tfrac{1}{2},0) \cap \overline{B(0,\tfrac{1}{2})} = \{(-\tfrac{1}{2},0), (\tfrac{1}{2},0)\} \text{ and } u^{-1}(\tfrac{1}{2},0) \cap \overline{B(0,\tfrac{1}{2})} = \{(0, -\tfrac{1}{2}), (0,\tfrac{1}{2})\}. 
$$
It is simple to check that $(1,0)\in u^{-1}(-\tfrac{1}{2},0)$ and $(1,0)\in u^{-1}(\tfrac{1}{2},0)$. By the continuity of $u$ on $\er^2\setminus\{(1,0)\}$ we have $u^{-1}(-\tfrac{1}{2},0)\cap u^{-1}(\tfrac{1}{2},0) = \{(1,0)\}$. Then $u^{-1}(-\tfrac{1}{2},0)$ and $u^{-1}(\tfrac{1}{2},0)$ are a pair of continua which are disjoint outside $Q((1,0), \delta)$. Note that any continuous curve from $(-\tfrac{1}{2},0)$ to $(\tfrac{1}{2},0)$ in $\er^2\setminus B(0,\tfrac{1}{2})$ must necessarily disconnect $(0, -\tfrac{1}{2})$ from $(0,\tfrac{1}{2})$ in $\er^2\setminus B(0,\tfrac{1}{2})$.

Let assume that $V^{1}_{4^{-1},4^{-1}}, V^{2}_{4^{-1},4^{-1}}$ are a pair of disjoint Jordan domains with $V^{1}_{4^{-1},4^{-1}}\subset u^{-1}(B((-\tfrac{1}{2},0), \tfrac{1}{4})) \cup Q((1,0), \tfrac{1}{4})$ containing $u^{-1}(-\tfrac{1}{2},0)\setminus Q((1,0), \tfrac{1}{4})$ and similarly for $V^{1}_{4^{-1},4^{-1}}$. Then there is a pair of curves, one going from $(-\tfrac{1}{2},0)$ to $(\tfrac{1}{2},0)$ in $V^{1}_{4^{-1},4^{-1}}$ and the other going from $(0, -\tfrac{1}{2})$ to $(0,\tfrac{1}{2})$ in $V^{2}_{4^{-1},4^{-1}}$. Since $V^{1}_{4^{-1},4^{-1}}\cap V^{2}_{4^{-1},4^{-1}} = \emptyset$, these curves do not intersect. Thus there is a continuous curve from $(-\tfrac{1}{2},0)$ to $(\tfrac{1}{2},0)$ in $\er^2\setminus B(0,\tfrac{1}{2})$ which does not disconnect $(0, -\tfrac{1}{2})$ from $(0,\tfrac{1}{2})$ in $\er^2\setminus B(0,\tfrac{1}{2})$, which is a contradiction.
\qed

\vskip 5pt
\noindent
{\bf Acknowledgments.} The author would like to express his deep gratitude to Aldo Pratelli for his kind tutorship and the insight he shared with the author, without which this paper would not have come to be. The author was supported by the grants ERC-CZ Grant LL2105 CONTACT and GA\v CR grant P201/21-01976.

\end{document}